\documentclass[a4paper]{article}
\usepackage{lineno,hyperref}
\usepackage[english]{babel}
\usepackage{csquotes}
\usepackage{indentfirst}
\usepackage{fancyhdr}
\usepackage{graphicx,subcaption,float}
\usepackage{amssymb}
\usepackage{amsmath}
\usepackage{cases}
\usepackage{mathtools}
\usepackage{amsthm}
\usepackage{color}
\usepackage{epstopdf}
\usepackage{esint}

%\usepackage[maxbibnames=9]{biblatex} %Imports biblatex package
%\addbibresource{Bibliography.bib} %Import the bibliography file

\usepackage[]{geometry}
\geometry{ left=2.54cm,right=2.54cm,top=2.54cm,bottom=2.54cm }

\newtheorem{theorem}{Theorem}[section]
\newtheorem{definition}[theorem]{Definition}
\newtheorem{lemma}[theorem]{Lemma}
\newtheorem{corollary}[theorem]{Corollary}

\newtheorem{example}[theorem]{Example}
\newtheorem{proposition}[theorem]{Proposition}
\newtheorem{remark}[theorem]{Remark}

\newcommand{\changes}[1]{\textcolor{black}{#1}}

\usepackage{tikz}

\usetikzlibrary{positioning}
\usetikzlibrary{arrows}
\usetikzlibrary{arrows.meta}
\usetikzlibrary{calc}
\usetikzlibrary{shapes}
\usepackage{verbatim}
\makeatletter
\renewcommand*{\@fnsymbol}[1]{\ensuremath{\ifcase#1\or 1\or 2\or *\or 3\or 4\or 5\or 6\fi}}
\makeatother

\title{Non-Birkhoff periodic orbits in symmetric billiards}
\author{Casper Oelen\thanks{Maxwell Institute for Mathematical Sciences, Department of Mathematics, Heriot-Watt University, Edinburgh, EH14 4AS, UK,
{\tt c.oelen@hw.ac.uk}}\mbox{\ }$^{,}$\thanks{School of Mathematical Sciences, Loughborough University, Loughborough, LE11 3TU, UK}$^{\;\;, }$\thanks{Corresponding author, {\tt oelenc@gmail.com }}\;, Bob Rink\thanks{Department of Mathematics, Vrije Universiteit Amsterdam - Faculty of Science, De Boelelaan 1111, 1081 HV Amsterdam, The Netherlands, {\tt b.w.rink@vu.nl}}\;, Mattia Sensi\thanks{Dipartimento di Matematica, Università degli Studi di Trento, Via Sommarive 14, 38123 Povo (Trento), Italy, {\tt mattia.sensi@unitn.it}}\mbox{\ }$^{,}$\thanks{Department of Mathematical Sciences ``G. L. Lagrange'', Politecnico di Torino, Corso Duca degli Abruzzi 24, 10129 Torino, Italy}\mbox{\ }$^{,}$\thanks{MathNeuro team, Centre Inria d'Université Côte d'Azur, 2004 Rte des Lucioles, 06410 Biot, France} 
}
\date{\today}

\begin{document}

\maketitle

\begin{abstract}
We study non-Birkhoff periodic orbits in symmetric convex planar billiards. Our main result provides a quantitative criterion for the existence of such  orbits with prescribed minimal period, rotation number, and  spatiotemporal symmetry. We exploit this criterion to find sufficient conditions for a symmetric billiard to possess infinitely many non-Birkhoff periodic orbits. It follows that arbitrarily small analytical perturbations of the circular billiard have non-Birkhoff periodic orbits of any rational rotation number and with arbitrarily long periods. We also generalize a known result for elliptical billiards to other $\mathbb{D}_2$-symmetric billiards. Lastly, we  provide Matlab codes which can be used to numerically compute and visualize the non-Birkhoff periodic orbits whose existence we prove analytically. 
\end{abstract}
 
\section{Introduction}
\noindent 

The classical planar convex billiard problem \cite{tabachnikov2005geometry} considers the motion of a point through a strictly convex domain in the plane with smooth boundary. The point moves rectilinearly in the interior of the domain, changing its direction when it hits the boundary  according to the familiar billiard rule ``angle of incidence equals angle of reflection''. 
The study of this dynamical system is a well developed and active field of research \cite{DiasCarneiro_2007, rychlik1989periodic,tabachnikov2005geometry}. One of the first major results in this area is ``Poincar\'e's last geometric theorem'' about fixed points of symplectic twist maps, which was formulated by Poincar\'e \cite{poincare1912theoreme} and improved by G.D.  Birkhoff \cite{birkhoff1926extension, birkhoff1913proof}. Applied to smooth planar convex billiards, this theorem guarantees the existence of at least two distinct periodic billiard orbits of any rational rotation number $0<\frac{m}{n} < 1$. The periodic orbits obtained in this way, moreover, have the remarkable property that they preserve the orientation of the billiard boundary, that is, their impact points are \emph{well-ordered} in a sense that we make precise below. 
In honor of the discoverer of the theorem, orbits with this well-ordering property are nowadays called  \emph{Birkhoff} orbits. Billiards may have infinitely many Birkhoff orbits of any given rotation number. However, for each rational number $0<\frac{m}{n} <1$ there also exist billiards with exactly two distinct Birkhoff periodic orbits of that rotation number  \cite{pinto2012billiards}, which implies that Birkhoff's lower bound is sharp.

It can be shown that a periodic billiard trajectory of rotation number $\frac{m}{n} $ (with $m$ and $n$ co-prime) defines a Birkhoff orbit precisely when it is homeomorphic to a regular polygon of \emph{Schl\"afli symbol} $\{n/m\}$ (e.g., a triangle, square, pentagon, pentagram, hexagon, etc., see \cite{coxeter1973regular,schlafli1901theorie}.) In particular, $n$ is the minimal period of such an orbit, while $m$ is its winding number. 
{\it Non-Birkhoff} periodic orbits, that lack the aforementioned well-ordering property, do not admit such a simple geometric description. For instance, the minimal period of a non-Birkhoff periodic orbit of rotation number $\frac{m}{n}$ can be  (much) larger than $n$.  
A billiard also need not possess non-Birkhoff orbits. The circular billiard, for instance, only supports Birkhoff orbits. In  elliptical billiards, an orbit is non-Birkhoff precisely when it possesses a hyperbola as caustic, as can be inferred for example from \cite{casas2011frequency}. In particular, every non-Birkhoff periodic orbit in an elliptical billiard has rotation number $\frac{1}{2}$ automatically. 

Birkhoff orbits and non-Birkhoff orbits can be defined for general symplectic twist maps, see Definition  \ref{Birkhoff} in this paper.  For example, in the standard map and variants thereof, non-Birkhoff periodic orbits were found and investigated by Tanikawa and Yamaguchi, see for instance \cite{10.1143/PTP.108.987,10.1143/PTP.107.1117, 10.1143/PTP.117.601}.  
It was proved by Boyland and Hall \cite{boyland1987invariant}  that a symplectic twist map admits a non-Birkhoff periodic orbit of rotation number $\frac{m}{n}$, if and only if it does not admit an invariant circle of any irrational rotation number $\omega$ of which $\frac{m}{n}$ is a continued fraction convergent. For a class of twist maps of the infinite cylinder (not including billiards), this result was later improved by Cheng and Cheng \cite{chengcheng}. These authors prove that whenever such maps do not admit an invariant circle of irrational rotation number $\omega$, then they possess infinitely many non-Birkhoff periodic orbits of any rational rotation number $\frac{m}{n}$ sufficiently close to $\omega$.

As there is no simple  method to prove the non-existence of irrational invariant circles for billiards (or general twist maps), in this article we  aim instead to prove the existence of non-Birkhoff periodic orbits in billiards directly.
Specifically, we establish sufficient conditions for the existence of  these  orbits in \emph{symmetric} planar convex billiards. 
To describe our main results, we introduce some terminology. Assume that $\Gamma\subset \mathbb{R}^2$ is a $C^2$-smooth simple closed curve bounding a strictly convex domain (the billiard table). In particular, the \emph{curvature} $\kappa(z)$ of $\Gamma$  is strictly positive at a dense set of points $z\in \Gamma$. Recall that the curvature $\kappa(z)$ is defined as the reciprocal of the radius of the unique circle with a second order tangency to $\Gamma$ at $z$ (for example, if $\Gamma$ is a circle of radius $r$, then $\kappa \equiv r^{-1}$).

 A sequence $z= (\ldots, z_{-1}, z_0, z_1, \ldots)\in\Gamma^{\mathbb{Z}}$ of points in $\Gamma$ is called a {\it billiard orbit} if it is a sequence of consecutive impact points of a billiard trajectory. More precisely, $z$ is a billiard orbit when for all $i\in \mathbb{Z}$ we have that $z_{i+1}\neq z_i$  and that the vectors $z_{i-1}-z_i$ and $z_{i+1}-z_i$ make an equal angle with the tangent line to $\Gamma$ at $z_i$ (meaning that the billiard rule ``angle of incidence equals angle of reflection'' holds).

Let $G \subset O(2)$ be a finite group acting on $\mathbb{R}^2$. 
Then $\Gamma$  is called \emph{$G$-symmetric} if $g(\Gamma)=\Gamma$ for all $g\in G$. 
In this paper, we restrict $G$ to be a dihedral group containing at least one reflection and one rotation. For definiteness, we assume that $G$ is generated by the counterclockwise rotation $R$ of the plane over an angle $\frac{2\pi}{n}$, and the reflection $S$ in the horizontal axis. 
We denote  $\mathbb{D}_n:= \langle R, S\rangle \subset O(2)$.
We call a subgroup $H \subset \mathbb{D}_n$   \emph{dihedral} if it contains a reflection (we do not require that it contains a rotation).  Note that if $H\cong \mathbb{D}_N$ (for $1\leq N\leq n$), then $H$ is generated by a reflection and a rotation over $\frac{2\pi}{N}$. 

 If  $z \in \Gamma^{\mathbb{Z}}$ is a billiard orbit  for a $G$-symmetric billiard $\Gamma$, then so is 
$$g\cdot z := (\ldots, g(z_{-1}),\, g(z_0),\, g(z_1), \ldots),\ \mbox{ for all }\ g\in G .$$ 
An element $h\in G$ is called a \emph{time-preserving symmetry} of a billiard orbit $z\in \Gamma^{\mathbb{Z}}$ when there is a $k\in \mathbb{Z}$ such that $h(z_i)=z_{k+i}$ for all $i\in \mathbb{Z}$, that is, if $h\cdot z$ defines the same billiard orbit as $z$  up to a shift of time. The group of time-preserving symmetries of $z$ is denoted $H^+(z)$. An element $h\in G$ is called a \emph{time-reversing symmetry} of $z$ when there is a $k\in \mathbb{Z}$ such that $h(z_i)=z_{k-i}$ for all $i\in \mathbb{Z}$, that is, if $h\cdot z$ defines the same billiard orbit as $z$ up to a (shift and a) reversal of time. The set of time-reversing symmetries of $z$ is denoted $H^-(z)$. The union 
$$H(z):=H^+(z)\cup H^-(z)\subset G$$ 
is a group called the {\it spatiotemporal symmetry group} of $z$.  We say that $z$ is $H$-symmetric if $H\subset H(z)$.

Every $\mathbb{D}_n$-symmetric convex billiard admits precisely two (up to shifting or reversing time) $\mathbb{D}_n$-symmetric Birkhoff periodic orbits with rotation number $\frac{m}{n}$, see Lemma \ref{lem:existenceBirkhoffDn}. Let $Z\in \Gamma^{\mathbb{Z}}$ be one of these orbits.
 By symmetry, the segment length $L=\|Z_{i+1}-Z_i\|$ and the curvature $\kappa=\kappa(Z_i)$ of $\Gamma$ at $Z_i$ are constant (i.e., independent of $i\in \mathbb{Z}$). The product of these quantities is decisive for the existence of non-Birkhoff orbits, as  the main result of this paper demonstrates:

   \begin{theorem}\label{thm:sample}
    Let $m,n\in \mathbb{N}$ be co-prime, with $1\leq m \leq n-1$, and let $\Gamma$  be a $\mathbb{D}_n$-symmetric, $C^2$-smooth, strictly convex simple closed curve. 
     Let $Z\in \Gamma^{\mathbb{Z}}$ be one of its two 
      $\mathbb{D}_n$-symmetric Birkhoff periodic orbits with rotation number $\frac{m}{n}$. Denote by $L=\|Z_{i+1}-Z_i\|$ its constant  segment length and by $\kappa=\kappa(Z_i)$ the constant curvature of $\Gamma$ along $Z$. 
Let $H \subset \mathbb{D}_n$ be a dihedral subgroup of order $2N$ (for $N\geq 1$),  
let $s\in \mathbb{N}_{\geq 2}$ satisfy $\gcd(s, N)=1$, and define $p := s n$.    If      
\begin{equation}\label{eq:kappalambdaformulaintro}
\kappa L < 2 \sin\left(\frac{m \pi }{n} \right) \cos^2\left( \frac{N\pi}{p}\right) ,
\end{equation}
then $\Gamma$ admits a non-Birkhoff periodic orbit with minimal period $p$,  rotation number $\frac{m}{n}$, and spatiotemporal symmetry group $H$.   
\end{theorem}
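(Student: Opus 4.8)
The plan is to obtain the desired orbit as a critical point of the billiard length functional on a symmetric configuration space, and to detect it by computing the Morse index of the reference Birkhoff orbit inside that space.

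\emph{Variational set-up.} Parametrize $\Gamma$ by $\gamma:\mathbb R/2\pi\mathbb Z\to\mathbb R^2$. A $p$-periodic billiard orbit of rotation number $\tfrac{m}{n}=\tfrac{sm}{p}$ corresponds, via its lift, to a critical point of the length functional
\[
\mathcal L(t)=\sum_{i=1}^{p}\bigl\|\gamma(t_{i+1})-\gamma(t_i)\bigr\|
\]
on the space $\mathcal C$ of sequences $t:\mathbb Z\to\mathbb R$ with $t_{i+p}=t_i+2\pi sm$ and $t_i\neq t_{i+1}\ (\mathrm{mod}\ 2\pi)$, modulo the global translation $t\mapsto t+2\pi$ and the index shift $i\mapsto i+1$; the Euler--Lagrange equations are exactly the reflection law. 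The group $\mathbb D_n$ acts on $\mathcal C$ spatially, $\mathbb Z$ acts by index shifts, and combinations of the two realize all spatiotemporal symmetries in $\mathbb D_n$. Let $X_H\subset\mathcal C$ be the fixed-point set of the subgroup corresponding to $H$ acting with rotations as prescribed index shifts and reflections as index reversals; here $\gcd(s,N)=1$ is used to pin down which index shift a given rotation of $H$ must induce, so that $X_H$ is precisely the set of configurations whose spatiotemporal symmetry contains $H$. By Palais' principle of symmetric criticality, every critical point of $\mathcal L|_{X_H}$ is a billiard orbit, automatically $H$-symmetric, $p$-periodic, of rotation number $\tfrac mn$. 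The chosen $\mathbb D_n$-symmetric Birkhoff orbit $Z$, traversed $s$ times — call it $Z^{(s)}$ — lies in $X_H$ and is such a critical point; it plays the role of the known, trivial solution, and the goal is to produce a second one in $X_H$.

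\emph{The index of $Z^{(s)}$.} Compute the Hessian of $\mathcal L|_{X_H}$ at $Z^{(s)}$. Since $Z^{(s)}$ has constant segment length $L$, constant curvature $\kappa$, and all reflection angles equal to $\tfrac{m\pi}{n}$, and retains a large residual symmetry inside $X_H$, this Hessian is a (block) circulant quadratic form in the discrete time variable, diagonalized by the Fourier modes $i\mapsto e^{2\pi\mathrm i k i/p}$; its eigenvalue on the $k$-th mode has the shape $\mathrm{const}\cdot\bigl(\kappa L-2\sin\tfrac{m\pi}{n}\,c_k\bigr)$ with $c_k\in(0,1]$ depending only on $k$ and the Birkhoff polygon. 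Among the modes admissible on $X_H$ that break $\mathbb D_n$ down to exactly $H$, the one of lowest frequency has $c_k=\cos^2\tfrac{N\pi}{p}$; thus \eqref{eq:kappalambdaformulaintro} is precisely the condition that this eigenvalue has the sign opposite to the one making $Z^{(s)}$ a local maximum of $\mathcal L|_{X_H}$, i.e.\ that $Z^{(s)}$ acquires a genuine ``uphill'' direction inside $X_H$, and that this direction carries exactly the symmetry-breaking $H$-isotropy. (As a check: for the circular billiard $\kappa L=2\sin\tfrac{m\pi}{n}$ exactly and $c_k\le1$, so \eqref{eq:kappalambdaformulaintro} never holds, matching the absence of non-Birkhoff orbits there; as $p\to\infty$ the threshold tends to $\kappa L<2\sin\tfrac{m\pi}{n}$, the ellipticity condition for the Birkhoff orbit.)

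\emph{Producing and identifying the second orbit.} Because $Z^{(s)}$ is not a local maximum of $\mathcal L|_{X_H}$, a minimax argument — maximizing $\mathcal L$ over (a suitable region of) $X_H$, or a mountain-pass/linking construction adapted to the uphill direction, and in the lowest-dimensional cases simply the intermediate value theorem applied to $\tfrac{\mathrm d}{\mathrm dt}\mathcal L|_{X_H}$ — yields a critical point $z^\star\in X_H$ with $\mathcal L(z^\star)$ strictly separated from $\mathcal L(Z^{(s)})$, so $z^\star\neq Z^{(s)}$. One then checks the advertised properties: the rotation number $\tfrac mn$ is built in; the minimal period is exactly $p=sn$, since $\gcd(m,n)=1$ forces the period to be a multiple of $n$, period $n$ would make $z^\star$ a Birkhoff orbit (excluded by the separation of $\mathcal L$-values, resp.\ by the co-index count at $Z^{(s)}$), and the intermediate multiples are ruled out by $\gcd(s,N)=1$ together with the isotropy type of the uphill direction; hence $z^\star$ has period $>n$ and is therefore non-Birkhoff; finally its spatiotemporal symmetry group equals $H$, being $\supseteq H$ by construction, while a strictly larger symmetry would confine $z^\star$ to a proper $H$-invariant subspace of $X_H$ to which the uphill direction at $Z^{(s)}$ is transverse.

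\emph{Main obstacle.} The delicate part is the compactness / Palais--Smale property of $\mathcal L|_{X_H}$: the functional is only Lipschitz up to the boundary of $\mathcal C$, where chords collapse ($t_i\to t_{i+1}$) and configurations degenerate to lower period, and a priori minimax sequences could drift into this locus. One must choose the minimax class (or the maximization region) adapted to the braid/combinatorial type of the target non-Birkhoff orbit and uniformly bounded away from the collapse locus; the index information supplied by \eqref{eq:kappalambdaformulaintro} then guarantees that the resulting critical point is of the correct combinatorial type and genuinely distinct from $Z^{(s)}$.
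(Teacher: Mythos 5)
Your Hessian computation is essentially the paper's: the eigenvalue of $D^2W_{p,q}$ at the symmetric Birkhoff orbit on the lowest admissible symmetry-breaking Fourier mode is $2c^2\sin\left(\tfrac{m\pi}{n}\right)\left(\tfrac{2\sin(m\pi/n)\cos^2(N\pi/p)}{L}-\kappa\right)$, and condition \eqref{eq:kappalambdaformulaintro} is exactly its positivity. But your existence and identification steps have genuine gaps. The step you yourself flag as the ``main obstacle'' --- compactness of minimax sequences, i.e.\ keeping them away from the collapse locus $t_i\to t_{i+1}$ and from degeneration to lower period --- is precisely the hard part of the theorem, and your proposal does not resolve it; you say a suitable minimax class ``must be chosen'' but give no construction. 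The paper resolves this not by a mountain-pass argument but by running the curve-lengthening (gradient) flow from the explicit initial condition $X+\varepsilon v$ and confining it, via the comparison principle, to the order interval $[X^-,X^+]$ between two neighboring $\mathbb{D}_n$-symmetric Birkhoff orbits; a separate technical result (Corollary \ref{cor:gradientlimit}, with delicate force estimates on $F^\pm$ needed when $m=1$ or $m=n-1$) shows flow lines in this interval stay uniformly inside $\Sigma_\delta$, so the limit exists and is a genuine billiard orbit. Without some substitute for this confinement, your critical point could sit on the degenerate boundary or coincide with a collapsed/lower-period configuration.

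The second gap is in identifying the critical point. You assert that the minimal period is exactly $p$ and the symmetry group is exactly $H$ because intermediate periods and larger symmetries are ``ruled out by $\gcd(s,N)=1$ together with the isotropy type of the uphill direction,'' but a minimax critical point inherits no information from the unstable direction at $Z^{(s)}$: nothing in your construction prevents $z^\star$ from being, say, another $n$-periodic (hence Birkhoff) orbit in $X_H$ distinct from $Z$, or an orbit with symmetry strictly larger than $H$, and the separation of $\mathcal L$-values only distinguishes $z^\star$ from $Z^{(s)}$ itself. The paper gets exactly this control from the Sturmian lemma (Lemma \ref{prop:decreaseintersection}): the number of sign changes of $x(t)-X$ cannot increase along the flow, so the limit $x_\infty-X$ has exactly two sign changes per period $K=p/N$; this pins down its minimal period, and then Proposition \ref{prop:minperiod} gives minimal period $p={\rm lcm}(n,K)$ for the orbit, Proposition \ref{prop:pqnm} gives the non-Birkhoff property (a Birkhoff orbit of rotation number $\tfrac mn$ would lie in $\mathbb{X}_{n,m}$), and the same crossing count excludes any rotation of order larger than $N$ in $H(z_\infty)$. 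You would need an analogue of this crossing/intersection control for whatever minimax class you choose; as written, the conclusion ``minimal period $p$, non-Birkhoff, symmetry exactly $H$'' is asserted rather than proved.
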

\begin{figure}[h!]
\centering
\begin{subfigure}{.35\textwidth}
  \centering
\includegraphics[width=\linewidth]{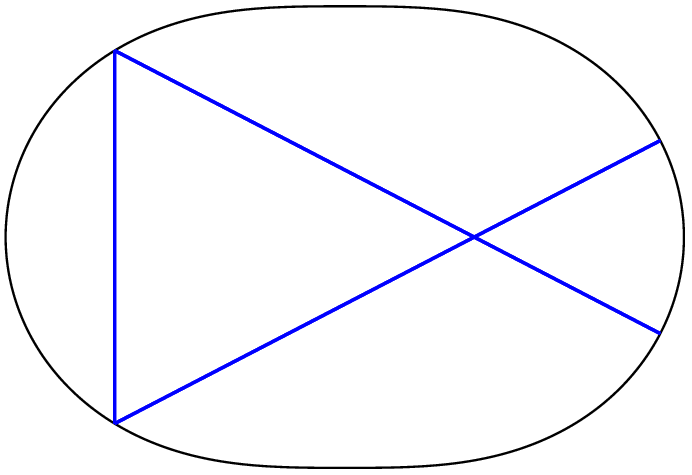}
  \caption{}
  \label{fig:intro1}
\end{subfigure}\hspace{2cm}
\begin{subfigure}{.35\textwidth}
  \centering  \includegraphics[width=\linewidth]{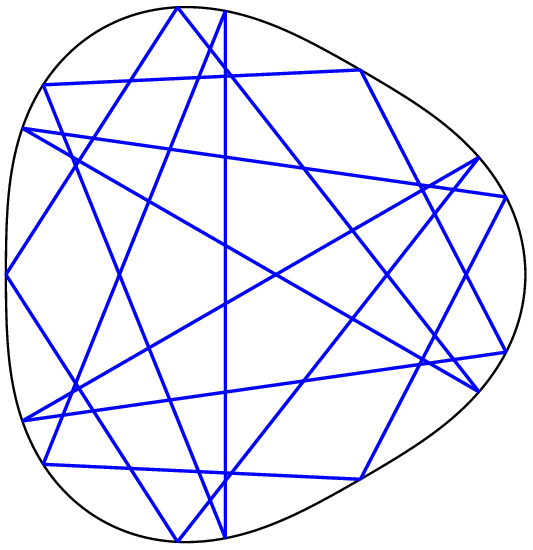}
  \caption{}
  \label{fig:intro2}
\end{subfigure}\\%fill%
\begin{subfigure}{.35\textwidth}
  \centering
\includegraphics[width=\linewidth]{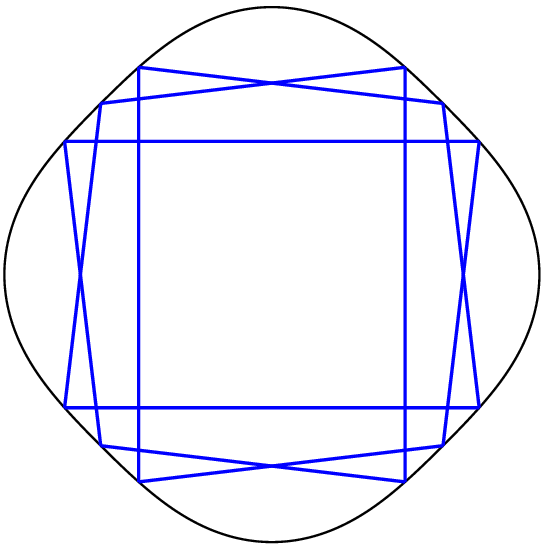}
  \caption{}
  \label{fig:intro3}
\end{subfigure}\hspace{2cm}
\begin{subfigure}{.35\textwidth}
  \centering
  \includegraphics[width=\linewidth]{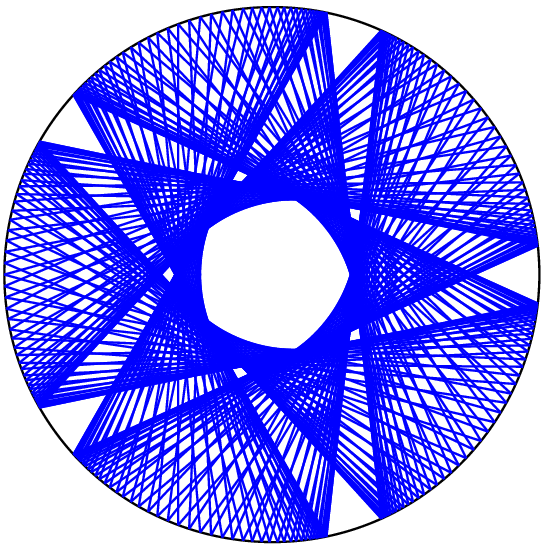}
  \caption{}
  \label{fig:intro4}
\end{subfigure}
\caption{Visualization of Theorem \ref{thm:sample} in four distinct symmetric convex billiards.  \textbf{(a)} $\mathbb{D}_1$-symmetric $(6,3)$-periodic  non-Birkhoff  orbit in a $\mathbb{D}_2$-symmetric billiard  -- note that the billiard trajectory is traversed in two directions throughout each period; \textbf{(b)} $\mathbb{D}_3$-symmetric $(15,5)$-periodic  non-Birkhoff  orbit in a $\mathbb{D}_3$-symmetric billiard; \textbf{(c)} $\mathbb{D}_4$-symmetric $(12,3)$-periodic  non-Birkhoff  orbit in a $\mathbb{D}_4$-symmetric billiard; \textbf{(d)} $\mathbb{D}_5$-symmetric $(245,98)$-periodic  non-Birkhoff  orbit in a close-to-circular $\mathbb{D}_5$-symmetric billiard. \label{fig:intro}}\end{figure}
\noindent  
Figure \ref{fig:intro} displays a small selection of symmetric non-Birkhoff periodic billiard orbits whose existence is guaranteed by this result. 
Theorem \ref{thm:sample} appears (with some more details) as Theorem \ref{thm:mainthrm} in this paper. The theorem  
gives an explicit quantitative criterion for the existence of a non-Birkhoff periodic orbit of prescribed minimal period, rotation number, and spatiotemporal symmetry.       
We prove several other theorems that follow from the verification of this criterion. We first remark that as soon as Theorem \ref{thm:sample} guarantees the existence of \emph{one} non-Birkhoff periodic orbit of minimal period $p$, then it guarantees the existence of infinitely many such orbits with periods larger than $p$ (simply because the right-hand side of \eqref{eq:kappalambdaformulaintro} increases as $p$ increases). This observation is formulated as Theorem \ref{thm:1}. Next, we show that inequality \eqref{eq:kappalambdaformulaintro} can be satisfied in arbitrarily small analytical perturbations of the circular billiard. This leads to Theorem \ref{thm:3}, which states that any open neighborhood (in the analytic topology) of the circle contains a billiard with non-Birkhoff periodic orbits of any prescribed rational rotation number.

For $\mathbb{D}_2$-symmetric billiards, we can prove more than Theorem \ref{thm:sample}. The non-Birkhoff periodic orbits guaranteed by Theorem \ref{thm:sample} have the property that any rotation in their spatiotemporal symmetry group is a time-preserving symmetry and not a time-reversing symmetry. However, $\mathbb{D}_2$-symmetric billiards may also possess periodic orbits on which a rotation acts by time-reversal. Theorems  \ref{thm:mainthrmII} and \ref{thm:mainthrmIII} are the counterparts of Theorem \ref{thm:sample} for such orbits, and give a quantitative criterion for their existence. 

A classical example of a $\mathbb{D}_2$-symmetric convex billiard is the elliptical billiard, which -- perhaps surprisingly -- still provides a source of new results \cite{akopyan2020billiards, reznik2020can, stachel2021isometric, stachel2022motion}. We mention in particular recent progress on the long-standing Birkhoff conjecture (which states that any integrable billiard must be an ellipse) in   \cite{kaloshin2024birkhoff,kaloshin2018local}.
The results in this paper imply that every (noncircular) elliptical billiard possesses infinitely many non-Birkhoff periodic orbits of rotation number $\frac{1}{2}$. Although this fact also follows from  results in \cite{casas2011frequency}, we show here that it is implied by  the $\mathbb{D}_2$-symmetry of the ellipse alone.
 
Our proof of Theorems \ref{thm:sample}, \ref{thm:mainthrm}, \ref{thm:mainthrmII} and \ref{thm:mainthrmIII} relies on various techniques from Aubry-Mather theory \cite{AubryDaeron, gole2001symplectic, MatherTopology} combined with ideas from equivariant dynamical systems \cite{field4, golschaef2}. In fact, most proofs in this paper exploit only the symmetry and the  monotone variational structure of the billiard problem. The majority of our results should therefore generalize to a large class of symmetric variational problems -- not only those that arise from mathematical billiards. Concretely, we find our non-Birkhoff periodic orbits as stationary points of the gradient flow of the length functional, restricted to appropriate spaces of symmetric periodic sequences. 
The estimate \eqref{eq:kappalambdaformulaintro} is derived in a local analysis near a symmetric Birkhoff periodic orbit. However, we stress that the non-Birkhoff periodic orbits that we find, need not lie close to this Birkhoff orbit. This distinguishes this paper from recent work on symmetric billiards, such as \cite{ferreira2023symmetric}, which studies, among other things, the effect of symmetry on the stability of periodic billiard orbits.

The remainder of this paper is organized as follows. In Section \ref{sec:basicsofbilliardssection}, we introduce some basic properties of convex billiards and rephrase the billiard problem as a monotone recurrence relation. In Section \ref{sec:Birkhoffsection}, we define Birkhoff orbits and characterize Birkhoff periodic billiard  orbits as regular polygons. In Sections \ref{sec:periodicorbitssection} and \ref{sec:symmetrygradientsection}, we investigate the spatiotemporal symmetries of billiard sequences, and we classify periodic billiard sequences with dihedral spatiotemporal symmetry groups. 
Section \ref{sec:gradientflowsection} is concerned with fundamental properties of the gradient flow of the length functional, which is a crucial technical tool used in the paper. Section  \ref{sec:symBirkhoffsection} focuses on $\mathbb{D}_n$-symmetric periodic Birkhoff orbits. Sections \ref{sec:Hessiansection} and \ref{sec:technicalsection} contain  technical results needed for the proof of our main result -- Theorem \ref{thm:mainthrm} -- in Section \ref{sec:mainthmsection}. We investigate $\mathbb{D}_2$-symmetric billiards separately in Section \ref{sec:D2section}, and in Section \ref{sec:infinitesection} we prove various corollaries of our theorems regarding the existence of infinitely many non-Birkhoff orbits. We draw some conclusions in Section \ref{sec:concl}. Appendix \ref{app:convexity} contains a proof of the convexity of a class of Lima\c{c}on-type billiards which we use to illustrate and visualize our results. Appendix \ref{app:code} briefly explains the Matlab code used to numerically produce our visualizations. This code is available in the dedicated GitHub repository \href{https://github.com/MattiaSensi/BilliardOrbitFinder}{BilliardOrbitFinder}.

 \section{A monotone recurrence relation}\label{sec:basicsofbilliardssection}
\noindent In this section and the next, we introduce some basic notions from the theory of mathematical billiards that we need in the sequel. The goal of the current section is to rephrase the billiard problem as a monotone recurrence relation on a space of real-valued sequences. The material presented here is standard. Throughout this paper, a billiard will be a strictly convex domain in $\mathbb{R}^2$ bounded by a $C^2$-smooth simple closed curve $\Gamma$. By \emph{strictly convex} we mean that any line segment connecting two points on $\Gamma$ is  contained within the interior of the domain bounded by $\Gamma$ (apart from the endpoints of the segment). Specifically, we  assume that $\Gamma$ is parameterized by a $1$-periodic and $C^2$-smooth map 
$$\gamma: \mathbb{R} \to \mathbb{R}^2\,. $$
We require that $\gamma$ descends to an embedding on $\mathbb{R}/\mathbb{Z}$. 
In particular, we assume that $\gamma(x+1)=\gamma(x)$ for all $x\in \mathbb{R}$, that $\gamma(x)=\gamma(y)$ implies that $x-y\in \mathbb{Z}$, and that $\gamma'(x) \neq (0,0)$ for all $x\in \mathbb{R}$. For definiteness, we assume that $\Gamma$ is oriented counterclockwise. The strict convexity of $\Gamma$ then implies that $\det \left( \gamma'(x), \gamma''(x) \right) \geq 0$, and that this quantity is strictly positive for a dense set of $x\in \mathbb{R}$.

We study the billiard problem by solving the recurrence relation between $z_{i-1}, z_{i}$ and $z_{i+1}$ given by the billiard rule ``angle of incidence equals angle of reflection''. 
To  write down this recurrence relation explicitly, we make some definitions. First of all, we will call a bi-infinite sequence 
$$z=(\ldots, z_{-1}, z_0, z_{1}, \ldots )\in \Gamma^{\mathbb{Z}}$$ 
of points on $\Gamma$ a {\it billiard sequence} if $z_{i}\neq z_{i+1}$ for all $i\in \mathbb{Z}$. We do not  require a billiard sequence to be a billiard orbit, i.e., a billiard sequence may or may not satisfy the billiard rule. When $z$ is  a billiard sequence, then for each $i\in \mathbb{Z}$, we may choose an $x_i\in \mathbb{R}$ with $\gamma(x_i)=z_i$. This $x_i$ is unique up to addition of an integer. The real-valued sequence 
$$x = (\ldots, x_{-1}, x_0, x_1, \ldots)\in \mathbb{R}^{\mathbb{Z}}$$ 
is then called a \emph{lift} of $z$. 
  This lift is not unique, but we shall assume without loss of generality that
$x_i < x_{i+1} < x_i + 1$ for all $i \in \mathbb{Z}$. The collection of such real-valued sequences shall be denoted by
\begin{equation}\label{eq:billiardliftsequence}
\Sigma := \{ x\in \mathbb{R}^{\mathbb{Z}} \ |\ 0 < x_{i+1} - x_i < 1 \ \mbox{for all}\ i \in \mathbb{Z}\, \}\, .
\end{equation}
When $x\in \Sigma$ is a lift of $z\in \Gamma^{\mathbb{Z}}$, then the increment $x_{i+1}-x_i\in (0,1)$ can be interpreted as    the  ``distance'' from $z_{i}$ to $z_{i+1}$ measured counterclockwise along $\Gamma$.

A billiard sequence $z$ is said to be {\it periodic} with period $p\in \mathbb{N}$ if $z_i=z_{p+i}$ for all $i\in\mathbb{Z}$. When $x\in \Sigma$ is a lift of a $p$-periodic billiard sequence $z$, then the integer $$q:= x_{p+i}-x_i  = (x_{p+i}-x_{p+i-1})+\ldots + (x_{i+1}-x_i) = (x_{p}-x_{p-1})+\ldots + (x_{1}-x_0) \in \{1,\ldots, p-1\} $$
(which is independent of $i\in \mathbb{Z}$)  is called the {\it winding number} of $z$. In other words, for every $p$-periodic billiard sequence $z$ there is a unique integer $0<q<p$ such that any lift $x \in \Sigma$ of $z$ lies in  
$$\mathbb{X}_{p,q}:= \{x\in \mathbb{R}^{\mathbb{Z}} \, |\, x_{p+i}=x_i + q \ \mbox{for all}\ i\in \mathbb{Z}\, \} \subset \mathbb{R}^{\mathbb{Z}}\, .$$
Elements of $\mathbb{X}_{p,q}$ are called {\it $(p,q)$-periodic}. We note that every $x\in\mathbb{X}_{p,q}$ satisfies $\lim_{i\to\pm \infty} \frac{x_i}{i}=\frac{q}{p}$, that is, every element of $\mathbb{X}_{p,q}$ has rational rotation number $\frac{q}{p}$. When $x\in \Sigma$ is the lift of a billiard orbit $z\in \Gamma^{\mathbb{Z}}$, this rotation number has the interpretation of the average rotation (per reflection) of the billiard ball around the billiard table.

Now we turn to describing the recurrence relation that the lift of a billiard sequence must satisfy to represent an actual billiard trajectory. For $x, X\in \mathbb{R}$, we shall denote by 
$L(x,X)$ the Euclidean distance between the two points $\gamma(x), \gamma(X)\in\Gamma \subset \mathbb{R}^2$, that is, we define 
$$L(x,X) := \|\gamma(x)-\gamma(X)\|\, .$$
This function is continuous on $\mathbb{R}^2$ and smooth at points $(x,X)\in \mathbb{R}^2$ where $x - X \notin \mathbb{Z}$. 
The following well-known lemma describes the relation between the partial derivatives of $L(x,X)$ and the angles between the billiard boundary and the vector $\gamma(X)-\gamma(x)$. In Lemma \ref{PartialsLalternativesecondorder} we will provide a similar result for the second order partial derivatives of $L$. In the following, $\partial_1 L(x,X)$ and $\partial_2 L(x,X)$ respectively indicate the derivatives of $L(x,X)$ with respect to its first and second argument.
\begin{lemma}\label{PartialsLalternativefirstorder}
Let $x,X\in \mathbb{R}$ with $x-X\notin \mathbb{Z}$, and define $$\Theta(x,X):=\angle \left(\gamma'(x), \gamma(X)-\gamma(x)\right) \in (0,\pi)$$ 
to be the angle between $\gamma'(x)$ and $\gamma(X)-\gamma(x)$, and 
$$\Phi(x,X) := \angle \left(\gamma(X)-\gamma(x),  \gamma'(X) \right)\in (0,\pi)$$ 
the angle between $\gamma(X)-\gamma(x)$ and $\gamma'(X)$.  Then
\begin{align}
& \partial_1 L(x,X) = -\|\gamma'(x)\| \cos \Theta(x,X)    \quad \mbox{and}\quad  \partial_2 L(x,X) = \|\gamma'(X)\|\cos \Phi(x,X)\, .
\end{align}
\end{lemma}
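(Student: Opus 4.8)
The plan is to compute both partial derivatives directly from the definition $L(x,X)=\|\gamma(x)-\gamma(X)\|=\langle \gamma(x)-\gamma(X),\,\gamma(x)-\gamma(X)\rangle^{1/2}$, and then to translate the resulting inner products into cosines of angles. The hypothesis $x-X\notin\mathbb{Z}$ is exactly what guarantees, via the embedding property of $\gamma$ on $\mathbb{R}/\mathbb{Z}$, that $\gamma(x)\neq\gamma(X)$; hence the map $(x,X)\mapsto L(x,X)$ is smooth in a neighbourhood of the point in question and the chain rule applies.

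First I would differentiate with respect to the first slot:
\begin{equation}\label{eq:d1Lplan}
\partial_1 L(x,X)=\frac{\langle \gamma'(x),\,\gamma(x)-\gamma(X)\rangle}{\|\gamma(x)-\gamma(X)\|}
=-\,\frac{\langle \gamma'(x),\,\gamma(X)-\gamma(x)\rangle}{\|\gamma(X)-\gamma(x)\|}.
\end{equation}
Since $\gamma'(x)\neq(0,0)$ by regularity and $\gamma(X)-\gamma(x)\neq(0,0)$, the angle $\Theta(x,X)=\angle\!\left(\gamma'(x),\,\gamma(X)-\gamma(x)\right)$ is well defined, and the elementary identity $\langle u,v\rangle=\|u\|\,\|v\|\cos\angle(u,v)$ with $u=\gamma'(x)$, $v=\gamma(X)-\gamma(x)$ turns the numerator of \eqref{eq:d1Lplan} into $\|\gamma'(x)\|\,\|\gamma(X)-\gamma(x)\|\cos\Theta(x,X)$. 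The factor $\|\gamma(X)-\gamma(x)\|$ cancels against the denominator, leaving $\partial_1 L(x,X)=-\|\gamma'(x)\|\cos\Theta(x,X)$. The second formula is obtained the same way, except that now the difference vector appears with its natural orientation, matching the definition of $\Phi(x,X)=\angle\!\left(\gamma(X)-\gamma(x),\,\gamma'(X)\right)$: one gets $\partial_2 L(x,X)=\langle \gamma'(X),\,\gamma(X)-\gamma(x)\rangle/\|\gamma(x)-\gamma(X)\|=\|\gamma'(X)\|\cos\Phi(x,X)$ with no sign change.

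There is no genuine obstacle in this argument; it is a one-line computation dressed up with the angle notation. The only points meriting care are bookkeeping rather than substance: (i) invoking $x-X\notin\mathbb{Z}$ to secure differentiability of the Euclidean norm; (ii) tracking the sign that distinguishes the vector $\gamma(x)-\gamma(X)$ arising from differentiation from the vector $\gamma(X)-\gamma(x)$ used in the definitions of $\Theta$ and $\Phi$; and (iii) noting that all vectors entering the inner products are nonzero, so each angle lies in $[0,\pi]$ and its cosine is unambiguous (strict convexity further confines the angles to $(0,\pi)$, but this is not needed for the derivative identities themselves).
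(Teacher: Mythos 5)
Your proof is correct and follows essentially the same route as the paper: differentiate $L(x,X)=\langle \gamma(x)-\gamma(X),\gamma(x)-\gamma(X)\rangle^{1/2}$ via the chain rule and rewrite the resulting inner products as $\|u\|\|v\|\cos\angle(u,v)$, with the single sign flip for $\partial_1 L$ coming from the orientation of the difference vector. No issues.
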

\begin{proof}
    Differentiation of $L(x,X)=\sqrt{ \langle \gamma(x)-\gamma(X),\gamma(x)-\gamma(X)\rangle }$ yields 
$$\partial_1L(x,X) =  \left\langle \frac{\gamma(x)-\gamma(X)}{L(x,X)}, \gamma'(x) \right\rangle = -\|\gamma'(x)\|\cos \Theta(x,X)\, ,$$
by definition of $\Theta(x,X)$.  Similarly, by definition of $\Phi(x,X)$,  
$$\partial_2L(x,X) =    \left\langle \frac{\gamma(X)-\gamma(x)}{L(x,X)},  \gamma'(X) \right\rangle =  \|\gamma'(X)\|\cos \Phi(x,X)\, .$$ 
\end{proof}
\noindent Note that the strict convexity of $\Gamma$ implies that $\Theta(x,X), \Phi(x,X) \neq 0, \pi$. 
When a real-valued sequence $x = (\ldots, x_{-1}, x_0, x_1, \ldots) \in \Sigma$ is the lift of a billiard sequence $z=(\ldots, z_{-1}, z_0, z_1, \ldots)$,  we will write 
$$\phi_i:=\Phi(x_{i-1}, x_i) \ \mbox{ and }\ \theta_i:=\Theta(x_{i}, x_{i+1})$$
for the \emph{angle of incidence} of the segment $z_{i}-z_{i-1}$ at $z_i$, respectively the \emph{angle of reflection} of the segment $z_{i+1}-z_i$ from $z_i$, see Figure \ref{billiard_law}.
 \begin{figure}
\centering
  \begin{tikzpicture}
  \node at (0,0) {\includegraphics[width=0.35\linewidth]{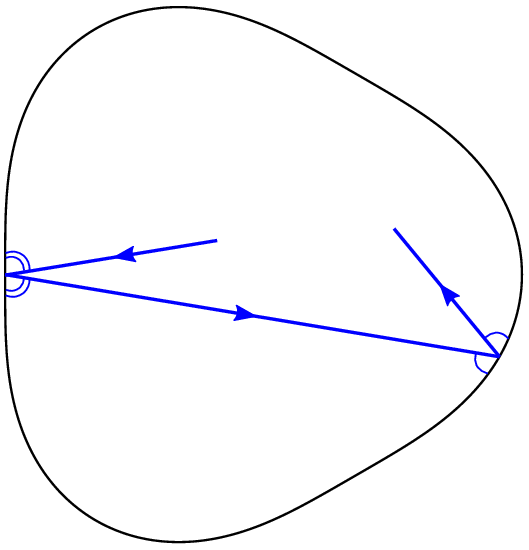}};
\node at (2.9,-1) {$\textcolor{blue}{z_{i+1}}$};
\node at (1.9,-1) {$\textcolor{blue}{\phi_{i+1}}$};
\node at (-2.45,0.45) {$\textcolor{blue}{\phi_i}$};
\node at (-2.45,-0.45) {$\textcolor{blue}{\theta_i}$};
\node at (-3,0) {$\textcolor{blue}{z_i}$};
\node at (2.5,-0.35) {$\textcolor{blue}{\theta_{i+1}}$};
  \end{tikzpicture}  
  \caption{$\mathbb{D}_3$-symmetric billiard $\Gamma$ and part of a billiard sequence $z=(\ldots, z_i, z_{i+1}, \ldots)\in \Gamma^{\mathbb{Z}}$. The angle of incidence at $z_i$ is denoted $\phi_i$, and the angle of reflection at $z_i$ is denoted $\theta_i$. We have that $z$ is a billiard orbit precisely when $\phi_i=\theta_i$ for all $i\in \mathbb{Z}$.
 \label{billiard_law}}
\end{figure}
It follows from Lemma \ref{PartialsLalternativefirstorder} that  
$$\partial_2L(x_{i-1},x_i)  = \|{\changes{\gamma'}} (x_i)\| \cos \phi_i \quad \mbox{and} \quad  \partial_1L(x_i, x_{i+1}) =  -\|\changes{\gamma'} (x_i)\|\cos \theta_i\, .$$
This shows that $z\in \Gamma^{\mathbb{Z}}$ satisfies the billiard rule if and only if its lift $x\in \Sigma$ satisfies  
\begin{align}\label{eq:recrelgeneral}
 \partial_2 L(x_{i-1}, x_i) + \partial_1 L(x_i,x_{i+1}) = 0\ \mbox{ for all }\ i\in \mathbb{Z}\, .
 \end{align}
 We will search for periodic billiard orbits by solving this  second order recurrence relation. 
 
\begin{remark}
Equation \eqref{eq:recrelgeneral} is the Euler-Lagrange equation for the discrete Lagrangian $L$. Indeed, the left-hand side of  \eqref{eq:recrelgeneral} is equal  to the  derivative
$$\left. \frac{d}{dx}\right|_{x=x_i} L(x_{i-1}, x) + L(x, x_{i+1}) \, .$$
Thus, we observe the well-known variational principle behind the billiard problem: the billiard rule is satisfied at $z_i$, precisely when $z_i$ is a stationary point (among points $z\in \Gamma$)  of the sum   $\|z_{i-1}-z\| + \|z-z_{i+1}\|$ of the lengths of the orbit segments of which it is an endpoint. 
\end{remark}

The billiard problem is autonomous and time-reversible: if $i\mapsto x_i$ is a solution to \eqref{eq:recrelgeneral} and  $k\in \mathbb{Z}$, then also $i\mapsto x_{k+i}$ and $i \mapsto x_{k-i}$ are solutions to \eqref{eq:recrelgeneral}, see Remark \ref{remk:autonomousreversible}. In other words, traversing a given solution of \eqref{eq:recrelgeneral} earlier, later or in reverse yields another solution of \eqref{eq:recrelgeneral}. 
Of course, these time-translated and time-reversed orbits define exactly the same billiard trajectory in the plane. To distinguish billiard sequences that define truly distinct planar trajectories, we make the following definition.

\begin{definition}\label{rem:geom-dist}
Let $z, Z\in\Gamma^{\mathbb{Z}}$ be two billiard sequences   with lifts $x,X\in \Sigma$. Then $z$ and $Z$ (and $x$ and $X$) are called \emph{geometrically equal} if there exists an integer $k \in \mathbb{Z}$, such that either $z_i=Z_{k+i}$ for all $i\in \mathbb{Z}$, or $z_i=Z_{k-i}$ for all $i\in \mathbb{Z}$. Otherwise, $z$ and $Z$ (and $x$ and $X$) are called \emph{geometrically distinct}. \end{definition}
   
We conclude this section with a few more remarks.
 
\begin{remark}\label{remk:autonomousreversible}
Let $x\in \Sigma$ be a solution to \eqref{eq:recrelgeneral}, let $k\in \mathbb{Z}$, and define $y_i:=x_{k+i}$. Then, clearly, 
\begin{align}\nonumber 
 \partial_2 L(y_{i-1}, y_i) + \partial_1 L(y_{i}, y_{i+1}) =    \partial_2 L(x_{(k+i)-1}, x_{k+i}) + \partial_1 L(x_{k+i}, x_{(k+i)+1}) = 0\, .
\end{align}
This shows that $y$ solves \eqref{eq:recrelgeneral} as well, and  proves our claim that the billiard problem is autonomous. Time-reversibility is a consequence of the symmetry $L(x,X)=\|\gamma(x)-\gamma(X)\| = L(X,x)$ of the discrete Lagrangian. Indeed, it follows from this symmetry that $\partial_1L(x,X)=\partial_2L(X,x)$. Therefore, if $x$ satisfies \eqref{eq:recrelgeneral}, $k\in \mathbb{Z}$, and $y_i:=x_{k-i}$, then  
    \begin{align} \nonumber
    &  \partial_2L(y_{i-1}, y_i)+ \partial_1L(y_{i}, y_{i+1}) =       \partial_1L(y_{i}, y_{i-1}) + \partial_2L(y_{i+1}, y_i) = \\ \nonumber &   \partial_1L(x_{k-i}, x_{k-(i-1)}) + \partial_2L(x_{k-(i+1)}, x_{k-i}) = 
       \partial_2L(x_{(k-i)-1}, x_{(k-i)})  + \partial_1L(x_{(k-i)}, x_{(k-i)+1}) =0 \, . 
    \end{align}
\end{remark}

\begin{remark}\label{rem:firstmonotoneremark}
We will show in Remark \ref{rem:secondmonotoneremark} and Lemma \ref{lem:Hessianlemma} below that $\partial_{1,2}L(x,X)>0$. This means that \eqref{eq:recrelgeneral} is a so-called \emph{monotone} recurrence relation. Monotonicity is crucial for proving the {\rm comparison principle} (Lemma \ref{lem:comparisonprinciple}) and the {\rm Sturmian lemma} (Lemma \ref{prop:decreaseintersection}) for the  gradient flow, which will be introduced in Section \ref{sec:gradientflowsection}.
\end{remark}

\begin{remark}
Whenever two consecutive impact points $z_{i-1}, z_i \in \Gamma$ of a billiard trajectory are known, then the next impact point $z_{i+1}\in \Gamma$ is uniquely determined by the billiard rule. Thus, any smooth and strictly convex billiard curve $\Gamma$ determines a well-defined discrete-time dynamical system on $\{(z,Z)\in \Gamma \times \Gamma\, |\, z\neq Z\}$, which assigns to each consecutive pair $(z_{i-1}, z_i)$ of impact points the next pair $(z_i, z_{i+1})$. Equivalently, one can describe the billiard dynamics in terms of impact points and angles of reflection. Indeed, given an impact point $z_i\in \Gamma$ and angle of reflection $\theta_i \in (0,\pi)$, the next impact point and angle of reflection are uniquely determined. The exact symplectic twist map $T:\Gamma\times (0,\pi) \to \Gamma \times (0,\pi)$ assigning to the  pair $(z_i,\theta_i)$  the next pair $(z_{i+1}, \theta_{i+1})$ is what is commonly known as \emph{the billiard map}. Although one may study periodic billiard trajectories by finding periodic orbits of the billiard map, in this paper we instead solve the recurrence relation \eqref{eq:recrelgeneral} directly.
\end{remark}

\section{The Birkhoff property}\label{sec:Birkhoffsection}
\noindent A Birkhoff sequence is a sequence of points on $\Gamma$ that respects the ``cyclic order'' of $\Gamma$. In this section, we will
make this statement precise. In the literature, the Birkhoff property is usually only defined for real-valued sequences. However, here we will separately define what it means for a sequence $z\in \Gamma^{\mathbb{Z}}$ to possess the Birkhoff property. In Lemma \ref{Orderintext} we then show that a billiard sequence $z\in \Gamma^{\mathbb{Z}}$ satisfies our definition of Birkhoff precisely when its lift $x\in \Sigma$ is Birkhoff in the usual sense. To the best of our knowledge Lemma \ref{Orderintext} is new (albeit rather elementary). We start by introducing some notation.
\begin{definition}
Let $z_i, z_j, z_k$ be three points on $\Gamma$. We write $$z_i\prec z_j \prec z_k$$ 
if there exist $x_i, x_j, x_k\in \mathbb{R}$ such that $z_i=\gamma(x_i), z_j=\gamma(x_j), z_k=\gamma(x_k)$, and $$x_i \leq x_j \leq x_k < x_i+1\, .$$
\end{definition}
\noindent In other words, $z_i\prec z_j \prec z_k$  if $z_j$ lies in the positively oriented interval between $z_i$ and $z_k$,  obtained by traversing $\Gamma$ counterclockwise from $z_i$ to $z_k$. We can now define when a  sequence of points on $\Gamma$ is Birkhoff, and when a real-valued sequence is Birkhoff.  

\begin{definition}\label{def:BirkhoffIntro}
 
A  sequence $z=(\ldots, z_{-1}, z_0, z_1, \ldots)\in \Gamma^{\mathbb{Z}}$ 
 is    \emph{well-ordered} or \emph{Birkhoff} if  
 $$z_i \prec z_j \prec z_k \implies z_{i+m} \prec z_{j+m} \prec z_{k+m}  \  \mbox{for all}\ i,j,k,m \in
\mathbb{Z}\, . $$ Otherwise, $z$ is called \emph{non-Birkhoff}.
\end{definition}
\begin{definition}  \label{Birkhoff}
A real-valued sequence $x=(\ldots, x_{-1}, x_0, x_1, \ldots) \in \mathbb{R}^{\mathbb{Z}}$ is   \emph{well-ordered} or \emph{Birkhoff} if $$x_i \leq x_j + l  \implies x_{i+m}\leq x_{j+m}+l\  \mbox{for all}\ i,j,l,m \in
\mathbb{Z}\, .$$ 
Otherwise, $x$ is called \emph{non-Birkhoff}.
\end{definition}
\noindent Figure \ref{fig:first_example} depicts Birkhoff and non-Birkhoff periodic orbits in two symmetric billiards.
The following result confirms that, for billiard sequences and their lifts,  Definitions \ref{def:BirkhoffIntro} and \ref{Birkhoff} coincide. 
\begin{lemma}\label{Orderintext}
Let $z\in \Gamma^{\mathbb{Z}}$ be a billiard sequence with lift $x\in \Sigma$. Then $z$ is Birkhoff according to Definition \ref{def:BirkhoffIntro} if and only if $x$ is Birkhoff according to Definition \ref{Birkhoff}.
\end{lemma}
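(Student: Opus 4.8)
The plan is to establish both implications directly from the definitions, using the fact that a lift $x \in \Sigma$ satisfies $x_i < x_{i+1} < x_i + 1$ for all $i$, so that the counterclockwise order of points on $\Gamma$ is faithfully encoded by the real order of lift coordinates, modulo the ambiguity of adding integers. The key translation is: for a billiard sequence $z$ with lift $x$, the relation $z_i \prec z_j \prec z_k$ holds precisely when there exist integers $a, b$ with $x_i \le x_j + a \le x_k + b < x_i + 1$, i.e. when, after shifting $x_j$ and $x_k$ by suitable integers, the three values lie in a common half-open unit interval starting at $x_i$. I would first record this elementary reformulation as a preliminary observation, being slightly careful about the boundary cases where two of the three points coincide (the $\le$ signs in the definition of $\prec$ allow $z_i = z_j$ or $z_j = z_k$, and since $z$ is a billiard sequence consecutive points differ, but non-consecutive ones may coincide — this is harmless since equality of points corresponds to equality of lift coordinates modulo $\mathbb{Z}$).

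For the implication ``$x$ Birkhoff $\implies$ $z$ Birkhoff'': suppose $z_i \prec z_j \prec z_k$. By the reformulation, choosing representatives we may assume $x_i \le x_j \le x_k < x_i + 1$ (replacing $x_j, x_k$ by $x_j + a$, $x_k + b$; note adding integers to individual entries of a lift does not affect whether the Birkhoff inequalities of Definition \ref{Birkhoff} hold, since those inequalities are stated for all integer shifts $l$). From $x_i \le x_j$ and $x_j \le x_k$, Definition \ref{Birkhoff} (with $l = 0$, shift $m$) gives $x_{i+m} \le x_{j+m}$ and $x_{j+m} \le x_{k+m}$. It remains to get $x_{k+m} < x_{i+m} + 1$; this follows from applying Definition \ref{Birkhoff} to the strict inequality $x_k < x_i + 1$ — strictness is preserved because if $x_{k+m} \ge x_{i+m} + 1$ then, running Birkhoff backwards with shift $-m$, we would get $x_k \ge x_i + 1$, a contradiction. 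Hence $x_{i+m} \le x_{j+m} \le x_{k+m} < x_{i+m}+1$, which by the reformulation is exactly $z_{i+m} \prec z_{j+m} \prec z_{k+m}$.

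For the converse ``$z$ Birkhoff $\implies$ $x$ Birkhoff'': I would argue by contraposition, or equivalently show that the defining inequalities transfer back. Suppose $x_i \le x_j + l$; I want $x_{i+m} \le x_{j+m} + l$. The subtlety is that $x_i \le x_j + l$ for arbitrary integer $l$ does not immediately say anything about the cyclic position of $z_i$ relative to $z_j$ — one must reduce to the case where the two points lie within one period. Here I would use that $x \in \mathbb{X}_{p,q}$-type periodicity is \emph{not} assumed, so instead the right approach is: it suffices to prove the Birkhoff inequality of Definition \ref{Birkhoff} in the special form ``$x_i < x_j \implies x_{i+1} \le x_{j+1}$'' (and its iterate), plus handling of the wrap-around, and then bootstrap to general $l, m$. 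Concretely, one shows that if $x$ were non-Birkhoff there would be indices with $x_i < x_j$ but $x_{i+1} > x_{j+1}$ (after reducing via minimality of a violation and using the unit-gap property $0 < x_{\ell+1} - x_\ell < 1$ to place everything in one turn around $\Gamma$); translating through $\gamma$, this yields $z_i \prec z_j \prec z_{i+1}$ (or a similar triple) whose shift by $1$ is violated, contradicting that $z$ is Birkhoff. The main obstacle — and the place I would spend the most care — is precisely this reduction: cleanly converting an arbitrary failure of the real-sequence inequality $x_i \le x_j + l \implies x_{i+m} \le x_{j+m}+l$ into a failure of the three-point order relation on $\Gamma$, since the integer $l$ and shift $m$ are unbounded and one must exploit the unit-increment structure of $\Sigma$ (and, for the ``$<1$'' part, possibly a third index to pin down the cyclic interval) to localize the violation to a single revolution where the correspondence $x \leftrightarrow z$ is order-preserving. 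Once that localization is done, both directions are short.
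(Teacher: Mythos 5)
Your forward implication ($x$ Birkhoff $\Rightarrow$ $z$ Birkhoff) is essentially correct and close in spirit to the paper's argument: you apply Definition \ref{Birkhoff} to each of the three inequalities in the reformulated order relation, and your contradiction argument for preserving the strict inequality $x_k+b<x_i+1$ (run the Birkhoff implication backwards with shift $-m$) is valid. This half of the proof is fine.

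The converse is where there is a genuine gap, and it is not merely a matter of writing out details. Your plan is to localize a failure of Definition \ref{Birkhoff} to a single ``crossing'' --- indices with $x_i\le x_j+l$ but $x_{i+1}>x_{j+1}+l$ --- and then exhibit a triple such as $z_i\prec z_j\prec z_{i+1}$ whose shift by $1$ is violated. But that shifted triple need \emph{not} be violated: the relation $z_{i+1}\prec z_{j+1}\prec z_{i+2}$ only requires \emph{some} integer $a$ with $x_{i+1}\le x_{j+1}+a\le x_{i+2}<x_{i+1}+1$, and even though $a=l$ fails (since $x_{j+1}+l<x_{i+1}$), the representative $x_{j+1}+l+1$ can perfectly well land in $[x_{i+1},x_{i+2}]$. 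In other words, the cyclic order $\prec$ is blind to the integer part of the lift, so a single transversal crossing of Aubry diagrams does not translate into a violated three-point relation; the obstruction you flag as ``the place I would spend the most care'' is exactly the step that does not go through as sketched. The paper circumvents this by making the integer bookkeeping global rather than local: it introduces, for each pair $(i,j)$, the unique integer $l(i,j)$ with $x_i\le x_j+l(i,j)<x_i+1$, proves that $x$ is Birkhoff iff $l(i,j)=l(i+m,j+m)$ for all $i,j,m$, and that $z_i\prec z_j\prec z_k$ iff $l(i,k)=l(i,j)+l(j,k)$; it then uses the chain of consecutive triples $(z_0,z_j,z_{j+1})$ and the trichotomy $z_0\prec z_j\prec z_{j+1}$ versus $z_0\prec z_{j+1}\prec z_j$ to propagate, by induction on $j$, the translation-invariance of $l(0,j)$ along the entire sequence. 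Some such global propagation argument is indispensable; to complete your proof you would need to replace the single-crossing reduction with an argument of this type.
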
 
\begin{proof}
Let $x\in \mathbb{R}^{\mathbb{Z}}$. For all $i,j\in \mathbb{Z}$ there is a unique $l(i,j)\in \mathbb{Z}$ such that $$x_i\leq x_j +l(i,j) < x_i+ 1\, .$$
For example, for $x\in \Sigma$ we have $x_i < x_{i+1} < x_i+1 < x_{i+1}+1$ so $l(i,i+1)=0$ and $l(i+1, i) = 1$.   
The proof of the \changes{lemma} hinges on two facts concerning these $l(i,j)$. We prove these facts first.

{\it Fact 1:} $x\in\mathbb{R}^{\mathbb{Z}}$ is Birkhoff (by Definition \ref{Birkhoff}) if and only if $l(i,j)=l(i+m, j+m)$ for all $i,j,m\in \mathbb{Z}$. 

%\noindent 
{\it Proof:} Start by assuming that 
   $x$ is Birkhoff. Then the inequality  $x_{i}\leq x_j+l(i,j)$ implies that $x_{i+m} \leq x_{j+m} +l(i,j) < x_{i+m}+1 -l(i+m, j+m)+l(i,j)$, so $l(i,j) \geq  l(i+m, j+m)$. Similarly, $x_{i+m} \leq x_{j+m} + l(i+m, j+m) $ implies $x_{i} \leq x_{j} + l(i+m, j+m) < x_i+1-l(i,j)+l(i+m,j+m)$ so $l(i,j) \leq l(i+m,j+m)$. This proves that if $x$ is Birkhoff, then $l(i,j)=l(i+m,j+m)$ for all $i,j,m\in \mathbb{Z}$. Conversely, if $l(i+m,j+m)=l(i,j)$ and $x_{i}\leq x_j + l$, then $l(i,j) \leq l$ by definition of $l(i,j)$. Therefore, $x_{i+m} \leq x_{j+m} + l(i+m, j+m) = x_{j+m} + l(i, j) \leq  x_{j+m} + l$. Thus, $x$ is Birkhoff. \hfill $\square$

{\it Fact 2:} It holds that $z_i\prec z_j \prec z_k$ if and only if $l(i,k)=l(i,j)+l(j,k)$. 

%\noindent 
{\it Proof:} Note that $z_i\prec z_j \prec z_k$ if and only if \begin{equation}
\label{eq:orderliftequation}
x_{i} \leq x_{j}+l(i,j) \leq x_{k}+l(i,k) < x_{i}+1\, .
\end{equation}
From this we can infer that
$x_{j} \leq x_{k}+l(i,k) - l(i,j) < x_{j} + 1$. In other words, $l(j,k)=l(i,k)-l(i,j)$. Conversely, if $l(j,k)=l(i,k)-l(i,j)$, then $x_{j} \leq x_{k}+l(i,k) - l(i,j) < x_{j} + 1$, from which it follows that $x_j+l(i,j)\leq x_k+l(i,k)$, i.e., \eqref{eq:orderliftequation} holds and therefore $z_i\prec z_j\prec z_k$. \hfill  $\square$

Now we return to the proof of the lemma. Assume that $x\in \mathbb{R}^{\mathbb{Z}}$ is a lift of $z\in\Gamma^{\mathbb{Z}}$ and that $x$ is Birkhoff (according to Definition \ref{Birkhoff}). Suppose that $z_i\prec z_j \prec z_k$. This means that 
  \eqref{eq:orderliftequation}
  holds. 
  The Birkhoff property of $x$ then firstly implies that $x_{i+m} \leq x_{j+m} +l(i,j) \leq x_{k+m} + l(i,k) \leq x_{i+m}+1$, and secondly (due to Fact 1) that $x_{i+m} \leq x_{j+m} +l(i+m,j+m) \leq x_{k+m} + l(i+m,k+m) \leq x_{i+m}+1$. By definition of $l(i+m, k+m)$, the last inequality actually is strict. This proves that $z_{i+m}\prec z_{j+m} \prec z_{k+m}$. In other words, $z$ is Birkhoff (according to Definition \ref{def:BirkhoffIntro}).
    
Conversely, let $z \in \Gamma^{\mathbb{Z}}$ be Birkhoff (according to Definition \ref{def:BirkhoffIntro}) and assume that its lift $x$ lies in $\Sigma$ (i.e., $z_i\neq z_{i+1}$ for all $i\in \mathbb{Z}$). Choose $z_i, z_j, z_k$ so that $z_i\prec z_j\prec z_k$, and thus also $z_{i+m}\prec z_{j+m}\prec z_{k+m}$ for all $m\in \mathbb{Z}$. Fact 2 then implies that $l(i+m,k+m)=l(i+m,j+m)+l(j+m,k+m)$  for all $m\in \mathbb{Z}$. 
  
   Consider the points $z_0, z_1, z_2 \in \Gamma$. Either it holds that $z_0\prec z_1 \prec z_2$ or $z_0\prec z_2 \prec z_1$. In the first case, we find $l(0+m,2+m)=l(0+m,1+m)+l(1+m,2+m)=0+0=0$, which is independent of $m$. In the second case,  $0=l(0+m,1+m)=l(0+m,2+m)+l(2+m,1+m)= l(0+m,2+m)+1$, and again $l(0+m,2+m)$ is independent of $m$. Proceeding inductively (distinguishing the cases $z_0 \prec z_{j} \prec z_{j+1}$ and $z_0 \prec z_{j+1} \prec z_{j}$, we find that $l(0+m,j+m)$ is always independent of $m$. In other words $l(i+m,j+m)=l(i,j)$ for all $i,j,m\in \mathbb{Z}$. This proves that $x$ is Birkhoff (according to  Definition \ref{Birkhoff}).
\end{proof}

\begin{figure}[h!]
\centering
\begin{subfigure}{.35\textwidth}
  \centering
\begin{tikzpicture}
  \node at (0,0) {\includegraphics[width=\linewidth]{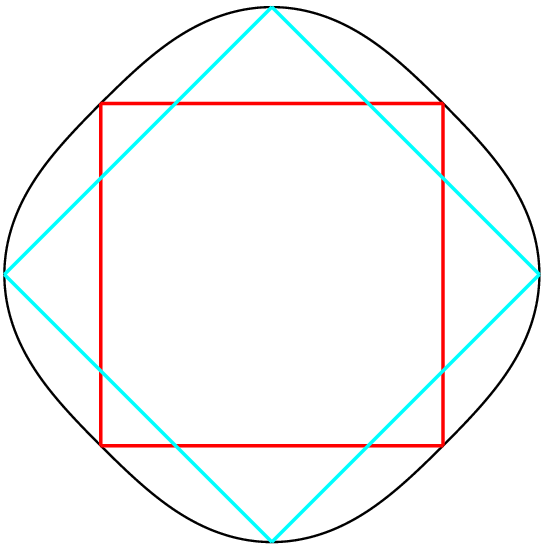}};
\node at (1.9,1.9) {$1$};
\node at (-1.9,1.9) {$2$};
\node at (-1.9,-1.9) {$3$};
\node at (1.9,-1.9) {$4$};
  \end{tikzpicture}
  \caption{}
  \label{fig:sub1}
\end{subfigure}\hspace{2cm}%fill%
\begin{subfigure}{.35\textwidth}
  \centering
  \begin{tikzpicture}
  \node at (0,0) {\includegraphics[width=\linewidth]{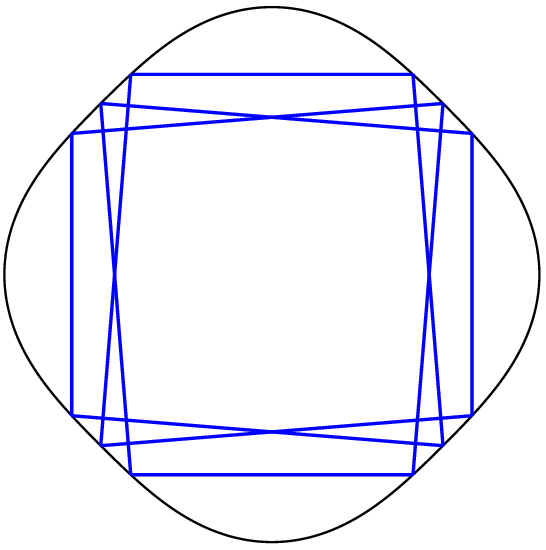}};
\node at (1.6,2.2) {$1$};
\node at (-1.6,2.2) {$2$};
\node at (-1.9,-1.9) {$3$};
\node at (2.2,-1.6) {$4$};
\node at (2.2,1.6) {$5$};
\node at (-1.9,1.9) {$6$};
\node at (-1.6,-2.2) {$7$};
\node at (1.6,-2.2) {$8$};
\node at (1.9,1.9) {$9$};
\node at (-2.2,1.6) {$10$};
\node at (-2.2,-1.6) {$11$};
\node at (1.9,-1.9) {$12$};
  \end{tikzpicture}
  \caption{}
  \label{fig:sub2}
\end{subfigure}\\
\centering \begin{subfigure}{.35\textwidth}
  \centering
  \begin{tikzpicture}
  \node at (0,0) {\includegraphics[width=\linewidth]{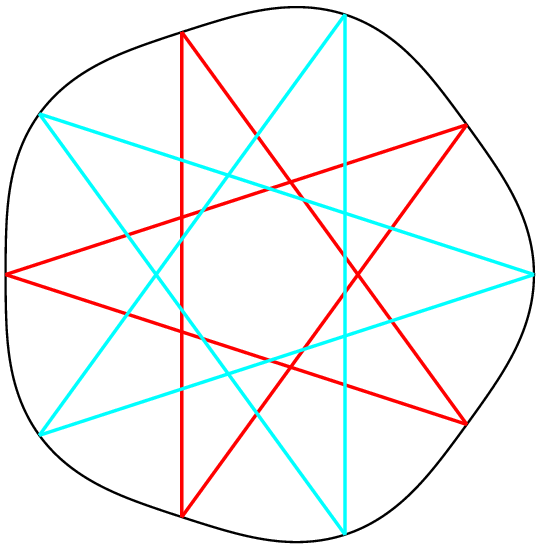}};
\node at (2.2,1.7) {$1$};
\node at (-1,2.7) {$4$};
\node at (-3,0) {$2$};
\node at (-1,-2.7) {$5$};
\node at (2.2,-1.7) {$3$};
  \end{tikzpicture}
  \caption{}
  \label{fig:sub3}
\end{subfigure}\hspace{2cm}%fill%
\begin{subfigure}{.35\textwidth}
  \centering
  \begin{tikzpicture}
  \node at (0,0) {\includegraphics[width=\linewidth]{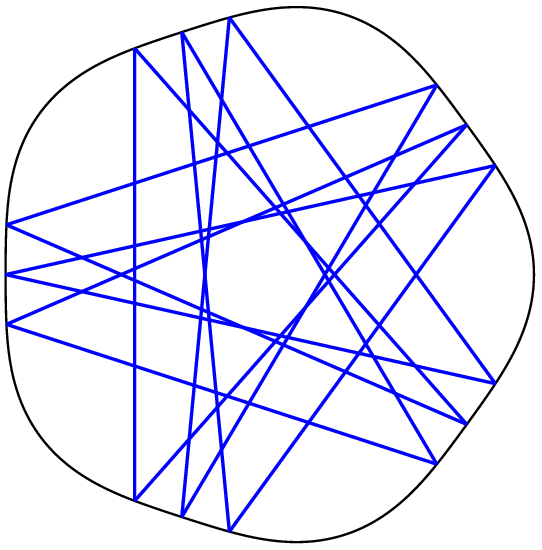}};
\node at (1.9,2.1) {$1$};
\node at (-1.5,2.6) {$4$};
\node at (-3,0.5) {$2$};
\node at (-1.5,-2.6) {$5$};
\node at (2.2,-1.7) {$3$};
\node at (2.2,1.7) {$6$};
\node at (-3,-0.5) {$7$};\node at (1.9,-2.1) {$8$};
\node at (-1,2.7) {$9$};
\node at (-0.5,-2.9) {$10$};
\node at (2.5,1.3) {$11$};
\node at (-3,0) {$12$};
\node at (2.5,-1.3) {$13$};
\node at (-0.5,2.9) {$14$};
\node at (-1,-2.7) {$15$};
  \end{tikzpicture}
  \caption{}
  \label{fig:sub4}
\end{subfigure}
\caption{Symmetric billiards with Birkhoff periodic orbits and non-Birkhoff periodic orbits: {\bf (a)} $\mathbb{D}_4$-symmetric billiard with two $(4,1)$-periodic Birkhoff orbits, the ``short'' one in red and the ``long'' one in cyan; 
{\bf (b)} $\mathbb{D}_4$-symmetric billiard with $(12,3)$-periodic non-Birkhoff orbit: note that $z_9\prec z_1 \prec z_2$ holds but $z_{10} \prec z_{2} \prec z_{3}$ does not; {\bf (c)} $\mathbb{D}_5$-symmetric billiard with two $(5,2)$-periodic Birkhoff orbits, the ``short'' one in red and the ``long'' one in cyan; 
{\bf (d)}  $\mathbb{D}_5$-symmetric billiard with $(15,6)$-periodic non-Birkhoff orbit: note that $z_6\prec z_{1} \prec z_2$ holds but $z_7 \prec z_{2} \prec z_{3}$ does not.
\label{fig:first_example}}
\end{figure}

\noindent  
For later reference, we recall the well-known fact that  periodic Birkhoff sequences always have the minimal possible period given their rotation number.
\begin{proposition}\label{prop:pqnm}
Let $x \in \mathbb{X}_{p,q}$ be  Birkhoff, where $\frac{q}{p}=\frac{m}{n}$ with $\gcd(m,n)=1$. Then $x\in \mathbb{X}_{n,m}$. 
\end{proposition}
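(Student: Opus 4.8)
The plan is to use the characterization of the Birkhoff property in terms of the shift-invariance of the integers $l(i,j)$ (Fact 1 in the proof of Lemma \ref{Orderintext}), together with a counting argument over one full period. Recall that for $x \in \mathbb{X}_{p,q}$ one has $x_{p+i} = x_i + q$, so the rotation number is exactly $\frac{q}{p} = \frac{m}{n}$, hence $q = \frac{m}{n} p$ and in particular $n \mid p$; write $p = kn$, $q = km$ for some $k \geq 1$. The goal is to show $k = 1$, i.e.\ that $x$ is actually $(n,m)$-periodic.

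First I would recall that a Birkhoff sequence with rotation number $\frac{m}{n}$ is totally ordered by the map $i \mapsto x_i$ in a way compatible with $i \mapsto i \frac{m}{n}$: more precisely, since $x$ is Birkhoff, for every $i,j$ the sign of $x_{i+m'} - x_{j+m'} - l$ is independent of the shift $m'$ (Fact 1). The key step is to compare $x_0$ with its shift $x_n$: from $x_{n} = x_0 + q/k \cdot (\text{something})$ — more carefully, I would show directly that $x_n - x_0$ must equal $m$ exactly. One way: consider the $p$ numbers $x_0, x_1, \dots, x_{p-1}$ reduced modulo $1$; by the Birkhoff property these are cyclically ordered on $\mathbb{R}/\mathbb{Z}$ in the same combinatorial pattern as the points $0, \frac{q}{p}, \frac{2q}{p}, \dots$ modulo $1$. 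Since $\gcd(q,p) = \gcd(km, kn) = k$, the reduced points $\{iq/p \bmod 1\}$ take only $n = p/k$ distinct values, each hit $k$ times, and consecutive-in-$i$ points separated by a multiple of $n$ land on the same residue. Transporting this back through the order-isomorphism forces $x_{i+n} \equiv x_i \pmod 1$ for all $i$, i.e.\ $x_{i+n} = x_i + c_i$ with $c_i \in \mathbb{Z}$; summing the telescoping increments and using $x_{p+i} = x_i + q$ shows $\sum$ of the $c_i$ over a period is consistent only with $c_i = m$ for all $i$, so $x \in \mathbb{X}_{n,m}$.

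Alternatively — and this is probably the cleaner route to write — I would argue by contradiction using strict monotonicity of $i \mapsto x_i$ on $\Sigma$. Suppose $x \notin \mathbb{X}_{n,m}$. Since $x \in \Sigma$ we have $x_n < x_0 + n$ trivially, and the rotation number being $\frac{m}{n}$ gives $x_n - x_0 \to$ an average of $m$ over long runs but not necessarily $=m$ on a single step of length $n$. I would pin down $x_n - x_0$: because $x$ is Birkhoff and $x_{p} - x_0 = q = km$, apply Fact 1 to the pair $(0,n)$ with shifts $0, n, 2n, \dots, (k-1)n$. Writing $a_j := x_{(j+1)n} - x_{jn}$, each $a_j$ is a real number, $\sum_{j=0}^{k-1} a_j = km$, and the Birkhoff property forces all the $a_j$ to be equal as elements of $\mathbb{R}/\mathbb{Z}$ (indeed the combinatorial position of $x_{jn}$ relative to the integer lattice is shift-by-$n$ invariant), hence $a_j = m + d_j$ with $d_j \in \mathbb{Z}$ and $\sum d_j = 0$; a further appeal to well-ordering (the $x_{jn} \bmod 1$ must appear in the "correct" cyclic order, which for equal residues means they coincide) gives $d_j = 0$, so $x_n = x_0 + m$. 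Repeating with base point $i$ instead of $0$ gives $x_{n+i} = x_i + m$ for all $i$, which is exactly $x \in \mathbb{X}_{n,m}$.

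The main obstacle I anticipate is making rigorous the claim that "equal residues modulo $1$ among the $x_{jn}$ forces equality of the $x_{jn}$ themselves." This needs the Birkhoff (well-ordering) hypothesis in an essential way: a priori the $x_{jn}$ could differ by nonzero integers while having the same fractional part, and one must rule this out by observing that if $x_{j_1 n} < x_{j_2 n}$ with the same fractional part, then $x_{j_1 n} + 1 \leq x_{j_2 n}$, and tracking this inequality under the shift $i \mapsto i + (j_2 - j_1)n$ via Fact 1 (iterated around the period, where it must eventually wrap because of the $+q$ periodicity) yields a contradiction with $0 < x_{i+1} - x_i < 1$. Handling this wrap-around cleanly — essentially showing the "defect" integers $d_j$ cannot be nonzero because they would accumulate — is the technical heart; everything else is bookkeeping with $\gcd(km,kn) = k$ and telescoping sums. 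Since the excerpt flags this as "well-known," I would keep the writeup short, citing the standard Aubry--Mather references already in the bibliography for the analogous statement about real-valued Birkhoff sequences and supplying only the brief argument above.
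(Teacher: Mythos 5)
First, note that the paper itself gives no proof of this proposition; it only cites \cite{mramor2012ghost}, so there is no in-paper argument to compare against. Your proposal does contain a genuine gap, concentrated exactly in the step you flag as the ``technical heart.'' In your preferred second route you claim that the Birkhoff property forces the numbers $a_j=x_{(j+1)n}-x_{jn}$ to be equal as elements of $\mathbb{R}/\mathbb{Z}$, and then write $a_j=m+d_j$ with $d_j\in\mathbb{Z}$. Neither step is justified. What Fact 1 actually gives is shift-invariance of the \emph{integers} $l(jn,(j+1)n)=\lceil x_{jn}-x_{(j+1)n}\rceil$, i.e.\ that all the $a_j$ lie in a single half-open interval $[c,c+1)$ for one integer $c$; that is a statement about integer parts, not fractional parts, and it does not make the $a_j$ congruent modulo $1$. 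Moreover, even granting congruence modulo $1$, combining it with $\sum_j a_j=km$ only yields $a_0\in\tfrac{1}{k}\mathbb{Z}$, not $a_j\in m+\mathbb{Z}$: writing $a_j=m+d_j$ with $d_j\in\mathbb{Z}$ presupposes that the $a_j$ are integers, which is essentially the conclusion to be proved. The final appeal to the ``correct cyclic order'' to force $d_j=0$ is left unproved (and, as sketched, would also use $0<x_{i+1}-x_i<1$, i.e.\ $x\in\Sigma$, which the proposition does not assume).

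The repair is short, because the weaker statement that Fact 1 \emph{does} give already suffices. Since $a_j\in[c,c+1)$ for all $j$ with a common integer $c$, the identity $\sum_{j=0}^{k-1}a_j=x_p-x_0=km$ forces $kc\le km<k(c+1)$, hence $c=m$; then $a_j\ge m$ for every $j$ together with total sum $km$ forces $a_j=m$ for all $j$. Running the same argument with base point $i$ in place of $0$ (Fact 1 gives $l(i+jn,i+(j+1)n)=l(0,n)$ for all $i,j$) yields $x_{i+n}=x_i+m$ for every $i$, i.e.\ $x\in\mathbb{X}_{n,m}$. Alternatively, the shortest standard proof uses Remark \ref{rem:birkhoff} directly: the translates $x$ and $\tau_{n,-m}x$ are ordered, say $x_{i+n}-m\ge x_i$ for all $i$; iterating $k$ times gives $x_{i+p}-km\ge x_i$, which is an equality because $x\in\mathbb{X}_{p,q}$, so every intermediate inequality in the telescoping chain is an equality and $x_{i+n}=x_i+m$. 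Either version should replace the modular-arithmetic steps in your writeup.
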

\noindent For a proof, we refer to  \cite{mramor2012ghost} (Theorem 3.13 and the paragraph below). We finish this section with several remarks. 

\begin{remark} Let $z\in \Gamma^{\mathbb{Z}}$ be a periodic billiard orbit of minimal period $n\geq 3$ and winding number $m$. We claim that $z$  is Birkhoff precisely when it defines a trajectory that is homeomorphic to a regular polygon of \emph{Schl\"afli symbol} $\{n/m\}$ \cite{coxeter1973regular,schlafli1901theorie}. Recall that this polygon is obtained by placing $n$ distinct points on a circle and connecting  pairs of points that have exactly $m-1$ points lying strictly ``between'' them. 

To prove this claim,  define
$$M := \sharp \{ Z\in \{z_1, \ldots, z_n\}\,|\, Z\neq z_i\ \mbox{and}\  z_i \prec Z \prec z_{i+1} \}\, $$
to be the number of distinct points on the sequence lying ``between'' $z_i$ and $z_{i+1}$ (excluding $z_i$ and including $z_{i+1}$). The Birkhoff property ensures that this quantity is independent of $i\in \mathbb{Z}$, and clearly $0<M<n$. We claim that $M=m$. Indeed, let $x\in \Sigma$ be a lift of $z$, and denote $$E := \{x_j + k \ | \ j,k\in \mathbb{Z}\ \}\subset \mathbb{R}\, .$$ 
By definition of $M$, and because $0<x_{i+1}-x_i<1$, we have that $E\cap (x_i, x_{i+1}]$ has cardinality $M$ for every $i\in \mathbb{Z}$. As a consequence, $E\cap (x_0, x_{n}]$ has cardinality $Mn$. However, $x_n=x_0+m$ by definition of the winding number. Since $E\cap (x_0, x_{0}+m]$ clearly has cardinality $mn$ (this holds for the intersection of $E$ with any interval of integer length $m$), it follows that $M=m$. 

We conclude that when we connect each $z_i$ and $z_{i+1}$ by a line segment, then we obtain a homeomorphic copy of a regular polygon of Schl\"afli symbol $\{n/m\}$.  
\end{remark}

\begin{remark}
It is well known that every real-valued Birkhoff sequence $x$ has a well-defined rotation number, that is, the limit $\lim_{i\to\pm \infty} \frac{x_i}{i}$ exists \cite{gole2001symplectic}. 
\end{remark} 
\begin{remark}
A fundamental theorem from Aubry-Mather theory \cite{gole2001symplectic, mramor2012ghost} implies that if $\Gamma$ is a strictly convex and $C^2$-smooth billiard, then for every $0<\omega<1$ there exists a billiard orbit with a lift $x\in \Sigma$ of  rotation number $\omega$. This $x$ satisfies the Birkhoff property, and it is an action-maximizer. The latter means that the length of each finite segment $(x_i, x_{i+1}, \ldots, x_{i+M-1}, x_{i+M})$ of $x$ is maximal among sequences $(x_i, y_{i+1}, \ldots, y_{i+M-1}, x_{i+M})$ with the same endpoints. For $\omega=\frac{m}{n}\in \mathbb{Q}$ (with $\gcd(m,n)=1$) this action-maximizing Birkhoff orbit satisfies $x_{i+n}=x_i+m$. In other words, it represents one of the two periodic solutions guaranteed by Birkhoff's twist map theorem.
\end{remark}
\begin{remark}\label{rem:birkhoff}
For real-valued sequences, one may alternatively define the Birkhoff property as follows. For two sequences $x, y\in\mathbb{R}^{\mathbb{Z}}$ we write $x \leq y$ if $x_i \leq y_i$ for all $i \in \mathbb{Z}$. One says that $x$ and $y$ are \emph{ordered} if either $x \leq y$ or $y\leq x$. For $x\in \mathbb{R}^{\mathbb{Z}}$ and $m,l\in \mathbb{Z}$ one may define the \emph{integer translate} $\tau_{m,l}x\in \mathbb{R}^{\mathbb{Z}}$ of $x$ by $(\tau_{m,l}x)_i=x_{i+m}+l$. It is not hard to see that $x$ is Birkhoff (according to Definition \ref{Birkhoff}) if and only if any two of its integer translates are ordered. We refer to \cite{mramor2012ghost} for a proof of this claim. 
\end{remark}

\begin{remark}\label{rem:Bir_no_touch}
A sequence $x\in\mathbb{R}^{\mathbb{Z}}$ can be visualized through what is known as its \emph{Aubry diagram}. This diagram consists of all the points $(i,x_i)\in\mathbb{R}^2$ and straight line segments connecting all neighboring points $(i,x_i)$ and $(i+1, x_{i+1})$. A sequence is Birkhoff precisely when the Aubry diagrams of all its integer translates do not cross. 
Figure \ref{fig:Aubrydiagram} displays Aubry diagrams of a Birkhoff sequence and five of its translates.
\end{remark}
%\newpage 
\vspace{-2mm}
\begin{figure}[h!]
\centering
\begin{tikzpicture}
  \node at (0,0) {\includegraphics[width=0.5\linewidth]{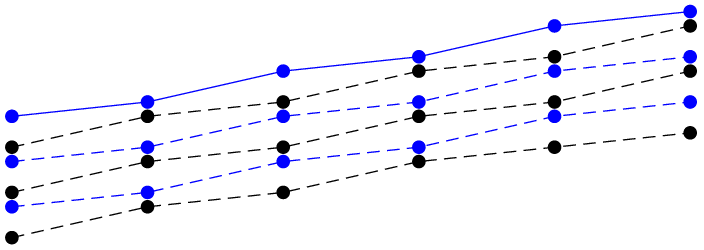}};
\node at (4.25,1.3) {\textcolor{blue}{$x$}};
\node at (4.6,0.75) {\textcolor{blue}{$\tau_{4,-1}x$}};
\node at (4.6,0.2) {\textcolor{blue}{$\tau_{2,-1}x$}};
\node at (-4.6,-0.3) {$\tau_{5,-1}x$};
\node at (-4.6,-.85) {$\tau_{3,-1}x$};
\node at (-4.6,-1.4) {$\tau_{1,-1}x$};
  \end{tikzpicture}  
  \caption{Aubry diagrams of a Birkhoff sequence (solid line) and five of its integer translates (dashed); recall Remark \ref{rem:Bir_no_touch}. \label{fig:Aubrydiagram}}
\end{figure}
\vspace{-2mm}
\section{Spatiotemporal symmetry}\label{sec:periodicorbitssection}
\noindent Recall that  a billiard $\Gamma$ is called $G$-symmetric if  $g(\Gamma) = \Gamma$ for all $g\in G$, and that throughout this paper, we assume that $G=\mathbb{D}_n = \langle R, S\rangle$. Here, $R$ is the counterclockwise rotation over an angle $\frac{2\pi}{n}$ and $S$ is the reflection in the horizontal axis. Without loss of generality, we will now also assume that the parametrization $\gamma: \mathbb{R}\to\mathbb{R}^2$ of $\Gamma$ is $\mathbb{D}_n$-equivariant, by which we mean that 
\begin{align} \label{equivarianceofgamma}
R(\gamma(x))=\gamma(x+1/n) \ \mbox{ and }\  S(\gamma(x))=\gamma(-x)\  \mbox{ for all }\ x\in \mathbb{R}\, .
\end{align}
\begin{example}\label{ex:limacon}
Consider the Lima\c{c}on-type curve which is the image of the map $\gamma_{\alpha}: \mathbb{R}\rightarrow \mathbb{R}^2$ defined by
\begin{equation}\label{eq:limac-like}
\gamma_{\alpha}(x)=r_{\alpha}(x)(\cos (2\pi x), \sin (2 \pi x)) \ \mbox{in which}\ r_{\alpha}(x) = 1+ \alpha \cos(2\pi n x)\,  .
\end{equation}
This $\gamma$ satisfies \eqref{equivarianceofgamma} and is thus $\mathbb{D}_n$-equivariant.  For $| \alpha| < 1$ it defines an embedding on $\mathbb{R}/\mathbb{Z}$. In Appendix \ref{app:convexity} we show that $\gamma_{\alpha}$ bounds a strictly convex  domain if and only if 
$$
| \alpha|\leq \alpha^*(n):=\dfrac{1}{1+n^2}\, .
$$
We used Lima\c{c}on-type billiards to generate all the figures in this paper. In Appendix \ref{app:code} we briefly explain how we numerically found non-Birkhoff periodic orbits in these billiards, and we provide a reference to a GitHub repository containing three examples of the Matlab codes used to produce our figures.

Figure \ref{fig:intro1} was produced with $\alpha=0.19<\alpha^*(2)=0.2$; Figure \ref{fig:intro2} was produced with $\alpha=0.099<\alpha^*(3)= 0.1$; Figure \ref{fig:intro3} was produced with $\alpha=0.05<\alpha^*(4)\approx 0.0588$; Figure \ref{fig:intro4} was produced with $\alpha=0.0005\ll \alpha^*(5)\approx 0.0385$. Figure \ref{billiard_law} was produced with $\alpha=0.099<\alpha^*(3)=0.1$. Figures \ref{fig:sub1} and \ref{fig:sub2} were produced with $\alpha=0.05<\alpha^*(4)\approx 0.0588$; Figures \ref{fig:sub3} and \ref{fig:sub4} were produced with $\alpha=0.035<\alpha^*(5)\approx 0.0385$.
\end{example} 
\noindent 
It follows from \eqref{equivarianceofgamma} that 
\begin{equation}\label{eq:invarianceL}
L(x+1/n, X+1/n)=L(x,X)\ \mbox{and} \  L(-x, -X)=L(x,X)\, .
\end{equation}
The reason is simply that $\|\gamma(x+1/n)-\gamma(X+1/n)\| = \|R(\gamma(x))-R(\gamma(X))\| = \|\gamma(x)-\gamma(X)\|$, and  $\|\gamma(-x)-\gamma(-X)\| = \|S(\gamma(x))-S(\gamma(X))\| = \|\gamma(x)-\gamma(X)\|$.
\noindent The following trivial proposition is an immediate consequence of \eqref{eq:invarianceL}. It states that any symmetry of a billiard is also a symmetry of the set of billiard orbits.
\begin{proposition}\label{prop:symaction}
Let $\Gamma$ be a $\mathbb{D}_n$-symmetric billiard, and let $i\mapsto z_i=\gamma(x_i)\in \Gamma$ be a billiard orbit (where $x\in \Sigma$). Then also $i\mapsto R(z_{i})=\gamma(x_{i}+1/n)$ and $i\mapsto S(z_{i})=\gamma(-x_i)$ are billiard orbits.
\end{proposition}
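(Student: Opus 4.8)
The plan is to verify directly that $i\mapsto R(z_i)$ and $i\mapsto S(z_i)$ satisfy the recurrence relation \eqref{eq:recrelgeneral}, using the invariance relations \eqref{eq:invarianceL} for the discrete Lagrangian $L$. Since Proposition \ref{prop:symaction} asserts precisely that images under the generators $R$ and $S$ of a billiard orbit are again billiard orbits, and since being a billiard orbit is (by the discussion preceding \eqref{eq:recrelgeneral}) equivalent to the lift solving \eqref{eq:recrelgeneral}, everything reduces to a short computation with the chain rule.

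First I would observe that if $z_i=\gamma(x_i)$ with $x\in\Sigma$, then $R(z_i)=\gamma(x_i+1/n)$ by the equivariance \eqref{equivarianceofgamma}, and the sequence $y$ defined by $y_i:=x_i+1/n$ still lies in $\Sigma$ since the increments $y_{i+1}-y_i=x_{i+1}-x_i\in(0,1)$ are unchanged. Hence $y$ is a legitimate lift of the sequence $i\mapsto R(z_i)$, and it suffices to check that $y$ solves \eqref{eq:recrelgeneral}. Differentiating the first identity in \eqref{eq:invarianceL}, namely $L(x+1/n,X+1/n)=L(x,X)$, with respect to $x$ and to $X$ gives $\partial_1 L(x+1/n,X+1/n)=\partial_1 L(x,X)$ and $\partial_2 L(x+1/n,X+1/n)=\partial_2 L(x,X)$. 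Therefore
\[
\partial_2 L(y_{i-1},y_i)+\partial_1 L(y_i,y_{i+1})=\partial_2 L(x_{i-1}+\tfrac1n,x_i+\tfrac1n)+\partial_1 L(x_i+\tfrac1n,x_{i+1}+\tfrac1n)=\partial_2 L(x_{i-1},x_i)+\partial_1 L(x_i,x_{i+1})=0,
\]
where the last equality holds because $x$ solves \eqref{eq:recrelgeneral}. This establishes the claim for $R$.

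For $S$, I would set $w_i:=-x_{-i}$. Since $x_{i}<x_{i+1}$ for all $i$, we have $-x_{-i}<-x_{-i-1}$, and $-x_{-i-1}-(-x_{-i})=x_{-i}-x_{-i-1}\in(0,1)$, so $w\in\Sigma$; and $\gamma(w_i)=\gamma(-x_{-i})=S(\gamma(x_{-i}))=S(z_{-i})$ by \eqref{equivarianceofgamma}, so $w$ is a lift of the sequence $i\mapsto S(z_{-i})$. Since the billiard problem is time-reversible (Remark \ref{remk:autonomousreversible}), it suffices to handle the reindexing $i\mapsto-i$ and the sign change separately, or simply to compute directly: differentiating $L(-x,-X)=L(x,X)$ yields $\partial_1 L(-x,-X)=-\partial_1 L(x,X)$ and $\partial_2 L(-x,-X)=-\partial_2 L(x,X)$, and combining this with the symmetry $\partial_1 L(x,X)=\partial_2 L(X,x)$ one checks that $\partial_2 L(w_{i-1},w_i)+\partial_1 L(w_i,w_{i+1})=-\bigl(\partial_1 L(x_{-i},x_{-i+1})+\partial_2 L(x_{-i-1},x_{-i})\bigr)=0$, again because $x$ solves \eqref{eq:recrelgeneral}. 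Then $i\mapsto S(z_i)$ is obtained from $i\mapsto S(z_{-i})$ by the time reversal $i\mapsto-i$, which preserves the solution set of \eqref{eq:recrelgeneral} by Remark \ref{remk:autonomousreversible}.

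There is essentially no obstacle here: the proposition is labelled \emph{trivial} in the text and the only things to be careful about are the bookkeeping of lifts (making sure the candidate lifts still lie in $\Sigma$, i.e.\ have increments in $(0,1)$) and the signs produced by differentiating the reflection invariance $L(-x,-X)=L(x,X)$. The mild subtlety — if one wants to be pedantic — is that applying $S$ to a billiard orbit naturally produces a time-reversed parametrization, so one invokes time-reversibility from Remark \ref{remk:autonomousreversible} to conclude that $i\mapsto S(z_i)$ itself (not merely $i\mapsto S(z_{-i})$) is a billiard orbit; but since the statement only concerns the \emph{geometric} trajectory this is immaterial, and in any case both reindexing conventions give solutions of \eqref{eq:recrelgeneral}.
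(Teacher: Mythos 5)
Your proof is correct and follows essentially the same route as the paper: differentiate the invariances $L(x+1/n,X+1/n)=L(x,X)$ and $L(-x,-X)=L(x,X)$ and verify that the transformed lifts still satisfy the recurrence \eqref{eq:recrelgeneral}. The only difference is cosmetic: for $S$ the paper plugs $-x_i$ directly into the recurrence, whereas you reindex to $w_i=-x_{-i}$ so the lift stays in $\Sigma$ and then invoke time-reversibility — a slightly more pedantic but equally valid bookkeeping of the same computation.
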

\begin{proof}   
Recall that a billiard sequence $z_i=\gamma(x_i)$ is a billiard orbit if and only if it satisfies the recurrence relation \eqref{eq:recrelgeneral}. The identity $L(x+1/n, X+1/n)=L(x,X)$ implies that $\partial_1 L(x+1/n, X+1/n)=\partial_1 L(x,X)$ and $\partial_2 L(x+1/n, X+1/n)=\partial_2 L(x,X)$. As a result, 
$$\partial_2 L(x_{i-1}+1/n, x_{i}+1/n) +\partial_1 L(x_{i}+1/n, x_{i+1}+1/n) = \partial_2 L(x_{i-1}, x_{i}) +\partial_1 L(x_i, x_{i+1}) \, .$$
So the sequence $i\mapsto R(z_i)=\gamma(x_i+1/n)$ is a billiard orbit if and only if $z_i = \gamma(x_i)$ is.

From $L(-x,-X)=L(x,X)$ it follows that $\partial_1 L(-x, -X)= - \partial_1 L(x,X)$ and $\partial_2 L(-x, -X)= - \partial_2 L(x,X)$, and therefore
$$\partial_2 L(-x_{i-1}, -x_{i}) +\partial_1 L(-x_{i}, -x_{i+1}) = -  \partial_2 L(x_{i-1}, x_{i}) - \partial_1 L(x_i, x_{i+1})  \, .$$
So the sequence $i\mapsto S(z_i)=\gamma(-x_i)$ is a billiard orbit if and only if $z_i = \gamma(x_i)$ is.
\end{proof}
\noindent 
We recall from Remark \ref{remk:autonomousreversible} that when $i\mapsto z_i$ is a billiard orbit, then so are the time-translated orbit $i\mapsto z_{k+i}$ and the time-reversed orbit $i\mapsto z_{k-i}$ for any $k\in \mathbb{Z}$. In the introduction, we defined a spatiotemporal symmetry of a billiard orbit $z\in \Gamma^{\mathbb{Z}}$ to be a group element $h\in \mathbb{D}_n$ with the property that the transformed orbit $i\mapsto h(z_i)$ is one of these translated or reversed versions of the orbit $z$ itself. Here we  repeat this definition, and we extend it to general billiard sequences.
  
\begin{definition}
Let $\Gamma$ be a $\mathbb{D}_n$-symmetric billiard  and $z=(\ldots, z_{-1}, z_0, z_1, \ldots)\in \Gamma^{\mathbb{Z}}$ a billiard sequence. A group element $h\in \mathbb{D}_n$ is called a \emph{time-preserving symmetry} of $z$ if there is a $k\in \mathbb{Z}$ such that $h(z_i)=z_{k+i}$ for all $i\in \mathbb{Z}$, and a \emph{time-reversing symmetry} of $z$ if there  is a $k\in \mathbb{Z}$ such that $h(z_i)=z_{k-i}$ for all $i\in \mathbb{Z}$. The group of time-preserving symmetries of $z$ is denoted $H^+(z)$, the set of time-reversing symmetries of $z$ is denoted $H^-(z)$, and the group $H(z):=H^+(z) \cup H^-(z)$ is called the \emph{spatiotemporal symmetry group} of $z$. 
\end{definition}
\noindent It may occasionally occur that the intersection $H^+(z) \cap H^-(z)$ is nonempty, that is, a symmetry $h\in \mathbb{D}_n$ may be both a time-preserving and a time-reversing symmetry of a given billiard sequence, see for example Lemma \ref{lem:D1orbitlemma} below.

The following simple but crucial lemma translates any  spatiotemporal symmetry of a billiard sequence into a set of  linear inhomogeneous equations for its lift. Figure \ref{fig:lemma_4.6} illustrates this result by depicting symmetric billiard orbits and Aubry diagrams of their lifts. 

\begin{figure}[h!]
\centering
\begin{subfigure}{.35\textwidth}
  \centering
  \begin{tikzpicture}
  \node at (0,0) {\includegraphics[width=\linewidth]{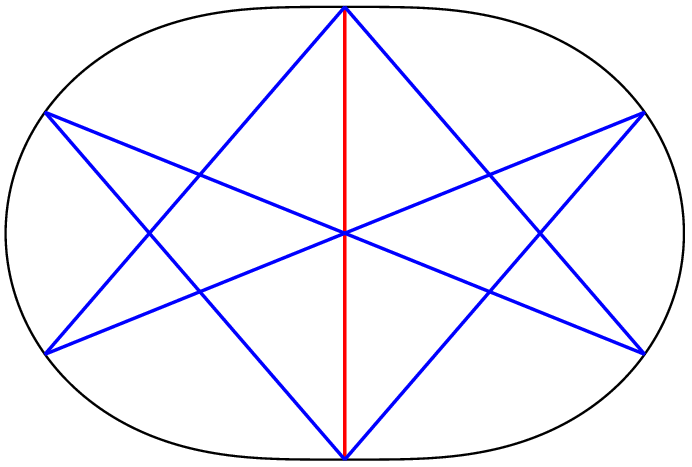}};
\node at (2.5,1.2) {$1$};
\node at (0,2.1) {$5$};
\node at (2.5,-1.2) {$4$};
\node at (-2.5,1.2) {$3$};
\node at (0,-2.1) {$2$};
\node at (-2.5,-1.2) {$6$};
  \end{tikzpicture}
  \caption{}
  \label{fig:sub3_new1}
\end{subfigure}\hspace{2cm}%fill%
\begin{subfigure}{.35\textwidth}
  \centering
    \begin{tikzpicture}
 \node at (0,0) {\includegraphics[width=\linewidth]{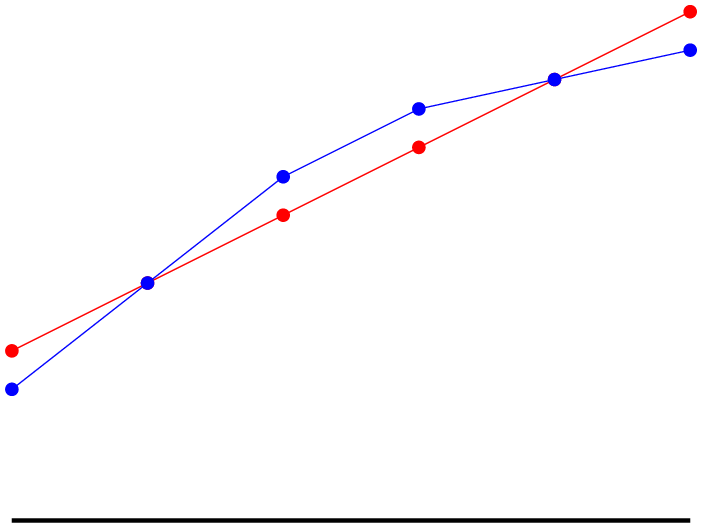}};
 \node at (-2.7,-2.3) {$1$};
 \node at (-1.7,-2.3) {$2$};
 \node at (-0.6,-2.3) {$3$};
 \node at (0.5,-2.3) {$4$};
 \node at (1.6,-2.3) {$5$};
 \node at (2.7,-2.3) {$6$};
%\node at (2.7,-1.5) {$\mathbb{Z}$};
  \end{tikzpicture}  
  \caption{}
  \label{fig:sub4_new1}
\end{subfigure}\\
\centering
\begin{subfigure}{.35\textwidth}
  \centering
  \begin{tikzpicture}
  \node at (0,0) {\includegraphics[width=\linewidth]{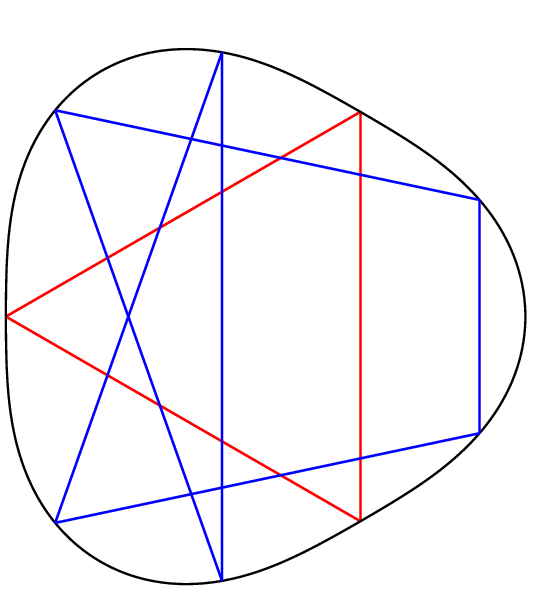}};
\node at (2.3,1.2) {$1$};
\node at (-2.4,2.1) {$2$};
\node at (-0.4,-3.2) {$3$};
\node at (-0.4,2.8) {$4$};
\node at (-2.4,-2.5) {$5$};
\node at (2.3,-1.6) {$6$};
  \end{tikzpicture}
  \caption{}
  \label{fig:sub1_new1}
\end{subfigure}\hspace{2cm}%fill%
\begin{subfigure}{.35\textwidth}
  \centering
    \begin{tikzpicture}
  \node at (0,0) {\includegraphics[width=\linewidth]{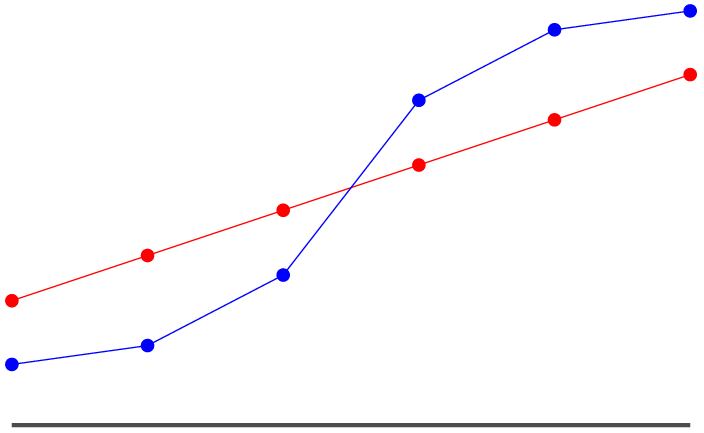}};
 \node at (-2.7,-1.9) {$1$};
 \node at (-1.7,-1.9) {$2$};
 \node at (-0.6,-1.9) {$3$};
 \node at (0.5,-1.9) {$4$};
 \node at (1.6,-1.9) {$5$};
 \node at (2.7,-1.9) {$6$};
%\node at (2.5,-1.3) {$\mathbb{Z}$};
  \end{tikzpicture}  
  \caption{}
  \label{fig:sub2_new1}  
\end{subfigure}
\caption{Symmetric billiard orbits have symmetric Aubry diagrams by Lemma \ref{liftlemma}. {\bf (a)} Convex $\mathbb{D}_2$-symmetric billiard of Lima\c con-type  (see Example \ref{ex:limacon}) with parameter $\alpha=0.1995<\alpha^*(2)= 0.2$. possessing a $(2,1)$-periodic Birkhoff orbit (red) and a $(6,3)$-periodic non-Birkhoff  orbit (blue), both satisfying $S(z_i)=z_{3+i}$ and $R(z_i)=z_{7-i}$; {\bf (b)} Aubry diagrams of lifts of these orbits satisfying $x_i+x_{3+i}=i$ and $x_{7-i}-x_i=\frac{1}{2}-i$; {\bf (c)} Convex $\mathbb{D}_3$-symmetric billiard of Lima\c con-type  (see Example \ref{ex:limacon}) with parameter $\alpha=0.09<\alpha^*(3)= 0.1$, possessing a $(3,1)$-periodic Birkhoff orbit (red) and a $(6,2)$-periodic non-Birkhoff  orbit (blue), both satisfying $S(z_i)=z_{7-i}$; {\bf (d)} Aubry diagrams of lifts of these orbits satisfying $x_i+x_{7-i}=0$.
\label{fig:lemma_4.6}}
\end{figure}
\begin{lemma} \label{liftlemma}
Let $z\in \Gamma^{\mathbb{Z}}$ be a billiard sequence with lift $x\in \Sigma$. Then
\begin{itemize}
\item[{\it i)}] $R^a(z_{i}) = z_{k+i}$ for all $i\in \mathbb{Z}$, if and only if $x_{k+i} - x_i = \frac{a}{n} + M$ for some $M\in \mathbb{Z}$ and all $i\in \mathbb{Z}$.
\item[{\it ii)}] $R^a(z_i) = z_{k-i}$ for all $i\in \mathbb{Z}$, if and only if $x_{k-i} - x_i = \frac{a}{n} + M -i$ for some $M\in \mathbb{Z}$ and all $i\in \mathbb{Z}$.
\item[{\it iii)}] $(R^bS)(z_{i}) = z_{k+i}$ for all $i\in \mathbb{Z}$, if and only if $x_i + x_{k+i} = \frac{b}{n} + M + i$ for some $M\in \mathbb{Z}$ and all $i\in \mathbb{Z}$. 
\item[{\it iv)}] $(R^bS)(z_{i}) = z_{k-i}$ for all $i\in \mathbb{Z}$, if and only if $x_i + x_{k-i} = \frac{b}{n} + M$ for some $M\in \mathbb{Z}$ and all $i\in \mathbb{Z}$.
\end{itemize}  
\end{lemma}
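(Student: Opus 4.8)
The plan is to reduce each of the four geometric symmetry relations to an equation between real numbers by composing with the parametrization $\gamma$ and invoking its $\mathbb{D}_n$-equivariance \eqref{equivarianceofgamma}. Concretely, for all $u\in\mathbb{R}$ one has $R^a(\gamma(u))=\gamma(u+a/n)$ and $(R^bS)(\gamma(u))=\gamma(-u+b/n)$. Since $\gamma$ descends to an embedding on $\mathbb{R}/\mathbb{Z}$, we also have $\gamma(u)=\gamma(v)$ if and only if $u-v\in\mathbb{Z}$. Take case \emph{i)} as a model: $R^a(z_i)=z_{k+i}$ for all $i$ is equivalent to $\gamma(x_i+a/n)=\gamma(x_{k+i})$ for all $i$, hence to $x_{k+i}-x_i-\tfrac{a}{n}=M_i$ for some integer $M_i$ that a priori depends on $i$. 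Cases \emph{ii)}--\emph{iv)} produce, in the same way, an integer-valued sequence $M_i$ with $x_{k-i}-x_i-\tfrac{a}{n}=M_i$, $x_i+x_{k+i}-\tfrac{b}{n}=M_i$, and $x_i+x_{k-i}-\tfrac{b}{n}=M_i$ respectively. The entire content of the lemma is then to show that $M_i$ has the claimed affine dependence on $i$ and that the converse direction is a direct substitution.

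To pin down the $i$-dependence I would use only the spacing condition $0<x_{i+1}-x_i<1$ built into the definition of $\Sigma$ in \eqref{eq:billiardliftsequence}. In case \emph{i)}, $M_{i+1}-M_i=(x_{k+i+1}-x_{k+i})-(x_{i+1}-x_i)$ is a difference of two numbers in $(0,1)$, so it lies in $(-1,1)$; being an integer it vanishes, so $M_i\equiv M$ is constant. The other cases are identical in spirit, the only difference being which open interval of length $2$ the increment lands in: in case \emph{ii)} one gets $M_{i+1}-M_i=-(x_{k-i}-x_{k-i-1})-(x_{i+1}-x_i)\in(-2,0)$, hence $M_{i+1}=M_i-1$; in case \emph{iii)} one gets $M_{i+1}-M_i=(x_{i+1}-x_i)+(x_{k+i+1}-x_{k+i})\in(0,2)$, hence $M_{i+1}=M_i+1$; and in case \emph{iv)} one gets $M_{i+1}-M_i=(x_{i+1}-x_i)-(x_{k-i}-x_{k-i-1})\in(-1,1)$, hence $M_{i+1}=M_i$. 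A one-step induction from $i=0$ then yields $M_i=M$, $M_i=M-i$, $M_i=M+i$, $M_i=M$ in the four cases, which is exactly the asserted form after substituting back into the displayed relations.

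For the converse implications one simply runs the equivalences backwards. For instance, if $x_{k+i}-x_i=\tfrac{a}{n}+M$ then $\gamma(x_{k+i})=\gamma(x_i+\tfrac{a}{n}+M)=\gamma(x_i+\tfrac{a}{n})=R^a(\gamma(x_i))$ by $1$-periodicity of $\gamma$ and \eqref{equivarianceofgamma}, i.e.\ $z_{k+i}=R^a(z_i)$; the remaining three cases follow the same pattern, additionally using $S(\gamma(u))=\gamma(-u)$. I do not expect a genuine obstacle: the one point that needs care is the bookkeeping that upgrades the a priori $i$-dependent integer $M_i$ to an explicit affine function of $i$, and that is entirely forced by the length-one spacing condition of $\Sigma$ together with the fact that a bounded integer-valued increment must be constant.
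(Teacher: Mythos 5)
Your proposal is correct and follows essentially the same route as the paper: reduce each symmetry relation to an equation $\gamma(\cdot)=\gamma(\cdot)$, extract an a priori $i$-dependent integer $M_i$, and use the spacing condition $0<x_{i+1}-x_i<1$ to force $M_{i+1}-M_i$ into an open interval of length two containing exactly one integer, which pins down the affine dependence on $i$; the converse is direct substitution. The only difference is cosmetic — the paper phrases the key step as "both sides must lie in $(0,1)$" rather than computing $M_{i+1}-M_i$ explicitly — so there is nothing to add.
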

\begin{proof}
Our assumptions on $z$ and $x$ imply that $z_i=\gamma(x_i)$ and that $0< x_{i+1}-x_i<1$ for all $i\in\mathbb{Z}$.
\begin{itemize}
\item[{\it i)}]  Note that $R^a(z_{i}) =R^a(\gamma(x_{i})) = \gamma\left(x_i+\frac{a}{n}\right)$ and $z_{k+i}= \gamma(x_{k+i})$, so   $R^a(z_{i}) =z_{k+i}$ if and only if $x_i+\frac{a}{n}  = x_{k+i} - M(i)$ for integers $M(i)$. In particular, if $R^a(z_i) = z_{k+i}$, then $x_{i+1}-x_i =  (x_{k+i+1} - M(i+1)-\frac{a}{n})  -  (x_{k+i}-M(i) -\frac{a}{n})= x_{k+i+1} - x_{k+i} + M(i)-M(i+1)$. The only way the left- and right-hand side can both be between $0$ and $1$ is if $M(i+1)=M(i)$. Thus, $M(i)=M$ is independent of $i$, and $x_{k+i} - x_i = \frac{a}{n} + M$. Conversely, if $x_{k+i} - x_i = \frac{a}{n} + M$ for some $M\in \mathbb{Z}$ and all $i\in \mathbb{Z}$, then $x_{k+i} = x_i + \frac{a}{n} + M$ and applying $\gamma$ to both sides gives $R^a(z_{i}) = z_{k+i}$. 
\item[{\it ii)}]  Now we use that $R^a(z_{i}) =z_{k-i}$ if and only if $x_i+\frac{a}{n}  = x_{k-i} - M(i)$ for integers $M(i)$.  In particular, if $R^a(z_{i}) =z_{k-i}$, then  $x_{i+1}-x_i =  (x_{k-i-1} - M(i+1) -\frac{a}{n}) -  (x_{k-i}-M(i)-\frac{a}{n})= x_{k-i-1} - x_{k-i} + M(i)-M(i+1)$. The only way the left- and right-hand side  can both be between $0$ and $1$ is if $M(i)-M(i+1)=1$. Thus, $M(i)=M - i$, and $x_{k-i} - x_i = \frac{a}{n} + M -i$. The converse implication follows from applying $\gamma$ to $x_{k-i}=x_i+\frac{a}{n} + M - i$.
\item[{\it iii)}]  We have $(R^bS)(z_i) = z_{k+i}$ if and only if $\frac{b}{n} -x_i = x_{k+i} - M(i)$ for  integers $M(i)$.  So if $(R^bS)(z_i) = z_{k+i}$ then $x_{i+1}-x_i = - ( x_{k+i+1} - M(i+1)-\frac{b}{n}) +  (x_{k+i}-M(i) -\frac{b}{n})= x_{k+i} - x_{k+i+1} + M(i+1)-M(i)$. The only way the left- and right-hand sides can both be between $0$ and $1$ is if $M(i+1)-M(i)=1$. Thus, $M(i)=M + i$, and $x_i+x_{k+i}=\frac{b}{n}+ M+i$. The converse implication is obvious.
 \item[{\it iv)}] We have $(R^bS)(z_i) = z_{k-i}$ if and only if $\frac{b}{n}-x_i = x_{k-i} - M(i)$ for  integers $M(i)$.  So $x_{i+1}-x_i = - ( x_{k-i-1} - M(i+1) -\frac{b}{n} ) +  (x_{k-i}  -M(i)-\frac{b}{n})= x_{k-i} - x_{k-i-1} + M(i)-M(i+1)$. The only way the left-hand side and right-hand side can both be between $0$ and $1$ is if $M(i+1)=M(i)$. Thus, $M(i)=M$ is independent of $i$, and $x_i+x_{k-i}=\frac{b}{n}+M$. The converse implication is obvious.
    \end{itemize} 
    \vspace{-.3cm}
\end{proof}

\begin{remark}\label{remk:rotationnumberhalf}
Assume that $z\in \Gamma^{\mathbb{Z}}$ has a rotation number, meaning that for any lift $x\in \Sigma$ of $z$, the limit $\omega:=\lim_{i\to \pm \infty} \frac{x_i}{i}$ exists. If in addition $R^a(z_i)=z_{k-i}$ for all $i\in \mathbb{Z}$, then it follows from part {\it ii)} of Lemma \ref{liftlemma} \changes{that $x_{k-i} - x_i = \frac{a}{n} + M -i$. Dividing both sides of this equation  by $i$ and taking  the limit $i\to \infty$, we obtain that $-2\omega=-1$, as $\lim_{i\to \infty} \frac{x_{k-i}}{i}=\lim_{i\to \infty} \frac{k-i}{i} \cdot\frac{x_{k-i}}{k-i} =-\omega$.} In other words, a billiard sequence that is invariant under a rotation acting time-reversing, can only have rotation number $\frac{1}{2}$. 
    
If $(R^bS)(z_i)=z_{k+i}$ for all $i\in \mathbb{Z}$, then part {\it iii)} of Lemma \ref{liftlemma} implies that any lift $x\in \Sigma$ of $z$ must satisfy $x_i = -x_{k+i} +M+i = -(-x_{k+(k+i)}+M+(k+i))+M+i = x_{2k+i}-k$. \changes{This means that $x\in \mathbb{X}_{2k,k}$, and in particular that it has rotation number $\frac{k}{2k}=\frac{1}{2}$.} We conclude that a billiard sequence that is invariant under a reflection acting time-preserving, is necessarily periodic, and  must have rotation number $\frac{1}{2}$. 
\end{remark}

\section{Symmetric periodic billiard sequences}\label{sec:symmetrygradientsection}
\noindent In this section, we classify periodic billiard sequences with dihedral spatiotemporal symmetry groups. We recall that we call a subgroup $H\subset \mathbb{D}_n$ {\it dihedral} if it contains at least one reflection. In fact, when $H\cong \mathbb{D}_N$ has order $2N$, then $N$ is a divisor of $n$, and $H$ is generated by a rotation of order $N$ and a reflection. 
Our first result is trivial, but we formulate it  for convenience and  completeness. 
\begin{lemma}\label{lem:periodicp=2}
Let $n\geq 2$ and let $\Gamma$ be a $\mathbb{D}_n$-symmetric billiard. Let $H\subset \mathbb{D}_n$ be a dihedral subgroup of order $2N$ (for $N\geq 1$), and let $z = (\ldots, z_{-1}, z_0, z_1, \ldots) \in \Gamma^{\mathbb{Z}}$ be a  billiard sequence of minimal period $p=2$ with spatiotemporal symmetry group $H$. Then $N=1$ or $N=2$. 
\end{lemma}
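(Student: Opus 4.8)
The plan is to exploit the rigidity of the period-$2$ condition. Since $z$ has minimal period $2$ one has $z_0\neq z_1$ and $z_i=z_j$ whenever $i\equiv j \pmod 2$; moreover the winding number of a $2$-periodic billiard sequence lies in $\{1,\dots,p-1\}=\{1\}$, so any lift of $z$ lies in $\mathbb{X}_{2,1}$, i.e.\ $x_{i+2}=x_i+1$. The heart of the argument is the claim that \emph{every rotation contained in $H=H(z)$ squares to the identity}; this bounds the order of the rotation subgroup of $H$ by $2$, and together with the standard structure of dihedral subgroups of $\mathbb{D}_n$ it forces $N\in\{1,2\}$.

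First I would prove: if $\rho=R^a\in H(z)$ is a rotation, then $\rho^2=\mathrm{id}$. If $\rho\in H^+(z)$, pick $k$ with $\rho(z_i)=z_{k+i}$ for all $i$; then $\rho^2(z_i)=\rho(z_{k+i})=z_{2k+i}=z_i$, because $2k$ is even and $z$ has period $2$. If $\rho\in H^-(z)$, pick $k$ with $\rho(z_i)=z_{k-i}$; then $\rho^2(z_i)=\rho(z_{k-i})=z_{k-(k-i)}=z_i$. In either case $\rho^2=R^{2a}$ fixes the point $z_0\in\Gamma$. Since $n\geq 2$, the rotation $R$ is nontrivial and $\Gamma$ is $R$-invariant, so the centroid of the billiard table is $R$-fixed, equals the origin, and is an interior point; hence $z_0\neq(0,0)$, and the only rotation about the origin fixing $z_0$ is the identity. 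Therefore $R^{2a}=\mathrm{id}$, i.e.\ $n\mid 2a$. (Alternatively, this step can be done purely on the level of the lift using Lemma \ref{liftlemma}: parts \emph{i)} and \emph{ii)} give $x_{k+i}-x_i=\tfrac an+M$ resp.\ $x_{k-i}-x_i=\tfrac an+M-i$, and iterating once while using $x_{i+2}=x_i+1$ yields $\tfrac{2a}{n}\in\mathbb{Z}$.)

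Next I would invoke the structure of $H$. As $H$ is dihedral of order $2N$ it contains a reflection $r$, and $h\mapsto rh$ is a bijection between the reflections in $H$ and the rotations in $H$; hence the rotation subgroup $H\cap\langle R\rangle$ has order exactly $N$. Being a subgroup of the cyclic group $\langle R\rangle$ of order $n$, it is $\langle R^{n/N}\rangle$ with $N\mid n$. Applying the previous step to $a=n/N$ gives $n\mid 2n/N$, i.e.\ $N\mid 2$, so $N=1$ or $N=2$, as claimed.

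I do not expect a serious obstacle; the lemma is elementary, as the authors note. The only points needing care are the clean case split between time-preserving and time-reversing rotations in the squaring argument, and the elementary remark that the origin (the common fixed point of $\mathbb{D}_n$, $n\geq 2$) is an interior point of the billiard table, so that no nontrivial rotation fixes a point of $\Gamma$ — this is precisely what excludes ``accidental'' rotations from $H(z)$. Routing the argument through Lemma \ref{liftlemma} instead makes it entirely self-contained at the cost of a couple of lines of bookkeeping with the lift.
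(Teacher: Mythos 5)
Your proof is correct and follows essentially the same route as the paper: the key point in both is that minimal period $2$ forces every rotation in $H(z)$ to square to a rotation fixing a point of $\Gamma$, hence to the identity, so the rotation subgroup of $H$ has order at most $2$. Your version merely spells out the time-preserving/time-reversing case split and the fixed-point argument in more detail than the paper does.
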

\begin{proof}
Let $z$ have period $p=2$ and assume that $|H| = 2N$ with $N>1$. This implies that $H=\langle \rho, \sigma\rangle$ is generated by a nontrivial rotation $\rho$ of order $N$ and a nontrivial reflection $\sigma$. It holds that $\rho(z_i)\neq z_i$ because $\rho$ does not have any fixed points on $\Gamma$. But then necessarily $\rho^{2}(z_i) =z_i$ because $z$ has period $2$, and therefore $\rho^2={\rm Id}$, i.e., $\rho$ has order $2$. Thus $N=2$. In general, there are no restrictions on the action of $\sigma$, so either $\sigma(z_i)=z_i$, in which case $z$ consists  of points that are fixed by $\sigma$, or $\sigma(z_i)\neq z_{i}$, in which case $\sigma$ interchanges the two distinct points on the orbit.
\end{proof}
\noindent Symmetric billiard sequences of period $p\geq 3$ can be of  distinct types, as described in the following lemmas. We distinguish between the cases $N=1, N=2$ and $N\geq 3$. We treat the latter case first.
\begin{lemma}\label{lem:periodicn=2}
Let $n\geq 3$ and let $\Gamma$ be a $\mathbb{D}_n$-symmetric billiard. Let $H=\langle \rho, \sigma\rangle \subset \mathbb{D}_n$ be a dihedral subgroup of order $2N$ with $N\geq 3$, generated by a rotation $\rho$ of order $N$ and a reflection $\sigma$. 

Let $z = (\ldots, z_{-1}, z_0, z_1, \ldots) \in \Gamma^{\mathbb{Z}}$ be a billiard sequence of minimal period $p\geq 3$ with spatiotemporal symmetry group $H$. 
Then $p$ is an integer multiple of $N$, there is a unique $1\leq M\leq N-1$ with $\gcd(M,N)=1$ such that $\rho^M(z_i)=z_{\frac{p}{N}+i}$, and there is a unique  $0\leq  k < p$ such that $\sigma(z_i)=z_{k-i}$. 
\end{lemma}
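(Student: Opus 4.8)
The plan is to analyze the action of the rotation $\rho$ and the reflection $\sigma$ on the periodic sequence $z$ separately, using the dictionary provided by Lemma \ref{liftlemma} to convert spatiotemporal symmetries into linear constraints on a lift $x\in\Sigma$. First I would observe that since $\rho$ has order $N\geq 3$, it cannot act as a time-reversing symmetry: by Remark \ref{remk:rotationnumberhalf}, a rotation acting time-reversing forces rotation number $\tfrac12$, but more to the point, if $\rho$ were both the source of a time-reversal, then $\rho^2$ would fix the orbit setwise as a time-preserving symmetry yet $\rho^2\neq\mathrm{Id}$ since $N\geq 3$ — I would need to check this rules out $\rho\in H^-(z)$ cleanly, or simply argue directly from part \emph{ii)} of Lemma \ref{liftlemma} that a time-reversing rotational symmetry pins the rotation number and hence (combined with periodicity) is incompatible with $\rho$ having order $\geq 3$ acting freely. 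So $\rho\in H^+(z)$: there is $k_\rho\in\mathbb{Z}$ with $\rho(z_i)=z_{k_\rho+i}$ for all $i$. Iterating, $\rho^j(z_i)=z_{jk_\rho+i}$, and since $\rho^N=\mathrm{Id}$ we get $z_{Nk_\rho+i}=z_i$, so $p\mid Nk_\rho$; conversely, since $\rho$ acts freely on $\Gamma$ the orbit of $z_0$ under $\langle\rho\rangle$ has exactly $N$ points, which must be among $z_0,\ldots,z_{p-1}$, forcing $N\mid p$. Writing $\rho=R^a$ and using part \emph{i)} of Lemma \ref{liftlemma}, $x_{k_\rho+i}-x_i=\tfrac{a}{n}+M'$ is constant; summing this over one $\rho$-cycle relates $k_\rho$ to $p/N$ up to the winding number, and a counting argument (the $N$ points $z_0,\rho z_0,\ldots,\rho^{N-1}z_0$ are cyclically ordered on $\Gamma$ exactly as $R^a,R^{2a},\ldots$ permutes the $N$-th roots of unity) identifies $k_\rho\equiv \tfrac{p}{N}M\pmod p$ for a unique $M$ with $\gcd(M,N)=1$, i.e. $\rho^M(z_i)=z_{\frac{p}{N}+i}$. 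The genuinely careful step here is extracting the coprimality of $M$ and $N$: it follows because $\rho^M$ must still have order $N$ (it generates the same cyclic group as $\rho$, since $\rho$ is a chosen generator), equivalently because the shift by $\tfrac{p}{N}$ iterated $N$ times returns to identity and no smaller iterate does, as $z$ has \emph{minimal} period $p$.

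Next I would handle the reflection. Since $\sigma$ is a reflection and, by Lemma \ref{lem:periodicp=2}-type reasoning, a reflection acting time-preserving forces rotation number $\tfrac12$ and period $2$ (Remark \ref{remk:rotationnumberhalf}), which is excluded by $p\geq 3$ together with $N\geq 3$ (period $2$ cannot carry a $\mathbb{D}_N$-symmetry with $N\geq 3$), the reflection $\sigma$ must act as a time-reversing symmetry: there is $k\in\mathbb{Z}$ with $\sigma(z_i)=z_{k-i}$ for all $i$. Reducing $k$ modulo $p$ gives $0\le k<p$; uniqueness of $k$ in this range follows because if $z_{k-i}=z_{k'-i}$ for all $i$ then $p\mid k-k'$. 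I should also confirm $\sigma$ is \emph{only} time-reversing (not also time-preserving), which again follows from the rotation-number obstruction unless we are in the $p=2$ degenerate case.

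The main obstacle I anticipate is the bookkeeping that pins down $M$ precisely and proves $\gcd(M,N)=1$ and its uniqueness: one must be careful that the geometric $\prec$-ordering of the $\rho$-orbit of a point on $\Gamma$ matches the arithmetic of multiplication by $a$ modulo $n$ restricted to the relevant $N$-element subgroup, and then transport this through the lift via Lemma \ref{liftlemma}\emph{i)}. A clean way to organize this: let $j_0\in\{1,\ldots,N-1\}$ be the unique index with $z_{\frac{p}{N}j_0}=\rho(z_0)$ — this exists and is unique because the $\rho$-orbit points occur among $z_0,z_{p/N},z_{2p/N},\ldots$ (which I'd verify from the constancy of $x_{k_\rho+i}-x_i$ forcing $k_\rho$ to be a multiple of $p/N$ — actually $k_\rho$ need not a priori be a multiple of $p/N$, so the real first sub-step is to show it is, using that $\rho$ permutes the $N$-element set $\{z_0,z_{k_\rho},\ldots\}$ and this set is $\rho$-invariant hence a full $\langle\rho\rangle$-orbit, of size $N$, with consecutive indices differing by $k_\rho$ and total span $p$, giving $Nk_\rho\equiv 0$ and the set $=\{z_0,z_{p/N},\ldots,z_{(N-1)p/N}\}$ only after knowing $\gcd(k_\rho,p)=p/N$). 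Then set $M=j_0$; that $\gcd(M,N)=1$ is forced because $z_0,\rho z_0,\rho^2 z_0,\ldots$ must cycle through all $N$ points before repeating (order of $\rho$ is exactly $N$), i.e. $\{0,M,2M,\ldots\}\bmod N=\mathbb{Z}/N$. I would present this as the crux and allocate the most care to it, treating the reflection part and the divisibility $N\mid p$ as comparatively routine consequences of freeness of the rotation action on $\Gamma$ and minimality of the period.
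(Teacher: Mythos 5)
Your treatment of the rotation is essentially the paper's argument and is sound in outline: if $\rho(z_i)=z_{k-i}$ then $\rho^2(z_i)=z_i$, and a nontrivial rotation fixes no point of $\Gamma$, so $\rho$ must act time-preserving; from $\rho(z_i)=z_{k_\rho+i}$ and $\rho^N=\mathrm{Id}$ one gets that $k_\rho$ has order exactly $N$ in $\mathbb{Z}/p\mathbb{Z}$ (again because $\rho^j$ fixes no point of $\Gamma$ for $0<j<N$), whence $N\mid p$. (Your phrasing ``$N$ points among $z_0,\dots,z_{p-1}$ forces $N\mid p$'' is not itself a proof, but you repair it in the final parenthetical; the paper instead writes $Nk_\rho=tp$, shows $\gcd(t,N)=1$, and applies B\'ezout.) One arithmetic slip: writing $k_\rho\equiv \frac{p}{N}t \pmod p$ with $\gcd(t,N)=1$, the power of $\rho$ shifting by exactly $\frac{p}{N}$ is $M=t^{-1}\bmod N$, not $M=t$; your ``$k_\rho\equiv\frac{p}{N}M$, i.e.\ $\rho^M(z_i)=z_{\frac{p}{N}+i}$'' (and later ``set $M=j_0$'') conflates the two, since $\rho^{j_0}$ shifts by $j_0^2\frac{p}{N}$. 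This is easily fixed and does not affect coprimality or uniqueness.

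The genuine gap is in the reflection part. You rule out $\sigma(z_i)=z_{k+i}$ by asserting that a time-preserving reflection ``forces rotation number $\frac12$ and period $2$'' via Remark \ref{remk:rotationnumberhalf}. That remark only gives rotation number $\frac12$ and periodicity ($x\in\mathbb{X}_{2k,k}$), \emph{not} minimal period $2$: the paper's own type II and type V orbits (Lemmas \ref{lem:periodicngeq3} and \ref{lem:D1orbitlemma}, Figures therein) have a reflection acting time-preserving with arbitrarily large minimal period. So your stated obstruction does not exist, and the rotation-number constraint alone is not obviously incompatible with $N\geq 3$. The paper's argument uses the dihedral relation $\sigma\rho^M\sigma^{-1}=\rho^{-M}$: if $\sigma(z_i)=z_{k+i}$, then
$$z_{k-\frac{p}{N}+i}=(\rho^{-M}\sigma)(z_i)=(\sigma\rho^{M})(z_i)=z_{k+\frac{p}{N}+i}\quad\text{for all }i,$$
forcing $\frac{2p}{N}\equiv 0 \pmod p$, i.e.\ $N\mid 2$, contradicting $N\geq 3$. (Alternatively: if $\sigma$ were time-preserving then all of $H$ would act time-preserving, giving an injective homomorphism from the non-abelian group $H$ into the abelian group of shifts $\mathbb{Z}/p\mathbb{Z}$.) Without some such use of the interaction between $\rho$ and $\sigma$, the claim that $\sigma$ is time-reversing is unproven.
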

\begin{proof}
Let $H$ be a dihedral subgroup of $\mathbb{D}_n = \langle R, S\rangle$ of order $2N\geq 6$. Then $N$ is a divisor of $n$, and $H$ is generated by a rotation $\rho$ of order $N$ and a reflection $\sigma$. Let $z$ have spatiotemporal symmetry group $H$. If $\rho(z_i)=z_{k-i}$ for some $k\in \mathbb{Z}$, then $\rho^2(z_i)=z_{k-(k-i)}=z_i$. This means that all $z_i$ are fixed by $\rho^2$, which is impossible because $N\geq 3$, so that $\rho^{2}\neq {\rm Id}$ is a nontrivial rotation. So it must hold that $\rho(z_i)=z_{k+i}$ for some (unique) $0< k < p$.  Note that the possibility that $k=0$ is excluded, because then $\rho(z_i)=z_i$, and all $z_i$ would be fixed by $\rho$.

From the fact that $\rho(z_i)=z_{k+i}$, it follows that  $z_i=\rho^{N}(z_i)=z_{Nk+i}$, which in turn implies that $Nk = tp$ for some $t\in \mathbb{N}$. We claim that $\gcd(t,N)=1$. If not then we could divide $t$ and $N$ by this common divisor to find $\tilde N K = \tilde t p$ for $\tilde N < N$. But then $\rho^{\tilde N}(z_i) = z_{i+\tilde N k} = z_{i+\tilde t p} = z_i$. This would mean that $\rho$ would have order $\tilde N < N$, which is nonsense. In particular, there are $s,M\in \mathbb{Z}$ so that $sN+tM=1$. It holds that $\gcd(M,N)=1$, and if we ask that $1\leq M\leq N-1$ then this makes $M$ unique. It follows that $p=sNp+tMp = sNp+MNk = N(sp+Mk)$, which shows that $p$ is an integer multiple of $N$ (the first statement of the lemma), and hence that $k=t\frac{p}{N}$.  We thus  find that $\rho^M(z_i)=z_{Mk+i}= z_{Mt\frac{p}{N}+i}=z_{\frac{p}{N}-sp+i} = z_{\frac{p}{N}+i}$. This proves the second statement of the theorem. 

We claim that it is not possible that $\sigma(z_i)=z_{k+i}$ for some $k\in \mathbb{Z}$.  Indeed, this would imply that $z_{-\frac{p}{N} + k+i} = (\rho^{-M}\sigma)(z_i) = (\sigma \rho^M)(z_i) = z_{\frac{p}{N} + k+i}$, so $-\frac{p}{N}=\frac{p}{N}\!\mod \, p$, or $\frac{2p}{N}=0\!\mod\, p$. Because $N\geq 3$,  this is  impossible. We conclude that $\sigma(z_i)=z_{k-i}$ for some (unique) $0\leq k < p$ (the third statement of the lemma).  
\end{proof}
\noindent The next lemma describes the case that $N=2$ and $p\geq 3$.
\begin{lemma}\label{lem:periodicngeq3}
Let $\Gamma$ be $\mathbb{D}_n$-symmetric and let $H=\langle \rho, \sigma\rangle \subset \mathbb{D}_n$ be a dihedral subgroup of order $4$ generated by a rotation $\rho$ of order $2$ and a reflection $\sigma$.   Let $z = (\ldots, z_{-1}, z_0, z_1, \ldots) \in \Gamma^{\mathbb{Z}}$ be a billiard sequence of minimal period $p\geq 3$  with spatiotemporal symmetry group $H$. Then $p$ is even, and either
\begin{itemize}
\item[{\rm I)}] The rotation $\rho$ acts time-preserving and the reflection $\sigma$ acts time-reversing. More precisely, $\rho(z_i)=z_{\frac{p}{2}+i}$, and $\sigma(z_i)=z_{k-i}$ for a unique $0\leq  k < p$; or
\item[{\rm II)}] The rotation $\rho$ acts time-reversing and one of the two reflections in $H$ acts time-preserving. More precisely, $\rho(z_i)=z_{k-i}$ for a unique odd $0<k<p$, and either $\sigma(z_i)=z_{\frac{p}{2}+i}$, or $(\rho \sigma)(z_i) = z_{\frac{p}{2}+i}$. 
\end{itemize}
\end{lemma}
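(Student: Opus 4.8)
The plan is to pin down, one group element at a time, how the rotation $\rho$ and the two reflections $\sigma,\rho\sigma$ in $H=\{e,\rho,\sigma,\rho\sigma\}$ act on $z$. Three elementary facts will be used throughout. First, the origin is interior to $\Gamma$: it is the unique point of $\mathbb{R}^2$ fixed by $\mathbb{D}_n$, and it is the centroid of the $\mathbb{D}_n$-orbit of any interior point, hence interior by convexity. Consequently the nontrivial rotation $\rho$ has no fixed point on $\Gamma$, and each reflection in $H$ fixes exactly the two points where its axis meets $\Gamma$. Second, a billiard sequence of minimal period $p\geq 3$ takes at least three distinct values (otherwise it would alternate between two values and have period $1$ or $2$); in particular it cannot be pointwise fixed by a single reflection. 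Third, if $g(z_i)=z_{a+i}$ for all $i$ then $a\bmod p$ is determined, since $z_{a+i}=z_{a'+i}$ for all $i$ forces $p\mid a-a'$; likewise for $g(z_i)=z_{a-i}$. (Everything below may alternatively be phrased in terms of lifts via Lemma \ref{liftlemma}.)

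First I would distinguish whether $\rho$ acts time-preservingly. If $\rho(z_i)=z_{k_1+i}$, then $\rho^2=\mathrm{Id}$ gives $z_{2k_1+i}=z_i$, so $p\mid 2k_1$; since $\rho$ has no fixed point on $\Gamma$ we have $k_1\not\equiv 0\pmod p$, hence $p$ is even and $\rho(z_i)=z_{p/2+i}$. In this situation $\sigma$ must act time-reversingly: were $\sigma(z_i)=z_{c+i}$, the same argument would give $c\equiv 0$ or $c\equiv p/2\pmod p$; the case $c\equiv 0$ is impossible ($\sigma$ would fix every $z_i$), while $c\equiv p/2$ gives $\sigma(z_i)=z_{p/2+i}=\rho(z_i)$, so the reflection $\rho\sigma$ would fix every $z_i$ — again impossible. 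Hence $\sigma(z_i)=z_{k-i}$ for a unique $0\leq k<p$, which is alternative I.

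If instead $\rho$ does not act time-preservingly, then, as $\rho\in H\subseteq H(z)$, it acts time-reversingly: $\rho(z_i)=z_{k-i}$ for some $k$. One of the reflections $\sigma,\rho\sigma$ must then act time-preservingly: if both acted time-reversingly, say $\sigma(z_i)=z_{l-i}$, then $(\rho\sigma)(z_i)=\rho(z_{l-i})=z_{(k-l)+i}$ would be a time-preserving action of $\rho\sigma$. Let $\tau\in\{\sigma,\rho\sigma\}$ with $\tau(z_i)=z_{c+i}$; then $\tau^2=\mathrm{Id}$ gives $p\mid 2c$, the case $c\equiv 0\pmod p$ is impossible ($\tau$ is a reflection and would fix every $z_i$), so the only remaining possibility is $c\equiv p/2\pmod p$, forcing $p$ even and $\tau(z_i)=z_{p/2+i}$. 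With $p$ even, I would finally show $k$ is odd: if $k$ were even, then since every even residue modulo the even number $p$ equals $2j$ for some $j$, there is a $j$ with $2j\equiv k\pmod p$, whence $z_{k-j}=z_{j+(k-2j)}=z_j$ by $p$-periodicity and $\rho(z_j)=z_j$, contradicting that $\rho$ has no fixed point on $\Gamma$. This gives alternative II, with $\sigma(z_i)=z_{p/2+i}$ if $\tau=\sigma$ and $(\rho\sigma)(z_i)=z_{p/2+i}$ if $\tau=\rho\sigma$; uniqueness of the odd $k$ follows from the third fact above, and $0<k<p$ because $k$ is odd.

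The only delicate point is obtaining the parity of $k$ in alternative II without circular reasoning: the argument needs $p$ even, which is not available a priori and only emerges once a reflection of $H$ has been shown to act time-preservingly. After that, the no-fixed-point property of the order-two rotation, combined with the bare $p$-periodicity of $z$ — and here one must resist using unique forward continuation, since $z$ is only assumed to be a billiard sequence, not necessarily an orbit — finishes the proof. Everything else is bookkeeping with shifts modulo $p$.
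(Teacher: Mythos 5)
Your proof is correct and follows essentially the same route as the paper's: split on whether $\rho$ acts time-preservingly, use $g^2=\mathrm{Id}$ to force any time-preserving shift to equal $p/2$, and use fixed-point counts (no fixed points on $\Gamma$ for $\rho$, only two for a reflection, versus at least three distinct values of $z$) to exclude trivial shifts. The only cosmetic difference is in case II: the paper deduces $k$ odd and then $p$ even directly from $\rho(z_i)=z_{k-i}$ alone (via $\rho(z_{k/2})=z_{k/2}$ and $\rho(z_{(k+p)/2})=z_{(k+p)/2}$), whereas you obtain $p$ even from the time-preserving reflection first and then the parity of $k$; both orderings are valid.
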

Two examples of orbits of type {\rm I} and two of type {\rm II} are presented in Figures \ref{fig:D2typeI} and \ref{fig:D2typeII}, respectively.
\begin{proof}
Let $H$ be as in the assumptions of the lemma and let $z$ have spatiotemporal symmetry group $H$. Note that necessarily $\rho = R^{n/2}$ because $\rho^2={\rm Id}$.  We first consider the case that $\rho$ acts time-preserving, i.e., that $\rho(z_i) = z_{k+i}$ for some $0< k < p$ (note that the case $k=0$ is excluded because then $\rho(z_i)=z_i$ whereas $\rho$ is a nontrivial rotation). It follows that  $z_i=\rho^2(z_i)=z_{i+2k}$, and we conclude that $p$ is even and $k =\frac{p}{2}$. 
    
We may similarly analyze the case that $\rho$ acts time-reversing, i.e., that $\rho(z_i)=z_{k-i}$. It follows from this that $k$ must be odd, as otherwise $\rho(z_{\frac{k}{2}})= z_{\frac{k}{2}}$. In turn, this implies that $p$ must be even; otherwise, $k+p$ would be even and 
$$\rho\left(z_{\frac{k+p}{2}}\right) = z_{k-\frac{k+p}{2}} = z_{\frac{k-p}{2}} = z_{\frac{k-p}{2}+p} = z_{\frac{k+p}{2}}.$$
To summarize, we proved that $p$ is always even, and that $\rho(z_i)=z_{\frac{p}{2}+i}$, or $\rho(z_i)=z_{k-i}$ for an odd $0<k<p$. 
    
Next, we consider the case that $\sigma$ acts time-preserving, i.e., that  $\sigma(z_i)=z_{k+i}$ for some $0< k<p$ (the option that $k=0$ is excluded as it would imply that $S$ has more than two fixed points on $\Gamma$, because $p\geq 3$). Then $z_i=\sigma^2(z_i)=z_{2k+i}$, so $k=\frac{p}{2}$. So if $\sigma$ acts time-preserving, then $\sigma(z_i)=z_{\frac{p}{2}+i}$. By exactly the same argument, if $\rho\sigma$ acts time-preserving, then $(\rho\sigma)(z_i)=z_{\frac{p}{2}+i}$.

After studying the possible  actions of $\rho$ and $\sigma$ on $z$, we now claim that if $\rho$ acts time-preserving, then $\sigma$ cannot act  time-preserving.  Indeed, this would imply that $(\rho\sigma)(z_i)=z_{\frac{p}{2} + \frac{p}{2} + i} =z_i$ for all $i\in \mathbb{Z}$, i.e., that $z$ consists entirely of fixed points of the reflection $\rho\sigma$, which is only possible if $p=2$. By exactly the same argument, $\rho$ and $\rho\sigma$ cannot both act time-preserving. So if $\rho$ acts time-preserving, then $\sigma$ and $\rho\sigma$ must act time-reversing. This is case I. 
   
In case $\sigma$ or $\rho\sigma$ acts time-preserving, then clearly $\rho$ must act time-reversing. This is case II. It is also clear that cases I and II are mutually exclusive.
\end{proof}
\noindent Our final lemma describes the case that $N=1$ and $p\geq 3$.
\begin{lemma}\label{lem:D1orbitlemma}
Let $\Gamma$ be $\mathbb{D}_n$-symmetric and let $H=\langle \sigma\rangle \subset \mathbb{D}_n$ be a dihedral subgroup of order $2$ generated by a reflection $\sigma$.   Let $z = (\ldots, z_{-1}, z_0, z_1, \ldots) \in \Gamma^{\mathbb{Z}}$ be a  billiard sequence of minimal period $p\geq 3$ with spatiotemporal symmetry group $H$. 

When the rotation number of $z$ is not equal to $\frac{1}{2}$, then $\sigma(z_i)=z_{k-i}$ for a unique $0<k<p$. When the rotation number of $z$  equals $\frac{1}{2}$, then either 
\begin{itemize}
\item[{\rm III)}]  There is a unique $0<k<p$ such that $\sigma(z_i)=z_{k-i}$; or
\item[{\rm IV)}] The period $p$ is even, and $\sigma(z_i)=z_{\frac{p}{2}+i}$; or
\item[{\rm V)}] Both {\rm III} and {\rm IV} hold. In this case there is an $a\in \mathbb{Z}$ such that  for every $t\in \mathbb{Z}$ it holds that $z_{\left(a+\frac{tp}{2}\right)+i} =  z_{\left(a+\frac{tp}{2}\right)-i}$ for all $i\in \mathbb{Z}$.
\end{itemize}  
\end{lemma}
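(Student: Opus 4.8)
The plan is to case-split on the way the single nontrivial element $\sigma$ of $H=\langle\sigma\rangle$ acts on the orbit $z$ — time-preservingly, time-reversingly, or both — and to combine this with Remark~\ref{remk:rotationnumberhalf}, which forces any billiard sequence admitting a time-preserving reflection to have rotation number $\tfrac12$. All of the case formulas will be read off directly from the definitions (or from Lemma~\ref{liftlemma}), and the uniqueness statements will follow from minimality of the period.

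First I would record the setup. Since $z$ has spatiotemporal symmetry group exactly $H$, we have $\sigma\in H^+(z)\cup H^-(z)$; that is, either $\sigma(z_i)=z_{k+i}$ for all $i$, or $\sigma(z_i)=z_{k-i}$ for all $i$, for a suitable $k\in\mathbb{Z}$. In either situation $k$ is determined modulo the minimal period $p$, because two admissible shifts differ by a period of $z$, hence by a multiple of $p$; this accounts for every uniqueness claim. If the rotation number of $z$ is not $\tfrac12$, then Remark~\ref{remk:rotationnumberhalf} rules out $\sigma$ being time-preserving, so necessarily $\sigma\in H^-(z)$, i.e.\ $\sigma(z_i)=z_{k-i}$, which is the first assertion.

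Next, assume the rotation number is $\tfrac12$; writing the winding number as $q$ we get $q/p=\tfrac12$, so $p$ is even. I would then run the trichotomy $\sigma\in H^-(z)\setminus H^+(z)$ / $\sigma\in H^+(z)\setminus H^-(z)$ / $\sigma\in H^+(z)\cap H^-(z)$, which is exhaustive and mutually exclusive since $\sigma\in H^+(z)\cup H^-(z)$. The first case is exactly~III. In the second case $\sigma(z_i)=z_{k+i}$, and the shift $k$ cannot be $0$: otherwise every $z_i$ would be one of the (at most two) points of $\Gamma$ fixed by $\sigma$, whereas a billiard sequence of minimal period $p\ge 3$ takes at least three distinct values, since a billiard sequence assuming only two values alternates and hence has period $2$. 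Applying $\sigma$ twice gives $z_i=z_{2k+i}$, so $2k$ is a positive multiple of $p$ with $0<k<p$, forcing $2k=p$ and $\sigma(z_i)=z_{p/2+i}$, which is~IV. In the third case both of the previous descriptions hold, and equating them gives $z_{p/2+i}=z_{k-i}$ for all $i$, i.e.\ $z_j=z_{c-j}$ for all $j$, where $c:=k+\tfrac p2\in\mathbb{Z}$.

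The crux is the last case, and it is the only step with genuine content: I must show $c$ is even. If $c$ were odd, then $j:=(c-1)/2$ is an integer and the relation $z_j=z_{c-j}$ reads $z_j=z_{j+1}$, contradicting the defining property $z_i\ne z_{i+1}$ of a billiard sequence. Hence $c=2a$ with $a\in\mathbb{Z}$, so $z_{a+i}=z_{a-i}$ for all $i$; substituting $a+\tfrac{tp}{2}$ for $a$ and using $z_{j+p}=z_j$ then yields $z_{(a+tp/2)+i}=z_{(a+tp/2)-i}$ for all $t,i\in\mathbb{Z}$, which is~V. I expect this parity argument — recognising that a half-integer palindrome centre is incompatible with a billiard sequence — to be the main obstacle; the rest is routine manipulation of the symmetry relations together with Remark~\ref{remk:rotationnumberhalf}.
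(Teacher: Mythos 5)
Your proof is correct and follows essentially the same route as the paper: the time-reversing and time-preserving cases are handled exactly as in the proof of Lemma \ref{lem:periodicngeq3} (which the paper simply cites, while you reprove the two facts inline), and the crux of case {\rm V} — showing the palindrome centre $c=k+\frac{p}{2}$ must be even because an odd centre would force $z_j=z_{j+1}$ — is identical to the paper's parity argument, as is the final shift-by-$\frac{tp}{2}$ computation. No gaps.
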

Two examples of orbits of type {\rm V} are presented in Figure \ref{fig:lemma54}.
\begin{proof}
It was already shown in the proof of Lemma \ref{lem:periodicngeq3} that if $\sigma$ acts time-reversing, then  $\sigma(z_i)=z_{k-i}$ for a unique $0<k<p$. It was also shown that, if $\sigma$ acts time-preserving, then $p$ is even and $\sigma(z_i) = z_{\frac{p}{2}+i}$. It follows from Remark \ref{remk:rotationnumberhalf} that $z$ must have rotation number $\frac{1}{2}$ then. 
    
These two cases are not necessarily exclusive: when $z$ has rotation number $\frac{1}{2}$, then it may happen that $\sigma(z_i)=z_{\frac{p}{2}+i} = z_{k-i}$. In this situation, we have that $z_i = \sigma^2(z_i) = z_{(\frac{p}{2}+k) - i}$ for all $i\in \mathbb{Z}$. We claim that $\frac{p}{2}+k$ must be even then. Indeed, if $\frac{p}{2}+k = 2b+1$ would be odd, then $z_b=z_{(\frac{p}{2}+k) - b} = z_{2b+1-b}=z_{b+1}$, so $z$ would not  be a billiard sequence (as two of its consecutive coordinates are equal). So $\frac{p}{2}+k = 2a$ is even. It follows that for any $i, t\in \mathbb{Z}$ we have  $z_{\left(a+\frac{tp}{2}\right)+i} =   z_{2a-\left(a+\frac{tp}{2}+i\right)} =  z_{a-\frac{tp}{2}-i} = z_{a-\frac{tp}{2} + tp-i} =  z_{\left(a+\frac{tp}{2}\right)-i}$.  
In other words, the sequence is ``symmetric'' around $i=a+\frac{tp}{2}$.  
\end{proof}
\begin{remark} \label{remk:typeVremark} Let $z$ be a billiard sequence of minimal period $p>2$, such that $H(z)$ contains a reflection $\sigma$ acting both time-preserving and time-reversing. We claim that then $z$ is of type ${\rm V}$, that is, $H(z)=\langle \sigma \rangle$.

Indeed, if $H(z)$ was larger than $\langle \sigma \rangle$, then it would contain at least one rotation $\rho$, and it would hold that $\rho  = \sigma \rho^{-1}\sigma$. 
Because $\sigma$ acts both time-preserving and time-reversing, it follows that if $\rho$ (and hence also $\rho^{-1}$) acts time-preserving, then $\rho$ also acts time-reversing, and vice versa. We conclude that $\rho$ must have order $2$, so $H(z)\cong \mathbb{D}_2$, and that in fact each element of $H(z)$ acts both time-preserving and time-reversing. In particular, we have that $\sigma(z_i)=z_{\frac{p}{2}+i}$ and $\rho(z_i)=z_{\frac{p}{2}+i}$. But this would mean that the rotation $\rho$ and the reflection $\sigma$ act the same on at least three distinct point on $\Gamma$, a contradiction.

\end{remark}

\section{The gradient flow of the length functional}\label{sec:gradientflowsection}
\noindent  A crucial tool used in this paper is the so-called \emph{gradient flow} or \emph{curve-lengthening flow}. It is defined as the flow of the differential equation
\begin{equation}
\label{eq:gradientflow}
\dot x_i = F_i(x) := \partial_2 L(x_{i-1}, x_i) + \partial_1 L(x_i,x_{i+1})    \ \mbox{for} \ i\in \mathbb{Z}\, .
\end{equation}
Note that the equilibrium points of \eqref{eq:gradientflow} are precisely the solutions to \eqref{eq:recrelgeneral}. A solution curve $x(t)\in \Sigma$ of \eqref{eq:gradientflow}, defined for $0\leq t < t_0$ with $t_0\in (0,\infty]$, will be called a {\it gradient flow line}. Although the right-hand side of \eqref{eq:gradientflow} is well-defined when $x\in\Sigma$, gradient flow lines may not exist for all initial condition $x(0)\in \Sigma$. 
However, we can restrict equation \eqref{eq:gradientflow} to a subset of $\Sigma$ of the form
\begin{equation}\label{def:sigmadelta}
\Sigma_{\delta} :=\{ x \in\mathbb{R}^{\mathbb{Z}} \, |\, \delta  \leq  x_{i+1} - x_i \leq 1-\delta \ \mbox{for all}\ i\in \mathbb{Z}\, \}\ \mbox{ for some }\ 0 < \delta < 1/2\, .
\end{equation}
The right-hand side of \eqref{eq:gradientflow} defines a vector field that is uniformly Lipschitz continuous on each $\Sigma_{\delta}$ with respect to the norm $\|x\| = \sup_{i\in\mathbb{Z}} |x_i|$. It follows from this that for any initial condition $x(0)\in \Sigma_{2\delta}$ there is a locally unique gradient flow line $x(t)\in \Sigma_{\delta}$ (with $0\leq t<t_0$). In other words, the initial value problem for \eqref{eq:gradientflow} possesses the local-in-time existence and uniqueness property on  $\bigcup_{\delta >0} \Sigma_{\delta}$. See \cite{mramor2012ghost} for a proof of these facts. 

In the remainder of this section, we present several fundamental properties of the gradient flow. The first is the simple observation that it restricts to a finite-dimensional ODE on each of the affine spaces of $(p,q)$-periodic sequences. 
\begin{proposition}\label{prop:Xpqpreservation}
Let $\delta>0$ and let $x(t)\in \Sigma_{\delta}$ be a gradient flow line in $\Sigma_{\delta}$, defined for $0\leq t<t_0$. 
If  $x(0) \in \mathbb{X}_{p,q}$, then $x(t)\in \mathbb{X}_{p,q}$ for all $0\leq t<t_0$.
\end{proposition}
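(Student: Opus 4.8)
The plan is to show that the affine space $\mathbb{X}_{p,q}$ is invariant under the gradient flow by combining the equivariance of the right-hand side of \eqref{eq:gradientflow} under the shift-and-add operation $x \mapsto \tau x$, where $(\tau x)_i := x_{i+p} - q$, with the local uniqueness of gradient flow lines in $\bigcup_{\delta>0}\Sigma_\delta$. The key point is that $\mathbb{X}_{p,q}$ is precisely the fixed-point set of $\tau$, and a flow commuting with $\tau$ must preserve $\mathrm{Fix}(\tau)$.

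First I would record the symmetry of the vector field $F=(F_i)_{i\in\mathbb{Z}}$ from \eqref{eq:gradientflow}. Since $L$ is $1$-periodic in each argument, $\partial_1 L$ and $\partial_2 L$ are also $1$-periodic in each argument, so $\partial_j L(x_{i-1}+c, x_i+c) = \partial_j L(x_{i-1},x_i)$ whenever $c\in\mathbb{Z}$. Applying this with the shift $(\tau x)_i = x_{i+p}-q$ and $c=-q\in\mathbb{Z}$ gives
\begin{align*}
F_i(\tau x) &= \partial_2 L(x_{i+p-1}-q,\, x_{i+p}-q) + \partial_1 L(x_{i+p}-q,\, x_{i+p+1}-q)\\
&= \partial_2 L(x_{i+p-1},\, x_{i+p}) + \partial_1 L(x_{i+p},\, x_{i+p+1}) = F_{i+p}(x) = (\tau F(x))_i,
\end{align*}
i.e. $F\circ\tau = \tau\circ F$. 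Also $\tau$ maps $\Sigma_\delta$ to itself for every $\delta$, since the increments $x_{i+1}-x_i$ are unchanged. Next I would observe that $\mathbb{X}_{p,q} = \{x : \tau x = x\}$: this is immediate from the definition of $\mathbb{X}_{p,q}$.

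Now suppose $x(t)\in\Sigma_\delta$ is a gradient flow line with $x(0)\in\mathbb{X}_{p,q}$. Consider the curve $y(t) := \tau x(t)$. Since $\tau$ is an affine isometry of $(\mathbb{R}^\mathbb{Z},\|\cdot\|_\infty)$ mapping $\Sigma_\delta$ to $\Sigma_\delta$, and $F\circ\tau=\tau\circ F$, we have $\dot y(t) = \tau \dot x(t) = \tau F(x(t)) = F(\tau x(t)) = F(y(t))$, so $y(t)$ is also a gradient flow line in $\Sigma_\delta$, with the same initial condition $y(0) = \tau x(0) = x(0)$. By the local-in-time uniqueness of gradient flow lines (valid on $\bigcup_{\delta>0}\Sigma_\delta$, as recalled just before the proposition, with the initial condition here lying in $\Sigma_\delta$ and hence in some $\Sigma_{2\delta'}$ after shrinking $\delta$ slightly, or simply invoking uniqueness directly on $\Sigma_\delta$), we conclude $y(t) = x(t)$ for all $0\le t<t_0$, that is, $\tau x(t) = x(t)$, i.e. $x(t)\in\mathbb{X}_{p,q}$ for all $t$.

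I do not anticipate a serious obstacle here; this is essentially a bookkeeping argument. The only mild subtlety is invoking the uniqueness statement correctly: uniqueness is stated in the excerpt for initial data in $\Sigma_{2\delta}$ giving a flow line in $\Sigma_\delta$, so I would either note that $x(0)\in\Sigma_\delta$ implies $x(0)\in\Sigma_{2\delta'}$ for $\delta' = \delta/2$ and both $x(t)$ and $y(t)$ remain in $\Sigma_{\delta'}$, or simply appeal to the fact (also recorded there, via \cite{mramor2012ghost}) that gradient flow lines through a given point of $\bigcup_{\delta>0}\Sigma_\delta$ are unique. Either way the conclusion follows.
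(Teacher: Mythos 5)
Your proposal is correct and is essentially the paper's argument: both rest on the same computation that the integer-translation invariance $L(x+q,X+q)=L(x,X)$ makes $F$ commute with the shift $x_i\mapsto x_{i+p}-q$, followed by uniqueness of gradient flow lines. The only cosmetic difference is that the paper phrases this as "$F$ is tangent to $\mathbb{X}_{p,q}$" while you phrase it as "$\mathbb{X}_{p,q}$ is the fixed-point set of a map commuting with the flow"; your version spells out the uniqueness step slightly more carefully, which is harmless.
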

\begin{proof}
          Because $L(x+q, X+q) = L(x,X)$ for any $q\in \mathbb{Z}$, we have that $\partial_1 L(x+q, X+q) = \partial_1 L(x,X)$  and $\partial_2 L(x+q, X+q) = \partial_2 L(x,X)$. Thus, when $x\in \mathbb{X}_{p,q}$,
    \begin{align} \nonumber 
       F_{p+i}(x) & = \partial_2 L(x_{(p+i)-1}, x_{p+i}) + \partial_1 L(x_{p+i}, x_{(p+i)-1}) \\ \nonumber 
    & = \partial_2 L(x_{i-1}+q, x_{i}+q) + \partial_1 L(x_{i}+q, x_{i-1}+q) 
   \\ \nonumber & =
      \partial_2 L(x_{i-1}, x_{i}) + \partial_1 L(x_{i}, x_{i-1}) = F_i(x) \, .  
    \end{align}
   This shows that $F$ is tangent to $\mathbb{X}_{p,q}$. By uniqueness of solutions,  $x(t)\in\mathbb{X}_{p,q}$ as long as $x(t)\in \Sigma_{\delta}$.
\end{proof}

\noindent An analogous result holds for symmetric sequences:  
\begin{lemma}\label{lem:symmetrypreservation}
Let $\delta>0$ and let $x(t)\in \Sigma_{\delta}$ be a gradient flow line in $\Sigma_{\delta}$, defined for $0\leq t<t_0$. If $x(0)$ satisfies one or more of the equalities in Lemma \ref{liftlemma}, then so does $x(t)$ for all $0\leq t<t_0$. In particular, if $z(0):=\gamma(x(0))$ is $H$-symmetric, with $H\subset \mathbb{D}_n$ a subgroup, then so is $z(t):=\gamma(x(t))$. 
\end{lemma}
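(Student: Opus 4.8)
The plan is to exploit the fact that the right-hand side $F$ of the gradient flow \eqref{eq:gradientflow} inherits a list of (anti-)equivariance properties from the invariances of the Lagrangian $L$ — the very invariances already used in Remark \ref{remk:autonomousreversible} and in the proof of Proposition \ref{prop:symaction}. Given a gradient flow line $x(t)\in\Sigma_\delta$ (for $0\le t<t_0$) and one of the four equalities from Lemma \ref{liftlemma}, I will introduce a new sequence $y(t)$ defined by the affine substitution that this equality prescribes, show that $y(t)$ is again a gradient flow line lying in the \emph{same} $\Sigma_\delta$, and note that the equality at $t=0$ says precisely $y(0)=x(0)$. The local uniqueness of gradient flow lines in $\bigcup_{\delta>0}\Sigma_\delta$ recalled above then forces $y(t)=x(t)$ for all $0\le t<t_0$, which is the equality at time $t$. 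The last sentence of the lemma follows immediately: by Lemma \ref{liftlemma}, $H$-symmetry of $z(0)=\gamma(x(0))$ is equivalent to $x(0)$ satisfying one such equality for each of a generating set of elements of $H$; each of these is preserved along the flow by the first part; and Lemma \ref{liftlemma} translates them back into $H$-symmetry of $z(t)=\gamma(x(t))$.

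The equivariance properties of $F$ that I will need are the following, each of which I would verify by a one-line computation exactly like those in Remark \ref{remk:autonomousreversible}. Writing $(\tau_k x)_i:=x_{k+i}$, $(r_k x)_i:=x_{k-i}$ and $(-x)_i:=-x_i$, one has $F_i(\tau_k x)=F_{k+i}(x)$ (autonomy), $F_i(r_k x)=F_{k-i}(x)$ (reversibility, from $L(x,X)=L(X,x)$, hence $\partial_1 L(x,X)=\partial_2 L(X,x)$), and $F_i(-x)=-F_i(x)$ (from $L(-x,-X)=L(x,X)$, which yields $\partial_1 L(-x,-X)=-\partial_1 L(x,X)$ and $\partial_2 L(-x,-X)=-\partial_2 L(x,X)$). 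In addition, since $L(x+c,X+c)=L(x,X)$ for every $c\in\frac{1}{n}\mathbb{Z}+\mathbb{Z}$ (iterate \eqref{eq:invarianceL} and use that $\gamma$ is $1$-periodic), and since $L$ and its partial derivatives are $\mathbb{Z}^2$-periodic, the value $F_i(x)$ is unchanged when one adds to $x$ a constant sequence with entries in $\frac{1}{n}\mathbb{Z}+\mathbb{Z}$, and also when one adds the arithmetic sequence $i\mapsto di$ with $d\in\mathbb{Z}$; the latter because $\partial_1 L(x_{i-1}+d(i-1),x_i+di)=\partial_1 L(x_{i-1},x_i)$ and likewise for $\partial_2 L$. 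Finally, each of the affine substitutions below maps $\Sigma_\delta$ into itself, since it only replaces a successive increment $x_{i+1}-x_i$ by itself or by $1-(x_{i+1}-x_i)$, both of which again lie in $[\delta,1-\delta]$.

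Assembling these ingredients will be routine. For an equality of type {\it i)}, $x_{k+i}-x_i=\frac{a}{n}+M$, I would set $y:=\tau_k x-(\frac{a}{n}+M)$, so that $\dot y_i=F_{k+i}(x)=F_i(\tau_k x)=F_i(y)$. For type {\it iv)}, $x_i+x_{k-i}=\frac{b}{n}+M$, I would set $y_i:=\frac{b}{n}+M-x_{k-i}$, so that $\dot y_i=-F_{k-i}(x)=-F_i(r_k x)=F_i(-r_k x)=F_i(y)$. For types {\it ii)} and {\it iii)} one additionally absorbs the term linear in $i$: for {\it ii)}, $x_{k-i}-x_i=\frac{a}{n}+M-i$, take $y_i:=x_{k-i}+i-\frac{a}{n}-M$; for {\it iii)}, $x_i+x_{k+i}=\frac{b}{n}+M+i$, take $y_i:=\frac{b}{n}+M+i-x_{k+i}$. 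In each case $y$ is obtained from $x$ by composing a time shift (or reversal), a sign flip, a constant shift, and the addition of the sequence $i\mapsto i$, so $\dot y=F(y)$ follows from the identities listed above. In all four cases $y(0)=x(0)$ by hypothesis, and uniqueness closes the argument. (Alternatively, one could argue exactly as in Proposition \ref{prop:Xpqpreservation} that $F$ is tangent to the affine subspace cut out by the equality.) I do not expect a genuine obstacle here; the only point demanding care is the bookkeeping of the constant and linear-in-$i$ shifts that appear inside the arguments of $L$, and this is controlled entirely by the $\mathbb{Z}^2$-periodicity of $\gamma$ together with the diagonal $\frac{1}{n}\mathbb{Z}$-invariance of $L$.
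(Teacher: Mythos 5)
Your proposal is correct and rests on exactly the same ingredients as the paper's proof: the invariances of $L$ from \eqref{eq:invarianceL} translated into (anti-)equivariances of the gradient vector field $F$, combined with uniqueness of gradient flow lines in $\Sigma_\delta$. The paper packages this as tangency of $F$ to the affine constraint set (the alternative you mention in parentheses), whereas you package it as conjugating the flow by the affine symmetry and invoking uniqueness; the underlying computation is identical, so this is essentially the same argument.
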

\begin{proof}
Recall from Lemma \ref{liftlemma} that each spatiotemporal symmetry of a billiard sequence $z_i=\gamma(x_i)$ corresponds to a collection of inhomogeneous linear equations for the lift $x\in \Sigma$. We prove that these equations are preserved by the gradient flow. We give full details for the equality $x_i+x_{k-i}=\frac{b}{n} + M$ (case {\it iv)} of Lemma \ref{liftlemma}), omitting a complete analysis of the other equalities.
 
Assume that $x\in \Sigma_{\delta}$ satisfies $x_i+x_{k-i}=\frac{b}{n}+M$ for some $M\in \mathbb{Z}$ and all $i\in \mathbb{Z}$. Recall that $L(x,X)=L(X,x) = L(x+1, X+1) = L(x+\frac{1}{n},X+\frac{1}{n})= L(-x,-X)$. In particular, $L(\frac{b}{n}+M -x, \frac{b}{n}+M -X)=L(X,x)$ so  $\partial_1 L(\frac{b}{n}+M-x,\frac{b}{n}+M-X) = - \partial_2 L(X,x)$ and $\partial_2 L(\frac{b}{n}+M -x, \frac{b}{n}+M -X) = - \partial_1 L(X,x)$. Thus, 
\begin{align}
F_i(x) & =  \partial_2 L(x_{i-1}, x_i) + \partial_1 L(x_i,x_{i+1}) =\nonumber \\ \nonumber 
 & = \partial_2 L(b/n+M-x_{k-(i-1)}, b/n+M-x_{k-i}) + \partial_1 L(b/n+M-x_{k-i},b/n+M-x_{k-(i+1)}) 
 \nonumber \\ \nonumber 
& = - \partial_1 L(x_{k-i}, x_{k-(i-1)}) - \partial_2 L(x_{k-(i+1)}, x_{k-i}) 
 \nonumber \\ \nonumber 
& = - \partial_2 L(x_{(k-i)-1}, x_{k-i}) - \partial_1 L(x_{k-i},x_{(k-i)+1}) 
    = -F_{k-i}(x)\, .
\end{align}   
This proves that the gradient vector field $F$ is tangent to $\{x\in \Sigma_{\delta}\, |\, x_i+x_{k-i}= M \ \forall \ i\in \mathbb{Z}\}$. 
  
The rest of the equalities is treated in a similar way. Specifically, for $x_{k-i}-x_i=a/n+M-i$ (case {\it ii)} of the lemma) we use that $L(x-a/n-M-i, X-a/n-M-i)=L(X,x)$.  For $x_i+x_{k+i}=M+i$ (case {\it iii)} of Lemma \ref{liftlemma}) we use that $L(M+i-x, M+i-X)=L(x,X)$. For $x_{k+i}-x_i=a/n+M$ (case {\it i)} of the lemma) we use that $L(x-a/n-M, X-a/n-M)=L(x,X)$. 
\end{proof}

\noindent 
For $x\in \mathbb{X}_{p,q} \cap \Sigma$, the billiard sequence $z_i=\gamma(x_i)$ has period $p$. The total length (per period) of this  sequence is given by  the so-called {\it periodic action} 
\begin{equation}\label{def:Wpq}
W_{p,q}:\mathbb{X}_{p,q} \to\mathbb{R}  
\ \mbox{ defined by } \ 
    W_{p,q}(x):=\sum_{j=1}^p L(x_{j}, x_{j+1})  \, .
    \end{equation}
This function is continuous on $\mathbb{X}_{p,q}$, and smooth on $\mathbb{X}_{p,q} \cap \Sigma$. We now prove that $-W_{p,q}$ is a Lyapunov function for the restriction of the gradient flow to $\mathbb{X}_{p,q}$, meaning that $W_{p,q}$ cannot decrease under the flow of \eqref{eq:gradientflow}.

\begin{proposition}\label{prop:Lyapunovfunction} Let $\delta>0$ and let $x(t)\in \mathbb{X}_{p,q}\cap\Sigma_{\delta}$ be a gradient flow line, defined for $0\leq t<t_0$. Then $\frac{d}{dt}W_{p,q}(x(t)) \geq 0$ for all $0\leq t<t_0$. We have $\left. \frac{d}{dt}\right|_{t=0}W_{p,q}(x(t)) = 0$  if and only if $x(t)$ is independent of $t$ and defines a solution to \eqref{eq:recrelgeneral}.
\end{proposition}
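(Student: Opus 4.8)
The plan is to differentiate $W_{p,q}$ along the flow line by the chain rule and then re-sum the resulting expression so that it collapses into a sum of squares. First I would record the preliminaries: $t\mapsto x(t)$ is $C^1$ because it solves the ODE \eqref{eq:gradientflow} with a right-hand side that is (locally Lipschitz, hence in particular) continuous on $\Sigma_{\delta}$; it stays in $\mathbb{X}_{p,q}$ by Proposition \ref{prop:Xpqpreservation}; and $W_{p,q}$ is smooth on $\mathbb{X}_{p,q}\cap\Sigma$. Hence $t\mapsto W_{p,q}(x(t))$ is differentiable, and the chain rule gives
\[
\frac{d}{dt} W_{p,q}(x(t)) = \sum_{j=1}^p \bigl( \partial_1 L(x_j, x_{j+1})\, \dot x_j + \partial_2 L(x_j, x_{j+1})\, \dot x_{j+1} \bigr).
\]

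Next I would perform a ``discrete integration by parts'': in the second sum substitute $j\mapsto j-1$ to obtain $\sum_{j=2}^{p+1}\partial_2 L(x_{j-1},x_j)\,\dot x_j$, and then observe that the $j=p+1$ term coincides with the $j=1$ term. This is because $x\in\mathbb{X}_{p,q}$ gives $x_{p+1}=x_1+q$, $x_p=x_0+q$, and because $F_{p+i}=F_i$ on $\mathbb{X}_{p,q}$ (established in the proof of Proposition \ref{prop:Xpqpreservation}), so also $\dot x_{p+1}=\dot x_1$. The second sum therefore equals $\sum_{j=1}^p\partial_2 L(x_{j-1},x_j)\,\dot x_j$, and using $\dot x_j=F_j(x)$ the whole expression becomes
\[
\frac{d}{dt} W_{p,q}(x(t)) = \sum_{j=1}^p \bigl( \partial_2 L(x_{j-1}, x_j) + \partial_1 L(x_j, x_{j+1}) \bigr)\, \dot x_j = \sum_{j=1}^p F_j(x)\,\dot x_j = \sum_{j=1}^p F_j(x)^2 \geq 0 .
\]

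For the equality statement, I would evaluate this identity at $t=0$: $\left.\frac{d}{dt}\right|_{t=0}W_{p,q}(x(t))=\sum_{j=1}^p F_j(x(0))^2$, which vanishes exactly when $F_j(x(0))=0$ for $j=1,\dots,p$. By the periodicity $F_{p+i}=F_i$ on $\mathbb{X}_{p,q}$, this is equivalent to $F_i(x(0))=0$ for all $i\in\mathbb{Z}$, i.e.\ to $x(0)$ being an equilibrium of \eqref{eq:gradientflow}, equivalently a solution of \eqref{eq:recrelgeneral}; and then $x(t)\equiv x(0)$ by uniqueness of solutions. The converse is immediate, since a constant solution of \eqref{eq:recrelgeneral} has $\dot x\equiv 0$.

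The computation itself is routine; the one point that needs care is the index shift, i.e.\ verifying that after re-summing no boundary term is lost. This is precisely where the $\mathbb{X}_{p,q}$-periodicity together with the translation invariance $L(x+q,X+q)=L(x,X)$ (and the equivariance $F_{p+i}=F_i$ derived from it) is used, and it is the reason $W_{p,q}$ is the natural functional to differentiate on $\mathbb{X}_{p,q}$.
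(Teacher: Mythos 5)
Your proposal is correct and follows essentially the same route as the paper: chain rule on $W_{p,q}(x(t))$, a discrete integration by parts using the $\mathbb{X}_{p,q}$-periodicity facts ($F_{p+i}=F_i$ and $\partial_2 L(x_p,x_{p+1})=\partial_2 L(x_0,x_1)$) established in the proof of Proposition \ref{prop:Xpqpreservation}, collapse to $\sum_{j=1}^p F_j(x)^2$, and then the equality case via uniqueness of gradient flow lines. The paper handles the boundary terms by splitting them off explicitly rather than by your index shift, but this is a cosmetic difference.
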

     \begin{proof}
    In the proof of Proposition \ref{prop:Xpqpreservation} we already showed that $F_{1}(x)=F_{p+1}(x)$  and that $\partial_2L(x_{p}, x_{p+1})= \partial_2L(x_{0}+q, x_{1}+q) = \partial_2L(x_{0}, x_{1})$ for all $x\in \mathbb{X}_{p,q}$.  For a gradient flow line $x(t)\in \mathbb{X}_{p,q}\cap \Sigma_{\delta}$, it follows that
     \begin{align} \nonumber 
     \frac{d}{dt}W_{p,q}(x(t)) & = \sum_{j=1}^p \left( \partial_1L(x_j(t), x_{j+1}(t))\frac{dx_j(t)}{dt} + \partial_2L(x_j(t), x_{j+1}(t))\frac{dx_{j+1}(t)}{dt}\right) \\
     \nonumber 
     & = \partial_1L(x_1(t), x_{2}(t))F_1(x(t)) + \sum_{j=2}^{p} \left| F_j(x(t))\right|^2 + \partial_2L(x_{p}(t), x_{p+1}(t))F_{p+1}(x(t))\\
     \nonumber 
     & = 
         \sum_{j=1}^{p} \left| F_j(x(t))\right|^2 \geq 0\, .
     \end{align}
We see from this formula that $\left. \frac{d}{dt}\right|_{t=0}W_{p,q}(x(t)) = 0$ if and only if $F_i(x(0))=0$ for all $1\leq i\leq p$. But $F_{p+i}(x(0))=F_i(x(0))$ for all $i\in \mathbb{Z}$, so this can only happen if $F_i(x(0))=0$ for all $i\in \mathbb{Z}$, that is, if $x(0)$ solves \eqref{eq:recrelgeneral}. By uniqueness of gradient flow lines, $x(t)=x(0)$ is then necessarily independent of $t$.
 \end{proof}
\noindent The next property of the gradient flow that we exploit in this paper is its so-called \emph{comparison principle}:
\begin{lemma} \label{lem:comparisonprinciple}
Let $\delta>0$ and let $x(t), y(t)\in \Sigma_{\delta}$ be two gradient flow lines in $\Sigma_{\delta}$, defined for $0\leq t<t_0$. Assume that $x_i(0) \leq y_i(0)$ for all $i\in \mathbb{Z}$, while  $x(0)\neq y(0)$. Then $x_i(t)<y_i(t)$ for all $i\in \mathbb{Z}$ and all $0< t<t_0$.
\end{lemma}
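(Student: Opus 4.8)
The plan is to prove the comparison principle by a maximum-principle argument adapted to the monotone structure of the recurrence relation, exactly as one does for scalar parabolic equations. The key structural fact is Remark \ref{rem:firstmonotoneremark} (to be established in Remark \ref{rem:secondmonotoneremark} and Lemma \ref{lem:Hessianlemma}), namely that $\partial_{1,2}L(x,X)>0$: this is what makes \eqref{eq:gradientflow} a \emph{monotone} (cooperative) system, in which increasing a neighbor $x_{i\pm1}$ increases the velocity $\dot x_i$. Because $x(t)$ and $y(t)$ both stay in $\Sigma_\delta$ for $0\le t<t_0$, on this set the quantities $\partial_1 L$ and $\partial_2 L$ are evaluated only at pairs $(x_i,x_{i+1})$ with $\delta\le x_{i+1}-x_i\le 1-\delta$, where $L$ is smooth and its derivatives are bounded; so the vector field $F$ is uniformly Lipschitz there.

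The first step is the weak inequality $x_i(t)\le y_i(t)$ for all $i$ and all $0\le t<t_0$. I would obtain this from a standard cooperative-systems comparison argument: write $w_i(t):=y_i(t)-x_i(t)\ge 0$ at $t=0$, and show $w(t)$ stays in the nonnegative cone. Differentiating and using the mean value theorem along the segment from $x$ to $y$, one gets $\dot w_i = a_i(t)\,w_{i-1} + b_i(t)\,w_i + c_i(t)\,w_{i+1}$, where $a_i(t)=\partial_2\partial_1 L$ and $c_i(t)=\partial_1\partial_2 L$ are evaluated at intermediate points (hence \emph{positive}, by monotonicity, and bounded on $\Sigma_\delta$), and $b_i(t)$ is bounded. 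The off-diagonal positivity of the coefficient ``matrix'' is precisely the quasimonotonicity condition guaranteeing invariance of the positive cone; concretely, adding a large constant $\Lambda>0$ to all the $b_i$ (absorbed by rescaling $w\mapsto e^{-\Lambda t}w$) makes the whole operator positivity-preserving, and an elementary argument on $\sup_i (\text{negative part of } w_i)$ — bounded because all sequences lie in $\ell^\infty$ — shows this negative part cannot become positive. This yields $w_i(t)\ge 0$, i.e. $x_i(t)\le y_i(t)$, for all $i$ and $t<t_0$.

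The second step upgrades this to the strict inequality $x_i(t)<y_i(t)$ for $t>0$, which is a strong-maximum-principle statement. Here one uses irreducibility of the coupling: each $\dot w_i$ genuinely depends on $w_{i-1}$ and $w_{i+1}$ with strictly positive coefficients $a_i,c_i>0$, so the ``graph'' $\mathbb Z$ with nearest-neighbor edges is connected. Suppose $w_{i_0}(t_1)=0$ for some $i_0$ and some $0<t_1<t_0$. Since $w\ge 0$, the function $t\mapsto w_{i_0}(t)$ attains an interior minimum value $0$ at $t_1$, so $\dot w_{i_0}(t_1)\le 0$ (in fact one can argue along a decreasing sequence of times approaching the first zero); combined with $\dot w_{i_0}(t_1)=a_{i_0}w_{i_0-1}(t_1)+b_{i_0}\cdot 0+c_{i_0}w_{i_0+1}(t_1)\ge 0$ and $a_{i_0},c_{i_0}>0$, $w_{i_0\pm1}\ge 0$, this forces $w_{i_0\pm1}(t_1)=0$. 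Propagating along $\mathbb Z$ gives $w_i(t_1)=0$ for all $i$, i.e. $x(t_1)=y(t_1)$; by backward uniqueness of gradient flow lines (Lipschitz vector field on $\Sigma_\delta$) this would force $x(0)=y(0)$, contradicting the hypothesis. A clean way to package the time-derivative bookkeeping is to fix $t_1<t_0$, pick $\delta'$ with $x(t),y(t)\in\Sigma_{\delta'}$ for $t\le t_1$, let $\Lambda$ bound $|b_i|$ on $\Sigma_{\delta'}$, and consider $v_i(t):=e^{\Lambda t}w_i(t)\ge 0$, which satisfies $\dot v_i\ge a_i v_{i-1}+c_i v_{i+1}\ge 0$ with $a_i,c_i>0$; then $v_i$ is nondecreasing in $t$, and if $v_{i_0}(t_1)=0$ then $v_{i_0}\equiv 0$ on $[0,t_1]$, whence $a_{i_0}v_{i_0-1}+c_{i_0}v_{i_0+1}\equiv 0$ forces $v_{i_0\pm1}\equiv 0$ on $[0,t_1]$, and induction on the neighbor index gives $v\equiv 0$, i.e. $x(0)=y(0)$.

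The main obstacle is not conceptual but technical: making the cone-invariance and strong-maximum-principle arguments rigorous on the \emph{infinite}-dimensional sequence space, where one cannot simply invoke finite-dimensional ODE comparison theorems. The two points that need care are (i) justifying the mean value step uniformly — i.e. that $w(t)$ solves a linear nonautonomous equation with coefficients bounded uniformly in $i$, which is exactly where staying inside some $\Sigma_{\delta'}$ on compact time intervals is used — and (ii) controlling $\sup_i$ of $w$ and its negative part, i.e. working in the Banach space $(\ell^\infty,\|\cdot\|)$ with the ordering from Remark \ref{rem:birkhoff}, rather than pointwise. Both are handled by the same observation that underlies local existence and uniqueness for \eqref{eq:gradientflow} on $\bigcup_{\delta>0}\Sigma_\delta$: on each $\Sigma_\delta$ the vector field, and hence the linearization, is uniformly Lipschitz in the sup-norm. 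With that in hand, the Grönwall/cone argument and the neighbor-propagation argument go through verbatim as in the finite-dimensional case. (Alternatively, one could first prove the statement on each $\mathbb{X}_{p,q}$, where the flow is a genuine finite-dimensional cooperative ODE, and then pass to the general case by approximation; but the direct $\ell^\infty$ argument is cleaner.)
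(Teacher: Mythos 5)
The paper does not prove this lemma itself; it cites \cite[Thm.~6.2]{mramor2012ghost}, and your proposal reconstructs essentially that argument: a weak comparison via cone invariance for the cooperative (quasimonotone) linearized system, followed by a strong-maximum-principle upgrade using the strict positivity of the off-diagonal coefficients $\partial_{1,2}L>0$ and backward propagation along the nearest-neighbour coupling. The structure is right, and the second step (the $v_i=e^{\Lambda t}w_i$ monotonicity and neighbour-propagation argument) is correct as written once the weak inequality and the linear representation $\dot w_i=a_iw_{i-1}+b_iw_i+c_iw_{i+1}$ with $a_i,c_i>0$ are in place; note that the convexity of $\Sigma_\delta$ in the increments guarantees that the intermediate points in the mean-value/integral representation stay in $\Sigma_\delta$, so the coefficients are indeed uniformly bounded with $a_i,c_i$ bounded away from zero.

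One technical claim in your first step is inaccurate and needs repair: the sequences here do \emph{not} lie in $\ell^\infty$, and neither does the difference $w=y-x$ in general. Membership in $\Sigma_\delta$ only constrains the increments $x_{i+1}-x_i\in[\delta,1-\delta]$, so $x_i$ and $y_i$ grow linearly in $i$ and $w_i$ may grow linearly as well (at rate up to $1-2\delta$ per step); hence ``$\sup_i$ of the negative part of $w_i$'' is not obviously finite, and a naive Gr\"onwall bound on it does not close. This is fixable, and does not change the architecture of the proof: either work in a weighted space such as $\{w:\sup_i |w_i|/(1+|i|)<\infty\}$, where the banded linear operator with uniformly bounded entries is still bounded and its positivity-preservation follows from the product formula for $I+hA$ with $\Lambda$-shifted diagonal; or use the finite-bandwidth structure to get an exponential-decay-of-influence estimate $|w_i(t)|\le e^{Ct}\sum_j \frac{(Kt)^{|i-j|}}{|i-j|!}|w_j(0)|$, which converges despite the linear growth of $|w_j(0)|$ and localizes the cone-invariance argument. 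With either fix your outline goes through and agrees with the proof in the cited reference.
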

\noindent For a proof of this result we refer to \cite[Thm. 6.2]{mramor2012ghost}. Note that Lemma \ref{lem:comparisonprinciple} does not require periodicity.

The final property of the gradient flow that we make use of in this paper, will only be formulated for periodic sequences (even though it holds in some more generality). We first make two definitions: 
\begin{definition}\label{def:transverse}
Two sequences $x,y \in \mathbb{R}^{\mathbb{Z}}$ are said to be \emph{transverse}, denoted as $x \pitchfork y$, if they have no tangencies. That is,  $x_i = y_i$ for some $i \in \mathbb{Z}$ implies  $(x_{i-1} - y_{i-1}) (x_{i+1}-y_{i+1}) < 0$. 
\end{definition} 
\noindent We note that the sequences $x$ and $y$ are trivially transverse if $x_i\neq y_i$ for all $i\in \mathbb{Z}$. 
Examples of  transverse sequences and  non-transverse sequences are shown in Figure \ref{fig:transv_seq}.
\begin{figure}[h!]
\centering
\begin{subfigure}{.35\textwidth}
  \centering
  \begin{tikzpicture}
  \node at (0,0) {\includegraphics[width=\linewidth]{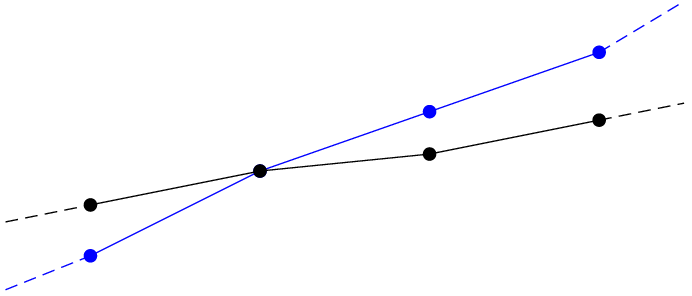}};
\node at (-1.8,-0.1) {$y_{i-1}$};
\node at (-1.8,-1.2) {$\textcolor{blue}{x_{i-1}}$};  
\node at (-0.5,-0.5) {$\textcolor{blue}{x_i}=y_i$};
\node at (0.5,0.5) {$\textcolor{blue}{x_{i+1}}$};
\node at (0.75,-0.25) {$y_{i+1}$};
\node at (2.75,1) {$\textcolor{blue}{x}$};
\node at (2.75,0.15) {$y$};
  \end{tikzpicture}
  \caption{}
  \label{fig:sub3_new}
\end{subfigure}\hspace{2cm}%fill%
\begin{subfigure}{.35\textwidth}
  \centering
    \begin{tikzpicture}
 \node at (0,0) {\includegraphics[width=\linewidth]{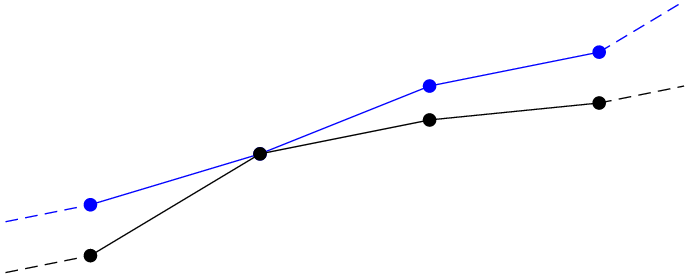}};
\node at (-1.8,-0.17) {$\textcolor{blue}{x_{i-1}}$};
\node at (-1.8,-1.2) {$y_{i-1}$}; 
\node at (-0.75,0.25) {$\textcolor{blue}{x_i}=y_i$};
\node at (0.5,0.75) {$\textcolor{blue}{x_{i+1}}$};
\node at (0.75,-0.05) {$y_{i+1}$};
\node at (2.75,0.9) {$\textcolor{blue}{x}$};
\node at (2.75,0.25) {$y$};
  \end{tikzpicture}  
  \caption{}
  \label{fig:sub4_new}
\end{subfigure}\\
  \caption{\textbf{(a)} Transverse sequences with Aubry diagrams intersecting at $(i, x_i)=(i,y_i)$; \textbf{(b)} Non-transverse sequences with a tangency at $(i, x_i)=(i,y_i)$.} \label{fig:transv_seq}
\end{figure}
\begin{definition}\label{def:intersection}
Let $x,y\in \mathbb{X}_{p,q}$. If $x \pitchfork y$, then the \emph{intersection index} $I(x,y)$ of $x$ and $y$ is defined to be largest integer $k$ for which there are $$i_0 < i_1< \ldots < i_k = i_0+p\, ,$$ such that $$(x_{i_j}-y_{i_j})(x_{i_{j+1}}-y_{i_{j+1}})<0 \ \ \mbox{holds for}\ \ j=0,1,2,\ldots,k-1\, .$$
\end{definition}
\noindent We observe that $I(x,y)$ is even because $x-y$ is periodic. 
The so-called \emph{Sturmian lemma}  states that the intersection index does not increase along gradient flow lines:
\begin{lemma} \label{prop:decreaseintersection}
Let $\delta >0$ and let $x(t), y(t) \in \mathbb{X}_{p,q}\cap \Sigma_{\delta}$ be two gradient flow lines, defined for $0\leq t < t_0$.  Assume that $x(0) \neq y(0)$. The set of $0\leq t < t_0$ for which $x(t)$ and $y(t)$ are not transverse is finite. The intersection index $I(x(t), y(t))$ (which is well-defined on the complement of this set) is a non-increasing function of $t$, which strictly decreases exactly at those $t$ at which $x(t)$
and $y(t)$ are not transverse. 
\end{lemma}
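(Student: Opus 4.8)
The plan is to reduce the statement to a discrete Sturm (nodal-set) argument for the linear, non-autonomous, finite-dimensional ODE satisfied by the difference $w(t):=x(t)-y(t)$; the only structural inputs needed are the monotonicity $\partial_{1,2}L>0$ and the comparison principle.

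Since $x(t),y(t)\in\mathbb{X}_{p,q}$, the sequence $w(t)=x(t)-y(t)$ is genuinely $p$-periodic, so it may be viewed as a curve in $\mathbb{R}^p$; by local uniqueness of gradient flow lines in $\Sigma_\delta$ (forward and backward in time), $w(t)\neq 0$ throughout $[0,t_0)$. Writing $\xi_k(s,t):=(1-s)y_k(t)+s\,x_k(t)$ and applying the fundamental theorem of calculus to $F_i$ — which depends only on $x_{i-1},x_i,x_{i+1}$ — one gets
\[
\dot w_i(t)=a_i(t)\,w_{i-1}(t)+b_i(t)\,w_i(t)+c_i(t)\,w_{i+1}(t),
\]
with $a_i(t)=\int_0^1\partial_{1,2}L\big(\xi_{i-1}(s,t),\xi_i(s,t)\big)\,ds$, $c_i(t)=\int_0^1\partial_{1,2}L\big(\xi_i(s,t),\xi_{i+1}(s,t)\big)\,ds$, and $b_i(t)$ a similar integral involving $\partial_{1,1}L$ and $\partial_{2,2}L$. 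The segments $\xi(\cdot,t)$ remain in $\Sigma_\delta$, where $\partial_{1,2}L$ is continuous, positive (Lemma \ref{lem:Hessianlemma}; see Remark \ref{rem:firstmonotoneremark}) and $1$-periodic; hence there are constants $0<\mu\le M$, depending only on $\delta$ and $\Gamma$, such that $\mu\le a_i(t),c_i(t)\le M$ and $|b_i(t)|\le M$ for all $i,t$. So $w$ solves a linear system $\dot w=A(t)w$ on $\mathbb{R}^p$ with bounded coefficient matrices having strictly positive sub- and super-diagonals. I would also record that on a transverse configuration $I(x(t),y(t))$ equals the number $\sigma(w(t))$ of cyclic sign changes of $w(t)$ around one period — an even nonnegative integer — and that transversality forbids two consecutive vanishing entries, so the zero set of a transverse configuration consists of isolated indices with opposite-signed neighbours.

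The core of the argument is then the nodal analysis of $\dot w=A(t)w$ at a fixed time $t_*$, a discrete analogue of the parabolic nodal-set (Sturmian) argument. The mechanism: at a maximal block $w_i(t_*)=\dots=w_j(t_*)=0$ with $w_{i-1}(t_*)\neq 0\neq w_{j+1}(t_*)$, the positivity of the off-diagonal coefficients forces the block to collapse instantaneously — for $t$ slightly above and slightly below $t_*$ the configuration $w(t)$ is transverse — and, if the boundary values $w_{i-1}(t_*),w_{j+1}(t_*)$ have the same sign (which happens whenever the block has length $\ge 2$, i.e.\ whenever $w(t_*)$ is non-transverse at that block), the interior of the block acquires that common sign for $t>t_*$ while it sat on the opposite side for $t<t_*$, so the two sign changes at the ends of the block are annihilated; an isolated zero with opposite-signed neighbours simply makes a sign change relocate. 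This is made rigorous by a strong maximum principle for $\dot w=A(t)w$ — for $C$ large $A(t)+C\,{\rm Id}$ has nonnegative entries and its off-diagonal structure connects all $p$ indices cyclically, whence the solution operator $\Phi(t,t_*)$ maps $\{v\ge 0,\ v\neq 0\}$ into $\{v\gg 0\}$ for $t>t_*$ (the same argument as Lemma \ref{lem:comparisonprinciple}) — applied block by block. The upshot is that $\sigma(w(t))$ is constant on $(t_*-\varepsilon,t_*)$ and on $(t_*,t_*+\varepsilon)$, with $\sigma(w(t_*+))<\sigma(w(t_*-))$ exactly when $w(t_*)$ is non-transverse; in particular the non-transverse times are isolated. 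Since $\sigma$ is a nonnegative (even) integer, constant between non-transverse times and strictly decreasing at each of them, there are at most $\tfrac12\sigma(w(0))$ of them — a finite set — and $t\mapsto I(x(t),y(t))$ is non-increasing, dropping precisely across the non-transverse times.

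The main obstacle is this last nodal analysis: proving that a tangency block is instantaneous and that the cyclic sign-change count strictly drops across it. Everything hinges on the uniform positivity $\mu\le a_i(t),c_i(t)$, which is exactly why $\partial_{1,2}L>0$ is indispensable, and the technical cost lies in the (elementary but delicate) bookkeeping of sign changes at block boundaries — in particular handling several simultaneous blocks and the cyclic wrap-around around $\mathbb{Z}/p$. A clean alternative, once the linear equation $\dot w=A(t)w$ with positive off-diagonal coefficients has been exhibited, is to invoke the Sturmian nodal-decrease lemma already established in the Aubry--Mather literature \cite{mramor2012ghost} and simply verify its hypotheses.
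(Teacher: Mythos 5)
The paper does not prove this lemma at all: it is quoted verbatim from the literature, with the proof deferred to Gol\'e \cite{gole2001symplectic} (Lemma 22.1). Your proposal is essentially a reconstruction of that standard argument (linearize along the segment between $x(t)$ and $y(t)$, obtain $\dot w=A(t)w$ with strictly positive off-diagonal entries thanks to $\partial_{1,2}L>0$ and the convexity of $\Sigma_\delta$, identify $I(x,y)$ with the cyclic sign-change count of $w$, and run a discrete parabolic nodal-set argument), so in spirit you are doing exactly what the cited source does, and your fallback of invoking the established Sturmian lemma after exhibiting the linear equation is precisely the route the paper takes.

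Two points in your sketch deserve attention if you intend it as a self-contained proof rather than a reduction to the literature. First, the parenthetical claim that the boundary values $w_{i-1}(t_*)$ and $w_{j+1}(t_*)$ ``have the same sign whenever the block has length $\ge 2$'' is false: a block of two or more zeros can perfectly well have opposite-signed neighbours (e.g.\ $w_{i-1}<0$, $w_i=w_{i+1}=0$, $w_{i+2}>0$), and this configuration is non-transverse; the sign-change count still drops by $2$ across it, but by a different mechanism (just before $t_*$ the local pattern is $-,+,-,+$ and just after it is $-,-,+,+$), so the case analysis needs to be redone correctly. Second, for blocks of length $\ge 3$ the interior entries have $\dot w_j(t_*)=0$, so the first-derivative argument says nothing and the ``instantaneous collapse'' genuinely requires a maximum principle; but the global statement you invoke --- that $\Phi(t,t_*)$ maps $\{v\ge 0,\ v\neq 0\}$ into $\{v\gg 0\}$ --- cannot be applied directly, because $w(t_*)$ is not of one sign. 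One must instead localize: compare $w$ on the block with sub- and supersolutions of the restricted system with the actual (signed) boundary data, or use positivity of the fundamental solution of the truncated tridiagonal system. You correctly identify this as the main obstacle, but as written the step is asserted rather than proved, and it is the entire content of the lemma.
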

\noindent For a proof of this result we refer to \cite[Lemma 22.1]{gole2001symplectic}. 

\section{Symmetric periodic Birkhoff sequences}\label{sec:symBirkhoffsection}
\noindent In this section, we investigate $\mathbb{D}_n$-symmetric periodic Birkhoff orbits, as these play a major role in our analysis of non-Birkhoff periodic orbits. By $\mathbb{Z}_n = \langle R\rangle \subset \mathbb{D}_n$ we shall denote the subgroup consisting of all the rotations in $\mathbb{D}_n$.  Our first result characterizes $\mathbb{Z}_n$-symmetric periodic billiard sequences with period $p=n$. We recall that  $\kappa(z)$ denotes the curvature of $\Gamma$ at a point $z\in \Gamma$.
\begin{lemma}\label{lem:propertiesBirkhoffDn}
Let $m,n\in \mathbb{N}$ be co-prime with $1\leq m \leq n-1$, and let $\Gamma$ be a $\mathbb{D}_n$-symmetric billiard. Let $Z\in \Gamma^{\mathbb{Z}}$ be an $n$-periodic and $\mathbb{Z}_n$-symmetric billiard sequence with rotation number $\frac{m}{n}$ and lift $X\in \Sigma$. Then $Z$ is Birkhoff, $Z_i = R^{mi}(Z_0)$ and $X_i=X_0+\frac{m}{n}i$. The orbit segment length $L_{i,i+1}:=\|Z_{i+1}-Z_i\|$, and the billiard curvature $\kappa_i := \kappa(Z_i)$ are constant along $Z$. If $Z$ in addition satisfies the billiard law, then also the reflection angle $\theta_i:= \frac{1}{2} \angle(Z_{i+1}-Z_{i}, Z_{i}-Z_{i-1} ) = \frac{m\pi}{n}$ is constant along $Z$.
\end{lemma}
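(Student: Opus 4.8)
The plan is to first determine how the rotation $R$ acts on $Z$, then to read off the explicit formula $X_i=X_0+\tfrac{m}{n}\,i$ for the lift, after which every assertion of the lemma follows quickly. Since $\mathbb{Z}_n\subset H(Z)$, we have $R\in H(Z)=H^+(Z)\cup H^-(Z)$, and I first claim that in fact $R\in H^+(Z)$. If $R$ acted time-reversing, then by Remark \ref{remk:rotationnumberhalf} the rotation number of $Z$ would equal $\tfrac12$; since $\gcd(m,n)=1$ and $1\le m\le n-1$ this forces $n=2$, $m=1$. But then $R$, being the rotation by $\pi$, fixes no point of $\Gamma$ (its only fixed point in $\mathbb{R}^2$ is the origin, which lies in the interior of the convex domain bounded by $\Gamma$), so $R(Z_0)=Z_1$ and $R(Z_1)=Z_0=Z_2$, i.e.\ $R(Z_i)=Z_{i+1}$ for all $i$ by $2$-periodicity, and $R$ is time-preserving after all. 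Hence in every case there is a $k\in\mathbb{Z}$ with $R(Z_i)=Z_{k+i}$ for all $i$, and Lemma \ref{liftlemma}(i) supplies a fixed $M\in\mathbb{Z}$ with $X_{k+i}-X_i=\tfrac1n+M$ for all $i\in\mathbb{Z}$.

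The heart of the proof is then a short arithmetic manipulation of this relation together with the identity $X_{n\ell+i}=X_i+\ell m$, valid because $Z$ is $n$-periodic with rotation number $\tfrac mn$, so that its lift lies in $\mathbb{X}_{n,m}$. Telescoping gives $X_{jk+i}-X_i=j\bigl(\tfrac1n+M\bigr)$ for all $j\ge 1$. Taking $j=n$, $i=0$ and using $X_{nk}=X_0+km$ (as $nk$ is a multiple of $n$) yields $km=1+nM$. Taking $j=m$ gives $X_{mk+i}-X_i=\tfrac mn+mM$, while $mk=km=1+nM$ gives $X_{mk+i}=X_{(i+1)+nM}=X_{i+1}+Mm$; comparing the two shows $X_{i+1}-X_i=\tfrac mn$ for every $i$. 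Hence $X_i=X_0+\tfrac mn\,i$, and applying $\gamma$ together with the equivariance \eqref{equivarianceofgamma} gives $Z_i=\gamma(X_0+\tfrac{mi}{n})=R^{mi}(Z_0)$.

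Everything else now follows from the explicit lift. The sequence $X_i=X_0+\tfrac mn\,i$ is manifestly well-ordered in the sense of Definition \ref{Birkhoff} (the inequality $X_i\le X_j+l$ is equivalent to $\tfrac mn(i-j)\le l$, which is invariant under simultaneous translation of $i$ and $j$), so $Z$ is Birkhoff by Lemma \ref{Orderintext}. Since $R^{mi}$, being a power of $R$, is a rotation and hence a Euclidean isometry mapping $\Gamma$ to itself, we get $L_{i,i+1}=\|Z_{i+1}-Z_i\|=\|R^{mi}(Z_1-Z_0)\|=\|Z_1-Z_0\|$ (one may also invoke the invariance \eqref{eq:invarianceL} of $L$), and likewise $\kappa_i=\kappa(R^{mi}Z_0)=\kappa(Z_0)$, because a Euclidean rotation preserving $\Gamma$ carries the osculating circle at a point to the osculating circle at the rotated point.

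Finally, suppose $Z$ satisfies the billiard law, i.e.\ $\Phi(x_{i-1},x_i)=\Theta(x_i,x_{i+1})$ for all $i$ in the notation of Lemma \ref{PartialsLalternativefirstorder}. The unit vectors $v_i:=(Z_{i+1}-Z_i)/L_{i,i+1}$ satisfy $v_i=R^{mi}v_0=R^m v_{i-1}$, so the direction of travel turns by $\tfrac{2\pi m}{n}$ at each reflection; on the other hand, the billiard law says the incoming chord $Z_i-Z_{i-1}$ and the outgoing chord $Z_{i+1}-Z_i$ make equal angles with the tangent line at $Z_i$, on opposite sides of it (one pointing out of and one into the billiard table), so the angle between these two chords equals twice the reflection angle $\Theta(x_i,x_{i+1})$. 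Equating with the turn $\tfrac{2\pi m}{n}$ yields $\theta_i=\tfrac12\angle(Z_{i+1}-Z_i,\,Z_i-Z_{i-1})=\Theta(x_i,x_{i+1})=\tfrac{m\pi}{n}$, constant in $i$. I expect this last step to be the delicate point: constancy of $\theta_i$ is immediate from the rotational symmetry, but obtaining the exact value $\tfrac{m\pi}{n}$ — rather than $\tfrac{(n-m)\pi}{n}$ — requires careful bookkeeping of the orientation of the two chords at $Z_i$ (which one enters the table and which one leaves) when converting the per-step turn $\tfrac{2\pi m}{n}$ into the reflection angle.
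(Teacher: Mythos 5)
Your proof is correct, and it reaches the key formula $X_{i+1}-X_i=\frac{m}{n}$ by a genuinely different route than the paper. The paper first invokes the classification results (Lemma \ref{lem:periodicp=2} for $n=2$ and Lemma \ref{lem:periodicn=2} for $n\geq 3$) to conclude $R^M(Z_i)=Z_{1+i}$ with $\gcd(M,n)=1$, and then identifies $M=m$ from the rotation number; you instead establish only that $R$ acts time-preservingly, i.e.\ $R(Z_i)=Z_{k+i}$ for \emph{some} $k$ (handling the possible time-reversing case via Remark \ref{remk:rotationnumberhalf} and a direct fixed-point argument when $n=2$), and then extract $X_{i+1}-X_i=\frac{m}{n}$ from Lemma \ref{liftlemma}\,{\it i)} by telescoping together with $X\in\mathbb{X}_{n,m}$ and the Bézout-type identity $km=1+nM$. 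This is arguably cleaner: Lemma \ref{lem:periodicn=2} is stated for \emph{dihedral} symmetry groups of order $2N$ with $N\geq 3$ and for minimal period $p\geq 3$, so its application here requires some care that your self-contained arithmetic avoids; what you give up is the reuse of already-proved structure, so your proof repeats in miniature the Bézout argument hidden inside Lemma \ref{lem:periodicn=2}. The remaining assertions (Birkhoff via Lemma \ref{Orderintext}, $Z_i=R^{mi}(Z_0)$, constancy of $L$ and $\kappa$ by isometry) match the paper's treatment. For the reflection angle, your derivation via the per-step turn $v_i=R^m v_{i-1}$ is essentially the paper's one-line argument; the orientation bookkeeping you flag as delicate (distinguishing $\frac{m\pi}{n}$ from $\frac{(n-m)\pi}{n}$) is glossed over in the paper as well, and is resolved by the oriented-angle convention implicit in Lemma \ref{PartialsLalternativefirstorder}, so this is not a gap relative to the paper's own standard of rigor.
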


\begin{proof} Let $Z\in \Gamma^{\mathbb{Z}}$ be an $n$-periodic and $\mathbb{Z}_n$-symmetric sequence with rotation number $\frac{m}{n}$. In case $n=2$, we have $m=1$ and hence the rotation number of $Z$ is $\frac{1}{2}$. Since $p=n=2$, Lemma \ref{lem:periodicp=2} implies that $R(Z_i)=Z_{i+1}$. Any lift $X\in \Sigma$ of $Z$  therefore satisfies $X_{i+1} = X_i+\frac{1}{2}$, so that $X_i=X_0+\frac{1}{2}i$. 

For $n=p>2$, Lemma \ref{lem:periodicn=2} implies that $R^M(z_i)=z_{\frac{p}{n}+i} =z_{1+i}$ for some $M$ with $\gcd(M,n)=1$. A lift $X\in \Sigma$ of $Z$ must then satisfy $X_{i+1}=X_i +\frac{M}{n}$, so $X_{i}=X_0+\frac{M}{n}i$. But this implies that $M=m$, because $X$ has rotation number $\frac{m}{n}$. We conclude that  $X_i = X_0 + \frac{m}{n}i$.  It follows that $X\in \mathbb{X}_{n,m}$ and that $X$ is Birkhoff:  $X_i$ is an affine function of $i$, so $X$ does not intersect any of its nontrivial integer translates, cf. Remark \ref{rem:birkhoff}. We also find that  $Z_{i+1}= \gamma(X_{i+1})=\gamma(X_i + \frac{m}{n}) = R^m(\gamma(X_i))= R^m(Z_i)$, so that $Z_i=R^{mi}(Z_0)$.

Because the billiard $\Gamma$ is $\mathbb{D}_n$-invariant, it follows that $\kappa(R(z)) = \kappa(z)$ for any $z\in \Gamma$. In particular, $\kappa_i=\kappa(Z_{i})=\kappa(R^{im}(Z_0)) = \kappa(Z_0)$ is constant along the sequence $Z_i$.

It is also clear that $L_{i, i+1}=\|Z_{i+1}-Z_i\| = \|R^{(i+1)m}(Z_{0})-R^{im}(Z_0) \|= \| R^{m}(Z_{0})-Z_0\|$ is independent of $i$, i.e., the  segment length is constant along the sequence. 

Finally, if $Z$ satisfies the billiard law ``angle of incidence = angle of reflection'', then the reflection angle satisfies $\theta_i = \frac{1}{2} \angle(Z_{i+1}-Z_{i}, Z_{i}-Z_{i-1} ) = \frac{1}{2} \angle(R^m(Z_{i}-Z_{i-1}), Z_{i}-Z_{i-1} )$. Because $R^m$ is a rotation over $\frac{2 m\pi}{n}$, it follows that all reflection angles are equal to $\frac{m\pi}{n}$.
\end{proof}
\noindent The lifts $X$ of the Birkhoff sequences $Z$ described in Lemma \ref{lem:propertiesBirkhoffDn} together form the set 
$$G_{n,m}:=\left\{X\in \mathbb{R}^{Z}\, |\, X_i=X_0 + \frac{m}{n}i\ \mbox{for some} \ X_0\in \mathbb{R}\right\} \subset \mathbb{R}^{\mathbb{Z}}\, $$  
of all linear sequences of rotation number $\frac{m}{n}$. We summarize the properties of $G_{n,m}$ in the following proposition.
\begin{proposition}\label{prop:ghostcircle}
Let $m,n\in \mathbb{N}$ be co-prime with $1\leq m \leq n-1$. The set $G_{n,m}$ is nonempty, totally ordered, translation-invariant, closed under pointwise convergence, and invariant under the gradient flow.  
\end{proposition}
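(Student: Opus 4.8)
The plan is to read off the first four properties directly from the explicit description of $G_{n,m}$ as the one-parameter family of affine sequences $X_i = X_0 + \frac{m}{n}i$ (parametrized by $X_0 \in \mathbb{R}$), and then to deduce gradient-flow invariance from the symmetry-preservation lemma already at our disposal. For nonemptiness I would simply exhibit $X_i = \frac{m}{n}i$. I would also record at the outset two elementary facts: because $1 \le m \le n-1$, every $X \in G_{n,m}$ has all increments equal to $\frac{m}{n} \in (0,1)$, so $G_{n,m} \subset \Sigma_{\delta_0}$ for a suitable $\delta_0 \in (0,\frac{1}{2})$; and $X_{i+n} = X_0 + \frac{m}{n}(i+n) = X_i + m$, so $G_{n,m} \subset \mathbb{X}_{n,m}$.

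Total ordering is then immediate: for $X, Y \in G_{n,m}$ the difference $X_i - Y_i = X_0 - Y_0$ is independent of $i$, so $X \le Y$ or $Y \le X$ according to the sign of $X_0 - Y_0$. Translation invariance follows from the computation $(\tau_{j,l}X)_i = X_{i+j} + l = \bigl(X_0 + \frac{mj}{n} + l\bigr) + \frac{m}{n}i$, which again has slope $\frac{m}{n}$; thus $\tau_{j,l}(G_{n,m}) = G_{n,m}$ for all $j,l \in \mathbb{Z}$ (and the same works for real constant shifts). Closedness under pointwise convergence follows because, if $X^{(k)} \to X$ pointwise, then $X^{(k)}_0 \to X_0$ and hence $X_i = \lim_k\bigl(X^{(k)}_0 + \frac{m}{n}i\bigr) = X_0 + \frac{m}{n}i$, so $X \in G_{n,m}$.

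The only property with real content is gradient-flow invariance. The key remark is that membership in $G_{n,m}$ is nothing but the family of linear constraints $X_{i+1} - X_i = \frac{m}{n}$ for all $i \in \mathbb{Z}$, which is precisely the equality in part {\it i)} of Lemma \ref{liftlemma} with $a = m$, $k = 1$, $M = 0$ (the constraint encoding $R^m(Z_i) = Z_{i+1}$, i.e.\ the $\mathbb{Z}_n$-symmetry of $Z = \gamma(X)$). Hence, by Lemma \ref{lem:symmetrypreservation}, a gradient flow line $X(t)$ with $X(0) \in G_{n,m}$ continues to satisfy $X(t)_{i+1} - X(t)_i = \frac{m}{n}$ for all $i$ — i.e.\ $X(t) \in G_{n,m}$ — throughout its interval of existence; and since any such $X(t)$ then lies in the fixed slice $\Sigma_{\delta_0}$, it cannot escape $\bigcup_{\delta>0}\Sigma_\delta$, so by the local existence/uniqueness statement recalled in Section \ref{sec:gradientflowsection} the flow line extends to all $t \ge 0$. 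As an alternative, one can instead check by hand, using $L(x + \frac{j}{n}, X + \frac{j}{n}) = L(x,X)$ for $j \in \mathbb{Z}$, that on $G_{n,m}$ the gradient reduces to $F_i(X) = \partial_2 L\bigl(X_0, X_0 + \frac{m}{n}\bigr) + \partial_1 L\bigl(X_0, X_0 + \frac{m}{n}\bigr)$ independently of $i$, so the restricted gradient flow is the scalar ODE $\dot X_0 = F_0(X)$, which manifestly preserves the affine form. I do not expect a genuine obstacle here; the one point needing a little care is to identify the defining relation of $G_{n,m}$ with one of the constraints of Lemma \ref{liftlemma}, so that Lemma \ref{lem:symmetrypreservation} applies verbatim, together with the observation that the flow stays in a fixed $\Sigma_{\delta_0}$.
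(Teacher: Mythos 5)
Your proposal is correct and follows essentially the same route as the paper: the first four properties are read off from the explicit affine form of elements of $G_{n,m}$, and gradient-flow invariance is obtained by recognizing $X_{i+1}-X_i=\frac{m}{n}$ as the constraint of Lemma \ref{liftlemma}, part {\it i)} (with $k=1$, $a=m$, $M=0$) and invoking Lemma \ref{lem:symmetrypreservation}. The additional observations you make (that $G_{n,m}\subset\Sigma_{1/n}$ so flow lines persist, and the alternative reduction to a scalar ODE) are correct but not needed for the statement as given.
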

\begin{proof}  
The only slightly nontrivial statement is the invariance under the gradient flow. Note that $G_{n,m}$ is equal to the set of sequences satisfying $X_{i+1}=X_i+\frac{m}{n}$ for all $i\in \mathbb{Z}$. These  are equalities as in part {\it i)} of Lemma \ref{liftlemma} (choosing $k=1$, $a=m$ and $M=0$). By Lemma \ref{lem:symmetrypreservation} these equalities are preserved under the gradient flow, i.e., $G_{n,m}$ is invariant under the gradient flow.  
\end{proof}
\noindent A set with the properties described in the Proposition \ref{prop:ghostcircle} is called a {\it ghost circle} in  \cite{gole2001symplectic, mramor2012ghost} (note that $G_{n,m}$ is homeomorphic to $\mathbb{R}$, but the quotient $G_{n,m}/\mathbb{Z}$ obtained by identifying sequences $X$ and $Y$ for which $X_0-Y_0\in\mathbb{Z}$ is homeomorphic to the circle $\mathbb{R}/\mathbb{Z}$, which  explains part of the name).
 
The elements of $G_{n,m}$ are $\mathbb{Z}_n$-symmetric, but not necessarily $\mathbb{D}_n$-symmetric. Our next result states that $G_{n,m}$ contains exactly two geometrically distinct $\mathbb{D}_n$-symmetric sequences, both of which are billiard orbits.  

\begin{lemma}\label{lem:existenceBirkhoffDn}
Let $m,n\in \mathbb{N}$ be co-prime with $1\leq m \leq n-1$, and let $\Gamma$ be a $\mathbb{D}_n$-symmetric  billiard. Then $\Gamma$ admits exactly two geometrically distinct \changes{$n$-periodic} $\mathbb{D}_n$-symmetric Birkhoff orbits of rotation number $\frac{m}{n}$. 

More precisely, an $n$-periodic billiard sequence $Z\in \Gamma^{\mathbb{Z}}$ of rotation number $\frac{m}{n}$ is    $\mathbb{D}_n$-symmetric  if and only if it has a lift of the form
\begin{equation}
\label{eqn:Xaformula}
X_i = \frac{A}{2n} + \frac{m}{n} i \ \mbox{for some} \ A\in \mathbb{Z} \, .
\end{equation}
Every such sequence is the lift of a billiard orbit. Two such orbits are geometrically equal if and only if their lifts differ by an integer multiple of $\frac{1}{n}$.
\end{lemma}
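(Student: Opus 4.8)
The idea is to locate all $\mathbb{D}_n$-symmetric sequences inside the ghost circle $G_{n,m}$ of Proposition~\ref{prop:ghostcircle}, and then read off the three assertions. First I would prove the characterization~\eqref{eqn:Xaformula}. If $Z$ is $n$-periodic of rotation number $\tfrac{m}{n}$ and $\mathbb{D}_n$-symmetric, then it is in particular $\mathbb{Z}_n$-symmetric, so Lemma~\ref{lem:propertiesBirkhoffDn} forces every lift to satisfy $X_i = X_0 + \tfrac{m}{n}i$, i.e.\ $X\in G_{n,m}$. Since $S\in H(Z)$, the reflection $S$ acts on $Z$ either time-preservingly (part {\it iii)} of Lemma~\ref{liftlemma} with $b=0$) or time-reversingly (part {\it iv)}); substituting the affine lift into the resulting equation and matching the coefficient of $i$ forces $2X_0\in\tfrac{1}{n}\mathbb{Z}$ in both cases (in the time-preserving case it additionally forces $n=2$, but the conclusion on $X_0$ is unchanged). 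This is exactly~\eqref{eqn:Xaformula}. Conversely, given $X_i=\tfrac{A}{2n}+\tfrac{m}{n}i$ one has $R^m(z_i)=\gamma\!\left(X_i+\tfrac{m}{n}\right)=\gamma(X_{i+1})=z_{i+1}$, so $R^m$ — and hence all of $\mathbb{Z}_n=\langle R^m\rangle$, using $\gcd(m,n)=1$ — is a time-preserving symmetry of $Z=\gamma(X)$; moreover the $\mathbb{D}_n$-equivariance of $\gamma$ gives $S(z_i)=\gamma(-X_i)$, which equals $z_{k-i}=\gamma\!\left(X_0+\tfrac{m}{n}(k-i)\right)$ as soon as $mk\equiv -A\pmod n$, an equation solvable in $k$ because $\gcd(m,n)=1$. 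Hence $\mathbb{D}_n=\langle R^m,S\rangle\subset H(Z)$, i.e.\ $Z$ is $\mathbb{D}_n$-symmetric.

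Next I would show that every such $Z$ is a billiard orbit. The quickest route is via the gradient flow~\eqref{eq:gradientflow}: all of $G_{n,m}$ lies in a single $\Sigma_\delta$ (every element has constant increments $\tfrac{m}{n}$), so the flow through any $X\in G_{n,m}$ exists on some interval $[0,t_0)$, stays in $G_{n,m}$ by Proposition~\ref{prop:ghostcircle}, and preserves $\mathbb{D}_n$-symmetry by Lemma~\ref{lem:symmetrypreservation}. But the $\mathbb{D}_n$-symmetric elements of $G_{n,m}$ are exactly those with $X_0\in\tfrac{1}{2n}\mathbb{Z}$, a discrete subset of $G_{n,m}\cong\mathbb{R}$; a continuous trajectory starting in a discrete set is constant, so such an $X$ is an equilibrium of~\eqref{eq:gradientflow}, hence a solution of~\eqref{eq:recrelgeneral}, hence the lift of a billiard orbit. (Alternatively one argues by hand: for each $i$ the point $z_i=\gamma(X_i)$ is fixed by some reflection $\sigma_i\in\mathbb{D}_n$, which, being a time-reversing symmetry of $Z$, interchanges $z_{i-1}$ and $z_{i+1}$; as $\sigma_i$ is an isometry, the function $z\mapsto\|z_{i-1}-z\|+\|z-z_{i+1}\|$ on $\Gamma$ is $\sigma_i$-invariant, hence critical at its fixed point $z_i$, which is the billiard law at $z_i$.) Finally, an $X$ of the form~\eqref{eqn:Xaformula} is affine, so it never crosses any of its integer translates and is therefore Birkhoff, and it trivially satisfies $X_{i+n}=X_i+m$, so $X\in\mathbb{X}_{n,m}$. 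In particular the word ``Birkhoff'' in the first assertion of the lemma, as well as periodicity with minimal period $n$, is automatic once the characterization is in place.

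It remains to count geometric equivalence classes. Using that $\gamma$ descends to an embedding of $\mathbb{R}/\mathbb{Z}$ and that $\gcd(m,n)=1$, two lifts $X_i=\tfrac{A}{2n}+\tfrac{m}{n}i$ and $X_i'=\tfrac{A'}{2n}+\tfrac{m}{n}i$ define geometrically equal billiard sequences (Definition~\ref{rem:geom-dist}) if and only if $X_0-X_0'\in\tfrac{1}{n}\mathbb{Z}$, i.e.\ $A\equiv A'\pmod 2$; here one checks that a time-reversing identification $z_i=Z_{k-i}$ can occur only when $n\mid 2m$, hence $n\le 2$, and then yields the same congruence. Thus the lifts~\eqref{eqn:Xaformula} fall into exactly two geometric classes, represented by $A=0$ and $A=1$, which is the asserted count. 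The one step that genuinely uses the billiard structure — and which I expect to be the main obstacle to write out carefully — is showing that the symmetric sequences actually solve the recurrence~\eqref{eq:recrelgeneral}; everything else is bookkeeping with Lemma~\ref{liftlemma} and the coprimality of $m$ and $n$.
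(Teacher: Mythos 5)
Your proposal is correct and follows essentially the same route as the paper: reduce to the ghost circle $G_{n,m}$ via Lemma \ref{lem:propertiesBirkhoffDn}, pin down $X_0\in\frac{1}{2n}\mathbb{Z}$ via Lemma \ref{liftlemma}, deduce the billiard-orbit property from the gradient flow being tangent to a discrete invariant set, and count classes by the $\frac{1}{n}\mathbb{Z}$ criterion. Your parenthetical direct argument (each $z_i$ is a fixed point of a reflection that swaps $z_{i-1}$ and $z_{i+1}$, forcing criticality of the two-segment length) and your explicit treatment of the time-reversing identification in the counting step are sound additions not present in the paper, but they do not change the overall approach.
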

\begin{proof}
Let $Z\in \Gamma^{\mathbb{Z}}$ be a $\mathbb{D}_n$-symmetric sequence of period $n$ and winding number $m$ with lift $X\in \Sigma$. Then $Z$ is in particular $\mathbb{Z}_n$-symmetric, and hence its lift is of the form $X_i=X_0+\frac{m}{n}i$ by Lemma  \ref{lem:propertiesBirkhoffDn}. However, $Z$ is also invariant under $S$, and we claim that $S(Z_i)=Z_{k-i}$ for some $k\in \mathbb{Z}$. Indeed, for $n\geq 3$ this follows directly from Lemma \ref{lem:periodicn=2}. For $n=2$, either $S(Z_i)=Z_i$ or $S(Z_i)\neq Z_i$ for all $i\in \mathbb{Z}$. Because $Z$ has period $2$, in the first case, $S(Z_i)= Z_{-i}$, while in the second case $S(Z_i)= Z_{1-i}$ for all $i\in \mathbb{Z}$. This proves our claim. 
  
Case {\it iv)} of Lemma \ref{liftlemma} states that $S(Z_i)=Z_{k-i}$ if and only if there is an integer $M$ such that $M = X_i + X_{k-i} =  X_0+\frac{m}{n}i +X_0+\frac{m}{n}(k-i) = 2X_0 + \frac{mk}{n}$ for all $i\in \mathbb{Z}$, that is, if and only if 
\begin{equation}\label{eq:X0formula}
X_0 = \frac{1}{2n}(Mn-km) \ \mbox{for some}\ k, M\in \mathbb{Z}\, .
\end{equation}
Because $\gcd(m,n)=1$, we conclude that the  $\mathbb{D}_n$-symmetric billiard sequences of period $n$ and winding number $m$ are precisely the billiard sequences with a lift of the form $X_i=\frac{A}{2n} + \frac{m}{n}i$ for some $A\in \mathbb{Z}$. 

In particular, the billiard sequences of this form constitute a  discrete subset of $\Sigma_{\frac{1}{n}}$.
By Lemma \ref{lem:symmetrypreservation}, the gradient vector field $F$ is tangent to this discrete set,  hence it vanishes. This proves that any $n$-periodic $\mathbb{D}_n$-symmetric billiard sequence is a billiard orbit. 

To see when two such orbits are geometrically distinct, let $X$ and $Y$ be two sequences with $X_i=\frac{A}{2n} + \frac{m}{n}i$ and $Y_i=\frac{B}{2n} + \frac{m}{n}i$ for some $A,B\in \mathbb{Z}$. In case $Y_i-X_i=\frac{C}{n}$ is an integer multiple of $\frac{1}{n}$, then we can choose $s, t\in \mathbb{Z}$ such that $sm+tn=1$ and define $K:=sC$. Then  $X_{K+i} = X_i + \frac{m}{n}K = X_i + \frac{m}{n}sC = X_i + \frac{C}{n}(1-tn) = X_i + \frac{C}{n} - tC =  Y_i - tC$. This shows that $Y_i-X_{K+i}\in \mathbb{Z}$ so  $i\mapsto \gamma(X_{K+i})$ and $i\mapsto \gamma(Y_i)$ are the same billiard orbit. Conversely, when $X_i=X_0+\frac{m}{n}i$ and $Y_i=Y_0+\frac{m}{n}j$ are lifts of two geometrically equal billiard orbits, then $X_i=Y_{K+i}+l$ for some $K,l\in \mathbb{Z}$. It follows that $X_i = Y_i+\frac{m}{n}K + l$ is an integer multiple of $\frac{1}{n}$. Thus, two $n$-periodic and $\mathbb{D}_n$-symmetric billiard sequences are geometrically equal if and only if their lifts differ by an integer multiple of $\frac{1}{n}$.
\end{proof}
\noindent Figures \ref{fig:sub1} and \ref{fig:sub3} both depict two (geometrically distinct) examples of $\mathbb{D}_n$-symmetric $n$-periodic Birkhoff orbits for $n=4$ and $n=5$, respectively.

\begin{remark}
As $G_{n,m}\subset \Sigma_{\frac{1}{n}}$, the restriction of $W_{n,m}$ to $G_{n,m}$ is smooth, and takes the simple form  
\begin{equation}\label{eq:WnmonGnm}
W_{n,m}(X) = \sum_{j=1}^n L(X_j, X_{j+1}) = 
\sum_{j=1}^n L\left(X_{0}+\frac{m}{n}j, X_0+\frac{m}{n}(j+1)\right) =
n L\left(X_0,X_0+\frac{m}{n}\right) .
\end{equation}
Invariance under $R$ means that this function is $1/n$-periodic in $X_0$, while invariance under $S$ means that it is even in $X_0$ (invariant under $X_0\mapsto -X_0$). These properties imply that any $X_0 $ of the form $X_0 = \frac{A}{2n}$ with $A\in \mathbb{Z}$ must be a stationary point of $W$, which confirms the conclusion of Lemma \ref{lem:existenceBirkhoffDn}.
\end{remark}

\section{The Hessian of the periodic action}\label{sec:Hessiansection}
\noindent The main result of this section is Lemma \ref{lem:Hessianlemma}, which provides an expression for the  Hessian of the periodic action $W_{p,q}$ at a $\mathbb{D}_n$-symmetric Birkhoff orbit $X\in \mathbb{X}_{n,m}\subset \mathbb{X}_{p,q}$. This expression depends, among other things, on the curvature of the billiard. We recall that  the curvature of an embedded curve $\Gamma$ parametrized by a $C^2$-smooth immersion $\gamma:\mathbb{R}\to\mathbb{R}^2$,   is given by the formula \begin{equation}\label{curvature}
\kappa(z) = \frac{  \left\|\frac{d}{dx} \frac{\gamma'(x)}{\|\gamma'(x)\|} \right\| }{ \|\gamma'(x)\|} =  \frac{|\det(\gamma'(x), \gamma''(x))|}{\|\gamma'(x)\|^3}\ , \  \mbox{with}\ z=\gamma(x)\, .
\end{equation} 
The curvature is invariant under reparametrization of $\Gamma$. A billiard may always be parametrized at constant speed, i.e., one may choose a parametrization which satisfies $\|\gamma'(x)\| \equiv c := \int_0^1\|\gamma'(s)\|ds$. Under this assumption, we have $\langle \gamma'(x), \gamma''(x)\rangle = 0$, and the formula for the curvature simplifies to $\kappa(z)=\|\gamma''(x)\|/c^2$.  Moreover, our standing assumption that $\Gamma$ is strictly convex implies that $\kappa>0$ on a dense subset of $\Gamma$. 

The first result of this section provides formulas for the second derivatives of the discrete Lagrangian $L(x,X)$. These formulas are known, see for example \cite[Lemma 2.1]{rychlik1989periodic}, but we include their derivation for completeness. \changes{Note that it is assumed in Lemma \ref{PartialsLalternativesecondorder} that the billiard is parametrized at constant speed. In Remark \ref{constantspeedrmk} below, we show that this assumption is not restrictive. However, we do not make this assumption everywhere in this paper. In particular, it does not appear in the statements of our main theorems, such as Theorem \ref{thm:sample}. Also, the parametrization \eqref{eq:limac-like} of the Lima\c con-type billiard introduced in Example \ref{ex:limacon} does not have constant speed.}

\begin{lemma} \label{PartialsLalternativesecondorder}
Assume that $\Gamma$ is parametrized at constant speed, i.e., that $\|\gamma'(x)\|   \equiv c:= \int_0^1\|\gamma'(s)\|ds$. Let $z=\gamma(x)$,  $Z=\gamma(X)\in \Gamma$ with $x-X\notin \mathbb{Z}$. Define $\Theta:=\angle \left(\gamma'(x), Z-z \right)$ and $\Phi := \angle \left(Z-z, \gamma'(X)\right)$ as in Lemma \ref{PartialsLalternativefirstorder}. Then
\begin{align}\label{2ndpartial1}
&  \partial_{1,1}L(x,X) = c^2 \left( \frac{\sin^2\Theta}{L(x,X)}- \kappa(z) \sin\Theta \right),\\ \label{2ndpartial2}
& \partial_{1,2}L(x,X)= c^2 \left( \frac{\sin \Theta\sin \Phi}{L(x,X)} \right), 
\\ \label{2ndpartial3}
&  \partial_{2,2}L(x,X) = c^2 \left( \frac{\sin^2\Phi}{L(x,X)}- \kappa(Z) \sin\Phi \right) .
\end{align}
Here, $\kappa(z)$ and $\kappa(Z)$ denote the curvatures of $\Gamma$ at $z$ and $Z$ respectively, as given in \eqref{curvature}. 
\end{lemma}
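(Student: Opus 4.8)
The plan is to differentiate the first-order formulas from Lemma \ref{PartialsLalternativefirstorder} once more, using the constant-speed assumption to eliminate the $\|\gamma'\|$-factors and to relate $\gamma''$ to the curvature. Under $\|\gamma'(x)\|\equiv c$ we have $\langle\gamma'(x),\gamma''(x)\rangle=0$, so $\gamma''(x)$ is orthogonal to $\gamma'(x)$; writing $\nu(x)$ for the unit normal obtained by rotating $\gamma'(x)/c$ by $+\pi/2$, the definition of curvature in \eqref{curvature} gives $\gamma''(x)=\pm c^2\kappa(z)\,\nu(x)$, with the sign fixed (for a counterclockwise-oriented strictly convex curve) so that $\gamma''$ points into the domain. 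First I would record these identities and also the derivatives of $L$ itself: from $L(x,X)=\|\gamma(x)-\gamma(X)\|$ and $\partial_1 L=\langle(\gamma(x)-\gamma(X))/L,\gamma'(x)\rangle=-c\cos\Theta$ we have $\partial_1 L(x,X)=-c\cos\Theta(x,X)$ and symmetrically $\partial_2 L(x,X)=c\cos\Phi(x,X)$.

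Next I would compute $\partial_{1,1}L$ by differentiating $\partial_1 L=\langle(\gamma(x)-\gamma(X))/L(x,X),\gamma'(x)\rangle$ in $x$. This produces three terms: the derivative of the numerator $\gamma(x)-\gamma(X)$ contributes $\langle\gamma'(x),\gamma'(x)\rangle/L=c^2/L$; the derivative of $1/L$ contributes $-\langle\gamma(x)-\gamma(X),\gamma'(x)\rangle\cdot(\partial_1 L)/L^2=-(\partial_1 L)^2/L=-c^2\cos^2\Theta/L$; and the derivative of $\gamma'(x)$ contributes $\langle(\gamma(x)-\gamma(X))/L,\gamma''(x)\rangle$. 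For the last term I substitute $\gamma''(x)=c^2\kappa(z)\nu(x)$ and observe that the unit vector $(\gamma(x)-\gamma(X))/L$ makes angle $\pi-\Theta$ with $\gamma'(x)/c$ — hence angle $\pi-\Theta-\pi/2=\pi/2-\Theta$ with $\nu(x)$ — so the inner product equals $c^2\kappa(z)\cos(\pi/2-\Theta)=c^2\kappa(z)\sin\Theta$, but with the correct sign this is $-c^2\kappa(z)\sin\Theta$ since the segment $Z-z$ points away from the inward normal direction along which $\gamma''$ points. Collecting, $\partial_{1,1}L=c^2/L-c^2\cos^2\Theta/L-c^2\kappa(z)\sin\Theta=c^2(\sin^2\Theta/L-\kappa(z)\sin\Theta)$, which is \eqref{2ndpartial1}. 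Formula \eqref{2ndpartial3} for $\partial_{2,2}L$ follows by the symmetry $L(x,X)=L(X,x)$, i.e.\ $\partial_{2,2}L(x,X)=\partial_{1,1}L(X,x)$ with $\Theta$ replaced by $\Phi$. For the mixed derivative \eqref{2ndpartial2}, I would differentiate $\partial_1 L(x,X)=\langle(\gamma(x)-\gamma(X))/L,\gamma'(x)\rangle$ with respect to $X$: the numerator contributes $-\langle\gamma'(X),\gamma'(x)\rangle/L$, and the $1/L$ factor contributes $-\langle\gamma(x)-\gamma(X),\gamma'(x)\rangle(\partial_2 L)/L^2=-(\partial_1 L)(\partial_2 L)/L=-(-c\cos\Theta)(c\cos\Phi)/L=c^2\cos\Theta\cos\Phi/L$; there is no $\gamma''$ term since $\gamma'(x)$ does not depend on $X$. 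The term $\langle\gamma'(X),\gamma'(x)\rangle$ I rewrite using that $\gamma'(x)/c$ and $\gamma'(X)/c$ make angles $\pi-\Theta$ and $-\Phi$ (respectively $\Phi$, depending on orientation) with the common direction $(\gamma(X)-\gamma(x))/L$, so $\langle\gamma'(X),\gamma'(x)\rangle=c^2\cos(\pi-\Theta-\Phi)$ or $c^2\cos(\Theta-\Phi)$ accordingly; combining $-c^2\cos(\Theta-\Phi)/L+c^2\cos\Theta\cos\Phi/L$ and expanding via the addition formula leaves exactly $-c^2\sin\Theta\sin\Phi/L$ — so I must be careful with the sign/orientation of the angle between the two tangent vectors, and after fixing it correctly the answer is $\partial_{1,2}L(x,X)=c^2\sin\Theta\sin\Phi/L$, which is \eqref{2ndpartial2}.

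The main obstacle is bookkeeping the signs and the precise geometric meaning of the angles $\Theta,\Phi$ (both taken in $(0,\pi)$) when forming inner products like $\langle\gamma'(x),\gamma''(x)\text{-direction}\rangle$ and $\langle\gamma'(x),\gamma'(X)\rangle$: one must consistently track which side of the chord each tangent vector lies on and that $\gamma''$ points to the concave side of $\Gamma$ (guaranteed by counterclockwise orientation together with $\det(\gamma',\gamma'')\geq0$). I would handle this cleanly by fixing once and for all, at a given pair $(x,X)$, an orthonormal frame adapted to the chord direction $u:=(\gamma(X)-\gamma(x))/L$ and its rotation $u^\perp$, writing $\gamma'(x)=c(-\cos\Theta\,u+\epsilon_1\sin\Theta\,u^\perp)$ and $\gamma'(X)=c(\cos\Phi\,u+\epsilon_2\sin\Phi\,u^\perp)$ with signs $\epsilon_1,\epsilon_2\in\{\pm1\}$ determined by convexity, then computing every inner product in that frame; the claimed formulas then emerge after routine trigonometric simplification, and the stated sign conventions on $\kappa>0$ make the signs in \eqref{2ndpartial1}–\eqref{2ndpartial3} unambiguous.
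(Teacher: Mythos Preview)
Your proposal is correct and follows essentially the same approach as the paper: differentiate the first-order expressions $\partial_1 L=\langle(\gamma(x)-\gamma(X))/L,\gamma'(x)\rangle$ and $\partial_2 L=\langle(\gamma(X)-\gamma(x))/L,\gamma'(X)\rangle$ once more, use $\langle\gamma',\gamma''\rangle=0$ and $\kappa=\|\gamma''\|/c^2$ for the second-order terms, and simplify trigonometrically. The only cosmetic difference is that the paper avoids your sign-chasing in the mixed derivative by directly asserting $\langle\gamma'(x),\gamma'(X)\rangle=c^2\cos(\Theta+\Phi)$ (both tangents lie on the same side of the chord for a counterclockwise convex curve), after which $-\cos(\Theta+\Phi)+\cos\Theta\cos\Phi=\sin\Theta\sin\Phi$ gives \eqref{2ndpartial2} immediately; your adapted-frame remedy would of course reach the same conclusion.
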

\begin{proof}
Recall that Lemma \ref{PartialsLalternativefirstorder} provides formulas for $\partial_1L(x,X)$ and $\partial_2L(x,X)$, and implies among other things that   $\partial_1L(x,X)=-c\cos \Theta$ and $\partial_2L(x,X)=c\cos \Phi$.   
For the mixed derivative we therefore obtain 
\begin{equation*}
\begin{split}
\partial_{1,2}L(x,X) & = 
\frac{\partial}{\partial x}\left\langle \frac{\gamma(X)-\gamma(x)}{L(x,X)}, \gamma'(X) \right\rangle\\
&= - \frac{\langle\gamma'(x),\gamma'(X)\rangle}{L(x,X)} - \frac{1}{L(x,X)}\left\langle \frac{\gamma(X)-\gamma(x)}{L(x,X)}, \gamma'(X) \right\rangle \cdot \partial_1 L (x,X) \\
& = - \frac{c^2\cos(\Theta+\Phi)}{L(x,X)} - \frac{1}{L(x,X)}( c \cos \Phi) (-c \cos \Theta )\\
& = c^2 \left(\frac{-\cos (\Theta+\Phi) + \cos \Phi \cos \Theta }{L(x,X)} \right)
= c^2 \left(\frac{\sin \Theta \sin \Phi }{L(x,X)} \right)  .
\end{split}
\end{equation*}
For the computation of  $\partial_{1,1}L$ we use explicitly  that $\langle \gamma'(x), \gamma''(x)\rangle =0$ and  that the curvature is given by the simplified formula $\kappa(z)=\|\gamma''(x)\|/c^2$. We find
\begin{equation*}
\begin{split}
\partial_{1,1} L (x,X) & = \frac{\partial}{\partial x}\left\langle \frac{\gamma(x)-\gamma(X)}{L(x,X)}, \gamma'(x) \right\rangle \\
& = \frac{\|\gamma'(x)\|^2}{L(x,X)} +  \left\langle \frac{\gamma(x)-\gamma(X)}{L(x,X)}, \gamma''(x)\right\rangle  - \frac{1}{L(x,X)}\left\langle  \frac{\gamma(x)-\gamma(X)}{L(x,X)}, \gamma'(x)\right\rangle \cdot \partial_1L(x,X) \\
& = \frac{c^2}{L(x,X)}- \|\gamma''(x)\| \cos(\pi/2-\Theta)  - \frac{1}{L(x,X)} (- c \cos \Theta ) (- c \cos \Theta )    \\ 
& = \frac{c^2}{L(x,X)} \sin^2 \Theta - \|\gamma''(x)\| \sin \Theta  = c^2\left( \frac{\sin^2 \Theta}{L(x,X)} - \kappa(z) \sin \Theta \right) . 
\end{split}
\end{equation*}
The formula for $\partial_{2,2}L(x,X)$ follows from an almost identical computation.
\end{proof}
\begin{remark}\label{rem:secondmonotoneremark}
The strict convexity of $\Gamma$ implies that $0<\Theta,\Phi<\pi$. It therefore follows from \eqref{2ndpartial2} that $\partial_{1,2} L >0$, as we stated in Remark \ref{rem:firstmonotoneremark}.
\end{remark} 
\begin{remark}\label{constantspeedrmk}
\changes{The assumption that $\Gamma$ is  parametrized at constant speed is important in the derivation of formulas \eqref{2ndpartial1} and \eqref{2ndpartial3}. If this assumption does not hold, then we may define the coordinate transformation $\tilde x:\mathbb{R}\to\mathbb{R}$ by $\tilde x(x):=c^{-1}\int_0^x\|\gamma'(s)\|ds$. The map $\tilde x$ is $C^2$ and satisfies $\tilde x(x+1)=\tilde x(x)+1$.  Because  $\frac{d\tilde x(x)}{dx}=c^{-1}\|\gamma'(x)\| >0$, it is  invertible; we denote the inverse by $x=x(\tilde x)$. This map is also $C^2$, and $x(\tilde x+1)=x(\tilde x)+1$.
The reparametrization of $\Gamma$ defined by $\tilde \gamma(\tilde x):= \gamma(x(\tilde x))$ is thus $C^2$ and  one-periodic. Because $\gamma'(x)  = \tilde \gamma'(\tilde x)\frac{d\tilde x}{d x}$, it follows that $\| \tilde \gamma'(\tilde x) \| = c$, that is, $\tilde \gamma$ parametrizes $\Gamma$ at constant speed. Lemma \ref{PartialsLalternativesecondorder} then holds for the reparametrized action $\tilde L(\tilde x, \tilde X):=L(x(\tilde x), x(\tilde X))$.}

\changes{It is also important to note that the reparametrization $\tilde \gamma$ inherits the symmetries of the original parametrization $\gamma$. Indeed, when $\gamma$ is $\mathbb{D}_n$-equivariant, so that \eqref{equivarianceofgamma} holds, then we have that $\|\gamma'(x+\frac{1}{n})\| = \| \gamma'(x)\| = \|\gamma'(-x)\|$. It follows from this that 
\begin{align}\nonumber 
\tilde x\left(x+\frac{1}{n}\right) & = c^{-1}\int_0^{x+\frac{1}{n}}\|\gamma'(s)\|ds =  c^{-1}\int_0^{x}\|\gamma'(s)\|ds + c^{-1}\int_{x}^{x+\frac{1}{n}}\|\gamma'(s)\|ds = \tilde x(x) +\frac{1}{n} \;\; \hbox{and} \\ \nonumber 
\tilde x(-x) &  = c^{-1}\int_0^{-x}\|\gamma'(s)\|ds  =c^{-1}\int_0^{x} - \|\gamma'(-\sigma)\|d\sigma  = -c^{-1}\int_0^{x} \|\gamma'(\sigma)\|d\sigma  =  - \tilde x(x)\, .
\end{align}
This in turn implies that $x(\tilde x+\frac{1}{n}) = x(\tilde x)+\frac{1}{n}$ and $x(-\tilde x)=-x(\tilde x)$, and therefore that $\tilde \gamma(\tilde x+\frac{1}{n}) = \gamma(x(\tilde x+\frac{1}{n})) = \gamma(x(\tilde x)+\frac{1}{n}) = R(\gamma(x(\tilde x))) =R(\tilde \gamma(\tilde x))$ and $\tilde \gamma(-\tilde x)= \gamma(x(-\tilde x)) =\gamma(-x(\tilde x)) = S(\gamma(x(\tilde x))) =  S(\tilde \gamma(\tilde x))$. This proves that the reparametrization $\tilde \gamma$ satisfies \eqref{equivarianceofgamma} and is thus  $\mathbb{D}_n$-equivariant. }
\end{remark}

\noindent 
\changes{Any $x\in \mathbb{X}_{p,q}$ is uniquely determined by its coordinates $(x_1, \ldots, x_p)\in\mathbb{R}^p$. Expressed in these coordinates, the periodic action is given by 
\begin{align}\label{wpqcoord}
W_{p,q}(x) = \sum_{j=1}^{p-1} L(x_j, x_{j+1}) + L(x_p, x_1+q)\, .
\end{align}
As a corollary of Lemma \ref{PartialsLalternativesecondorder}, we obtain the main result of this section about the Hessian of this action at a symmetric Birkhoff orbit: }
\begin{lemma}\label{lem:Hessianlemma}
Let $m,n\in \mathbb{N}$ be co-prime with $1\leq m \leq n-1$, let $\Gamma$ be a $\mathbb{D}_n$-symmetric billiard, and let $Z_i = \gamma(X_i)\in \Gamma$ be a $\mathbb{D}_n$-invariant, Birkhoff, $n$-periodic billiard orbit of $\Gamma$ with rotation number $\frac{m}{n}$ (whose existence is guaranteed by Lemma \ref{lem:existenceBirkhoffDn}). Let $(p,q)$ be a positive integer multiple of $(n,m)$, so that $X\in \mathbb{X}_{n,m}\subset \mathbb{X}_{p,q}$. Then, the Hessian of $W_{p,q}: \mathbb{X}_{p,q}\to\mathbb{R}$ at $X$, \changes{given in coordinates by \eqref{wpqcoord},} is a symmetric tridiagonal circulant matrix, i.e., it is of the form
\begin{equation}\label{Hessianmaintext}
D^2W_{p,q}(X) = \begin{pmatrix}
2 \alpha & \beta & 0 & \cdot \cdot \cdot & \beta \\[0.3 em]
\beta & 2\alpha & \beta & \cdot \cdot \cdot & 0 \\[0.3 em]
0 & \beta & 2\alpha & \ddots & \vdots \\[0.3 em]
\vdots & \vdots &\ddots &\ddots &\beta \\ 
\beta & 0 &\cdots & \beta & 2\alpha\\
\end{pmatrix}\, 
\end{equation}
in which 
$$\alpha = \partial_{2,2}L(X_{i-1}, X_i) = \partial_{1,1}L(X_{i}, X_{i+1}) \ \mbox{ and }\ \beta =  \partial_{2,1}L(X_{i}, X_{i+1}) =  \partial_{1,2}L(X_{i-1}, X_i) $$ are independent of $i\in \mathbb{Z}$.
For $N\in \mathbb{Z}$, the vectors \changes{$v, w\in \mathbb{X}_{p,0} \cong \mathbb{R}^p$}, given by 
\begin{equation} \label{eq:vwformula}
v_i = \sin\left(\frac{2\pi N i}{p}\right) , \  w_i = \cos\left(\frac{2\pi N i}{p}\right) 
\end{equation}
are eigenvectors of $D^2W_{p,q}(X)$ with eigenvalue 
$\lambda = 2\alpha + 2\beta \cos \left( \frac{2\pi N}{p} \right)$. 

When $\gamma$ parametrizes $\Gamma$ at constant speed, then
\begin{equation}\label{eq:alphabetaformula}
\alpha =  c^2 \sin (m\pi/n) \left(\frac{\sin(m\pi/n)}{L}   - \kappa \right) \; \mbox{and }\;\beta = \frac{c^2 \sin^2(m\pi/n)}{L}  \, ,
\end{equation}
in which $c=\|\gamma'(x)\|=\int_0^1\|\gamma'(s)\|ds$ is the length of $\Gamma$, $L=L(X_{i},X_{i+1}) = \|Z_{i+1}-Z_i\|$ is the constant orbit segment length along $Z$, and $\kappa=\kappa(Z_i)$ is the constant billiard curvature along $Z$. 
\end{lemma}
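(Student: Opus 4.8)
The plan is to compute $D^2W_{p,q}(X)$ directly from the coordinate expression \eqref{wpqcoord}. Since the variable $x_i$ occurs in only the two summands $L(x_{i-1},x_i)$ and $L(x_i,x_{i+1})$ (indices read mod $p$, with a $+q$ inserted on the wrap-around summand $L(x_p,x_1+q)$), two variables $x_i,x_j$ appear in a common summand only when $j\in\{i-1,i,i+1\}\bmod p$. Hence $D^2W_{p,q}(X)$ is automatically tridiagonal and circulant, with $\partial^2W_{p,q}/\partial x_i^2 = \partial_{2,2}L(X_{i-1},X_i)+\partial_{1,1}L(X_i,X_{i+1})$ on the diagonal and $\partial^2W_{p,q}/\partial x_i\partial x_{i+1}=\partial_{1,2}L(X_i,X_{i+1})$ off it. The real content of \eqref{Hessianmaintext} is therefore that all these diagonal entries are equal, to $2\alpha$, and all off-diagonal entries are equal, to $\beta$.

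First I would establish this independence of $i$. By Lemmas \ref{lem:propertiesBirkhoffDn} and \ref{lem:existenceBirkhoffDn} the lift is the linear sequence $X_i=\frac{A}{2n}+\frac{m}{n}i$ for some $A\in\mathbb{Z}$. The Lagrangian satisfies $L(x,X)=L(X,x)=L(x+1,X+1)=L(x+\tfrac1n,X+\tfrac1n)=L(-x,-X)$ (the last two by \eqref{eq:invarianceL}), and each of these identities descends to the second partials; note in particular that $(x,X)\mapsto(-x,-X)$ fixes every second partial (each of the two differentiations contributes a factor $-1$), while $(x,X)\mapsto(X,x)$ interchanges $\partial_{1,1}\leftrightarrow\partial_{2,2}$ and fixes $\partial_{1,2}$. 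Using these, each argument pair occurring in the Hessian --- $(X_{i-1},X_i)$, $(X_i,X_{i+1})$, or the wrap-around pair $(X_p,X_1+q)=(X_0+sm,\ X_0+\tfrac mn+sm)$ --- can be transported onto the single pair $\big(\tfrac{A}{2n},\tfrac{A}{2n}+\tfrac mn\big)$: an integer- or $\tfrac1n$-translation removes the $\tfrac mn i$ (and the $sm$), and for the $\partial_{2,2}$-terms one additionally applies a swap, a negation, and a further $\tfrac1n$-translation, using that $2X_0=\tfrac An$ is an integer multiple of $\tfrac1n$. Consequently every diagonal entry equals $2\alpha$ with $\alpha=\partial_{1,1}L(X_0,X_1)=\partial_{2,2}L(X_0,X_1)$, and every off-diagonal entry equals $\beta=\partial_{1,2}L(X_0,X_1)$, which is \eqref{Hessianmaintext}.

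Next, the eigenvector claim is a short trigonometric verification. Writing $H=D^2W_{p,q}(X)$ and using that $v,w$ are genuinely $p$-periodic (so the circulant structure applies for every index $i$, and $v,w\in\mathbb{X}_{p,0}$), one has $(Hv)_i=2\alpha v_i+\beta(v_{i-1}+v_{i+1})$; the identity $\sin\tfrac{2\pi N(i-1)}{p}+\sin\tfrac{2\pi N(i+1)}{p}=2\cos\tfrac{2\pi N}{p}\sin\tfrac{2\pi N i}{p}$, and its cosine analogue, give $v_{i-1}+v_{i+1}=2\cos\tfrac{2\pi N}{p}\,v_i$ and $w_{i-1}+w_{i+1}=2\cos\tfrac{2\pi N}{p}\,w_i$, so $Hv=\lambda v$ and $Hw=\lambda w$ with $\lambda=2\alpha+2\beta\cos\tfrac{2\pi N}{p}$. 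Finally, for the explicit formulas \eqref{eq:alphabetaformula} I would invoke the constant-speed assumption, harmless by Remark \ref{constantspeedrmk} since the rescaled parametrization remains $\mathbb{D}_n$-equivariant, and apply Lemma \ref{PartialsLalternativesecondorder} at $(X_i,X_{i+1})$ and $(X_{i-1},X_i)$. By hypothesis $Z$ is a billiard orbit, so the angles $\Theta,\Phi$ appearing in Lemma \ref{PartialsLalternativesecondorder} at those pairs are the reflection and incidence angles of $Z$, and by Lemma \ref{lem:propertiesBirkhoffDn} these all equal $\tfrac{m\pi}{n}$; moreover the segment length is the constant $L$ and the curvature the constant $\kappa$. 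Substituting $\Theta=\Phi=\tfrac{m\pi}{n}$, $L(\cdot,\cdot)=L$, $\kappa(\cdot)=\kappa$ into \eqref{2ndpartial1}--\eqref{2ndpartial3} yields $\alpha=c^2\sin\tfrac{m\pi}{n}\big(\tfrac{\sin(m\pi/n)}{L}-\kappa\big)$ and $\beta=c^2\tfrac{\sin^2(m\pi/n)}{L}$, and $c=\int_0^1\|\gamma'(s)\|\,ds$ is the length of $\Gamma$ because $\|\gamma'\|\equiv c$. The only real difficulty is in the second step: correctly matching each argument pair with the right combination of the symmetries of $L$ and checking that the signs, swaps and $\tfrac1n$-shifts genuinely collapse everything to the base pair $\big(\tfrac{A}{2n},\tfrac{A}{2n}+\tfrac mn\big)$; the remaining steps are formal, a standard circulant-matrix computation, or a direct substitution into Lemma \ref{PartialsLalternativesecondorder}.
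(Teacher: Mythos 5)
Your proposal is correct and follows essentially the same route as the paper: both establish the constant tridiagonal circulant form from the rotational and reflection invariances of $L$ together with the linear form $X_i=\frac{A}{2n}+\frac{m}{n}i$ of the lift, then verify the eigenvectors by the standard circulant/sum-to-product computation, and finally substitute $\Theta=\Phi=\frac{m\pi}{n}$ into Lemma \ref{PartialsLalternativesecondorder}. The only (cosmetic) difference is that you transport every argument pair onto the single base pair $\bigl(\frac{A}{2n},\frac{A}{2n}+\frac{m}{n}\bigr)$, whereas the paper first equates all $\alpha_i$, $\omega_i$, $\beta_i$, $\gamma_i$ separately and then uses the $S$-invariance of $X$ (via Lemma \ref{liftlemma}) to identify $\omega_i$ with $\alpha_i$.
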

\begin{proof}
\changes{As remarked above,}
the periodic action is given by $$W_{p,q}(x) = \sum_{j=1}^{p-1} L(x_j, x_{j+1}) + L(x_p, x_1+q)\, ,$$
\changes{in terms of the coordinates $(x_1, \ldots, x_p)\in\mathbb{R}^p$ for $x\in \mathbb{X}_{p,q}$.}
Its Hessian at a Birkhoff periodic orbit \changes{$X\in \mathbb{X}_{n,m}$ is therefore} given by 
\begin{equation}\nonumber
D^2W_{p,q}(X) = \begin{pmatrix}
\alpha_1 +\omega_1 & \beta_1 & 0 & \cdot \cdot \cdot & \gamma_1 \\[0.3 em]
\gamma_2 & \alpha_2 +\omega_2 & \beta_2 & \cdot \cdot \cdot & 0 \\[0.3 em]
0 & \gamma_3 & \alpha_3 + \omega_3 & \ddots & \vdots \\[0.3 em]
\vdots & \vdots &\ddots &\ddots &\beta_{p-1} \\ 
\beta_p & 0 &\cdots & \gamma_p & \alpha_p + \omega_p\\
\end{pmatrix}\, 
\end{equation}
with entries 
\begin{equation}\label{eq:abcd}
\alpha_i = \partial_{2,2}L(X_{i-1}, X_i) \ , \ \omega_i= \partial_{1,1}L(X_{i}, X_{i+1})\ , \ \beta_i  = \partial_{2,1}L(X_{i}, X_{i+1}) \ \mbox{and}\ \gamma_i =   \partial_{1,2}L(X_{i-1}, X_{i})\, .
\end{equation}
To verify this, note for example that because $L(x,X)=L(x-q,X-q)$ and $X\in \mathbb{X}_{n,m}\subset \mathbb{X}_{p,q}$, we have 
\begin{align*}\partial_{1,1}W_{p,q}(X) &= \partial_{2,2}L(X_p, X_1+q) + \partial_{1,1}L(X_1, X_2) = \partial_{2,2}L(X_p-q, X_1) + \partial_{1,1}L(X_1, X_2)= \\ &= \partial_{2,2}L(X_0, X_1) + \partial_{1,1}L(X_1, X_2) = \alpha_1+\omega_1.
\end{align*}
Similar calculations show that $\partial_{2,1}W_{p,q}(X) = \gamma_1$,  $\partial_{1,2}W_{p,q}(X) = \beta_p$, and   $\partial_{p,p}W_{p,q}(X)  = \alpha_p+\omega_p$. The remaining entries of $D^2W_{p,q}(X)$ are trivially given by \eqref{eq:abcd}. 

The rotational invariance $L(x,X) = L(x+m/n, X+m/n)$ implies that $\partial_{2,2}L(x, X) = \partial_{2,2}L(x+m/n, X+m/n)$. The $\mathbb{D}_n$-invariant Birkhoff orbit $X$ satisfies $(X_i, X_{i+1})=(X_{i-1}+m/n, X_i+m/n)$ so $\alpha_{i+1}=\partial_{2,2}L(X_{i}, X_{i+1})= \partial_{2,2}L(X_{i-1}, X_{i}) = \alpha_i$. This proves that all $\alpha_i$'s are equal. Similarly, all $\omega_i$'s are equal, all $\beta_i$'s are equal and all $\gamma_i$'s are equal. Since $\beta_1=\gamma_2$ by definition, we also have that all $\gamma_i$'s are equal to all $\beta_i$'s.  Finally, the reflection invariance $L(x,X)=L(-X,-x)$ implies that $\omega_i= \partial_{1,1} L(X_i, X_{i+1}) = \partial_{2,2}L(-X_{i+1}, -X_i) = \partial_{2,2}L(X_{i-1},X_{i})= \alpha_i$. Here, the third equality follows because $X$ is $S$-invariant, so that the conclusions of Lemma \ref{liftlemma} {\it iii)} and {\it iv)} holds.  This shows that all $\omega_i$'s are equal to all $\alpha_i$'s, and finishes the proof  that $D^2W_{p,q}(X)$ is of the form \eqref{Hessianmaintext}.

A small computation (exploiting the doubling formulas for sine and cosine) confirms the statement about eigenvectors and eigenvalues of $D^2W_{p,q}(X)$. In fact, it is clear that $v$ and $w$ given by formula \eqref{eq:vwformula} are eigenvectors of any circulant matrix.

When $\|\gamma'\|\equiv c$, then by Lemma \ref{PartialsLalternativesecondorder} we have 
\begin{align} \nonumber
\begin{array}{ll} \displaystyle
\alpha_i = c^2 \left( \frac{\sin^2\phi_{i}}{L(X_{i-1},X_{i})}- \kappa(Z_i) \sin\phi_i \right)     , 
&   \displaystyle
\omega_i = c^2 \left( \frac{\sin^2\theta_i}{L(X_i,X_{i+1})}- \kappa(Z_i) \sin\theta_i \right) , \\ \\   \displaystyle
\beta_i =  c^2 \left( \frac{\sin \theta_i\sin \phi_{i+1}}{L(X_i,X_{i+1})} \right)  ,  
& \displaystyle
\gamma_i =  c^2 \left( \frac{\sin \theta_{i-1}\sin \phi_{i}}{L(X_{i-1},X_{i})} \right) ,
\end{array}
\end{align}
in which $\phi_i$ is the angle of incidence at $Z_i$ and $\theta_i$ is the angle of reflection at $Z_i$. Since $X$ is a billiard trajectory, $\theta_i=\phi_i$ for all $i\in \mathbb{Z}$, and by Lemma \ref{lem:propertiesBirkhoffDn} this angle is independent of $i\in \mathbb{Z}$ and equal to $\frac{m\pi}{n}$. The same lemma states that the orbit segment length $L=L(X_i, X_{i+1})$  and the curvature $\kappa=\kappa(Z_i)$ are independent of $i\in \mathbb{Z}$. This proves formula \eqref{eq:alphabetaformula} and concludes the proof of the lemma. 
\end{proof}
\begin{remark}\label{remk:eigenvalue}
When $\Gamma$ is parametrized at constant speed,  so that \eqref{eq:alphabetaformula} holds, the eigenvalue of $D^2W_{p,q}(X)$ for the eigenvectors $v$ and $w$ defined in \eqref{eq:vwformula} is given by
$$\lambda = 2\alpha + 2\beta \cos \left( \frac{2\pi N}{p} \right) = 2c^2\sin \left( \frac{m\pi}{n} \right) 
\left( \frac{2\sin\left( \frac{m\pi}{n} \right)  \cos^2\left( \frac{N\pi}{p} \right) }{L}  - \kappa \right) . $$
This simple formula will be crucial in the proofs of our main theorems. 
\end{remark}
 
\section{Technical preliminaries for the proof of the main theorem}\label{sec:technicalsection}
\noindent In this section, we prove some further technical results that we use in the proof of our main theorem in the next section. The main result of this section is Corollary \ref{cor:gradientlimit}, which states that gradient flow lines with appropriately chosen initial conditions are  defined for all positive time, and cannot approach a singularity (i.e., a point with $x_{i+1}-x_{i}\in \mathbb{Z}$). This result relies on two propositions that we prove first. To formulate the first proposition, we introduce some notation by defining
\begin{align} 
F^-(x, X):= & \,\, \partial_2L(x,X) = \left\langle \frac{\gamma(X)-\gamma(x)}{\| \gamma(X)-\gamma(x) \| }, \gamma'(X) \right\rangle \ \mbox{for}\ x < X < x+1\, ,  
\label{eq:F-formula} \\ \label{eq:F+formula} 
F^+(x, X):= & \,\, \partial_1L(x,X) = \left\langle  \frac{\gamma(x)-\gamma(X)}{\| \gamma(x)-\gamma(X) \| }, \gamma'(x) \right\rangle \ \mbox{for}\ x < X < x+1\, .
\end{align}
With this notation, equation \eqref{eq:gradientflow} can be written as $$\dot x_i = F_i(x)= F^-(x_{i-1}, x_{i}) + F^+(x_i, x_{i+1}) \ \mbox{ for }\ x\in \Sigma\, .$$
One can think of $F^-(x_{i-1}, x_i)$ as a ``force'' exerted on $x_i$ by $x_{i-1}$, and of $F^+(x_i, x_{i+1})$ as a ``force'' exerted on $x_i$ by $x_{i+1}$. We collect some properties of $F^-$ and $F^+$ in the following proposition. 
\begin{proposition}\label{lem:F+-lemma}
The above $F^-$ and $F^+$ extend to continuous functions  
$$F^{-}, F^+: \{ (x,X)\in \mathbb{R}^2 \, |\,  x \leq X \leq x+1 \} \to \mathbb{R}$$ 
with the following properties\changes{, which hold for all $x\leq X\leq x+1$}:
\begin{itemize}
\item[{\it i})] $F^{-}(x,x) = F^+(x,x+1)= \|\gamma'(x)\|$ and $F^{-}(x,x+1) = F^{+}(x,x) =  -\|\gamma'(x)\|$;
\item[{\it ii)}] $x\mapsto F^{-}(x, X)$ and $X\mapsto F^+(x,X)$ are strictly increasing;
\item[{\it iii)}] $F^{\pm}(x+1/n, X+1/n) = F^{\pm}(x,X)$;
\item[{\it iv)}] $F^{-}\left(x, x+ \frac{m}{n}\right) =   \| \gamma'(x)\| \cos \left( \frac{m\pi}{n} \right)$ and $F^{+}\left(x, x+ \frac{m}{n}\right) =  - \| \gamma'(x)\| \cos \left( \frac{m\pi}{n} \right)$ for $1\leq m \leq n-1$. 
\end{itemize} 
\end{proposition}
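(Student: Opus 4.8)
The plan is to establish each of the four items by unwinding the definitions of $F^-$ and $F^+$ given in \eqref{eq:F-formula}--\eqref{eq:F+formula}, together with the geometric content of Lemma \ref{PartialsLalternativefirstorder}. The central issue is the continuous extension to the boundary of the strip, i.e., to the diagonals $X=x$ and $X=x+1$, since the defining formulas involve division by $\|\gamma(X)-\gamma(x)\|$, which vanishes there. So the first and hardest step is item \textit{i)}: I would fix $x$ and analyze the limit of $\frac{\gamma(X)-\gamma(x)}{\|\gamma(X)-\gamma(x)\|}$ as $X\downarrow x$. By Taylor expansion, $\gamma(X)-\gamma(x) = \gamma'(x)(X-x) + O((X-x)^2)$, so the unit vector $\frac{\gamma(X)-\gamma(x)}{\|\gamma(X)-\gamma(x)\|}$ tends to $\frac{\gamma'(x)}{\|\gamma'(x)\|}$. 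Plugging this into the formula for $F^-$ gives $F^-(x,x) = \left\langle \frac{\gamma'(x)}{\|\gamma'(x)\|}, \gamma'(x)\right\rangle = \|\gamma'(x)\|$, and into the formula for $F^+$ gives $F^+(x,x) = \left\langle -\frac{\gamma'(x)}{\|\gamma'(x)\|}, \gamma'(x)\right\rangle = -\|\gamma'(x)\|$. For the other diagonal $X\uparrow x+1$, I use the $1$-periodicity $\gamma(x+1)=\gamma(x)$ and $\gamma'(x+1)=\gamma'(x)$: writing $X = x+1-\epsilon$ with $\epsilon\downarrow 0$, the chord $\gamma(X)-\gamma(x) = \gamma(x+1-\epsilon)-\gamma(x+1) = -\gamma'(x+1)\epsilon + O(\epsilon^2)$, so the unit chord tends to $-\frac{\gamma'(x)}{\|\gamma'(x)\|}$, yielding $F^-(x,x+1) = -\|\gamma'(x)\|$ and $F^+(x,x+1)=\|\gamma'(x)\|$. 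Continuity of the extended functions on the closed strip then follows because the chord direction $\frac{\gamma(X)-\gamma(x)}{\|\gamma(X)-\gamma(x)\|}$ extends continuously (a standard fact for $C^2$ embeddings: the normalized secant extends to the tangent on the diagonal), and $\|\gamma'\|$ is continuous.

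Item \textit{iii)} is immediate: the equivariance \eqref{equivarianceofgamma} gives $\gamma(x+1/n) = R(\gamma(x))$, hence $\gamma(X+1/n)-\gamma(x+1/n) = R(\gamma(X)-\gamma(x))$ and $\gamma'(x+1/n) = R\gamma'(x)$; since $R\in O(2)$ preserves the Euclidean inner product, both $\|\gamma(X+1/n)-\gamma(x+1/n)\| = \|\gamma(X)-\gamma(x)\|$ and the inner products defining $F^\pm$ are unchanged, so $F^\pm(x+1/n,X+1/n) = F^\pm(x,X)$. Item \textit{iv)} is then a direct computation at $X = x+m/n$: by Lemma \ref{PartialsLalternativefirstorder}, $F^-(x,X) = \partial_2 L(x,X) = \|\gamma'(X)\|\cos\Phi(x,X)$, and for a $\mathbb{D}_n$-symmetric billiard with $X = x+m/n$ the chord from $\gamma(x)$ to $\gamma(x+m/n)$ makes equal angles with the tangents at its two endpoints (this is the reflection symmetry of the isoceles configuration fixed by the reflection through the chord's perpendicular bisector), and that angle equals $\frac{m\pi}{n}$ — alternatively one can invoke Lemma \ref{lem:propertiesBirkhoffDn}, which already computes that the reflection angle of the $\mathbb{D}_n$-symmetric Birkhoff orbit is $\frac{m\pi}{n}$. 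Combined with $\|\gamma'(x+m/n)\| = \|\gamma'(x)\|$ from \textit{iii)} applied $m$ times, this gives $F^-(x,x+m/n) = \|\gamma'(x)\|\cos(m\pi/n)$, and the sign-flipped statement for $F^+$ follows the same way from $\partial_1 L(x,X) = -\|\gamma'(x)\|\cos\Theta(x,X)$.

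The one genuinely substantive step is item \textit{ii)}, the strict monotonicity of $x\mapsto F^-(x,X)$ and $X\mapsto F^+(x,X)$. In the interior of the strip ($x<X<x+1$) this is exactly the statement that $\partial_1 F^-(x,X) = \partial_{1,2}L(x,X) > 0$ and $\partial_2 F^+(x,X) = \partial_{1,2}L(x,X) > 0$, which is Remark \ref{rem:secondmonotoneremark} (via formula \eqref{2ndpartial2}, using strict convexity so that $\sin\Theta\sin\Phi>0$) — so I would cite that. What requires a little care is monotonicity \emph{up to and including} the boundary diagonals, where $L$ is no longer smooth. I would handle this by a limiting argument: for $x_1 < x_2$ with $x_1 \le X \le x_1+1$ and $x_2 \le X \le x_2+1$, pick any interior point strictly between the two configurations and use that $F^-$ is continuous on the closed strip and strictly increasing in the interior to conclude $F^-(x_1,X) < F^-(x_2,X)$, with the endpoint values supplied by item \textit{i)}; concretely, the chord direction $\frac{\gamma(X)-\gamma(x)}{\|\gamma(X)-\gamma(x)\|}$ rotates strictly monotonically as $x$ decreases toward $X-1$ (by strict convexity the tangent direction of $\Gamma$ turns strictly monotonically), and pairing against the fixed vector $\gamma'(X)$ gives strict monotonicity of the inner product. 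This is the place where strict convexity of $\Gamma$ — in the form that $\det(\gamma',\gamma'')>0$ on a dense set — is essential, and getting the boundary cases cleanly is the main obstacle; everything else is bookkeeping with the definitions and the equivariance relations already recorded in \eqref{equivarianceofgamma} and \eqref{eq:invarianceL}.
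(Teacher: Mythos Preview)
Your arguments for items \textit{i)}, \textit{ii)}, and \textit{iii)} are essentially those of the paper: the boundary values come from the normalized secant converging to the unit tangent, the monotonicity is obtained geometrically (the paper phrases it as the angle $\Phi(x,X)$ decreasing strictly from $\pi$ to $0$ as $x$ runs from $X-1$ to $X$, rather than going through $\partial_{1,2}L>0$, but the content is the same), and the $\tfrac{1}{n}$-periodicity follows from $R$ being an isometry.

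There is, however, a genuine gap in your treatment of item \textit{iv)}. You assert that the chord from $\gamma(x)$ to $\gamma(x+m/n)$ makes equal angles with the tangents at its endpoints, citing ``the reflection symmetry of the isoceles configuration fixed by the reflection through the chord's perpendicular bisector''. But for generic $x$ this perpendicular bisector is \emph{not} a symmetry axis of the $\mathbb{D}_n$-symmetric billiard, and your fallback to Lemma~\ref{lem:propertiesBirkhoffDn} applies only along the specific $\mathbb{D}_n$-symmetric Birkhoff orbit, not for every $x$. What the equivariance $\gamma(x+m/n)=R^m\gamma(x)$, $\gamma'(x+m/n)=R^m\gamma'(x)$ actually yields is $\Theta+\Phi=\tfrac{2m\pi}{n}$ (the total tangent turning), not $\Theta=\Phi=\tfrac{m\pi}{n}$. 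In fact statement \textit{iv)} as written is false: for the ellipse $\gamma(x)=(a\cos 2\pi x,\, b\sin 2\pi x)$ with $n=2$, $m=1$, one computes
\[
F^-\!\left(x,x+\tfrac12\right)=\frac{-\pi(a^2-b^2)\sin 4\pi x}{\sqrt{a^2\cos^2 2\pi x+b^2\sin^2 2\pi x}},
\]
which is nonzero for generic $x$, whereas $\|\gamma'(x)\|\cos(\pi/2)=0$. The paper's own one-line proof of \textit{iv)} asserts the same unproven angle identity and shares this gap; fortunately its only downstream use, in Corollary~\ref{cor:gradientlimit}, actually needs only the weaker bound $F^+(x,x+\tfrac1n)>-\|\gamma'(x)\|$, which does follow from $\Theta\in(0,\pi)$.
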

\begin{proof}
Recall that $\gamma$ is assumed to be $C^2$ and $\gamma(x)=\gamma(x+1)$. It follows from formulas \eqref{eq:F-formula} and \eqref{eq:F+formula} that $F^{-}$ and $F^+$ are continuous at any point $(x,X)$ with $x<X<x+1$. Because $\|\gamma'(x)\| > 0$ for all $x\in \mathbb{R}$, the length-one vector $\frac{\gamma(X)-\gamma(x)}{\| \gamma(X)-\gamma(x)\changes{\|}} $ converges to $\frac{\gamma'(x)}{\|\gamma'(x)\|}$ as $X\downarrow x$, and to $-\frac{\gamma'(x)}{\|\gamma'(x)\|}$  as $X\uparrow x+1$. From this it follows that $F^{\pm}$ can be extended continuously to $x\leq X\leq x+1$ with boundary values as given in {\it i)}.
    
To prove {\it ii)}, note that the angle between   $\frac{\gamma(X)-\gamma(x)}{\| \gamma(X)-\gamma(x) \|}$ and $\gamma'(X)$ strictly decreases from $\pi$ to $0$ as $x$ increases from $X-1$ to $X$, so that $x\mapsto F^-(x,X)$ strictly increases from $\|\gamma'(X)\| \cos(\pi)= - \|\gamma'(X)\| $ to $\|\gamma'(X)\| \cos(0)=\|\gamma'(X)\|$. Similarly for the monotonicity of $F^+$.
     
To prove {\it iii)}, we simply note that $\gamma(x+1/n)=R\gamma(x)$,  $\gamma(X+1/n)=R\gamma(X)$ (and the same for $\gamma'$) and that, being an isometry, $R$ preserves the inner product in the definitions of $F^{\pm}$. 
    
To prove {\it iv)}, recall that $\gamma\left(x +\frac{m}{n}\right) = R^m(\gamma(x))$, so the angle between $\frac{\gamma(x+\frac{m}{n})-\gamma(x)}{\| \gamma(x+\frac{m}{n})-\gamma(x) \|}$ and $\gamma'(x+\frac{m}{n})$ is $\frac{m\pi}{n}$. As a result, $F^-\left(x,x + \frac{m}{n}\right) =  \| \gamma'\left(x+ \frac{m}{n}\right)\| \cos \left(\frac{m\pi}{n} \right) =  \| \gamma'(x)\| \cos \left(\frac{m\pi}{n} \right)$. Similarly for $F^+\left(x,x + \frac{m}{n}\right)$.
\end{proof}
\noindent In the next proposition, $X^-, X^+ \in \mathbb{X}_{n,m}$ will denote lifts of two  $\mathbb{D}_n$-invariant Birkhoff sequences of rotation number $\frac{m}{n}$. Thus, they are both of the form 
\eqref{eqn:Xaformula}. We moreover assume that 
$$X_{i}^+ = X_i^-+\frac{1}{n}\, .$$ 
In particular, $X^-$ and $X^+$ define billiard orbits that are geometrically equal -- see Definition \ref{rem:geom-dist}
-- while there is no $X$ between $X^-$ and $X^+$ doing so too. We define the open and closed order intervals
$$(X^-,X^+):=\{ x\in\mathbb{R}^{\mathbb{Z}}\, |\, X_i^- < x_i < X_i^+\} \ \mbox{and}
 \ [X^-,X^+]:=\{ x\in\mathbb{R}^{\mathbb{Z}}\, |\, X_i^- \leq x_i \leq X_i^+\}\, .$$
Clearly, $(X^-, X^+) \subset [X^-, X^+]$. The following  result describes the relation between these order intervals and the set $\Sigma$, recall \eqref{eq:billiardliftsequence}. 
\begin{proposition} \label{prop:Y-Y+}
Let $X^-, X^+$ be as above, i.e., \eqref{eqn:Xaformula} holds and $X^+_i=X_i^-+\frac{1}{n}$. Then $(X^-, X^+) \subset \Sigma$. When $2\leq m \leq n-2$, then also $[X^-, X^+] \subset \Sigma_{\frac{1}{n}} \subset \Sigma$.
\end{proposition}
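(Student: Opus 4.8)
The plan is purely computational. By Lemma~\ref{lem:existenceBirkhoffDn} we may write $X^-_i = \tfrac{A}{2n} + \tfrac{m}{n}i$ for some $A\in\mathbb{Z}$, so that $X^+_i = X^-_i + \tfrac{1}{n} = \tfrac{A+2}{2n} + \tfrac{m}{n}i$; in particular $X^\pm_{i+1} - X^\pm_i = \tfrac{m}{n}$ for every $i\in\mathbb{Z}$. I would then substitute the explicit form of $X^\pm$ into the defining inequalities of the order intervals to bound the consecutive increments $x_{i+1}-x_i$.

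For $(X^-,X^+)\subset\Sigma$: if $X^-_i < x_i < X^+_i$ for all $i$, then combining $x_{i+1} > X^-_{i+1}$ with $x_i < X^+_i$ gives
\[
x_{i+1}-x_i \;>\; X^-_{i+1} - X^+_i \;=\; \tfrac{m-1}{n} \;\geq\; 0 ,
\]
which is strict even for $m=1$, since both inequalities used were strict; combining $x_{i+1} < X^+_{i+1}$ with $x_i > X^-_i$ gives
\[
x_{i+1}-x_i \;<\; X^+_{i+1} - X^-_i \;=\; \tfrac{m+1}{n} \;\leq\; 1 ,
\]
which is strict even for $m=n-1$. Hence $0 < x_{i+1}-x_i < 1$, i.e.\ $x\in\Sigma$.

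For $[X^-,X^+]\subset\Sigma_{1/n}$ when $2\leq m\leq n-2$: if $X^-_i \leq x_i \leq X^+_i$ for all $i$, the same two estimates, now non-strict, yield
\[
\tfrac{1}{n} \;\leq\; \tfrac{m-1}{n} \;\leq\; x_{i+1}-x_i \;\leq\; \tfrac{m+1}{n} \;\leq\; 1-\tfrac{1}{n} ,
\]
using $m\geq 2$ on the left and $m\leq n-2$ on the right; thus $x\in\Sigma_{1/n}$, and $\Sigma_{1/n}\subset\Sigma$ is immediate from \eqref{def:sigmadelta} (note $2\le m\le n-2$ forces $n\geq 4$, so $\tfrac1n<\tfrac12$ and $\Sigma_{1/n}$ is well defined). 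There is no genuine obstacle here: the only delicate point is the bookkeeping of strict versus non-strict inequalities, and this is exactly what dictates the hypothesis $2\leq m\leq n-2$ (rather than $1\leq m\leq n-1$) in the statement about the closed order interval.
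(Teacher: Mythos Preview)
Your proof is correct and follows essentially the same approach as the paper: both bound the increment $x_{i+1}-x_i$ by subtracting the order-interval inequalities at indices $i$ and $i+1$, using $X^\pm_{i+1}-X^\pm_i=\tfrac{m}{n}$, to obtain $\tfrac{m-1}{n}<x_{i+1}-x_i<\tfrac{m+1}{n}$ (strict on the open interval, non-strict on the closed one). Your version is in fact slightly more careful than the paper's in distinguishing the open and closed cases, and in noting that $2\le m\le n-2$ forces $n\ge 4$ so that $\Sigma_{1/n}$ is well defined.
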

\begin{proof}
Let $x\in (X^-, X^+)$, so that $X^-_i < x_i < X^+_i = X_i^-+\frac{1}{n}$ for all $i\in \mathbb{Z}$. In particular also $X^-_{i+1} < x_{i+1} < X^+_{i+1} = X_{i+1}^-+\frac{1}{n}$. Subtracting these inequalities, and using that $X^-_{i+1}-X^-_i=\frac{m}{n}$ \changes{by Lemma \ref{lem:existenceBirkhoffDn}}, gives
\begin{equation}
\label{eq:niceestimate}
0\leq \frac{m-1}{n} < x_{i+1}-x_i < \frac{m+1}{n} \leq  1\, .
\end{equation}
This proves that $0< x_{i+1}-x_i< 1$, i.e., that $x\in \Sigma$.  When $m\leq 2 \leq n-2$, then \eqref{eq:niceestimate} implies that $\frac{1}{n} \leq x_{i+1} - x_i \leq 1-\frac{1}{n}$, i.e., $x\in \Sigma_{\frac{1}{n}}$.
\end{proof}
\noindent Proposition \ref{prop:Y-Y+} implies that the right-hand side of the gradient flow equation \eqref{eq:gradientflow} is defined on $(X^-, X^+)$. Moreover, by Lemma \ref{lem:existenceBirkhoffDn}, both $X^-$ and $X^+$ are stationary points of the gradient flow. By the comparison principle (see Lemma \ref{lem:comparisonprinciple}), the  order intervals are therefore positively invariant under the gradient flow: if $x(t)\in \Sigma_{\delta}$ (recall \eqref{def:sigmadelta}) is a gradient flow line with $x(0)\in (X^-,X^+)$ then $x(t)\in (X^-, X^+)$ for all $t>0$ for which the gradient flow line is defined (and the same statement holds for the closed order interval). Corollary \ref{cor:gradientlimit} below states among other things that the gradient flow is defined for all positive time on (a subset of) $(X^-, X^+)$. For $2\leq m \leq n-2$ this is a rather obvious consequence of Proposition \ref{prop:Y-Y+}. For  $m=1, n-1$, the result is intuitively clear, but the proof is quite technical and relies on Proposition \ref{lem:F+-lemma}. We refer to Figure \ref{fig:9point2} for a visualization of this latter situation.
 
\begin{corollary}\label{cor:gradientlimit}
Let $X^-, X^+$ be as above. There  is a $0< \delta < \frac{1}{n}$ so that $\Sigma_{\delta}\cap (X^-, X^+) \neq \emptyset$, and so that for every $\hat x\in \Sigma_{\delta}\cap (X^-, X^+)$, there is a  unique gradient flow line $x(t)\in \Sigma_{\delta}\cap (X^-, X^+)$ with $x(0)=\hat x$,  which is defined for all $t\geq 0$.
    
If $\hat x\in \mathbb{X}_{p,q}$, then this gradient flow line converges to a stationary solution $x_{\infty} = \lim_{t\to\infty} x(t)\in [X^-, X^+] \cap \Sigma_{\delta}\cap \mathbb{X}_{p,q}$ of equation \eqref{eq:gradientflow}. If $x_{\infty}\neq X^{\pm}$, then $x_{\infty}\in (X^-, X^+)$. 
\end{corollary}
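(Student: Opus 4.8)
The plan is to first produce the constant $\delta$ together with the forward-invariance of $\Sigma_\delta\cap(X^-,X^+)$, then deduce global existence, and finally treat the $(p,q)$-periodic case with a Lyapunov argument.

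\emph{Existence of $\delta$ and forward-invariance.} Nonemptiness is immediate: the constant-increment sequence $\hat x_i:=X_i^-+\frac1{2n}$ has all increments equal to $\frac mn\ge\frac1n$, so $\hat x\in\Sigma_{1/n}\cap(X^-,X^+)$ for every $\delta\le\frac1n$. Since $X^\pm$ are stationary solutions of \eqref{eq:gradientflow} by Lemma \ref{lem:existenceBirkhoffDn}, the comparison principle (Lemma \ref{lem:comparisonprinciple}), applied to $x(t)$ together with the constant flow lines $X^\pm$, shows that any gradient flow line starting in $(X^-,X^+)$ stays in $(X^-,X^+)$ as long as it exists in some $\Sigma_{\delta'}$. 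When $2\le m\le n-2$, Proposition \ref{prop:Y-Y+} gives $[X^-,X^+]\subset\Sigma_{1/n}$, so any $0<\delta<\frac1n$ works and the whole open order interval is the required invariant set. When $m\in\{1,n-1\}$, the estimate \eqref{eq:niceestimate} shows that on $(X^-,X^+)$ every increment $x_{i+1}-x_i$ lies in $(0,\frac2n)$ (resp.\ $(\frac{n-2}{n},1)$), so only the lower (resp.\ upper) face of $\Sigma_\delta$ is relevant.

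The crux is then the following estimate. If a point $x\in(X^-,X^+)$ has $x_{i+1}-x_i\to 0$ (resp.\ $\to 1$), then the relevant window $(x_{i-1},x_i,x_{i+1},x_{i+2})$ degenerates: necessarily $x_i\to X_i^+$ and $x_{i+1}\to X_{i+1}^-=X_i^+$, while $x_{i-1}$ and $x_{i+2}$ remain in the compact intervals $[X_{i-1}^-,X_{i-1}^+]$ and $[X_{i+2}^-,X_{i+2}^+]$, on which the increments $x_i-x_{i-1}$ and $x_{i+2}-x_{i+1}$ are pinched into $[\frac1n,\frac2n]$ (resp.\ $[\frac{n-2}{n},\frac{n-1}{n}]$). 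Using Proposition \ref{lem:F+-lemma} --- the boundary values of $F^\pm$, their monotonicity, their $\frac1n$-periodicity, and the formulas $F^-(x,x+\frac jn)=\|\gamma'(x)\|\cos\frac{j\pi}{n}$, $F^+(x,x+\frac jn)=-\|\gamma'(x)\|\cos\frac{j\pi}{n}$ for $j=1$ and $j=2$ --- one computes that $\frac{d}{dt}(x_{i+1}-x_i)=F_{i+1}(x)-F_i(x)$ is, in this degenerate limit, bounded below by $2\|\gamma'(X_i^+)\|(1-\cos\frac\pi n)>0$ (resp.\ bounded above by $-2\|\gamma'(X_i^-)\|(1-\cos\frac\pi n)<0$). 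By continuity of $F^\pm$ on the relevant compact set, there is therefore $0<\delta<\frac1n$ such that $\frac{d}{dt}(x_{i+1}-x_i)>0$ (resp.\ $<0$) whenever $x\in(X^-,X^+)$ and $x_{i+1}-x_i\le\delta$ (resp.\ $\ge1-\delta$); since the $X^\pm$ are $\mathbb Z_n$-symmetric this $\delta$ can be chosen independent of $i$. A standard barrier argument --- immediate in the $(p,q)$-periodic case, where the infimum over $i$ of the increments is a minimum over finitely many indices --- now shows that $\Sigma_\delta\cap(X^-,X^+)$ is forward-invariant.

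\emph{Global existence and the periodic case.} For $\hat x\in\Sigma_\delta\cap(X^-,X^+)$ the locally unique flow line stays in $\Sigma_\delta\cap(X^-,X^+)$; there the vector field \eqref{eq:gradientflow} is uniformly Lipschitz and each coordinate stays in the bounded interval $(X_i^-,X_i^+)$, so there is no finite-time escape and the flow line is defined for all $t\ge0$. If moreover $\hat x\in\mathbb X_{p,q}$, then Proposition \ref{prop:Xpqpreservation} keeps the flow line in $K:=\mathbb X_{p,q}\cap\Sigma_\delta\cap(X^-,X^+)$, and $\overline K\subset\mathbb X_{p,q}$ is contained in the compact box $\prod_{i=1}^{p}[X_i^-,X_i^+]\subset\Sigma_\delta$. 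Since $-W_{p,q}$ is a strict Lyapunov function for the flow on $\mathbb X_{p,q}$ (Proposition \ref{prop:Lyapunovfunction}), LaSalle's invariance principle implies that the $\omega$-limit set of $x(t)$ is a nonempty, compact, connected, flow-invariant set consisting of equilibria of \eqref{eq:recrelgeneral}; invoking the standard fact for gradient flows of monotone recurrence relations that such $\omega$-limit sets are singletons (\cite{mramor2012ghost, gole2001symplectic}; alternatively one reduces to a point using that distinct equilibria are transverse, by the strong maximum principle for \eqref{eq:recrelgeneral}, together with the Sturmian lemma \ref{prop:decreaseintersection}), we get $x(t)\to x_\infty$ for a single equilibrium $x_\infty\in[X^-,X^+]\cap\Sigma_\delta\cap\mathbb X_{p,q}$, all three sets being closed. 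Finally, if $x_\infty\ne X^\pm$ it cannot lie on $\partial[X^-,X^+]$: if an equilibrium $Y\in[X^-,X^+]$ satisfied $Y_{i_0}=X^\pm_{i_0}$ for some $i_0$, then subtracting the equations \eqref{eq:recrelgeneral} for $Y$ and $X^\pm$ and using $\partial_{1,2}L>0$ (Remark \ref{rem:secondmonotoneremark}) forces $Y_{i_0\pm1}=X^\pm_{i_0\pm1}$, hence inductively $Y=X^\pm$. So $x_\infty$ lies in the interior of $\prod_{i=1}^p[X_i^-,X_i^+]$, which is $(X^-,X^+)\cap\mathbb X_{p,q}$.

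\emph{Main obstacle.} The technical heart is the invariance estimate in the cases $m=1$ and $m=n-1$: one must verify that the gradient vector field points strictly inward along the relevant boundary face of $\Sigma_\delta\cap(X^-,X^+)$, which rests entirely on the precise boundary behaviour, monotonicity, and $\cos$-formulas for $F^\pm$ from Proposition \ref{lem:F+-lemma}. The reduction of the $\omega$-limit set to a single point is the other delicate ingredient, for which I would lean on the established theory of monotone (parabolic) recurrence relations rather than reproving it here.
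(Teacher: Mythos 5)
Your proposal is correct and follows essentially the same strategy as the paper: the easy case $2\le m\le n-2$ via Proposition \ref{prop:Y-Y+} plus the comparison principle, the hard case $m\in\{1,n-1\}$ via an inward-pointing estimate for the gradient vector field on the boundary faces of $\Sigma_\delta$ built from the boundary values, monotonicity and $\cos$-formulas of Proposition \ref{lem:F+-lemma}, and then the Lyapunov function $W_{p,q}$ plus compactness and the strong comparison principle for the limit. The two points where you diverge are worth recording. First, you obtain the key boundary estimate by a degenerate-limit computation (sending $x_{i+1}-x_i\to 0$, which pins $x_i\to X_i^+$, $x_{i+1}\to X_{i+1}^-$ and pinches the neighbouring increments into $[\tfrac1n,\tfrac2n]$) followed by a compactness/continuity extraction of $\delta$; the paper instead runs an explicit $\varepsilon$--$\delta$ argument using the uniform continuity of $F^\pm$ and the quantitative inequalities \eqref{eq:inequality1}. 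These are the same estimate in different clothing. Second, on convergence you are actually more careful than the paper: the paper's proof only extracts a subsequence $t_n\to\infty$ along which $x(t_n)\to x_\infty$ with $F(x_\infty)=0$, whereas you invoke LaSalle together with the (cited) fact that $\omega$-limit sets of such gradient flows are singletons to get the full limit $\lim_{t\to\infty}x(t)$ claimed in the statement; that extra step is genuinely needed to justify the wording of the corollary as written, though you delegate it to the literature rather than proving it. The only soft spot is the ``standard barrier argument'' for forward invariance of $\Sigma_\delta$ in the non-periodic case, where the first touching time over infinitely many indices $i$ requires a word of justification -- but the paper glosses over the same point, and the corollary is only ever applied with $\hat x\in\mathbb{X}_{p,q}$, where your remark that the infimum is a minimum settles it.
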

\begin{proof}
When $2\leq m \leq n-2$, we choose $\delta = \frac{1}{2n}$. By Proposition \ref{prop:Y-Y+} we then have $[X^-, X^+]\subset \Sigma_{\delta}$. Because $F$ is $C^1$ on $\Sigma_{\delta}$, and hence globally Lipschitz on $\Sigma_{\delta}$, it trivially follows that $F$ is globally Lipschitz on $[X^-, X^+]$. The comparison principle (Lemma \ref{lem:comparisonprinciple}) implies that $[X^-, X^+]$ is a ``trapping region'' for the gradient flow: if  $\hat x \in [X^-, X^+]$, then $x(t) \in [X^-, X^+]$ for all $t\geq 0$.  Combining these observations, we see that the gradient flow line $x(t)\in [X^-, X^+]$ is unique and exists for all $t\geq 0$. Moreover, if $\hat x \in \mathbb{X}_{p,q}$ then $x(t)\in \mathbb{X}_{p,q}$ for all $t>0$, by Proposition \ref{prop:Xpqpreservation}. 

By Proposition \ref{prop:Lyapunovfunction}, the  function $t\mapsto W_{p,q}(x(t))$ is non-decreasing and bounded. In particular, $$\lim_{n\to\infty} \left( \left. \frac{d}{dt}  W_{p,q}(x(t))\right|_{t=t_n}  \right) = \lim_{n\to\infty} \left( \sum_{j=1}^p | F_j(x(t_n)) |^2 \right) = 0\, , $$ for some sequence of times $t_n$ converging to infinity. By compactness of $[X^-, X^+]$, we may assume that the limit $x_{\infty} = \lim_{n\to\infty} x(t_n)$ exists (by passing to a subsequence if necessary). We conclude that $F(x_{\infty})=0$ by continuity, that is, $x_{\infty}$ is a stationary point of the gradient flow. It is clear that $x_{\infty}\in\mathbb{X}_{p,q} \cap [X^-, X^+]$ as $\mathbb{X}_{p,q}$ and $[X^-, X^+]$ are closed. By the strong comparison principle, either $x_{\infty}=X^{\pm}$ or $x_{\infty}\in (X^-, X^+)$. This proves the proposition when $m\neq 1, n-1$.

The proof is more delicate when $m=1$ or $m=n-1$. We claim that also in this case there exists a $0<\delta<\frac{1}{n}$ such that the intersection $\Sigma_{\delta}\cap [X^-, X^+]$ is a trapping region for the gradient flow. The statements of the corollary then follow by exactly the same arguments that were used above for 
$2\leq m \leq n-2$.
For $m=1$ or $m=n-1$, our proof of the existence of a trapping region relies on Proposition \ref{lem:F+-lemma}.  
We start by choosing a small number $\varepsilon$ such that
\begin{align}\label{eq:epsestimate}
0< \varepsilon < \frac{1- \cos(\pi/n) }{2}   \,  \sup_{x\in \mathbb{R}} \| \gamma'(x)\| \, .
\end{align}
By the uniform continuity of $F^{-}$ and $F^+$, there exists a $0<\delta<\frac{1}{n}$ so that if $\max\{ |x-y|, |X-Y|\} \leq \delta$, then $| F^{\pm}(x,X)-F^{\pm}(y,Y)|<\varepsilon$.  We choose such a $\delta>0$. Note that $[X^-, X^+]\cap \Sigma_{\delta}\neq \emptyset$ because $\delta<\frac{1}{n}$. 
     
Next, let us consider a billiard sequence $x\in [X^-, X^+]$ for which there is an $i\in \mathbb{Z}$ such that 
$$x_{i+1}-x_i = \delta\, .$$ 
Note that, by \eqref{eq:niceestimate}, this can only occur when $m=1$. We claim that this assumption implies that 
\begin{align}
\label{eq:inequality1}
x_{i+2} \geq x_{i+1} + \frac{1}{n} - \delta \ \mbox{and}\ x_{i-1} \leq x_i - \frac{1}{n} + \delta \, .
\end{align}
{\it Proof of this claim:} 
Recall that $X_0^- + \frac{i}{n}  \leq x_i \leq X_0^- + \frac{i+1}{n} $, and hence also $X_0^- + \frac{i+2}{n}  \leq x_{i+2} \leq X_0^- + \frac{i+3}{n}$. Combining these inequalities,  we obtain $x_{i+2} \geq x_i + \frac{1}{n} = x_{i+1} - \delta + \frac{1}{n}$, the first inequality in \eqref{eq:inequality1}. The second inequality is proved in a similar fashion, using that $x_{i-1}\leq x_{i+1}-\frac{1}{n}$. We refer to Figure \ref{fig:9point2b} for an illustration of these estimates. 
\hfill $\square$\\ 
\noindent From the equality $x_{i+1}-x_{i}=\delta$ it follows that 
$$F^-(x_{i}, x_{i+1}) = F^-(x_{i+1}-\delta, x_{i+1}) \geq F^-(x_{i+1}, x_{i+1}) - \varepsilon = \|\gamma'(x_{i+1})\| - \varepsilon \, . $$
Here, the inequality follows from the uniform continuity of $F^-$, and the final equality from part {\it i)} of Proposition \ref{lem:F+-lemma}. On the other hand, the inequality  $x_{i+2} \geq x_{i+1}+\frac{1}{n}-\delta$ implies that 
$$F^{+}(x_{i+1}, x_{i+2})  \geq F^{+}(x_{i+1}, x_{i+1} + \frac{1}{n} - \delta)  \geq F^{+}(x_{i+1}, x_{i+1} + \frac{1}{n})  - \varepsilon = - \| \gamma'(x_{i})\| \cos(\pi/n)- \varepsilon  \, .$$
Here, in the first inequality, we used that $F^+$ is increasing in its second argument, see part {\it ii)} of Proposition \ref{lem:F+-lemma}. The second inequality follows from the uniform continuity of $F^+$, and the last equality from part {\it iv)} of Proposition \ref{lem:F+-lemma}. Using \eqref{eq:epsestimate}, we conclude that 
$$F_{i+1}(x) = F^-(x_{i}, x_{i+1}) + F^+(x_{i+1}, x_{i+2}) \geq  \|\gamma'(x_{i+1})\| - \varepsilon - \|\gamma'(x_{i+1})\| \cos(\pi/n) - \varepsilon > 0\, . $$
One could say that the ``force'' of $x_{i}$ on $x_{i+1}$ dominates the force of $x_{i+2}$ on $x_{i+1}$. Exploiting the second estimate in \eqref{eq:inequality1} and the fact that $F^-$ is increasing in its first argument, we can similarly show that $F_i(x) = F^-(x_{i-1},x_i)+F^+(x_i,x_{i+1}) < 0$ if $x_{i+1}-x_i=\delta$. Analogously, one can prove that  $F_{i+1}(x)<0$ and $F_i(x)>0$ if $x_{i+1}-x_i=1-\delta$ (which can only occur if $m=n-1$, by \eqref{eq:niceestimate}.)  

To summarize, if $0<\delta<\frac{1}{n}$ is chosen as above, then for any $x\in[X^-, X^+]$  we have the implications
\begin{align}\label{eq:Fimplications}
\begin{array}{rl}  \ x_{i+1}-x_i=\delta & \implies F_{i+1}(x)-F_i(x) > 0 \, ,\\ 
 x_{i+1}-x_i=1-\delta & \implies F_{i+1}(x)-F_i(x) < 0\, .
\end{array}
\end{align}
\noindent To conclude the proof, let $x(t)\in \Sigma_{\delta/2}\cap [X^-, X^+]$ be a gradient flow line defined for $0\leq t < t_0$ such that $\hat x:= x(0)\in \Sigma_{\delta}$. Assume, for the sake of contradiction, that there is a  $0<t_1<t_0$ for which $x(t_1)\notin \Sigma_{\delta}$. Then, by continuity, there must  exist a $0\leq t_2 \leq t_1$ and an $i\in \mathbb{Z}$ for which either $x_{i+1}(t_2)-x_i(t_2)=\delta$ and $\left. \frac{d}{dt}\right|_{t=t_2} x_{i+1}(t)-x_i(t) \leq 0$,  or $x_{i+1}(t_2)-x_i(t_2)=1-\delta$ and $\left. \frac{d}{dt}\right|_{t=t_2} x_{i+1}(t)-x_i(t) \geq 0$.  However, $\left. \frac{d}{dt}\right|_{t=t_2} x_{i+1}(t)-x_i(t) = F_{i+1}(x(t_2))-F_i(x(t_2))$ because  $x(t)$ is a gradient flow line. Thus, the assumption that $x(t_1)\notin \Sigma_{\delta}$ contradicts \eqref{eq:Fimplications}.  
This proves that $[X^-, X^+]\cap \Sigma_{\delta} \cap \mathbb{X}_{p,p}$ is a trapping region and finishes the proof of the corollary.
\end{proof}
\begin{figure}[h!]
\centering
\begin{subfigure}{.35\textwidth}
  \centering
  \begin{tikzpicture}
  \node at (0,0) {\includegraphics[width=\linewidth]{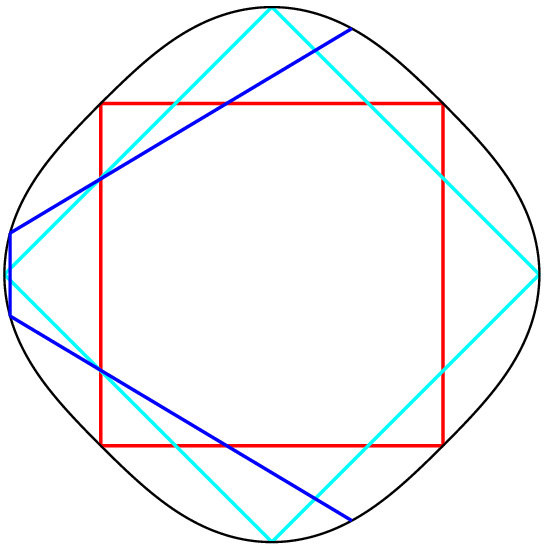}};
\node at (1.2,2.7) {$z_{i-1}$};
\node at (-2.9,0.6) {$z_i$};
\node at (-3,-0.6) {$z_{i+1}$};
\node at (1.2,-2.7) {$z_{i+2}$};
  \end{tikzpicture}
  \caption{}
  \label{fig:9point2a}
\end{subfigure}\hspace{2cm}%fill%
\begin{subfigure}{.35\textwidth}
  \centering
    \begin{tikzpicture}
 \node at (0,0) {\includegraphics[width=\linewidth]{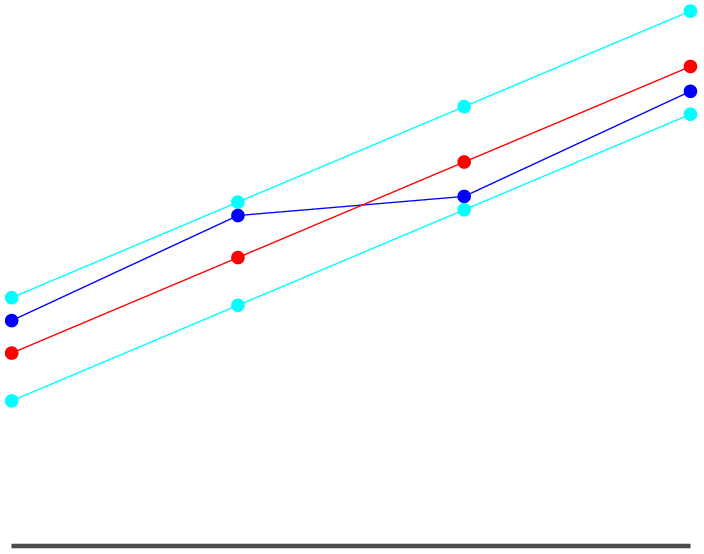}};
 \node at (-2.7,-2.3) {$i-1$};
 \node at (-0.9,-2.3) {$i$};
 \node at (0.8,-2.3) {$i+1$};
 \node at (2.6,-2.3) {$i+2$};
  \end{tikzpicture}  
  \caption{}
  \label{fig:9point2b}
\end{subfigure}
\caption{Visualization of the proof of Corollary \ref{cor:gradientlimit} for the case $(m,n)=(1,4)$ in a $\mathbb{D}_4$-symmetric billiard. {\bf (a)} ``Long'' and ``short'' symmetric Birkhoff orbits (in cyan and red, respectively) and part of a billiard sequence $z$ (in blue) for which $\|z_{i+1}-z_i\|$ is small. The ordering of $z$ with respect to the cyan Birkhoff orbit implies that $\|z_{i}-z_{i-1}\|$ and $\|z_{i+2}-z_{i+1}\|$ are not small; {\bf (b)} the corresponding Aubry diagrams illustrate the  estimates in \eqref{eq:inequality1}: the segment from $(i, x_i)$ to $(i+1, x_{i+1})$ is nearly horizontal. The ordering of the blue Aubry diagram with respect to the cyan Aubry diagrams implies that the neighboring blue segments have a slope close to or larger than $\frac{1}{4}$. \label{fig:9point2}}
\end{figure}
\noindent We conclude this section with one final technical result that is important in the next section. In the proposition below, the sequence $X$ plays the role of a $\mathbb{D}_n$-symmetric Birkhoff sequence that is geometrically distinct from $X^{\pm}$, while $x$ is a candidate non-Birkhoff orbit.  
\begin{proposition}\label{prop:minperiod}
Let $X^-, X^+$ be as above, and $x, X\in (X^-, X^+)$. Assume that $X$ is the lift of a $\mathbb{D}_n$-symmetric Birkhoff periodic orbit, i.e., $X_i=X_i^-+\frac{1}{2n} = X_i^+-\frac{1}{2n}$, and that $x-X\in \mathbb{X}_{K,0}$ has minimal period $K$. Then the minimal period of the sequence $z\in \Gamma^{\mathbb{Z}}$  defined by $z_i=\gamma(x_i)$ is the least common multiple of $n$ and $K$.
\end{proposition}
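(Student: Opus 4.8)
The plan is to reduce the statement to an elementary fact about the two periodicities carried by $X$ and by the difference sequence $u := x - X$.

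First I would record the setup. Since $X_i = X_i^- + \tfrac{1}{2n}$, Lemma~\ref{lem:existenceBirkhoffDn} tells us that $X$ has the form \eqref{eqn:Xaformula}, i.e.\ $X_i = X_0 + \tfrac{m}{n}i$ with $X_0 = \tfrac{A}{2n}$ for some $A\in\mathbb{Z}$; in particular $X\in\mathbb{X}_{n,m}$, so $X_{i+\ell}-X_i = \tfrac{m}{n}\ell$ for all $i,\ell\in\mathbb{Z}$. Moreover $x\in(X^-,X^+)\subset\Sigma$ by Proposition~\ref{prop:Y-Y+}, so $0<x_{i+1}-x_i<1$ for all $i$, the sequence $z_i=\gamma(x_i)$ is a genuine billiard sequence, and $x$ is an admissible lift of $z$. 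Then I would use the elementary characterization of the period: $z$ is $p$-periodic if and only if $z_{i+p}=z_i$ for all $i$, which (because $\gamma$ descends to an embedding of $\mathbb{R}/\mathbb{Z}$) is equivalent to $x_{i+p}-x_i\in\mathbb{Z}$ for all $i$; and since $0<x_{i+1}-x_i<1$, a short telescoping argument (exactly as in the winding-number discussion of Section~\ref{sec:basicsofbilliardssection}) forces $x_{i+p}-x_i$ to equal a single integer $q$ independent of $i$, i.e.\ $x\in\mathbb{X}_{p,q}$.

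The computation then hinges on the identity
$$ x_{i+p}-x_i \;=\; (X_{i+p}-X_i) + (u_{i+p}-u_i) \;=\; \tfrac{m}{n}\,p + (u_{i+p}-u_i). $$
For one inclusion, set $p_0:=\operatorname{lcm}(n,K)$: since $n\mid p_0$ we have $\tfrac{m}{n}p_0\in\mathbb{Z}$, and since $K\mid p_0$ and $u\in\mathbb{X}_{K,0}$ we have $u_{i+p_0}=u_i$; hence $x_{i+p_0}-x_i=\tfrac{m}{n}p_0\in\mathbb{Z}$ for every $i$, so $z$ is $p_0$-periodic and its minimal period divides $\operatorname{lcm}(n,K)$. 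For the converse, suppose $z$ is $p$-periodic; by the previous paragraph $x_{i+p}-x_i=q\in\mathbb{Z}$ is constant, so $u_{i+p}-u_i = q-\tfrac{m}{n}p =: c$ is independent of $i$. Because $u$ is $K$-periodic it is bounded and attains its maximum at some index $i_0$; evaluating the relation at $i=i_0$ and at $i=i_0-p$ gives $c = u_{i_0+p}-u_{i_0}\le 0$ and $c = u_{i_0}-u_{i_0-p}\ge 0$, whence $c=0$. Therefore $u_{i+p}=u_i$ for all $i$, so $K\mid p$ (as $K$ is the minimal period of $u$); and $c=0$ also yields $q=\tfrac{m}{n}p$, so $n\mid mp$, and $\gcd(m,n)=1$ forces $n\mid p$. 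Thus $\operatorname{lcm}(n,K)\mid p$, and combining this with the first inclusion we conclude that the minimal period of $z$ equals $\operatorname{lcm}(n,K)$.

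The argument is almost entirely elementary, with no input from the gradient flow, Aubry–Mather theory, or convexity; the only places needing a little care are the constancy of the integer $q$ (equivalently, that $x\in\mathbb{X}_{p,q}$ for a single $q$), which comes from the sandwiching $0<x_{i+1}-x_i<1$ as in Section~\ref{sec:basicsofbilliardssection}, and the boundedness/maximum trick that kills the constant $c$. I expect this last step — showing $c=0$ rather than merely $|c|$ small — to be the most substantive point, but it is handled cleanly by the fact that a nonzero $c$ would make $u_{i_0+jp}=u_{i_0}+jc$ unbounded, contradicting the $K$-periodicity of $u$.
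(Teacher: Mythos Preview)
Your proof is correct, and it takes a genuinely different route from the paper's. The paper exploits the order-interval hypothesis $x\in(X^-,X^+)$ directly: from $|u_i|<\tfrac{1}{2n}$ it obtains the bound $|u_{M+i}-u_i|<\tfrac{1}{n}$ for all $i,M$, and then a fractional-part argument shows that $\tfrac{m}{n}M+(u_{M+i}-u_i)\in\mathbb{Z}$ can hold (even for a single $i$) only when $n\mid M$ and $u_{M+i}=u_i$. You instead use the order interval only through the weaker consequence $x\in\Sigma$, which gives constancy of the winding number $q$; the work is then done by the boundedness of the $K$-periodic sequence $u$ via your maximum trick to force $c=0$. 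The paper's argument is a little shorter and pointwise (it characterizes when $z_{M+i}=z_i$ at each index), while yours is more structural, cleanly separating the two divisibility constraints $n\mid p$ and $K\mid p$, and in fact applies under the weaker hypothesis $x\in\Sigma$ with $x-X$ having minimal period $K$.
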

\begin{proof}
Our assumptions imply that $X_i^-< x_i < X_i^-+\frac{1}{n}$ and $X_i=X_i^-+\frac{1}{2n}$. Subtracting the latter equality from the former inequality produces $-\frac{1}{2n} <x_i-X_i<\frac{1}{2n}$ for all $i\in \mathbb{Z}$. Subsequently subtracting this from $ -\frac{1}{2n} < x_{M+i}-X_{M+i}< \frac{1}{2n}$, we obtain
\begin{equation}\label{eq:niceniceestimate}
-\frac{1}{n} < (x-X)_{M+i}-(x-X)_{i} < \frac{1}{n} \ \mbox{for all}\ i\in\mathbb{Z} \ \mbox{and all}\ M\in \mathbb{Z}\, .
\end{equation} 
Next, define $z_i:=\gamma(x_i)$. It holds that  $z_{M+i}=z_i$ if and only if $x_{M+i}-x_i \in \mathbb{Z}$. We write 
$$x_{M+i}-x_{i} = X_{M+i}-X_i + (x-X)_{M+i}-(x-X)_i = M\frac{m}{n} + (x-X)_{M+i}-(x-X)_i\, .$$ 
In view of formula \eqref{eq:niceniceestimate}, this expression is integer precisely when $M$ is a multiple of $n$ (because $\gcd(m,n)=1$), and $(x-X)_{M+i}=(x-X)_i$. In other words, $z_i=z_{M+i}$ if and only if $M$ is a multiple of both $n$ and $K$. So the minimal period of $z$ is ${\rm lcm}(n,K)$.  
\end{proof}   

\section{Non-Birkhoff orbits with dihedral symmetry}\label{sec:mainthmsection}
\noindent 
We now prove the main result of this paper, which states that, under a condition on the curvature of a billiard at a $\mathbb{D}_n$-symmetric periodic Birkhoff orbit, this billiard must possess symmetric periodic non-Birkhoff orbits. 
This result applies to billiards with $\mathbb{D}_n$-symmetry for any $n\in \mathbb{N}_{\geq 2}$.
\begin{theorem}\label{thm:mainthrm}
Let $m,n\in \mathbb{N}$ be co-prime, with $1\leq m \leq n-1$, and let $\Gamma$ be a $C^2$-smooth $\mathbb{D}_n$-symmetric strictly convex billiard.  Denote by  $Z_i = \gamma(X_i)$ one of its $\mathbb{D}_n$-symmetric $n$-periodic Birkhoff orbits of rotation number $\frac{m}{n}$. In particular, it has constant orbit segment length $L$ and curvature $\kappa$. 
    
Let $N\geq 1$ and let $H$ be a dihedral subgroup of $\mathbb{D}_n$ of order $2N$. Let $s\in \mathbb{N}_{\geq 2}$ satisfy $\gcd(s, N)=1$, and define $p := s n$ and $q :=s m$.  When 
\begin{equation}\label{eq:ineq-kappaL}
\kappa L < 2 \sin\left(\frac{m \pi }{n} \right) \cos^2\left( \frac{N\pi}{p}\right) ,
\end{equation}
then $\Gamma$ admits a non-Birkhoff periodic orbit $z_i=\gamma(x_i)$  with minimal period $p$, winding number $q$, and spatiotemporal symmetry group $H$. The lifts $x$ (of $z$) and $X$ (of $Z$) cross exactly $2N$ times per period $p$. When $n$ is odd and $p$ is even, then there are two geometrically distinct such orbits.
\end{theorem}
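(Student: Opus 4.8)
\emph{Reductions and the unstable direction.} By Remark \ref{constantspeedrmk} we may assume that $\gamma$ parametrizes $\Gamma$ at constant speed $c$, since this affects neither the symmetry type of $\Gamma$ nor the quantities $L,\kappa$ nor the inequality \eqref{eq:ineq-kappaL}; we may also assume $\cos(N\pi/p)\neq 0$, as otherwise \eqref{eq:ineq-kappaL} is vacuous. Let $X\in\mathbb{X}_{n,m}$ be the lift of the given $\mathbb{D}_n$-symmetric Birkhoff orbit, and let $X^\pm$ be the lifts of the two neighbouring (geometrically equal) $\mathbb{D}_n$-symmetric Birkhoff orbits with $X^\pm_i=X_i\pm\frac{1}{2n}$, as in Section \ref{sec:technicalsection}. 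Pick a rotation $\rho\in H$ of order $N$ and a reflection $\sigma\in H$. Since $X$ is $\mathbb{D}_n$-symmetric it is $H$-symmetric, and by Lemmas \ref{lem:periodicn=2}--\ref{lem:D1orbitlemma} the action of $H$ on $X$ (viewed as a $p$-periodic sequence) is $\rho^M(X_i)=X_{\frac{p}{N}+i}$ for some $M$ with $\gcd(M,N)=1$, and $\sigma(X_i)=X_{k_0-i}$ for some $k_0$. Let $\mathbb{X}^H_{p,q}\subset\mathbb{X}_{p,q}$ be the affine subspace of lifts of billiard sequences $z$ satisfying $\rho^M(z_i)=z_{\frac{p}{N}+i}$ and $\sigma(z_i)=z_{k_0-i}$; by Lemma \ref{liftlemma} these amount to linear equations, $X\in\mathbb{X}^H_{p,q}$, and by Lemma \ref{lem:symmetrypreservation} the gradient flow preserves $\mathbb{X}^H_{p,q}$. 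Now set $\xi_i=\sin\!\big(\tfrac{2\pi N}{p}(i-\tfrac{k_0}{2})\big)$. Then $\xi$ is $\tfrac{p}{N}$-periodic and $\xi_i+\xi_{k_0-i}=0$, so $\xi$ is tangent to $\mathbb{X}^H_{p,q}$; and since $D^2W_{p,q}(X)$ is a circulant matrix (Lemma \ref{lem:Hessianlemma}), $\xi$ is an eigenvector with eigenvalue $\lambda=2c^2\sin(\tfrac{m\pi}{n})\big(\tfrac{2\sin(m\pi/n)\cos^2(N\pi/p)}{L}-\kappa\big)$ (Remark \ref{remk:eigenvalue}). By \eqref{eq:ineq-kappaL}, $\lambda>0$, so $X$ has an unstable direction $\xi$ within $\mathbb{X}^H_{p,q}$.

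\emph{Flowing out of $X$.} Fix $\delta>0$ as in Corollary \ref{cor:gradientlimit} and choose $\varepsilon>0$ so small that $\hat x:=X+\varepsilon\xi$ lies in $\Sigma_\delta\cap(X^-,X^+)\cap\mathbb{X}^H_{p,q}$ (possible since $\|\xi\|_\infty\le 1$ and $X\in(X^-,X^+)$). By Corollary \ref{cor:gradientlimit} the gradient flow line $x(t)$ through $\hat x$ exists for all $t\ge 0$, stays in $\Sigma_\delta\cap(X^-,X^+)\cap\mathbb{X}^H_{p,q}\cap\mathbb{X}_{p,q}$, and converges to an equilibrium $x_\infty\in[X^-,X^+]$, with $x_\infty\in(X^-,X^+)$ unless $x_\infty\in\{X^-,X^+\}$. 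Because $\lambda>0$, a Taylor expansion gives $W_{p,q}(\hat x)=W_{p,q}(X)+\tfrac12\lambda\varepsilon^2\|\xi\|^2+O(\varepsilon^3)>W_{p,q}(X)$, and $W_{p,q}$ is non-decreasing along the flow (Proposition \ref{prop:Lyapunovfunction}), so $W_{p,q}(x_\infty)>W_{p,q}(X)$ and hence $x_\infty\neq X$. Moreover the equation $x_i+x_{k_0-i}=X_0+X_{k_0}$ is preserved by the flow and holds for $\hat x$ (as $\xi_i+\xi_{k_0-i}=0$), whereas $X^\pm$ satisfy $x_i+x_{k_0-i}=X_0+X_{k_0}\pm\tfrac1n$; therefore $x_\infty\neq X^\pm$. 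Thus $x_\infty\in(X^-,X^+)$ is a billiard orbit, it is $H$-symmetric, and $x_\infty\neq X,X^\pm$. Since $\hat x\in\mathbb{X}_{p,q}$, also $x_\infty\in\mathbb{X}_{p,q}$, so the associated orbit $z:=\gamma(x_\infty)$ is $p$-periodic with winding number $q$.

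\emph{Minimal period and the non-Birkhoff property.} The equations cutting out $\mathbb{X}^H_{p,q}$ force $x(t)-X\in\mathbb{X}_{p/N,0}$ for every $t$, hence $x_\infty-X\in\mathbb{X}_{p/N,0}$. From $(x_\infty-X)_i+(x_\infty-X)_{-i'}=0$ (where $i'$ runs over the reflected index) and $x_\infty\neq X$ it follows that $x_\infty-X$ takes both signs, so it actually crosses $X$. The Sturmian lemma (Lemma \ref{prop:decreaseintersection}) gives $I(\hat x,X)=2N$ — the number of sign changes of $\xi$ over a full period $p$ — and that the intersection index does not increase along the flow; combining this with the $\tfrac pN$-periodicity of $x_\infty-X$ forces its minimal period to equal exactly $\tfrac pN$ and forces $x_\infty$ to cross $X$ exactly $2N$ times per period. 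By Proposition \ref{prop:minperiod}, the orbit $z=\gamma(x_\infty)$ then has minimal period $\mathrm{lcm}(n,\tfrac pN)=\mathrm{lcm}(n,\tfrac{sn}{N})=p$, using $N\mid n$ and $\gcd(s,N)=1$. Since $p>n$, Proposition \ref{prop:pqnm} shows $z$ is not Birkhoff. Finally, any spatiotemporal symmetry of $z$ outside $H$ would enlarge the rotation part of $H(z)$ and thereby force a period of $x_\infty-X$ strictly smaller than $\tfrac pN$ (checked case by case via Lemmas \ref{lem:periodicn=2}--\ref{lem:D1orbitlemma} and Remark \ref{remk:rotationnumberhalf}), which is impossible; hence $H(z)=H$. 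When $n$ is odd and $p$ is even, $s$ is even and $N$ is odd, so $\tfrac{p}{2N}\in\mathbb{Z}$; repeating the argument with $\xi$ replaced by $\eta_i=\cos\!\big(\tfrac{2\pi N}{p}(i-\tfrac{k_0}{2})\big)$ — which is tangent to the subspace built from the reflection $\sigma'\in H$ of center $k_0+\tfrac{p}{2N}$ — yields a second non-Birkhoff orbit with symmetry group $H$, geometrically distinct from the first because its crossing pattern with the rigid orbit $X$ is out of phase by $\tfrac{p}{4N}$.

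\emph{Main obstacle.} The technical heart of the proof is the last step: to pass from "$I(\hat x,X)=2N$, $x_\infty-X$ is $\tfrac pN$-periodic and sign-changing" to "$x_\infty-X$ has minimal period \emph{exactly} $\tfrac pN$ and $I(x_\infty,X)=2N$ exactly", one must control how crossings behave in the limit $t\to\infty$ (a transverse crossing of $x(t)$ with $X$ could degenerate into a tangency as $t\to\infty$), and combine this limiting control carefully with the reflection relation; pinning down the full symmetry group $H(z)=H$ requires the same kind of bookkeeping.
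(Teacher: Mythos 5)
Your argument for the existence of one non-Birkhoff orbit with the stated period, winding number, symmetry group and crossing number follows the paper's proof essentially step for step (same eigenvector $v_i=\sin(\tfrac{2\pi i}{K}-\tfrac{\pi k}{K})$ with $K=p/N$, same gradient-flow trapping in $(X^-,X^+)$, same use of the action increase, the Sturmian lemma, Proposition \ref{prop:minperiod} and Proposition \ref{prop:pqnm}), and that part is sound. The concern you raise about crossings degenerating into tangencies is handled exactly as in the paper: one counts \emph{sign changes} of the $K$-periodic sequence $x_\infty-X$, which by the Sturmian lemma number at most $2$ per period $K$, and the reflection antisymmetry rules out zero sign changes unless $x_\infty=X$.

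The genuine gap is in your construction of the second orbit when $n$ is odd and $p$ is even. Replacing $\xi_i=\sin\bigl(\tfrac{2\pi N}{p}(i-\tfrac{k_0}{2})\bigr)$ by $\eta_i=\cos\bigl(\tfrac{2\pi N}{p}(i-\tfrac{k_0}{2})\bigr)$ shifts the phase by a quarter period $\tfrac{K}{4}=\tfrac{s}{4}\cdot\tfrac{n}{N}$, and when $4\mid s$ this is an integer. In that case $\eta_i=\xi_{i+K/4}$ and $X_{i+K/4}-X_i=\tfrac{mK}{4n}$, so $\hat x^{(2)}:=X+\varepsilon\eta$ is obtained from $\hat x^{(1)}:=X+\varepsilon\xi$ by the time shift $i\mapsto i+K/4$ composed with the rotation $\rho^{-ms/4}\in H$ (for $N=1$ it is literally an integer translate: e.g.\ $n=3$, $m=1$, $N=1$, $s=4$ gives $K/4=3=n$ and $\hat x^{(2)}=\tau_{3,-1}\hat x^{(1)}$). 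Since the gradient flow commutes with $R$ and with time shifts, and since $\rho$ acts time-preservingly on the limit orbit, the two limits are geometrically \emph{equal}, so your construction produces only one orbit whenever $s\equiv 0\pmod 4$. Your stated justification (``the crossing pattern is out of phase by $\tfrac{p}{4N}$'') is not a proof of distinctness in any case, since crossings can move under the flow. The paper avoids this by exploiting that $n$ is odd in a different way: the reflection relation $\sigma(Z_i)=Z_{k-i}$ holds for all $k$ in a fixed residue class modulo $n$, so one may choose representatives $k^{\rm odd}$ and $k^{\rm even}$ of opposite parity; if the two resulting orbits were geometrically equal, composing the two reflection relations would give $z_i=z_{j+i}$ with $j$ odd, contradicting that the minimal period $p$ is even. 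You would need to replace your cosine construction by this parity argument (or restrict the phase shift to an odd multiple of $n$) to close the gap.
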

\noindent  Figures \ref{fig:main_thm} and \ref{fig:main_thm_pt2} illustrate Theorem \ref{thm:mainthrm} by displaying symmetric non-Birkhoff periodic orbits in billiards  with four distinct symmetry groups ($\mathbb{D}_3, \mathbb{D}_4, \mathbb{D}_5$ and $\mathbb{D}_7$). Figure \ref{fig:10.1D_6} shows six non-Birkhoff periodic orbits  in $\mathbb{D}_6$-symmetric billiards,  with mutually non-conjugate spatiotemporal symmetry groups.
\begin{figure}[h!]
\centering
\begin{subfigure}{.35\textwidth}
  \centering
  \begin{tikzpicture}
  \node at (0,0) {\includegraphics[width=\linewidth]{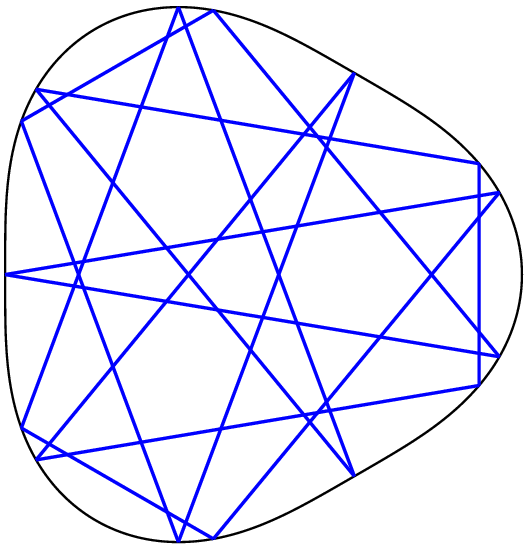}};
\node at (1,2.3) {$1$};
\node at (-2.6,-2.1) {$2$};
\node at (2.4,-1.3) {$3$};
\node at (2.4,1.3) {$4$};
\node at (-2.6,2.1) {$5$};
\node at (1,-2.3) {$6$};
\node at (-1,3) {$7$};
\node at (-2.7,-1.7) {$8$};
\node at (-0.5,-3) {$9$};
\node at (2.7,0.9) {$10$};
\node at (-3,0) {$11$};
\node at (2.7,-0.9) {$12$};
\node at (-0.5,3) {$13$};
\node at (-2.8,1.7) {$14$};
\node at (-1,-3) {$15$};
  \end{tikzpicture}
  \caption{}
  \label{fig:subb1}
\end{subfigure}\hspace{2cm}
\begin{subfigure}{.35\textwidth}
  \centering
  \begin{tikzpicture}
  \node at (0,0) {\includegraphics[width=\linewidth]{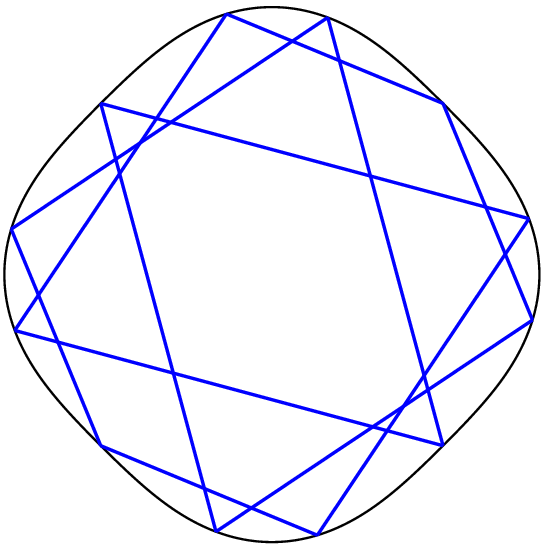}};
\node at (0.7,2.8) {$1$};
\node at (-2.9,0.5) {$2$};
\node at (-1.9,-1.9) {$3$};
\node at (0.5,-2.9) {$4$};
\node at (2.8,0.7) {$5$};
\node at (-1.9,1.9) {$6$};
\node at (-0.7,-2.9) {$7$};
\node at (2.8,-0.5) {$8$};
\node at (1.9,1.9) {$9$};
\node at (-0.5,2.9) {$10$};
\node at (-2.8,-0.7) {$11$};
\node at (1.9,-1.9) {$12$};
  \end{tikzpicture}
  \caption{}
  \label{fig:subb2}
\end{subfigure}\\%fill%
\begin{subfigure}{.35\textwidth}
  \centering
  \begin{tikzpicture}
  \node at (0,0) {\includegraphics[width=\linewidth]{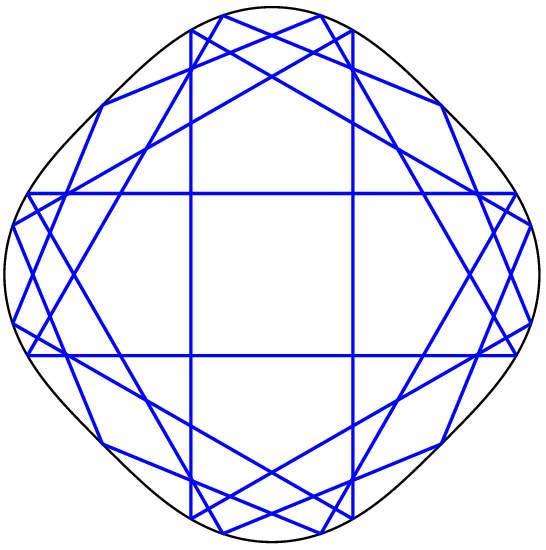}};
\node at (0.55,2.85) {$1$};
\node at (-1.9,1.9) {$2$};
\node at (-2.85,-0.55) {$3$};
\node at (0.9,-2.7) {$4$};
\node at (0.9,2.7) {$5$};
\node at (-2.85,0.55) {$6$};
\node at (-1.9,-1.9) {$7$};
\node at (0.55,-2.85) {$8$};
\node at (2.7,0.9) {$9$};
\node at (-2.75,0.9) {$10$};
\node at (-0.6,-2.85) {$11$};
\node at (1.9,-1.9) {$12$};
\node at (2.85,0.55) {$13$};
\node at (-1,2.7) {$14$};
\node at (-1,-2.7) {$15$};
\node at (2.85,-0.55) {$16$};
\node at (1.9,1.9) {$17$};
\node at (-0.6,2.85) {$18$};
\node at (-2.75,-0.9) {$19$};
\node at (2.75,-0.9) {$20$};
  \end{tikzpicture}
  \caption{}
  \label{fig:subb3}
\end{subfigure}\hspace{2cm}
\begin{subfigure}{.35\textwidth}
  \centering
  \begin{tikzpicture}
  \node at (0,0) {\includegraphics[width=\linewidth]{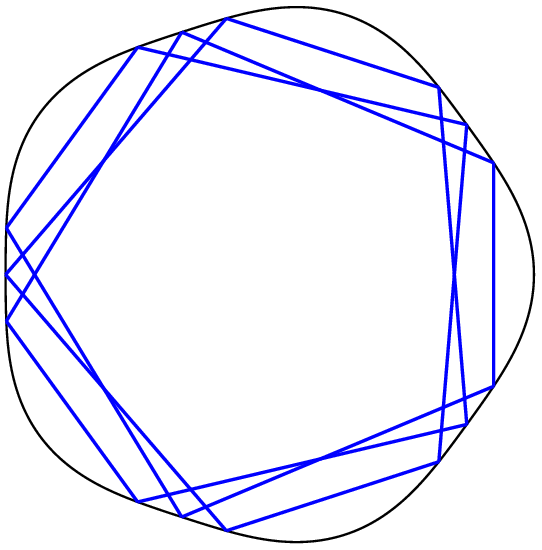}};
\node at (1.8,2.1) {$1$};
\node at (-1,2.7) {$12$};
\node at (-2.9,0.6) {$8$};
\node at (-0.5,-2.85) {$4$};
\node at (2.2,-1.7) {$15$};
\node at (2.5,1.25) {$11$};
\node at (-1.5,2.5) {$7$};
\node at (-2.9,0) {$3$};
\node at (-1.55,-2.55) {$14$};
\node at (2.5,-1.2) {$10$};
\node at (2.2,1.7) {$6$};
\node at (-0.5,2.85) {$2$};
\node at (-3,-0.6) {$13$};
\node at (-1,-2.7) {$9$};
\node at (1.8,-2.1) {$5$};
  \end{tikzpicture}
  \caption{}
  \label{fig:subb4}
\end{subfigure}
\caption{Visualization of Theorem \ref{thm:mainthrm} in  billiards of Lima\c con-type (see Example \ref{ex:limacon}). {\bf (a)} $\mathbb{D}_3$-symmetric $(15,5)$-periodic non-Birkhoff orbit in a $\mathbb{D}_3$-symmetric Lima\c con-type billiard with parameter $\alpha=0.099<\alpha^*(3)=0.1$; {\bf (b)} $\mathbb{D}_2$-symmetric $(12,3)$-periodic non-Birkhoff orbit in a $\mathbb{D}_4$-symmetric Lima\c con-type billiard with parameter $\alpha=0.05<\alpha^*(4)\approx 0.0588$; {\bf (c)} $\mathbb{D}_4$-symmetric $(20,5)$-periodic non-Birkhoff orbit in a $\mathbb{D}_4$-symmetric Lima\c con-type billiard with parameter $\alpha=0.055<\alpha^*(4)\approx 0.0588$; {\bf (d)} $\mathbb{D}_5$-symmetric $(15,3)$-periodic non-Birkhoff orbit in a $\mathbb{D}_5$-symmetric Lima\c con-type billiard with parameter $\alpha=0.035<\alpha^*(5)\approx 0.0385$. 
\label{fig:main_thm}}
\end{figure}
\begin{figure}[h!]
\centering
\begin{subfigure}{.35\textwidth}
  \centering
  \begin{tikzpicture}
  \node at (0,0) {\includegraphics[width=\linewidth]{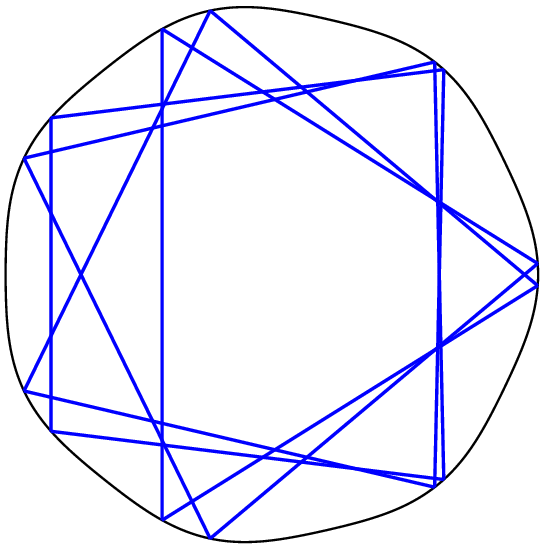}};
\node at (2.9,0.3) {$1$};
\node at (-1.2,2.75) {$2$};
\node at (-1.2,-2.75) {$3$};
\node at (2.9,-0.3) {$4$};
\node at (-0.75,2.9) {$5$};
\node at (-2.75,-1.25) {$6$};
\node at (1.65,-2.45) {$7$};
\node at (1.95,2.2) {$8$};
\node at (-2.4,1.75) {$9$};
\node at (-2.4,-1.75) {$10$};
\node at (1.95,-2.2) {$11$};
\node at (1.65,2.45) {$12$};
\node at (-2.8,1.25) {$13$};
\node at (-0.75,-2.9) {$14$};
  \end{tikzpicture}
  \caption{}
  \label{fig:subb_7a}
\end{subfigure}\hspace{2cm}
\begin{subfigure}{.35\textwidth}
  \centering
  \begin{tikzpicture}
  \node at (0,0) {\includegraphics[width=\linewidth]{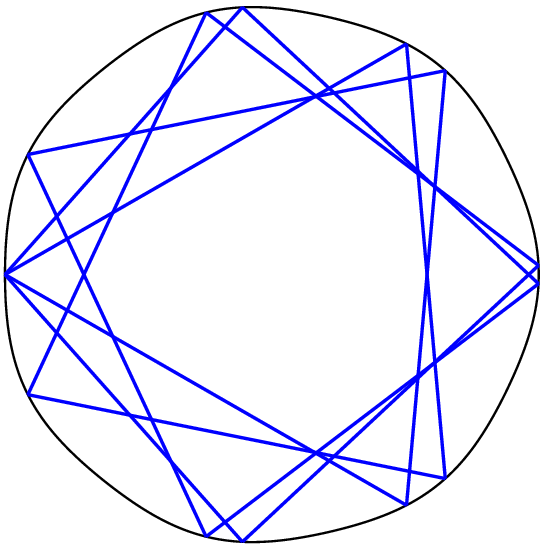}};
\node at (2.9,0.2) {$1$};
\node at (-0.75,2.9) {$2$};
\node at (-2.7,-1.25) {$3$};
\node at (1.85,-2.2) {$4$};
\node at (1.5,2.5) {$5$};
\node at (-3.3,0) {$6=13$};
\node at (1.4,-2.55) {$7$};
\node at (1.95,2.2) {$8$};
\node at (-2.7,1.25) {$9$};
\node at (-0.75,-2.9) {$10$};
\node at (2.9,-0.2)  {$11$};
\node at (-0.25,2.9) {$12$};
\node at (-0.25,-2.9) {$14$};
  \end{tikzpicture}
  \caption{}
  \label{fig:subb7_b}
\end{subfigure}
\caption{Visualization of Theorem \ref{thm:mainthrm} in two $\mathbb{D}_7$-symmetric billiards of Lima\c con-type (see Example \ref{ex:limacon}). Because $n=7$ is odd and $p=14$ is even, the theorem  predicts two geometrically distinct $(14,4)$-periodic non-Birkhoff orbits with spatiotemporal symmetry group $H = \langle S\rangle \cong \mathbb{D}_1$.  
{\bf (a)} $\mathbb{D}_1$-symmetric $(14,4)$-periodic non-Birkhoff orbit without points on the symmetry axis, in a $\mathbb{D}_7$-symmetric Lima\c con-type billiard with parameter $\alpha=0.015<\alpha^*(7)=0.02$; 
{\bf (b)} $\mathbb{D}_1$-symmetric $(14,4)$-periodic non-Birkhoff orbit with points on the symmetry axis, in a $\mathbb{D}_7$-symmetric Lima\c con-type billiard with parameter $\alpha=0.01<\alpha^*(7)=0.02$.  
\label{fig:main_thm_pt2}}
\end{figure}
\begin{figure}[h!]
\centering
\begin{subfigure}{.35\textwidth}
  \centering
\includegraphics[width=\linewidth]{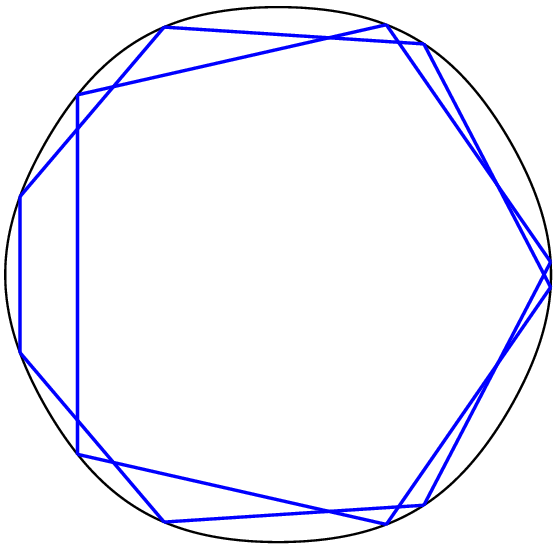}
  \caption{}
  \label{fig:10.1D_6a}
\end{subfigure}\hspace{2cm}
\begin{subfigure}{.35\textwidth}
  \centering
  \includegraphics[width=\linewidth]{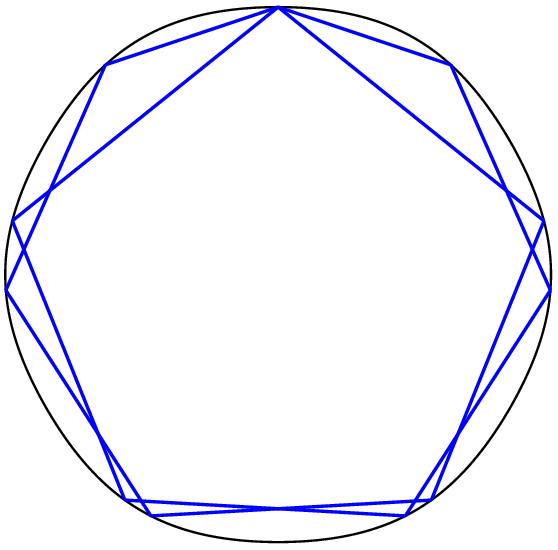}
  \caption{}
  \label{fig:10.1D_6b}
\end{subfigure}\\%fill%
\begin{subfigure}{.35\textwidth}
  \centering
\includegraphics[width=\linewidth]{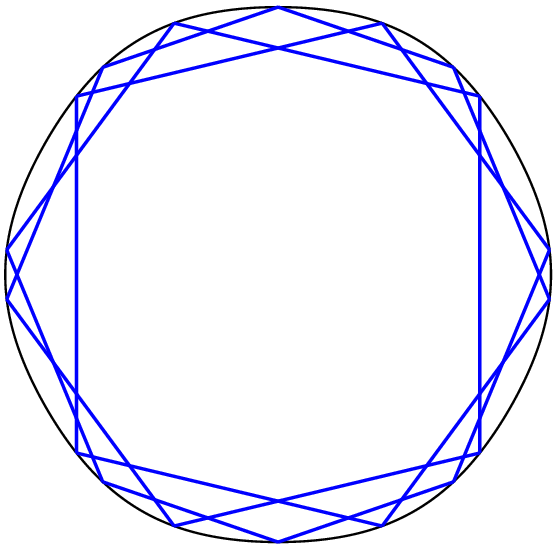}
  \caption{}
  \label{fig:10.1D_6c}
\end{subfigure}\hspace{2cm}
\begin{subfigure}{.35\textwidth}
  \centering
  \includegraphics[width=\linewidth]{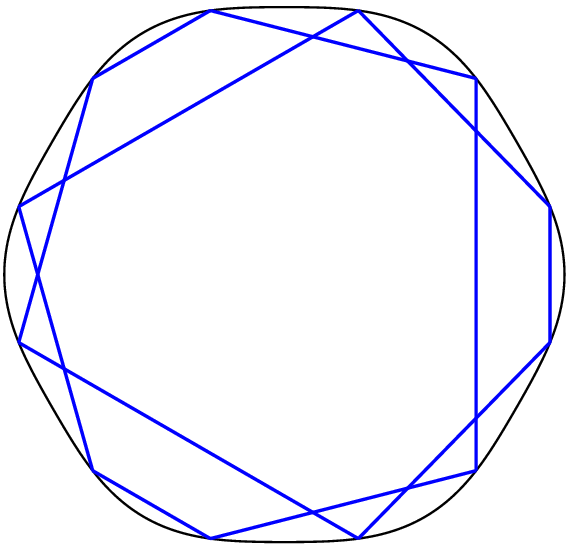}
  \caption{}
  \label{fig:10.1D_6d}
\end{subfigure}\\%fill%
\begin{subfigure}{.35\textwidth}
  \centering
\includegraphics[width=\linewidth]{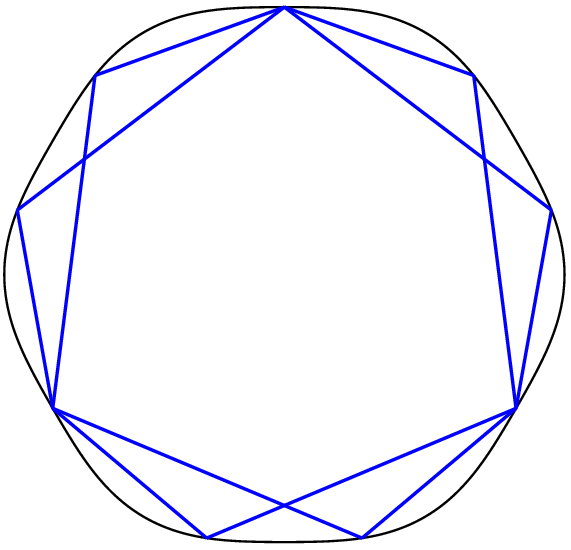}
  \caption{}
  \label{fig:10.1D_6e}
\end{subfigure}\hspace{2cm}
\begin{subfigure}{.35\textwidth}
  \centering
  \includegraphics[width=\linewidth]{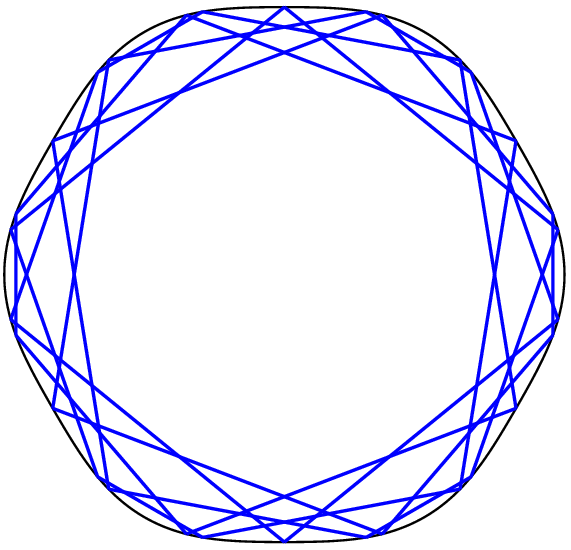}
  \caption{}
  \label{fig:10.1D_6f}
\end{subfigure}
\caption{Visualization of Theorem \ref{thm:sample} in  $\mathbb{D}_6$-symmetric billiards of Lima\c con-type (see Example \ref{ex:limacon}). As not all reflections in $\mathbb{D}_6$ are conjugate, $\mathbb{D}_6$ has six distinct non-conjugate dihedral subgroups $H$. For each of these, a non-Birkhoff periodic orbit with spatiotemporal symmetry group $H$  is displayed. Parameters were chosen so that the orbits exist, and their geometric properties are clearly visible. Namely, $\alpha=0.01<\alpha^*(6)\approx 0.027$ in subfigures \textbf{(a)}, \textbf{(b)} and \textbf{(c)}, while   $\alpha=0.023<\alpha^*(6)\approx 0.027$ in subfigures \textbf{(d)}, \textbf{(e)} and \textbf{(f)}.  \textbf{(a)} $\mathbb{D}_1$-symmetric $(12,2)$-periodic  non-Birkhoff orbit, reflection symmetric in the horizontal axis; \textbf{(b)} $\mathbb{D}_1$-symmetric $(12,2)$-periodic  non-Birkhoff orbit, reflection symmetric in the vertical axis; \textbf{(c)} $\mathbb{D}_2$-symmetric $(18,3)$-periodic non-Birkhoff orbit; \textbf{(d)} $\mathbb{D}_3$-symmetric $(12,2)$-periodic non-Birkhoff orbit, reflection symmetric the horizontal axis; \textbf{(e)} $\mathbb{D}_3$-symmetric $(12,2)$-periodic non-Birkhoff  orbit, reflection symmetric in the vertical axis; 
\textbf{(f)} $\mathbb{D}_6$-symmetric $(30,5)$-periodic non-Birkhoff orbit.
\label{fig:10.1D_6}
}
\end{figure}

\begin{proof} 
Let $Z$ be a $\mathbb{D}_n$-invariant Birkhoff orbit of period $n$ and rotation number $\frac{m}{n}$ as in the statement of the theorem. Its existence is guaranteed by Lemma  \ref{lem:existenceBirkhoffDn}. The lemma also states that its lift $X\in \mathbb{X}_{n,m}$ is given by $X_i=\frac{A}{2n} + \frac{m}{n}i$, for some $A\in \mathbb{Z}$, see \eqref{eqn:Xaformula}.  It follows that
\begin{equation}\label{eq:RmZj}
R^m(Z_i)=R^m(\gamma(X_i)) = \gamma(X_i+m/n) = \gamma(X_{i+1}) = Z_{i+1}\, .
\end{equation}
Moreover, choosing $B,C\in \mathbb{Z}$ such that $Bm+Cn=1$, we find that  $X_{i}+X_{-AB-i}= AC \in \mathbb{Z}$, so that 
\begin{equation}\label{eq:SZj}
S(Z_i)= S(\gamma(X_i)) = \gamma(-X_i)= \gamma(X_{-AB- i} -AC) = \gamma(X_{-AB- i}) =Z_{-AB-i}\, .  
\end{equation}
Let $H\subset \mathbb{D}_n$ be a dihedral subgroup of order $2N$. Then $N$ divides $n$ and $H=\langle \rho, \sigma\rangle$  is  generated by the rotation $\rho=R^{n/N}$ and a reflection $\sigma=R^bS$ for some $b\in \mathbb{Z}$. 
From \eqref{eq:RmZj} it follows that 
$$\rho^{m}(Z_i) = (R^{n/N})^{m}(Z_i)  = (R^{m})^{n/N}(Z_i) =Z_{(n/N)+i},$$ 
while \eqref{eq:SZj} gives that 
$$\sigma(Z_i)=(R^bS)(Z_i) = R^{b(Bm+Cn)}S(Z_i)  = (R^m)^{bB}S(Z_i) = (R^m)^{bB}(Z_{-AB-i}) = Z_{(bB-AB)-i}.$$ 
For any $s\in \mathbb{N}_{\geq 2}$, we thus have
\begin{equation}\label{eq:Zequalities}
\rho^{sm}(Z_i) =Z_{K+i} \ \mbox{and} \  \sigma(Z_i)=Z_{k-i} \ \mbox{where}\ K:=s(n/N) \ \mbox{and}\ k := B(b-A). 
\end{equation}
For later reference, we remark that the integers $k$ and $K$ in \eqref{eq:Zequalities} are not unique: because $Z$ is $n$-periodic, we may add an arbitrary integer multiple of $n$ to both $k$ and $K$, and \eqref{eq:Zequalities} will remain true.  

Next, let us fix an $s\in \mathbb{N}_{\geq 2}$ with $\gcd(s,N)=1$, and define $p:=s n, q:=sm$ as in the assumptions of the theorem. 
Note that $Z$ does not only have period $n$, but also (nonminimal) period $p=sn$. Similarly, $X$ is not only an element of $\mathbb{X}_{n,m}$ but also of $\mathbb{X}_{p,q} =\mathbb{X}_{sn,sm}$. 
We will now study general $p$-periodic sequences $z$, with a lift $x\in \mathbb{X}_{p,q}$, satisfying the same equalities as \eqref{eq:Zequalities}, that is,
\begin{equation}\label{eq:zequalities}
\rho^{sm}(z_i)=  z_{K+i} \ \mbox{and}\  \sigma(z_i)= z_{k-i}\, .
\end{equation}
By Lemma \ref{liftlemma}, a billiard sequence $z$ with lift $x\in \mathbb{X}_{p,q}$ satisfies \eqref{eq:zequalities} if and only if the lift satisfies 
\begin{equation}\label{eq:veqn}
(x-X)_{K+i}-(x-X)_i = 0\ \mbox{and}\  (x-X)_{k-i} +(x-X)_i = 0 \ \mbox{for all}\ i\in \mathbb{Z} \, .
\end{equation}
\changes{This can be seen by noting that $x$ and $X$ both satisfy the equations in Lemma \ref{liftlemma}, parts {\it i)} and {\it iv)}, so that their difference $x-X$ satisfies \eqref{eq:veqn}.}
An example of such a sequence is $\hat x= X+ \varepsilon v \in \mathbb{X}_{p,q}$, where $0<\varepsilon <\frac{1}{2n}$, and $v\in \mathbb{X}_{K,0}$ is given by 
$$v_i := \sin\left(\frac{2\pi i}{K} -\frac{\pi k}{K}\right)  
\changes{=\cos\left(\frac{\pi Nk}{p}\right)\sin\left(\frac{2\pi Ni}{p}\right)-\sin\left(\frac{\pi Nk}{p}\right)\cos\left(\frac{2\pi Ni}{p}\right)} ,$$
\changes{where the equality follows from the angle difference formula for the sine and the fact that $K=p/N$.}
Indeed, it is easy to check that $v_{i+K}-v_i=0$ and $v_{k-i}+v_i=0$, so $\hat x$ satisfies \eqref{eq:veqn}. 
To ensure that $v$ is not the zero vector, we shall assume from here on that $K \geq 3$. But note that $K\geq 2$ by assumption and that when $K=p/N=2$, then $\cos^2 \left( \frac{N\pi}{p}\right) = \cos^2 \left( \frac{\pi}{2}\right) = 0$ and the theorem is silent anyway.  

By Lemma \ref{lem:Hessianlemma}, $v$ is  an eigenvector of the Hessian $D^2W_{p,q}(X)$ (when the billiard is parametrized at constant speed $c=\int_{0}^1\|\gamma'(x)\|dx$, which we may assume without loss of generality) with eigenvalue  
$$\lambda_v = 2c^2\sin \left( \frac{m\pi}{n} \right) \left( \frac{2\sin\left( \frac{m\pi}{n} \right)  \cos^2\left( \frac{N\pi}{p} \right) }{L}  - \kappa \right).
$$
See also Remark \ref{remk:eigenvalue}. This eigenvalue is positive precisely when $$\kappa L <   2 \sin\left(\frac{m \pi }{n} \right) \cos^2\left( \frac{N\pi}{p}\right)  , $$ which is exactly the assumption of the theorem. For any small enough $0<\varepsilon<\frac{1}{2n}$ we therefore have
$$W_{p,q}(\hat x) > W_{p,q}(X)\, .$$ 
Our second  observation about $v$ is that it has exactly $2$ sign changes per period $K$, and hence $2N$ sign changes per period $p=NK$. This number of sign changes is minimal among nonzero vectors $v$ satisfying $v_{k-i}+v_i=0$ and $v_{K+i}-v_i=0$, because any such $v$ with less sign changes is necessarily zero.  
It also follows from the number of sign changes of $v$ that the minimal period of $v$ is $K$.     

The sequence $\hat x = X+\varepsilon v$ will  serve as initial condition for the gradient flow. We shall denote by $x(t)$ the gradient flow line with initial condition $x(0)=\hat x$. We will prove that this gradient flow line exists for all $t\geq 0$ and converges (along a subsequence) to a non-Birkhoff periodic orbit as $t\to \infty$. 

To show this, let us denote by $X^-, X^+$ the  $\mathbb{D}_n$-invariant Birkhoff sequences given by $X^-_i =X_i - \frac{1}{2n}$ and $X^+_i=X_i+\frac{1}{2n}$. By Lemma \ref{lem:existenceBirkhoffDn} these are lifts of billiard orbits, i.e., they are stationary points of the gradient flow. The order interval $[X^-, X^+]$ is thus positively invariant under the gradient flow, and we have that $\hat x, X\in (X^-, X^+)$ because $0<\varepsilon<\frac{1}{2n}$. By choosing $\varepsilon$ smaller if necessary, we can moreover arrange that $W_{p,q}(\hat x) > W_{p,q}(X)$, and that $\hat x \in \Sigma_{\delta}$, where $\delta>0$ is as in the conclusion of Corollary \ref{cor:gradientlimit}. By Corollary \ref{cor:gradientlimit}, the flow line $x(t)$ is then defined for all $t\geq 0$, lies entirely in $[X^-, X^+]\cap \mathbb{X}_{p,q}\cap \Sigma_{\delta}$ and converges (along a subsequence) to a stationary point $x_{\infty} \in [X^-, X^+]\cap \mathbb{X}_{p,q}\cap \Sigma_{\delta}$. The sequence $z_{\infty} \in  \Gamma^{\mathbb{Z}}$ defined by  $z_{\infty,i} :=\gamma(x_{\infty,i})$ is therefore a billiard trajectory. We claim that $z_{\infty}$ is non-Birkhoff and satisfies all the conclusions of the theorem. 

To prove this claim, we start by recalling that $(\hat x-X)_{K+i}-(\hat x-X)_{i}=0$ and $(\hat x -X)_{k-i}+(\hat x-X)_{i}=0$. Because these equalities are preserved by the gradient flow according to Lemma \ref{lem:symmetrypreservation}, and the space of sequences satisfying these equalities is closed, it follows that $(x_{\infty}-X)_{K+i}-(x_{\infty}-X)_{i}=0$ and $(x_{\infty}-X)_{k-i}+(x_{\infty}-X)_{i}=0$. Thus, $$\rho^{sm}(z_{\infty,i}) = z_{\infty,K+i} \quad \text{and} \quad \sigma(z_{\infty,i}) = z_{\infty,k-i}.$$ This proves that $z_{\infty}$ is $H$-symmetric, i.e., that $H\subset H(z_{\infty})$. We  prove below that $H = H(z_{\infty})$.
    
Next, we show that $x_{\infty} \neq X^-, X, X^+$. Recall that $x_{\infty}\in [X^-, X^+]$. Because $(X^{\pm} -X)_{k-i} +(X^{\pm}-x)_i = \pm \frac{1}{n} \neq 0$, we see that $X^{\pm}$ do not satisfy one of the  equalities that $x_{\infty}$ satisfies, so $x_{\infty}\neq X^{\pm}$. By the strong comparison principle we therefore have $x_{\infty} \in (X^-, X^+)$. It is also clear that $x_{\infty} \neq X$ because $W_{p,q}$ can only increase  along gradient flow lines, so that $$W_{p,q}(x_{\infty}) \geq W_{p,q}(\hat x) > W_{p,q}(X).$$ 
Now we show that \changes{$z_{\infty}$} has minimal period $p$ and is not Birkhoff. Recall that $\hat x-X\in \mathbb{X}_{K,0}$ has $2$ sign changes per period $K$. It follows from this that $x_{\infty}-X\in \mathbb{X}_{K,0}$ has at most $2$ sign changes per period $K$, as this number can only decrease along gradient flow lines. However, if $x_{\infty}-X$ had zero sign changes, then $x_{\infty}-X=0$, but we already proved that $x_{\infty}\neq X$. Hence, $x_{\infty}-X$ has exactly two sign changes per period $K$, \changes{i.e., $x_{\infty}$ and $X$  have $2N$ crossings per period $p$.} In particular, the minimal period of $x_{\infty}-X$ equals $K$. Thus,  Proposition \ref{prop:minperiod} implies that the minimal period of $z_{\infty}$ is $\mbox{lcm}(n,K)= \mbox{lcm}(n, sn/N) = sn = p$, because $\gcd(s, N)=1$.  \changes{We conclude that $x_{\infty}\in \mathbb{X}_{p,q}$, while $x_{\infty}\not\in \mathbb{X}_{n,m}$. It therefore follows from Proposition \ref{prop:pqnm} that $x_{\infty}$ and $z_{\infty}$ are not Birkhoff. }

At this point, we can also prove that $H=H(z_{\infty})$. Indeed, if $H\subsetneq H(z_{\infty})$  then $H(z_{\infty})$ would contain a rotation $\tilde \rho$ of order $\tilde N>N$ where $N$ divides $\tilde N$. This would imply that  $x_{\infty}-X$ has at least $2\tilde N$ sign changes (because $x_{\infty}\neq X$), which is more than the number $2N$ of sign changes of $\hat x-X$. This is impossible as the number of intersections can only decrease along gradient flow lines. Thus, $H=H(z_{\infty})$.
 
This finishes the proof that there is at least one non-Birkhoff periodic orbit of minimal period $p$, winding number $q$, and spatiotemporal symmetry group $H$. In the special case that $n$ is odd and $p$ is even, we now prove that there are two such orbits. Recall that the integer $k$ in the above proof is not unique: equations \eqref{eq:Zequalities} hold for $k=B(b-A)$ but also for any other $k$ of the form $k=B(b-Z)+tn$. In particular, when $n$ is odd, we can choose both an odd $k$, say $k^{\rm odd}$, and an even $k$, say $k^{\rm even}$, for which \eqref{eq:Zequalities} holds (we do not vary $K$). Our proof shows that there  are $(p,q)$-periodic non-Birkhoff orbits $z_{\infty}^{\rm odd}$ and  $z_{\infty}^{\rm even}$ satisfying \eqref{eq:zequalities}, respectively for $k=k^{\rm odd}$ and $k=k^{\rm even}$.
     
We claim that these orbits are geometrically distinct. If not, then there would be an $l\in \mathbb{Z}$ such that $(z_{\infty}^{\rm even})_{i}=(z_{\infty}^{\rm odd})_{l+i}$ or $(z_{\infty}^{\rm even})_{i}=(z_{\infty}^{\rm odd})_{l-i}$. In the first case, it would follow that  $$(z_{\infty}^{\rm even})_{i} = \sigma((z_{\infty}^{\rm even})_{k^{\rm even}-i})= \sigma((z_{\infty}^{\rm odd})_{l+ k^{\rm even}-i}) = (z_{\infty}^{\rm odd})_{k^{\rm odd} - (l+ k^{\rm even}-i)}  = (z_{\infty}^{\rm even})_{k^{\rm odd} - k^{\rm even} - 2l +i}\, .$$ Because $k^{\rm odd} - k^{\rm even} - 2l$ is obviously nonzero and $z_{\infty}^{\rm even}$ has period $p$,  this would imply that $z$ also has a period lower than $p$. This is a contradiction because $z_{\infty}^{\rm even}$ has minimal period $p$. In the second case, where $(z_{\infty}^{\rm even})_{i}=(z_{\infty}^{\rm odd})_{l-i}$,  we would find that $$(z_{\infty}^{\rm even})_{i} = \sigma((z_{\infty}^{\rm even})_{k^{\rm even}-i})= \sigma((z_{\infty}^{\rm odd})_{l- (k^{\rm even}-i)}) = (z_{\infty}^{\rm odd})_{k^{\rm odd} - l+ k^{\rm even}-i}  = (z_{\infty}^{\rm even})_{2l - k^{\rm odd} - k^{\rm even}  +i}\, ,$$ once more contradicting that $z_{\infty}^{\rm even}$ has minimal period $p$. This proves that $z_{\infty}^{\rm odd}$ and $z_{\infty}^{\rm even}$ are geometrically distinct orbits.
\end{proof}
     
\begin{remark}
The quantity $\kappa L$ is invariant under rescaling the billiard. Indeed, if the billiard is rescaled by a factor $C$, then $L$ is multiplied by a factor $C$ and $\kappa$ is multiplied by a factor $C^{-1}$.
\end{remark}

\begin{remark}
Theorem \ref{thm:sample} in the introduction follows immediately from Theorem \ref{thm:mainthrm}. 
\end{remark}

\section{Non-Birkhoff orbits in billiards with $\mathbb{D}_2$-symmetry} 
\label{sec:D2section}
\noindent Theorem \ref{thm:mainthrm} proves the existence of $\mathbb{D}_N$-symmetric non-Birkhoff orbits in billiards with $\mathbb{D}_n$-symmetry. For $n=N=2$, these orbits are of type I, as described in Lemma \ref{lem:periodicngeq3}. In this section, we give  conditions under which $\mathbb{D}_2$-symmetric billiards admit non-Birkhoff periodic orbits type II (see again Lemma \ref{lem:periodicngeq3}) and type {\rm V} (see Lemma \ref{lem:D1orbitlemma}). We first repeat Theorem \ref{thm:mainthrm} for the case $n=N=2$.

\begin{theorem}[{\bf Non-Birkhoff orbits of type I}] \label{thm:mainthrmI}
Let $\Gamma$ be a $\mathbb{D}_2$-symmetric billiard, and denote by $Z_i = \gamma(X_i)$ one of its $\mathbb{D}_2$-symmetric $2$-periodic Birkhoff orbits. In particular, it has constant orbit segment length $L$ and curvature $\kappa$. 
Let $s\geq 3$ be odd and define $p=2s$. 
When  
$$\kappa L < 2 \cos^2\left( \frac{2\pi}{p}\right),$$ then $\Gamma$ admits a $\mathbb{D}_2$-symmetric non-Birkhoff periodic orbit $z_i=\gamma(x_i)$ of type {\rm I}, of minimal period $p$ and rotation number $\frac{1}{2}$. The lifts $x$ (of $z$) and $X$ (of $Z$) cross exactly $4$ times per period $p$. 
 \end{theorem}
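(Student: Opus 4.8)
The plan is to obtain Theorem \ref{thm:mainthrmI} as the special case of Theorem \ref{thm:mainthrm} corresponding to the parameter choices $n = 2$, $m = 1$, $N = 2$, and $s \geq 3$ odd. First I would check that all hypotheses of Theorem \ref{thm:mainthrm} are satisfied with these choices: $m$ and $n$ are coprime with $1 \leq m \leq n - 1$; the full group $H = \mathbb{D}_2$ is a dihedral subgroup of $\mathbb{D}_2$ of order $2N = 4$; and $\gcd(s, N) = \gcd(s, 2) = 1$ holds precisely because $s$ is odd (so that $s \geq 3$ odd is exactly the condition $s \in \mathbb{N}_{\geq 2}$ with $\gcd(s,2)=1$). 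With these choices one has $p = sn = 2s$ and $q = sm = s$, hence rotation number $q/p = \tfrac{1}{2}$, and inequality \eqref{eq:ineq-kappaL} becomes
\[
\kappa L < 2 \sin\!\left(\tfrac{\pi}{2}\right) \cos^2\!\left(\tfrac{2\pi}{p}\right) = 2\cos^2\!\left(\tfrac{2\pi}{p}\right),
\]
which is exactly the hypothesis of Theorem \ref{thm:mainthrmI}. (Note also $K = p/N = s \geq 3$, so the perturbation vector $v$ in the proof of Theorem \ref{thm:mainthrm} is nonzero and the theorem is not vacuous.)

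Theorem \ref{thm:mainthrm} then directly produces a non-Birkhoff periodic orbit $z_i = \gamma(x_i)$ of minimal period $p$, winding number $q = s$, and spatiotemporal symmetry group $H = \mathbb{D}_2$, whose lift $x$ crosses the lift $X$ of the Birkhoff orbit exactly $2N = 4$ times per period $p$. All the substantive content — constructing $z_\infty$ as a limit of a curve-lengthening gradient flow line started at the symmetric perturbation $X + \varepsilon v$ of the $\mathbb{D}_2$-symmetric Birkhoff orbit, controlling the minimal period via Proposition \ref{prop:minperiod}, and ruling out the Birkhoff property via Proposition \ref{prop:pqnm} — is already carried out in the proof of Theorem \ref{thm:mainthrm} and can be invoked verbatim.

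The only remaining point, and hence the main (and quite minor) obstacle, is to confirm that the orbit produced is of type I in the sense of Lemma \ref{lem:periodicngeq3} rather than type II. For this I would revisit the identity $\rho^{sm}(z_{\infty,i}) = z_{\infty,K+i}$ established in the proof of Theorem \ref{thm:mainthrm}: here $\rho = R^{n/N} = R$ has order $N = 2$ and $K = s(n/N) = s = p/2$, so since $s$ is odd we get $\rho^{sm} = \rho^{s} = \rho$ and therefore $\rho(z_{\infty,i}) = z_{\infty,\,p/2 + i}$. Thus the nontrivial rotation in $H$ acts time-preservingly, which by Lemma \ref{lem:periodicngeq3} forces case I (equivalently, the reflection $\sigma(z_{\infty,i}) = z_{\infty,k-i}$ acts time-reversingly, also as recorded in the proof of Theorem \ref{thm:mainthrm}); cases I and II being mutually exclusive, this identifies the orbit as type I and completes the argument.
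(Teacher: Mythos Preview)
Your proof is correct and follows essentially the same approach as the paper: apply Theorem \ref{thm:mainthrm} with $n=N=2$, $m=1$, and $s$ odd, then observe that the resulting orbit is of type I because it satisfies \eqref{eq:zequalities} (the paper simply says ``by construction,'' while you spell out the computation $\rho^{sm}=\rho$ and $K=p/2$). Your additional remarks checking the hypotheses and the non-vacuity condition $K\geq 3$ are accurate and more explicit than the paper's two-sentence proof.
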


\noindent We refer to Figure \ref{fig:D2typeI} for a visualization of two orbits of type I.

\begin{proof}
This follows directly from  Theorem \ref{thm:mainthrm} above, applied to $n=N=2, m=1$, and $s$ odd. We note that the billiard orbits guaranteed by  Theorem \ref{thm:mainthrm} are of type I by construction, as they satisfy equations \eqref{eq:zequalities} for certain $k,K\in \mathbb{Z}$. 
\end{proof}

\begin{figure}[h!]
\centering
\begin{subfigure}{.35\textwidth}
  \centering
 \begin{tikzpicture}
  \node at (0,0) {\includegraphics[angle=90,origin=c,width=.75\linewidth]{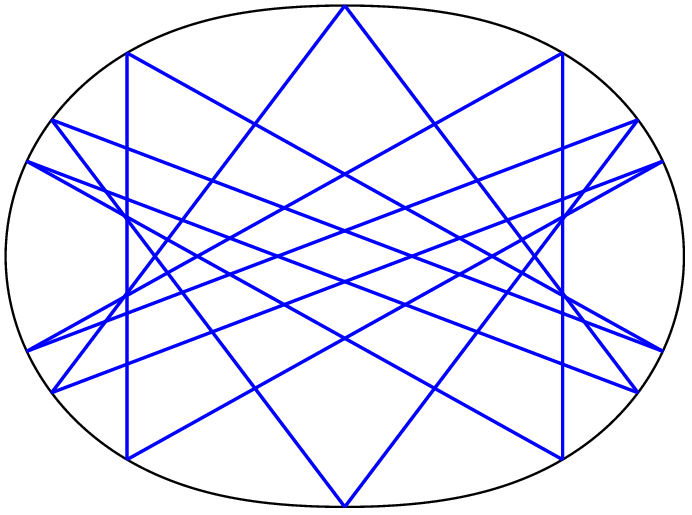}};
\node at (0.8,2.8) {$1$};
\node at (-1.8,-1.9) {$2$};
\node at (1.8,-1.9) {$3$};
\node at (-0.8,2.8) {$4$};
\node at (1.2,-2.6) {$5$};
\node at (-2.2,0) {$6$};
\node at (1.2,2.6) {$7$};
\node at (-0.8,-2.8) {$8$};
\node at (1.8,1.9) {$9$};
\node at (-1.9,1.9) {$10$};
\node at (0.8,-2.8) {$11$};
\node at (-1.2,2.6) {$12$};
\node at (2.3,0) {$13$};
\node at (-1.2,-2.6) {$14$};
  \end{tikzpicture}
  \caption{}
  \label{fig:fig:D2typeIa}
\end{subfigure}\hspace{2cm}
\begin{subfigure}{.35\textwidth}
  \centering
 \begin{tikzpicture}
  \node at (0,0) {\includegraphics[width=\linewidth]{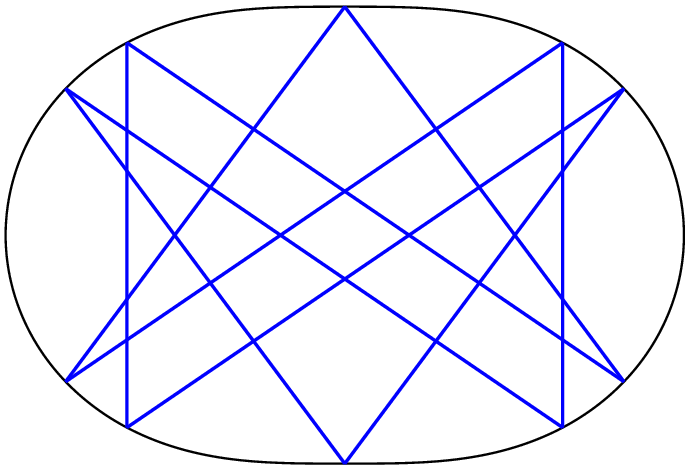}};
\node at (1.9,1.8) {$1$};
\node at (-2.4,-1.3) {$2$};
\node at (0,2.1) {$3$};
\node at (2.4,-1.3) {$4$};
\node at (-1.9,1.8) {$5$};
\node at (-1.9,-1.8) {$6$};
\node at (2.4,1.3) {$7$};
\node at (0,-2.1) {$8$};
\node at (-2.4,1.3) {$9$};
\node at (1.9,-1.8) {$10$};
  \end{tikzpicture}
  \caption{}
   \label{fig:fig:D2typeIb}
\end{subfigure}
\caption{Visualization of Theorem \ref{thm:mainthrmI} in a $\mathbb{D}_2$-symmetric billiard of Lima\c con-type (see Example \ref{ex:limacon}). {\bf (a)} $(14,7)$-periodic non-Birkhoff orbit of type I satisfying $R(z_i)=z_{7+i}$ and $S(z_i)=z_{12-i}$. Billiard parameter given by  $\alpha=0.15<\alpha^*(2)=0.2$; {\bf (b)} $(10,5)$-periodic  non-Birkhoff orbit of type I satisfying  $R(z_i)=z_{5+i}$ and $S(z_i)=z_{11-i}$. Billiard parameter given by $\alpha=0.195<\alpha^*(2)=0.2$. }
 \label{fig:D2typeI}
\end{figure}
\noindent Orbits of type II are not guaranteed by Theorem \ref{thm:mainthrm}. The next theorem thus requires a separate proof.
\begin{theorem}[{\bf Non-Birkhoff orbits of type  II}] \label{thm:mainthrmII}
Let $\Gamma$ be a $\mathbb{D}_2$-symmetric billiard, and denote by $Z_i = \gamma(X_i)$ one of its $\mathbb{D}_2$-symmetric $2$-periodic Birkhoff orbits. In particular, it has constant orbit segment length $L$ and curvature $\kappa$. 
Let $s\geq 2$ and define $p=2s$. When  
$$\kappa L < 2 \cos^2\left( \frac{\pi}{p}\right) ,$$
then $\Gamma$ admits a $\mathbb{D}_2$-symmetric non-Birkhoff periodic orbit $z_i=\gamma(x_i)$ of type {\rm II}, of minimal period $p$ and rotation number $\frac{1}{2}$. The lifts $x$ (of $z$) and $X$ (of $Z$) cross exactly $2$ times per period $p$.
 \end{theorem}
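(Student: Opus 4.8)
The plan is to run the gradient‑flow machinery from the proof of Theorem~\ref{thm:mainthrm}, but to seed it at an eigenvector of the Hessian adapted to the type~II symmetry relations rather than to the type~I ones. Fix one of the two $\mathbb{D}_2$‑symmetric $2$‑periodic Birkhoff orbits $Z$, with lift $X_i=\frac{A}{4}+\frac{i}{2}$ for some $A\in\mathbb{Z}$ (Lemma~\ref{lem:existenceBirkhoffDn} with $n=2$, $m=1$), and regard it as a $p$‑periodic orbit, so that $X\in\mathbb{X}_{p,s}$ with $p=2s$. Using that $Z$ has period $2$, one checks directly that the rotation $R$ acts time‑reversing on $Z$ via $R(Z_i)=Z_{k-i}$ for every odd $k$, and that exactly one of the two reflections, say $\sigma_0:=R^bS$ with $b\equiv A+s\pmod 2$, acts time‑preserving via $\sigma_0(Z_i)=Z_{p/2+i}$. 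By parts \emph{ii)} and \emph{iii)} of Lemma~\ref{liftlemma}, these amount to the linear identities $X_{k-i}-X_i=\tfrac12+M-i$ and $X_i+X_{p/2+i}=\tfrac b2+M'+i$ for suitable $M,M'\in\mathbb{Z}$. In contrast with Theorem~\ref{thm:mainthrm}, no parity condition on $s$ is needed, since the shift $k$ in the rotation relation may be taken odd for any $s$ (fix, e.g., $k=1$).

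For the perturbation direction, assume (by Remark~\ref{constantspeedrmk}) that $\Gamma$ is parametrized at constant speed $c$, and set $v\in\mathbb{X}_{p,0}$, $v_i:=\cos\!\big(\tfrac{\pi(2i-k)}{p}\big)$. Expanding the cosine exhibits $v$ as a linear combination of the two vectors of \eqref{eq:vwformula} with $N=1$, so by Lemma~\ref{lem:Hessianlemma} and Remark~\ref{remk:eigenvalue} it is an eigenvector of $D^2W_{p,q}(X)$ with eigenvalue $\lambda_v=2c^2\big(\tfrac{2\cos^2(\pi/p)}{L}-\kappa\big)$; the hypothesis $\kappa L<2\cos^2(\pi/p)$ is precisely $\lambda_v>0$. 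The choice of the $N=1$ eigenspace is forced: a vector in the $N$‑eigenspace satisfies $v_{p/2+i}=-v_i$ only when $N$ is odd, and this antiperiodicity is exactly what makes $\hat x:=X+\varepsilon v$ keep satisfying the reflection identity above. Because $k$ is odd one also has $v_{k-i}=v_i$, so $\hat x$ satisfies the same two linear identities as $X$; moreover $v$ has exactly two sign changes per period $p$.

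From here the proof proceeds exactly as that of Theorem~\ref{thm:mainthrm}. Let $X^\pm_i:=X_i\pm\tfrac14$ be the neighbouring $\mathbb{D}_2$‑symmetric Birkhoff orbits; for all sufficiently small $\varepsilon>0$ we have $\hat x\in(X^-,X^+)\cap\Sigma_\delta$ and $W_{p,q}(\hat x)>W_{p,q}(X)$, with $\delta$ as in Corollary~\ref{cor:gradientlimit}. The gradient flow line through $\hat x$ is defined for all $t\ge 0$, remains in $[X^-,X^+]\cap\mathbb{X}_{p,q}\cap\Sigma_\delta$, and converges along a subsequence to a stationary point $x_\infty$ (Corollary~\ref{cor:gradientlimit}); by Lemma~\ref{lem:symmetrypreservation} it still satisfies both linear identities, so $z_\infty:=\gamma(x_\infty)$ obeys $R(z_{\infty,i})=z_{\infty,k-i}$ and $\sigma_0(z_{\infty,i})=z_{\infty,p/2+i}$, whence $\mathbb{D}_2\subseteq H(z_\infty)$ and therefore $H(z_\infty)=\mathbb{D}_2$. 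Since $W_{p,q}(x_\infty)\ge W_{p,q}(\hat x)>W_{p,q}(X)$ we have $x_\infty\neq X$, and $X^\pm$ are excluded because $X^\pm_i+X^\pm_{p/2+i}=X_i+X_{p/2+i}\pm\tfrac12$ fails the reflection identity satisfied by $x_\infty$; hence $x_\infty\in(X^-,X^+)$ by the strong comparison principle. The Sturmian Lemma~\ref{prop:decreaseintersection} forces $x_\infty-X$ to have exactly two sign changes per period $p$ (so $x_\infty$ and $X$ cross twice per period) and hence minimal period $p$, and Proposition~\ref{prop:minperiod} then gives that $z_\infty$ has minimal period $\mathrm{lcm}(2,p)=p$. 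As $x_\infty\in\mathbb{X}_{p,s}$ has rotation number $\tfrac12$ but $x_\infty\notin\mathbb{X}_{2,1}$, Proposition~\ref{prop:pqnm} shows $z_\infty$ is non‑Birkhoff; and since $R$ acts time‑reversing while $\sigma_0$ acts time‑preserving on the minimal‑period‑$p$ orbit $z_\infty$, Lemma~\ref{lem:periodicngeq3} classifies it as type~II.

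The only genuinely new work relative to Theorem~\ref{thm:mainthrm} is contained in the first two paragraphs: determining which symmetry relations both the seed $Z$ and the target orbit must satisfy — here the rotation in $H$ acts time‑reversing, which is why the relevant eigenspace is the $N=1$ one and why the threshold is $2\cos^2(\pi/p)$ rather than $2\cos^2(2\pi/p)$ — and exhibiting an eigenvector in that eigenspace that respects those relations and has the minimal number of sign changes. Once $v$ is chosen correctly, the dynamical part is a verbatim rerun of the machinery already in place; the one point that needs a little extra care is the minimal‑period claim for $x_\infty-X$, which here follows from the two‑sign‑change count together with the antiperiodicity $v_{p/2+i}=-v_i$ rather than from a coprimality argument.
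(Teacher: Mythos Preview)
Your proof is correct and follows essentially the same approach as the paper: seed the gradient flow at $\hat x=X+\varepsilon v$ with $v_i=\cos\!\big(\tfrac{2\pi i}{p}-\tfrac{\pi k}{p}\big)$ chosen in the $N=1$ eigenspace so that the type~II relations $v_{k-i}=v_i$ and $v_{s+i}=-v_i$ hold, then rerun the convergence and intersection-index argument of Theorem~\ref{thm:mainthrm}. You supply a bit more detail than the paper (which simply omits the second half of the proof), notably the explicit identification of $\sigma_0=R^bS$ with $b\equiv A+s\pmod 2$ and the verification via Lemma~\ref{lem:periodicngeq3} that the limit is genuinely of type~II, but the core idea is identical.
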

\noindent We refer to Figure \ref{fig:D2typeII} for a visualization of two orbits of type II.
 \begin{figure}[h!]
\centering
\begin{subfigure}{.35\textwidth}
  \centering
 \begin{tikzpicture}
  \node at (0,0) {\includegraphics[angle=90,origin=c,width=.7\linewidth]{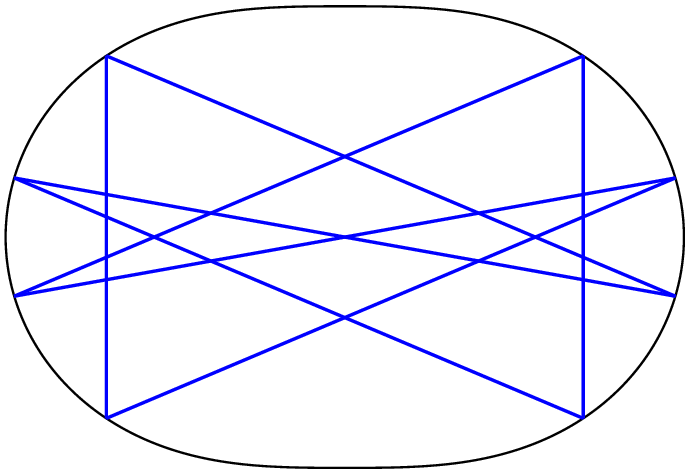}};
\node at (0.5,2.9) {$1$};
\node at (-1.7,-2) {$2$};
\node at (1.7,-2) {$3$};
\node at (-0.5,2.9) {$4$};
\node at (0.5,-2.9) {$5$};
\node at (-1.7,2) {$6$};
\node at (1.7,2) {$7$};
\node at (-0.5,-2.9) {$8$};
  \end{tikzpicture}
  \caption{}
  \label{fig:fig:D2typeIIa}
\end{subfigure}\hspace{2cm}
\begin{subfigure}{.35\textwidth}
  \centering
 \begin{tikzpicture}
  \node at (0,0) {\includegraphics[width=\linewidth]{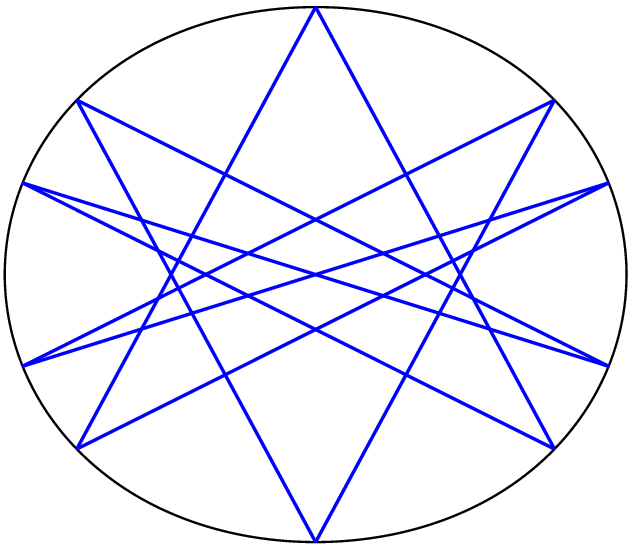}};
\node at (2.2,1.7) {$1$};
\node at (2.7,0.9) {$9$};
\node at (0,-2.6) {$2$};
\node at (-2.2,1.7) {$3$};
\node at (-2.8,0.9) {$5$};
\node at (2.2,-1.7) {$6$};
\node at (2.7,-0.9) {$4$};
\node at (0,2.6) {$7$};
\node at (-2.2,-1.7) {$8$};
\node at (-2.8,-0.9) {$10$};
  \end{tikzpicture}
  \caption{}
   \label{fig:fig:D2typeIIb}
\end{subfigure}
 \caption{  
 Visualization of Theorem \ref{thm:mainthrmII} in a $\mathbb{D}_2$-symmetric billiard of Lima\c con-type (see Example \ref{ex:limacon}). {\bf (a)} $(8,4)$-periodic non-Birkhoff orbit of type II  satisfying $R(z_i)=z_{9-i}$ and $S(z_i)=z_{4+i}$. Billiard parameter given by $\alpha=0.19<\alpha^*(2)=0.2$; {\bf (b)} $(10,5)$-periodic non-Birkhoff orbit of type II satisfying  $R(z_i)=z_{9-i}$ and $S(z_i)=z_{5+i}$. Billiard parameter given by $\alpha=0.075<\alpha^*(2)=0.2$. \label{fig:D2typeII}}
\end{figure} 
\begin{proof}
Let $Z$ be a Birkhoff orbit of  period $2$ as in the statement of the theorem. Note that its lift $X\in \mathbb{X}_{2,1}$ is then given by $X_i=X_0+\frac{1}{2}i$, where either $X_0\in \frac{1}{2}\mathbb{Z}$ (when $S(Z_i)=Z_i$) or $X_0\in \frac{1}{4} + \frac{1}{2}\mathbb{Z}$ (when $S(Z_i)\neq Z_i$).  
Let $s \geq 2$ be an integer as in the assumption of the theorem, and define $p:=2s$ and $q:=s$. Clearly, $Z$ does not only have period $2$, but also (nonminimal) period $p=2s$. Similarly, it holds that $X\in \mathbb{X}_{2s,s} = \mathbb{X}_{p,q}$. 

Because $Z$ has period $2$ and $R(Z_i)\neq Z_i$, we have that $R(Z_i)=Z_{K-i}$ for any odd integer $K\in \mathbb{Z}$. 
Moreover, we claim that we can always choose a reflection $\sigma\in \{S, RS\}$ such that $\sigma(Z_i)=Z_{s+i}$. To see this, note that $S(Z_i)=Z_i$ if and only if $(RS)(Z_i)\neq Z_i$ and vice versa. Thus, if $S(Z_i)\neq Z_{s+i}$ then necessarily $(RS)(Z_i)=Z_{s+i}$. This proves our claim.

We now  consider general sequences $z$ with a lift $x\in \mathbb{X}_{2s,s} = \mathbb{X}_{p,q}$. 
In view of Lemma \ref{liftlemma}, such a sequence satisfies $R(z_i)= z_{K-i}$ and $\sigma(z_i)= z_{s+i}$ if and only if
\begin{equation}\label{eq:veqnII}
   (x-X)_{K-i}-(x-X)_i = 0\ \mbox{and}\  (x-X)_{s+i} +(x-X)_i = 0  \, .
    \end{equation}
    An example of a sequence satisfying \eqref{eq:veqnII} is $\hat x= X+\varepsilon v\in (X^-, X^+)$, where $0<\varepsilon <\frac{1}{4}$, and $v$ is given by
      $$v_i=  \cos\left(\frac{\pi i}{s} - \frac{\pi K}{2s} \right) = \cos\left(\frac{2\pi i}{p} - \frac{\pi K}{p} \right).  $$
Note that $v\in \mathbb{X}_{2s,0}=\mathbb{X}_{p,0}$, which  implies that $\hat x \in \mathbb{X}_{2s,s}=\mathbb{X}_{p, q}$. Moreover, $v$ has $2$ sign changes per period $p=2s$, and this number of sign changes is minimal among nonzero vectors $v$ satisfying $v_{s+i}+v_i=0$, as any such $v$ without sign changes is necessarily trivial. 
It is also clear that $v$ is  an eigenvector of the Hessian $D^2W_{p,q}(X)$, its eigenvalue being 
     $$\lambda =  
    2   
    \left( \frac{2 \cos^2\left( \frac{\pi}{p} \right) }{L}  - \kappa \right)  .
    $$
This eigenvalue is positive exactly under the condition of the theorem. 

We omit the remainder of the proof, as it is more or less identical to that of Theorem \ref{thm:mainthrmI}.
\end{proof}
\noindent 
The final result in this section proves the existence of non-Birkhoff periodic orbits that are only reflection-symmetric, meaning that  their spatiotemporal symmetry group is generated by a single reflection. For such orbits, we can only prove the existence of orbits of type {\rm V} (see Lemma \ref{lem:D1orbitlemma}). We recall that the reflection symmetry acts on  orbits of type {\rm V} both time-preserving and time-reversing. In particular, such orbits  traverse the same path both forward and backward during each period.

\begin{theorem}[{\bf Non-Birkhoff orbits of type {\rm V}}] \label{thm:mainthrmIII}
Let $\Gamma$ be a $\mathbb{D}_2$-symmetric billiard, and denote by $Z_i = \gamma(X_i)$ one of its $\mathbb{D}_2$-symmetric $2$-periodic Birkhoff orbits. In particular, it has constant orbit segment length $L$ and curvature $\kappa$. 
Let $s\geq 2$ and define $p=2s$. When  
$$\kappa L < 2 \cos^2\left( \frac{\pi}{p}\right) ,$$
then $\Gamma$ admits a non-Birkhoff periodic orbit $z_i=\gamma(x_i)$ of type {\rm V}, of minimal period $p$ and rotation number $\frac{1}{2}$. Its  spatiotemporal symmetry group is generated by a single reflection. The lifts $x$ (of $z$) and $X$ (of $Z$) cross exactly $2$ times per period $p$.
 \end{theorem}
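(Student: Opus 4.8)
The plan is to follow the proofs of Theorems~\ref{thm:mainthrm} and~\ref{thm:mainthrmII} essentially verbatim, with one structural change: instead of imposing a time-reversing rotation together with a time-preserving reflection, I impose a single reflection $\sigma$ that acts on the orbit both time-preserving \emph{and} time-reversing, and I do not impose any rotational symmetry at all. First I would fix a $\mathbb{D}_2$-symmetric $2$-periodic Birkhoff orbit $Z_i=\gamma(X_i)$, so that $X\in\mathbb{X}_{2,1}$ with $X_i=X_0+\tfrac12 i$; put $p:=2s$, $q:=s$ (so that also $X\in\mathbb{X}_{p,q}$), and let $X^\pm_i:=X_i\pm\tfrac14$ denote the two neighbouring $\mathbb{D}_2$-symmetric Birkhoff lifts. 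Because $Z$ has period $2$, one of the two reflections in $\mathbb{D}_2$ fixes both points of $Z$ and the other interchanges them (cf.\ Lemma~\ref{lem:periodicp=2}); exploiting this, I would choose a reflection $\sigma\in\mathbb{D}_2$ and an integer $k$ with $k\equiv s\pmod 2$ such that $\sigma(Z_i)=Z_{s+i}$ and $\sigma(Z_i)=Z_{k-i}$ for all $i$ (take $\sigma$ the reflection fixing $Z$ and $k$ even when $s$ is even, and $\sigma$ the one swapping the two points of $Z$ and $k$ odd when $s$ is odd; the congruence $k\equiv s\pmod 2$ is precisely what makes these two identities mutually consistent for a genuine billiard sequence).

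For a general billiard sequence $z$ with lift $x\in\mathbb{X}_{p,q}$, parts {\it iii)} and {\it iv)} of Lemma~\ref{liftlemma} (applied both to $z$ and to $X$, which obeys the same identities) show that $\sigma(z_i)=z_{s+i}$ and $\sigma(z_i)=z_{k-i}$ for all $i$ if and only if $v:=x-X$ satisfies $v_i+v_{s+i}=0$ and $v_i+v_{k-i}=0$ for all $i$. As initial condition for the gradient flow I would take $\hat x:=X+\varepsilon v$ with $0<\varepsilon<\tfrac14$ and
\[
v_i:=\sin\left(\frac{2\pi i}{p}-\frac{\pi k}{p}\right).
\]
A short computation (using $2\pi s/p=\pi$ and the oddness of the sine) verifies both linear conditions; moreover $v\not\equiv 0$, it has exactly two sign changes per period $p$ (hence minimal period $p$), and being a linear combination of the two eigenvectors in~\eqref{eq:vwformula} with $N=1$ it is an eigenvector of $D^2W_{p,q}(X)$. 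By Remark~\ref{remk:eigenvalue} with $m/n=\tfrac12$ and $N=1$ its eigenvalue equals $2c^2\bigl(2\cos^2(\pi/p)/L-\kappa\bigr)$, which is positive precisely under the hypothesis $\kappa L<2\cos^2(\pi/p)$; hence, shrinking $\varepsilon$ if necessary, $W_{p,q}(\hat x)>W_{p,q}(X)$.

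Running the gradient flow from $\hat x$, Corollary~\ref{cor:gradientlimit} (with $\hat x\in(X^-,X^+)\cap\Sigma_\delta\cap\mathbb{X}_{p,q}$ for $\varepsilon$ small) shows the flow line exists for all $t\ge 0$, stays in $[X^-,X^+]\cap\mathbb{X}_{p,q}\cap\Sigma_\delta$, and subconverges to a stationary point $x_\infty$, so that $z_\infty:=\gamma(x_\infty)$ is a billiard orbit. Since $X^\pm$ violate $v_i+v_{s+i}=0$, the strong comparison principle (Lemma~\ref{lem:comparisonprinciple}) gives $x_\infty\in(X^-,X^+)$; since $W_{p,q}$ is nondecreasing along the flow and was strictly increased at $t=0$, $x_\infty\ne X$. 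By Lemma~\ref{lem:symmetrypreservation} both linear conditions persist in the limit, so $\sigma(z_{\infty,i})=z_{\infty,s+i}$ and $\sigma(z_{\infty,i})=z_{\infty,k-i}$, i.e.\ $\sigma$ acts on $z_\infty$ both time-preserving and time-reversing. By the Sturmian lemma (Lemma~\ref{prop:decreaseintersection}) $x_\infty-X$ has at most two sign changes per period $p$; it cannot have zero (that would force $v\equiv 0$ through $v_i=-v_{s+i}$, hence $x_\infty=X$), so it has exactly two, with minimal period $p$ --- these are the asserted two crossings of the lifts per period. Proposition~\ref{prop:minperiod} then yields that $z_\infty$ has minimal period $\mathrm{lcm}(2,p)=p$, whence $x_\infty\in\mathbb{X}_{p,q}\setminus\mathbb{X}_{2,1}$ and Proposition~\ref{prop:pqnm} shows $z_\infty$ is not Birkhoff.

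It remains to identify $H(z_\infty)$. Since $z_\infty$ has minimal period $p=2s>2$ and its spatiotemporal symmetry group contains a reflection $\sigma$ acting both time-preserving and time-reversing, Remark~\ref{remk:typeVremark} forces $H(z_\infty)=\langle\sigma\rangle\cong\mathbb{D}_1$, so $z_\infty$ is of type~{\rm V} in the sense of Lemma~\ref{lem:D1orbitlemma}; in particular its symmetry group is generated by a single reflection, as claimed. The step I expect to require the most care is the bookkeeping in the first paragraph: verifying that a single reflection $\sigma$ and an integer $k$ of the right parity can simultaneously realize the time-preserving and the time-reversing identity for the degenerate $2$-periodic orbit, and checking that imposing only these two conditions --- with no rotational symmetry --- cannot inadvertently enlarge $H(z_\infty)$ to $\mathbb{D}_2$; but the period-$2$ analysis underlying Lemma~\ref{lem:periodicp=2} and the dichotomy of Remark~\ref{remk:typeVremark} make both points routine, and the rest of the argument is a line-by-line adaptation of the proofs of Theorems~\ref{thm:mainthrm} and~\ref{thm:mainthrmII}.
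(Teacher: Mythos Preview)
Your proof is correct and follows essentially the same approach as the paper: the same initial vector $v_i=\sin(2\pi i/p-\pi k/p)$, the same eigenvalue computation, and the same gradient-flow argument. The only organisational difference is that the paper first invokes Theorem~\ref{thm:mainthrm} with $n=2$, $N=1$ to obtain the orbit with $H(z_\infty)=\langle\sigma\rangle$ and then observes that the very same $v$ additionally satisfies $v_{s+i}+v_i=0$ (so $\sigma$ also acts time-preserving), whereas you impose both linear constraints from the outset and close with Remark~\ref{remk:typeVremark}; the substance is identical.
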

\noindent We refer to Figure \ref{fig:lemma54} for a visualization of two orbits of type {\rm V}.
\begin{figure}[h!]
\centering
\hspace{-2cm}\begin{subfigure}{.35\textwidth}
  \centering
  \begin{tikzpicture}
  \node at (0,0) {\includegraphics[width=\linewidth]{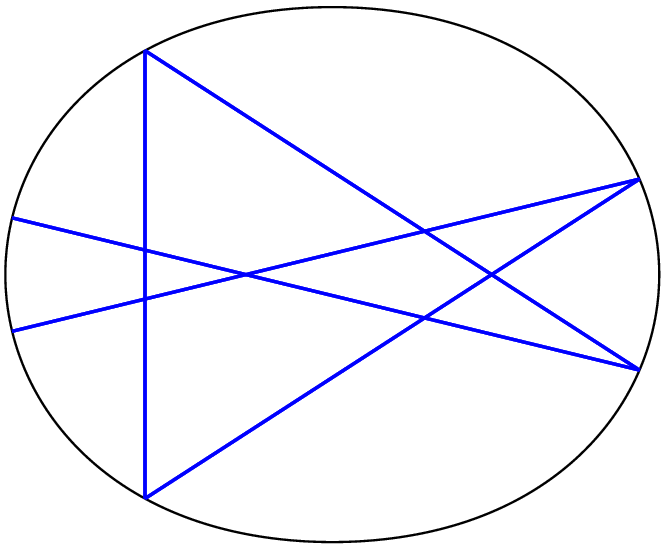}};
\node at (-2.9,0.5) {$1$};
\node at (3.15,-0.9) {$2=10$};
\node at (-1.9,2.1) {$3=9$};
\node at (-1.9,-2.1) {$4=8$};
\node at (3.05,0.9) {$5=7$};
\node at (-2.9,-0.5) {$6$};
  \end{tikzpicture}
  \caption{}
  \label{fig:lemma541}
\end{subfigure}\hspace{2cm}
\begin{subfigure}{.35\textwidth}
  \centering
  \begin{tikzpicture}
  \node at (0,0) {\includegraphics[width=\linewidth]{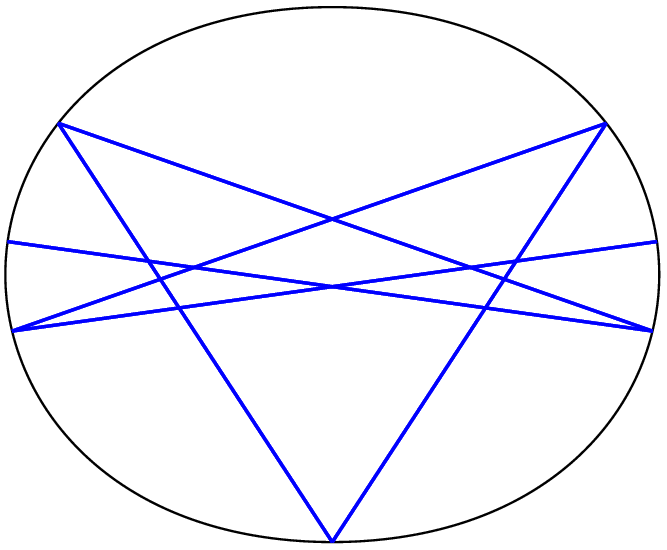}};
\node at (-2.9,0.35) {$1$};
\node at (3.2,-0.6) {$2=12$};
\node at (-2.85,1.4) {$3=11$};
\node at (0,-2.4) {$4=10$};
\node at (2.75,1.4) {$5=9$};
\node at (-3.15,-0.6) {$6=8$};
\node at (2.9,0.35) {$7$};
  \end{tikzpicture}
  \caption{}
  \label{fig:lemma542}
\end{subfigure}
\caption{Visualization of Theorem \ref{thm:mainthrmIII} in a $\mathbb{D}_2$-symmetric billiard of Lima\c con-type  (see Example \ref{ex:limacon}). Billiard parameter given by $\alpha=0.1<\alpha^*(2)= 0.2$.
\textbf{(a)} $\mathbb{D}_1$-symmetric $(10,5)$-periodic non-Birkhoff orbit of type {\rm V}, symmetric with respect to the horizontal axis. Note that $z_{1+i}=z_{1-i}$ and $z_{6+i}=z_{6-i}$; \textbf{(b)} $\mathbb{D}_1$-symmetric $(12,6)$-periodic non-Birkhoff orbit of type {\rm V}, symmetric with respect to the vertical axis. Note that $z_{1+i}=z_{1-i}$ and $z_{7+i}=z_{7-i}$.
\label{fig:lemma54}}
\end{figure}
 \begin{proof}      
Let $Z$ be a $\mathbb{D}_2$-symmetric Birkhoff orbit of period $2$ as in the statement of the theorem, choose $s\geq 2$, and define $p=2s, q=s$. As explained in the proof of Theorem \ref{thm:mainthrmII}, there is exactly one reflection $\sigma\in\{S, RS\}$ with the property that $\sigma(Z_i)=Z_{s+i}$. We define $H:=\langle \sigma\rangle$. 
Because $Z$ has period $2$, it holds that $\sigma(Z_i)=Z_{k-i}$ for any $k$ with the same signature as $s$. Let us choose such a $k$.

Theorem \ref{thm:mainthrm}  (applied to $n=2, N=1$ and $m=1$) states that there is a $p$-periodic non-Birkhoff orbit $z$ with spatiotemporal symmetry group $H$ and rotation number $\frac{1}{2}$, with the property that the Aubry diagrams of the lifts $x$ (of $z$) and $X$ (of $Z$) cross twice per period $p$. 

It remains to prove that this orbit is of type {\rm V}.
To do so, recall from the proof of Theorem \ref{thm:mainthrm} that the lift $x\in \mathbb{X}_{p,q}$ of $z$ is the limit (along a subsequence) of a gradient flow line $x(t)$ starting from an initial condition of the form 
$\hat x = X + \varepsilon v\in \mathbb{X}_{p,q}$, where the sequence $v\in \mathbb{X}_{p,0}$ is given explicitly by
\begin{align}\label{eq:vtypeiii}
    v_i =   \sin \left(\frac{2\pi i}{p} -\frac{\pi k}{p} \right)   .
\end{align}
It is clear from \eqref{eq:vtypeiii} that $v_{k-i}+v_i=0$, which implies by case {\it iv)} of Lemma \ref{liftlemma}  that $\sigma(\hat z_i) = \hat z_{k-i}$, where $\hat z\in \Gamma^{\mathbb{Z}}$ is the sequence given by $\hat z_i = \gamma(\hat x_i)$. Because this symmetry is preserved under the gradient flow by Lemma \ref{lem:symmetrypreservation}, we also have that $\sigma(z_i) = z_{k-i}$ for the billiard orbit $z$. In other words, $\sigma$ acts on $z$ time-reversing, as was also the conclusion of Theorem \ref{thm:mainthrm}.
    
However, we also see from \eqref{eq:vtypeiii} that $v_{s+i}+v_i=0$ (because $p=2s$), and by case {\it iii)} of Lemma \ref{liftlemma} this implies that $\hat z_{s+i}=\sigma(\hat z_i)$. This symmetry is also preserved under the gradient flow, so that $z_{s+i}=\sigma(z_i)$. This proves that $\sigma$ acts on $z$ time-preserving as well. This finishes the proof that $z$ is of type {\rm V}. 
\end{proof}
\begin{remark}
Theorems \ref{thm:mainthrmI}, \ref{thm:mainthrmII} and \ref{thm:mainthrmIII} partly generalize a result derived in \cite{casas2011frequency} about the existence of non-Birkhoff periodic orbits in elliptical billiards to arbitrary $\mathbb{D}_2$-symmetric billiards. We recall that the non-Birkhoff orbits inside an elliptical billiard are precisely those orbits that have  a confocal hyperbola as caustic, and consequently these non-Birkhoff orbits all have rotation number $\frac{1}{2}$. 

Consider the ellipse 
$$\mathcal{E}_{a,b}=\left\{ \dfrac{x^2}{a^2}+ \dfrac{y^2}{b^2}
=1\right\} \ \mbox{ with }\ 0<b<a\ .$$ 
According to \cite{casas2011frequency}, this ellipse admits a  non-Birkhoff periodic billiard orbit of minimal period $p$, that crosses the minor axis of the ellipse $M$ times per period $p$, if 
\begin{equation}\label{ineq}
    b^2<a^2\sin^2\left(\frac{\pi M}{2p}\right)\,.
\end{equation}
Our Theorems \ref{thm:mainthrmI},  \ref{thm:mainthrmII} and \ref{thm:mainthrmIII} reproduce this result for the choices $M=p-2$ and $M=p-4$. To see this, note that the minor axis of $\mathcal{E}_{a,b}$ defines a Birkhoff periodic orbit $Z$ of period $2$, and that $M=p-I$, where $I$ is the intersection index between the lift $x$ of the non-Birkhoff periodic orbit $z$ and the lift of the minor axis. This intersection index equals $4$ in Theorem   \ref{thm:mainthrmI} and it equals $2$ in Theorems \ref{thm:mainthrmII} and \ref{thm:mainthrmIII}. Substitution of $M=p-I$ into \eqref{ineq} gives 
\begin{equation}\label{ineqrewritten}
\frac{2b^2}{a^2} < 2\sin^2\left(\frac{\pi M}{2p}\right)=2\sin^2\left(\frac{\pi}{2}-\frac{\pi I}{2p}\right)  = 2\cos^2\left(\frac{\pi I}{2p}\right)\, .
\end{equation}
Now note that the minor axis of $\mathcal{E}_{a,b}$ has length $L=2b$, and that the curvature at its endpoints is $\kappa = \frac{b}{a^2}$. This means that the left hand side of \eqref{ineqrewritten} is precisely the value of the quantity $\kappa L$ for the minor axis.  
We thus recover from \eqref{ineq} the exact conditions of Theorems \ref{thm:mainthrmI}, \ref{thm:mainthrmII} and \ref{thm:mainthrmIII}.

Finally, we remark that symmetric periodic orbits inside ellipses have been classified in \cite{casas2012classification}. The orbits in Figures \ref{fig:D2typeI}, \ref{fig:D2typeII} and  \ref{fig:lemma54} of this paper are similar to those in Table {\rm V} in \cite{casas2012classification}.
\end{remark}

\section{Billiards with infinitely many non-Birkhoff periodic orbits}\label{sec:infinitesection}
\noindent As soon as our results guarantee the existence of \emph{one} non-Birkhoff periodic orbit, they automatically guarantee the existence of infinitely many such orbits. Our final three theorems exploit this observation.
\begin{theorem}\label{thm:1}
Let $\Gamma$ be a $\mathbb{D}_n$-symmetric billiard, and denote by $Z_i = \gamma(X_i)$  one of its $\mathbb{D}_n$-symmetric $n$-periodic Birkhoff orbits of rotation number $\frac{m}{n}$, which has constant curvature $\kappa$ and segment length $L$, as in Theorem \ref{thm:mainthrm}.
    When $\kappa L < 2 \sin \left(\frac{m\pi}{n}\right)$, then $\Gamma$ admits infinitely many non-Birkhoff periodic orbits of rotation number $\frac{m}{n}$, with arbitrarily long minimal periods.
\end{theorem}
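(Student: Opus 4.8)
The plan is to obtain Theorem \ref{thm:1} as an immediate consequence of Theorem \ref{thm:mainthrm}, by choosing the symmetry parameter as small as possible and then letting the period grow without bound. Concretely, fix the dihedral subgroup $H=\langle S\rangle\subset\mathbb{D}_n$ of order $2N=2$, so that $N=1$. For this choice the coprimality condition $\gcd(s,N)=\gcd(s,1)=1$ is automatic for every $s\in\mathbb{N}_{\geq 2}$, and, setting $p=sn$ and $q=sm$, the hypothesis \eqref{eq:ineq-kappaL} of Theorem \ref{thm:mainthrm} reads
$$\kappa L < 2\sin\!\left(\frac{m\pi}{n}\right)\cos^2\!\left(\frac{\pi}{sn}\right).$$

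First I would record the elementary limit $\cos^2(\pi/(sn))\to 1$ as $s\to\infty$. Hence, under the standing assumption $\kappa L<2\sin(m\pi/n)$, there is an $s_0\in\mathbb{N}_{\geq 2}$ such that the displayed inequality holds for every $s\geq s_0$. Theorem \ref{thm:mainthrm}, applied with this $H$ and this $s$, then yields a non-Birkhoff periodic orbit $z^{(s)}$ of $\Gamma$ of minimal period exactly $sn$, winding number $sm$, and rotation number $\frac{sm}{sn}=\frac{m}{n}$.

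Next I would observe that the orbits $z^{(s)}$ with $s\geq s_0$ are pairwise geometrically distinct, for the trivial reason that their minimal periods $sn$ are pairwise distinct. Consequently $\Gamma$ possesses infinitely many non-Birkhoff periodic orbits of rotation number $\frac{m}{n}$, and since $sn\to\infty$ their minimal periods are arbitrarily long, which is exactly the statement of the theorem.

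I do not expect any genuine obstacle here: the only points requiring attention are the limit $\cos^2(\pi/(sn))\to 1$ and the remark that taking $N=1$ both maximizes the right-hand side of \eqref{eq:ineq-kappaL} in the limit and removes any arithmetic restriction on $s$. (If a nontrivial rotational symmetry were desired instead, one could take any divisor $N$ of $n$ and restrict $s$ to the infinitely many integers coprime to $N$ — for instance $s$ prime and large — and the same limiting argument would go through verbatim.)
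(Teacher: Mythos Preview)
Your proof is correct and follows essentially the same strategy as the paper: apply Theorem \ref{thm:mainthrm} along a sequence $s\to\infty$ so that the factor $\cos^2(N\pi/p)$ tends to $1$. The only difference is cosmetic: the paper takes $N=n$ (so $H=\mathbb{D}_n$) and must restrict to integers $s$ coprime to $n$, whereas you take $N=1$, which removes that arithmetic constraint and lets every $s\geq s_0$ work. Your choice is slightly cleaner for exactly the reason you note in your parenthetical; the paper's choice has the minor advantage that the resulting orbits carry the full $\mathbb{D}_n$ spatiotemporal symmetry, but this is not needed for the statement of Theorem \ref{thm:1}.
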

\begin{figure}[h!]
\centering
\begin{subfigure}{.45\textwidth}
  \centering
  \includegraphics[width=\linewidth]{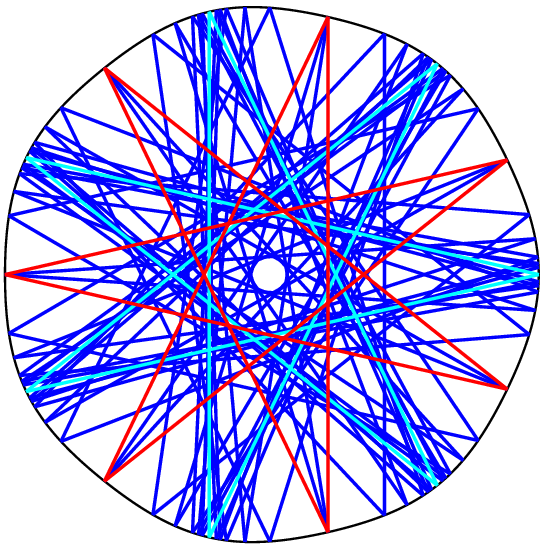}
  \caption{}
  \label{fig:infff1}
\end{subfigure}\hspace{.75cm}
\begin{subfigure}{.45\textwidth}
  \centering
  \includegraphics[width=\linewidth]{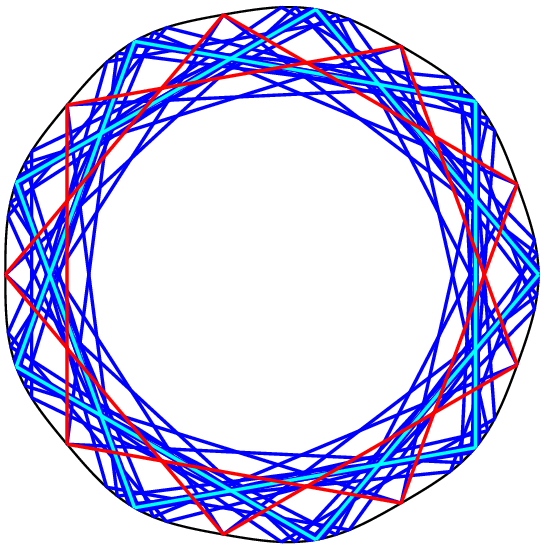}
  \caption{}
  \label{fig:infff2}
\end{subfigure}
\caption{Visualization of Theorem \ref{thm:1}. (a) $(77,33)$-periodic non-Birkhoff orbit in a convex $\mathbb{D}_7$-symmetric billiard of Lima\c con-type  (see Example \ref{ex:limacon}) with parameter $\alpha=0.01<\alpha^*(7)= 0.02$.  The corresponding $(7,3)$-periodic ``short'' Birkhoff orbit is depicted in red and the ``long'' one in cyan; (b) $(99,22)$-periodic non-Birkhoff orbit in a convex $\mathbb{D}_9$-symmetric billiard of Lima\c con-type with parameter $\alpha=0.01<\alpha^*(9) \approx 0.012$.  The corresponding $(9,2)$-periodic ``short'' Birkhoff orbit is depicted in red and the ``long'' one in cyan.
\label{fig:infff}}
\end{figure}
\begin{proof}
Let $s_1, s_2, s_3, \ldots \in \mathbb{N}_{\geq 2}$ be any  sequence of integers with the property that $\gcd(s_i, n)=1$ for all $i\in \mathbb{N}$, and $\lim_{i\to\infty} s_i= \infty$ . For instance, the $s_i$ can be a growing sequence of  prime numbers not equal to $n$. It obviously holds that $\lim_{i\to\infty} \cos^2\left( \frac{\pi}{s_i}\right)=1$, and therefore our assumption on $\kappa L$ implies that there is an $i_0\in \mathbb{N}$ such that 
$$\kappa L< 2 \sin \left(\frac{m\pi}{n}\right) \cos^2\left( \frac{\pi}{s_i}\right)$$ 
for all $i>i_0$. For these $i$, Theorem \ref{thm:mainthrm} (applied to the case $N=n$ and hence $K_i=s_i$) guarantees the existence of a $\mathbb{D}_n$-symmetric non-Birkhoff periodic orbit with rotation number $\frac{m}{n}$ and of minimal period $p_i=s_i n$. This proves the theorem.  We refer to Figure \ref{fig:infff} for a visualization.
\end{proof}

\begin{theorem}\label{thm:2}
Let $\Gamma$ be a $\mathbb{D}_2$-symmetric billiard, and denote by $Z_i = \gamma(X_i)$  one of its $\mathbb{D}_2$-symmetric $2$-periodic Birkhoff orbits of rotation number $\frac{1}{2}$, which has constant curvature $\kappa$ and segment length $L$, as in Theorems \ref{thm:mainthrmI} and \ref{thm:mainthrmII}. When $\kappa L < 2 $, then $\Gamma$ admits infinitely many non-Birkhoff periodic orbits of rotation number $\frac{1}{2}$ and with arbitrarily long minimal periods, of type {\rm I}, of type {\rm II}, and of type {\rm V}.
\end{theorem}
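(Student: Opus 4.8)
The plan is to mimic the proof of Theorem \ref{thm:1}, now applying the three existence results of Section \ref{sec:D2section} along a suitably chosen sequence of periods. First I would fix, for the type-\textrm{II} and type-\textrm{V} statements, an arbitrary strictly increasing sequence $s_1 < s_2 < \cdots$ of integers with $s_i \geq 2$, and set $p_i := 2 s_i$; for the type-\textrm{I} statement I would additionally require each $s_i$ to be odd and $\geq 3$, for instance $s_i = 2i+1$. Since $\cos^2(\pi/p_i) \to 1$ and $\cos^2(2\pi/p_i)\to 1$ as $i\to\infty$, the hypothesis $\kappa L < 2$ guarantees an index $i_0$ such that for all $i > i_0$ one has simultaneously
\[
\kappa L < 2\cos^2\!\left(\frac{2\pi}{p_i}\right) \qquad\text{and}\qquad \kappa L < 2\cos^2\!\left(\frac{\pi}{p_i}\right).
\]

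Next I would invoke the three theorems one at a time. For each $i > i_0$: Theorem \ref{thm:mainthrmI} (with $s=s_i$ odd) produces a $\mathbb{D}_2$-symmetric non-Birkhoff periodic orbit of type \textrm{I} with minimal period exactly $p_i$ and rotation number $\frac{1}{2}$; Theorem \ref{thm:mainthrmII} (with $s=s_i$) produces one of type \textrm{II} with minimal period exactly $p_i$; and Theorem \ref{thm:mainthrmIII} (with $s=s_i$) produces one of type \textrm{V} with minimal period exactly $p_i$. Because the minimal periods $p_i = 2 s_i$ tend to infinity, and orbits of distinct minimal period are automatically geometrically distinct, this yields, separately for each of the three types, infinitely many non-Birkhoff periodic orbits with arbitrarily long minimal periods. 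All of them have rotation number $\frac{1}{2}$, as stated in the respective theorems (and as is in any case forced for orbits on which a rotation acts time-reversing or a reflection acts time-preserving, by Remark \ref{remk:rotationnumberhalf}).

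There is no genuine obstacle here: the statement is a direct corollary of Theorems \ref{thm:mainthrmI}--\ref{thm:mainthrmIII} together with the elementary limit $\cos^2(c/p)\to 1$. The only points that require a moment's care are (i) respecting the parity constraint that $s$ be odd in the type-\textrm{I} case, which merely restricts the choice of the sequence $(s_i)$ and does not affect $p_i\to\infty$; and (ii) observing that the cited theorems deliver the \emph{minimal} period $p_i$ rather than merely a (possibly nonminimal) period, so that letting $s_i\to\infty$ genuinely produces orbits of unbounded minimal period and hence infinitely many geometrically distinct ones, rather than repeatedly rediscovering a single short orbit.
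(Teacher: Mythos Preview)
Your proof is correct and follows exactly the approach of the paper, which simply states that the result is ``obvious from Theorems \ref{thm:mainthrmI}, \ref{thm:mainthrmII} and \ref{thm:mainthrmIII}''; you have carefully spelled out the details the paper leaves implicit, including the parity constraint on $s$ for type~{\rm I} and the observation that the cited theorems yield the \emph{minimal} period.
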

\begin{figure}[h!]
\centering
\begin{subfigure}{.35\textwidth}
  \centering
\includegraphics[angle=90,origin=c,width=\linewidth]{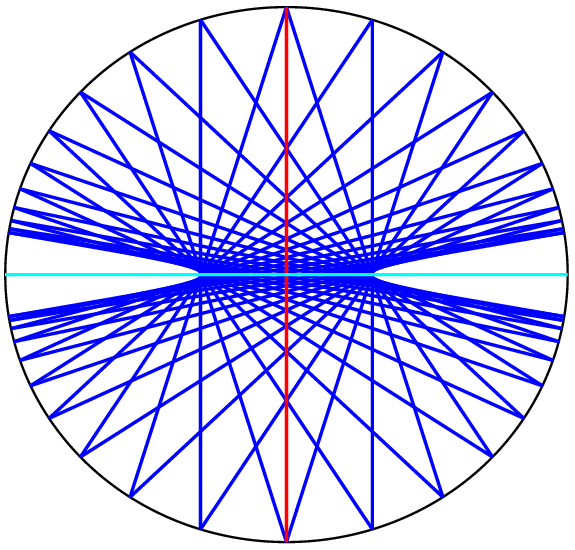}
  \caption{}
  \label{fig:infD211}
\end{subfigure}
\hspace{2cm}
\begin{subfigure}{.35\textwidth}
 \centering  \includegraphics[angle=90,origin=c,width=\linewidth]{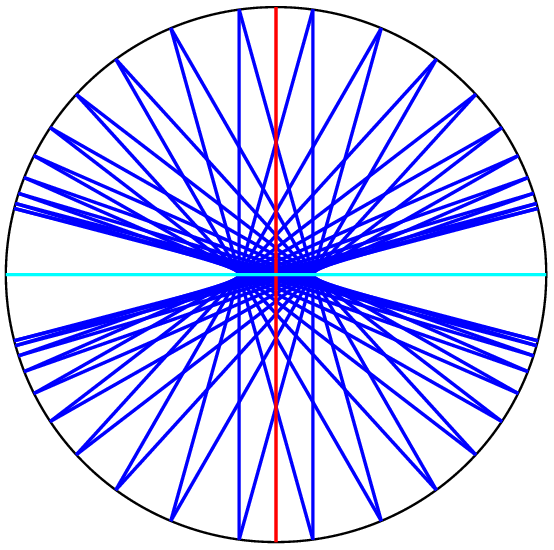}
  \caption{}
 \label{fig:infD22}
\end{subfigure}  \\ 
%\hspace{2cm}
\centering
\begin{subfigure}{.35\textwidth}
  \centering
  \includegraphics[width=\linewidth]{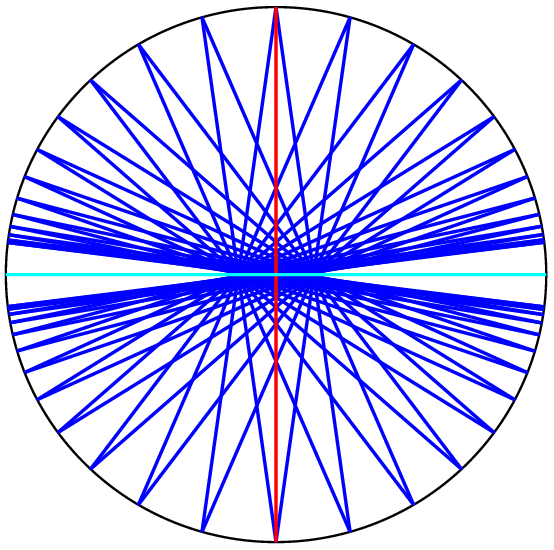}
  \caption{}
  \label{fig:infD23}
\end{subfigure}
\hspace{2cm} 
\begin{subfigure}{.35\textwidth}
  \centering
\includegraphics[width=\linewidth]{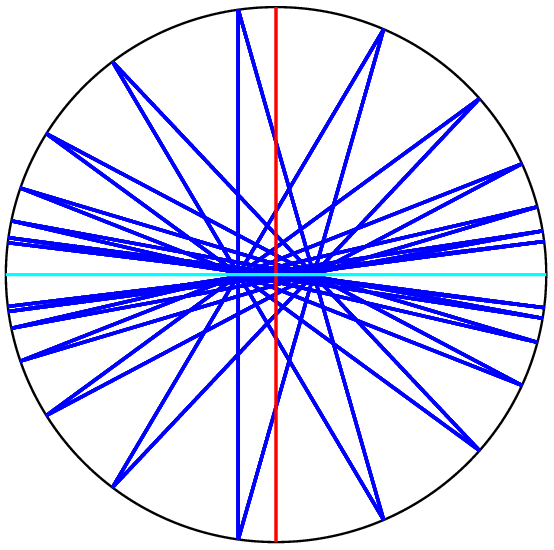}
  \caption{}
  \label{fig:infD25}
\end{subfigure}
\caption{Visualization of Theorem \ref{thm:2} in  convex $\mathbb{D}_2$-symmetric billiards of Lima\c con-type (see Example \ref{ex:limacon}). All billiards are chosen close to circular to improve readability of the figures, but the orbits also exist in ``flatter'' billiards. For clarity, the $(2,1)$-periodic ``short'' Birkhoff orbits are depicted in red and the ``long'' ones in cyan. 
{\bf (a)} $(42,21)$-periodic non-Birkhoff orbit of type I (high-period version of Figure \ref{fig:fig:D2typeIa}). Billiard parameter given by $\alpha=0.025<\alpha^*(2)= 0.2$; 
{\bf (b)} $(40,20)$-periodic non-Birkhoff orbit of type II (high-period version of Figure \ref{fig:fig:D2typeIIa}). Billiard parameter given by $\alpha=0.005$; 
{\bf (c)} $(50,25)$-periodic non-Birkhoff orbit of type {\rm II} (high-period version of Figure \ref{fig:fig:D2typeIIb}). Billiard parameter given by $\alpha=0.005$;  {\bf (d)} $(30,15)$-periodic non-Birkhoff orbit of type {\rm V} (high-period version of Figure \ref{fig:lemma541}). Note that the orbit admits only one reflection symmetry.  Billiard parameter given by $\alpha=0.005$.\label{fig:infD2}}
\end{figure}
\begin{proof}
    Obvious from Theorems \ref{thm:mainthrmI},  \ref{thm:mainthrmII} and \ref{thm:mainthrmIII}. We refer to Figure \ref{fig:infD2} for a visualization.
\end{proof}
\noindent The circular billiard clearly does not admit any non-Birkhoff orbits. However, our final theorem shows that arbitrarily small perturbations of the circular billiard possess infinitely many non-Birkhoff periodic orbits of arbitrary rational rotation number.
\begin{theorem}\label{thm:3}
    Let $0< \frac{m}{n}<1$ be a rational number. In the analytic topology, any open neighborhood of the circular billiard contains a billiard with infinitely many non-Birkhoff periodic orbits of rotation number $\frac{m}{n}$.
\end{theorem}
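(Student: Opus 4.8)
The plan is to produce, inside any analytic neighborhood of the circle, a member of the Lima\c{c}on-type family $\Gamma_\alpha$ of Example \ref{ex:limacon} to which Theorem \ref{thm:1} applies. Write $\gamma_\alpha(x) = (1+\alpha\cos(2\pi n x))(\cos 2\pi x,\sin 2\pi x)$; by Example \ref{ex:limacon} and Appendix \ref{app:convexity}, for every $0<\alpha<\alpha^*(n)=\frac{1}{1+n^2}$ this is a real-analytic, strictly convex, $\mathbb{D}_n$-equivariant embedding. Since $\gamma_\alpha-\gamma_0$ equals $\alpha$ times the fixed analytic function $x\mapsto\cos(2\pi n x)(\cos 2\pi x,\sin 2\pi x)$, the map $\alpha\mapsto\gamma_\alpha$ is an analytic arc of billiards through the unit circle $\gamma_0$; hence every open neighborhood of the circle in the analytic topology contains $\Gamma_\alpha$ for all sufficiently small $\alpha>0$. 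We may assume $\gcd(m,n)=1$, so $1\le m\le n-1$ and $n\ge 2$.

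The work then reduces to verifying the hypothesis $\kappa L<2\sin(m\pi/n)$ of Theorem \ref{thm:1} for $\Gamma_\alpha$, evaluated at a suitable one of its two $\mathbb{D}_n$-symmetric $n$-periodic Birkhoff orbits of rotation number $\frac mn$. By Lemma \ref{lem:existenceBirkhoffDn} these orbits have lifts $X_i=\frac{A}{2n}+\frac mn i$ with $A\in\{0,1\}$, and I would take $A=1$, so that every vertex $\gamma_\alpha(X_i)$ satisfies $\cos(2\pi n X_i)=\cos(\pi+2\pi m i)=-1$, i.e.\ all vertices lie on the circle of radius $1-\alpha$ (the ``valleys'' of $\Gamma_\alpha$). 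Consecutive vertices subtend the central angle $\frac{2\pi m}{n}$, so the constant segment length is $L=2(1-\alpha)\sin(m\pi/n)$. For the curvature I would use the polar formula $\kappa=\dfrac{r^2+2(r')^2-rr''}{(r^2+(r')^2)^{3/2}}$ with $r(\theta)=1+\alpha\cos(n\theta)$ at a point where $n\theta\equiv\pi\ (\mathrm{mod}\ 2\pi)$; there $r=1-\alpha$, $r'=0$, $r''=\alpha n^2$, giving $\kappa=\dfrac{1-\alpha(1+n^2)}{(1-\alpha)^2}$, which is positive since $\alpha<\alpha^*(n)$ (consistently with strict convexity, Appendix \ref{app:convexity}). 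Multiplying, $\kappa L=\dfrac{1-\alpha(1+n^2)}{1-\alpha}\,2\sin(m\pi/n)$, and because $0<\alpha<\alpha^*(n)<1$ gives $1-\alpha(1+n^2)<1-\alpha$, we conclude $\kappa L<2\sin(m\pi/n)$ for every such $\alpha$. Theorem \ref{thm:1} then furnishes infinitely many non-Birkhoff periodic orbits of rotation number $\frac mn$ with arbitrarily long minimal periods in $\Gamma_\alpha$, which proves the statement.

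There is no deep obstacle here; the one point requiring care is the choice of Birkhoff orbit. The same computation at the ``peak'' orbit ($A=0$, vertices at radius $1+\alpha$) gives $\kappa L=\dfrac{1+\alpha(1+n^2)}{1+\alpha}\,2\sin(m\pi/n)>2\sin(m\pi/n)$, so the inequality fails there: the perturbation displaces the two $\mathbb{D}_n$-symmetric Birkhoff orbits to opposite sides of the threshold $2\sin(m\pi/n)$, and only the valley orbit lies on the favorable side (equivalently, one could keep the peak orbit and instead take $\alpha<0$). At $\alpha=0$ both orbits satisfy $\kappa L=2\sin(m\pi/n)$ exactly, consistently with the circle itself supporting no non-Birkhoff orbits, so a nonzero perturbation of the correct sign is genuinely necessary.
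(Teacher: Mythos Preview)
Your proof is correct and follows essentially the same approach as the paper: both use the Lima\c con-type family $\Gamma_\alpha$, select the ``valley'' $\mathbb{D}_n$-symmetric Birkhoff orbit (vertices at radius $1-\alpha$), compute $L=2(1-\alpha)\sin(m\pi/n)$, and verify $\kappa L<2\sin(m\pi/n)$ in order to invoke Theorem~\ref{thm:1}. Your explicit polar-curvature computation $\kappa=\frac{1-\alpha(1+n^2)}{(1-\alpha)^2}$ is in fact more detailed than the paper's argument (which merely bounds $\kappa$ by the curvature $\frac{1}{1-\alpha}$ of the inscribed circle), and your remark that the ``peak'' orbit sits on the wrong side of the threshold is a nice addition.
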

\begin{figure}[h!]
\centering
\begin{subfigure}{.45\textwidth}
  \centering
\includegraphics[width=\linewidth]{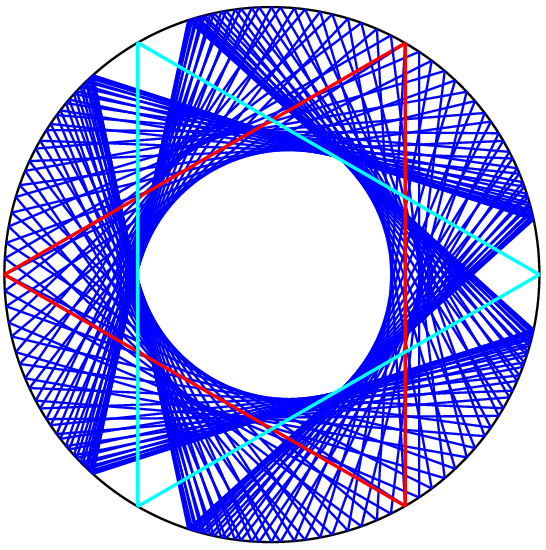}
  \caption{}
  \label{fig:infinite1}
\end{subfigure}\hspace{.75cm}
\begin{subfigure}{.45\textwidth}
  \centering
  \includegraphics[width=\linewidth]{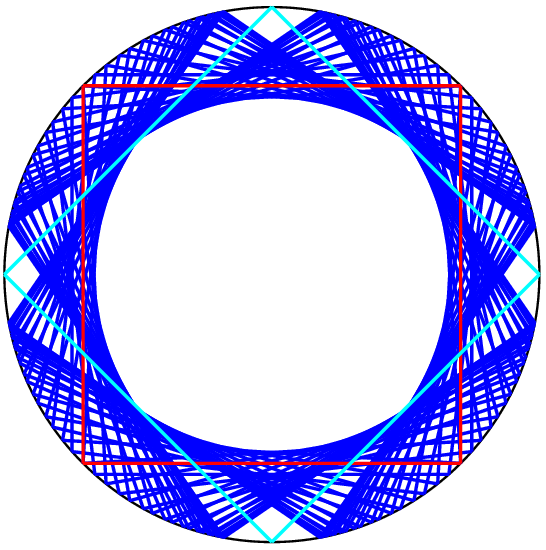}
  \caption{}
  \label{fig:infinite2}
\end{subfigure}
  \caption{Visualizations of Theorem \ref{thm:3}: \textbf{(a)} $(165,55)$-periodic non-Birkhoff orbit in a convex $\mathbb{D}_3$-symmetric billiard of Lima\c con-type (see Example \ref{ex:limacon}). Billiard parameter given by $\alpha=0.001<\alpha^*(3)= 0.1$. For clarity, the $(3,1)$-periodic ``short'' Birkhoff orbit is depicted in red and the ``long'' one in cyan; \textbf{(b)} $(156,39)$-periodic non-Birkhoff orbit in a convex $\mathbb{D}_4$-symmetric billiard of Lima\c con-type. Billiard parameter given by $\alpha=0.001<\alpha^*(4)\approx 0.0588$. For clarity, the $(4,1)$-periodic ``short'' Birkhoff orbit is depicted in red and the ``long'' one in cyan; Recall Figure \ref{fig:intro4} for a third visualization of the theorem.\label{fig:infinite}}
\end{figure}
\begin{proof}
Let $n, m$ be co-prime with $1\leq m \leq n-1$, and let $\alpha > 0$. We consider the Lima\c con-type billiard curve $\Gamma_{\alpha}$ given by the embedding 
   $\gamma_{\alpha}(x)= r_{\alpha}(x) \left( \cos(2\pi x), \sin(2\pi x) \right) $ where 
    $$r_{\alpha}(x) = 1 + \alpha \cos(2\pi n x)\, .$$
Because $r_{\alpha}(-x)=r_{\alpha}(x)$ and  $r_{\alpha}(x+\frac{1}{n})=r_{\alpha}(x)$, we have that $\Gamma_{\alpha}$ is $\mathbb{D}_n$-symmetric. It follows from Proposition \ref{prop:convexityprop}  that 
$\Gamma_{\alpha}$ is convex for sufficiently small values of $\alpha$, namely when $0\leq \alpha \leq \alpha^*(n)=\frac{1}{1+n^2}$. 

Next,  we observe that the analytic norm $$\|\gamma_{\alpha} - \gamma_0\|_{\rho} := \sup_{x\in \mathbb{R} + i [-\rho, \rho] }\| \gamma_{\alpha}(x) - \gamma_0(x)\| =  \sup_{x\in \mathbb{R} + i [-\rho, \rho]} | \alpha \cos(2\pi n x) | \leq \alpha   e^{2\pi n \rho}  $$ can be made arbitrarily small by choosing $\alpha$ small (depending on $n\in \mathbb{N}$ and $\rho>0$). This shows that $\Gamma_{\alpha}$ can be chosen arbitrarily close to the circle $\Gamma_0$ in any analytic norm $\| \cdot \|_{\rho}$. 
    
The final thing to note is that $\Gamma_{\alpha}$ is tangent to the circle of radius $1-\alpha$ at any of the points $\gamma_{\alpha}\left(\frac{j}{n} + \frac{1}{2n}\right)=  (1-\alpha)\left(\cos\left(\frac{2 \pi j}{n} + \frac{\pi}{n}\right)\!,\, \sin\left(\frac{2 \pi j}{n} + \frac{\pi}{n}\right)\right)$. This implies that $\Gamma_{\alpha}$ admits a Birkhoff periodic orbit of rotation number $\frac{m}{n}$ reflecting at these points, just like the circular billiard of radius $1-\alpha$ does. This orbit is in fact given by $$Z_i = \gamma_{\alpha}\left(\frac{m}{n}i + \frac{1}{2n}\right) .$$ 
The length of all the orbit segments of this Birkhoff orbit is equal to 
$$L = \left\|Z_{i+1} - Z_i \right\|
= 2  (1-\alpha)\sin\left(\frac{m \pi}{n} \right) .$$ 
It is also not hard to see that, for $0<\alpha < \frac{1}{1+n^2}$, the curvature of $\Gamma_{\alpha}$ at any of the points $Z_i$ is strictly less than the curvature $\frac{1}{1-\alpha}$ of the circle of radius $1-\alpha$. Thus, $\kappa L < 2\sin(m\pi/n)$. The result now follows from Theorem \ref{thm:1}. We refer to Figure \ref{fig:infinite} for a visualization.
\end{proof}
\section{Conclusion and discussion}\label{sec:concl}
\noindent In this article, we established sufficient conditions for the existence of non-Birkhoff periodic orbits in symmetric planar convex billiards. We also classified the spatiotemporal symmetries of periodic orbits that arise in this context. Our results show that for any $\mathbb{D}_n$-symmetric billiard  $\Gamma$ and any subgroup $H\subset \mathbb{D}_n$ containing a reflection, $\Gamma$ admits $H$-symmetric non-Birkhoff orbits of certain minimal periods, granted that the curvature at the points of a $\mathbb{D}_n$-symmetric Birkhoff orbit is sufficiently small, cf. Theorem \ref{thm:mainthrm}. Theorems \ref{thm:mainthrmII}  and \ref{thm:mainthrmIII}  provide similar existence results for non-Birkhoff orbits in $\mathbb{D}_2$-symmetric billiards that are not covered by Theorem \ref{thm:mainthrm}. Our theorems apply in particular to elliptical billiards, for which they provide  alternative proofs of known results. We also showed that as soon as a our theorems guarantee the existence of one non-Birkhoff periodic orbit, then they guarantee that infinitely many such  orbits exist, with arbitrarily long periods, cf. Theorems \ref{thm:1}, \ref{thm:2}
and \ref{thm:3}. We gave various numerical illustrations of our results. In Appendix \ref{app:code}, we provide a brief introduction to the Matlab code that was used to produce these illustrations. We have made this code available on GitHub.

As was highlighted in the introduction, our proofs  crucially rely only on the discrete symmetry and on the monotone variational structure of the billiard problem. Thus we expect that many of our results can be  generalized to other problems with such structure. We anticipate in particular that our methodology applies to billiard systems different from the classical planar type, such as outer billiards \cite{tabachnikov2005geometry}, symplectic billiards \cite{albers2018introducing}, magic billiards \cite{dragovic2024magic}, and possibly to polygonal billiards \cite{tabachnikov2005geometry}. We refer to \cite{tabachnikov2005geometry}, and references therein, and to a  recent survey by Schwartz in \cite{schwartz2021survey}, for an overview of some of these  generalized billiards. We also mention an extension of the classical planar billiard problem to bounded geodesically convex domains in surfaces of constant curvature, cf. \cite{dos2017periodic}. 

Extending our results to general monotone variational problems with discrete symmetries would require various modifications of the proofs given in this paper. For instance, two outer billiards or two  symplectic billiards that are related by an affine transformation have conjugate dynamics 
\cite{albers2018introducing,tabachnikov2005geometry}. For outer and symplectic billiards, the eigenvalues of the Hessian of the periodic action will  therefore not depend on the curvature of the billiard in the same way as for classical billiards. On the other hand, unlike in the classical billiard problem, the generating function of the symplectic billiard is smooth. This implies that the gradient flow cannot develop singularies, which simplifies some of the proofs in this paper. It would be interesting to investigate if our methods can be used to prove the existence of symmetric non-Birkhoff periodic orbits in any of the aforementioned generalized billiard problems.
To the best of our knowledge, non-Birkhoff periodic orbits have not yet been found in such billiards. 

\changes{We conclude by pointing out a surprising relation between our work and recent results in \cite{bialy2025}. Let us denote by $\overline{\kappa}:=\fint_{\Gamma} \kappa(s) ds$ the average curvature of a convex billiard $\Gamma$, and by $\overline L_{\rm max}:=\frac{1}{n}\sum_{i=1}^n\|Z_{i+1}-Z_i\|$  the average segment length of an $(m,n)$-periodic length-maximizing Birkhoff orbit $Z$ of  $\Gamma$. 
The results in \cite{bialy2025} imply that these quantities satisfy an ``isoperimetric inequality'' of the form $\overline \kappa \overline L_{\rm max} \geq 2\sin\left(\frac{m\pi}{n} \right)$, with equality holding if and only if $\Gamma$ is a circle. This inequality provides an intriguing ``opposite'' of the condition under which Theorem \ref{thm:1} is applicable. }
\\ \mbox{} 
\\
\noindent \textbf{Acknowledgments.} The work of C.O. was partially supported by the Engineering and Physical Sciences Research Council (EPSRC) grant [EP/W522569/1]. B.R. acknowledges the hospitality and financial support of the Sydney Mathematical Research Institute (SMRI). M.S. was partially supported by the Italian Ministry of University and Research (MUR) through the PRIN 2020 project ``Integrated Mathematical Approaches to Socio-Epidemiological Dynamics'' (No. 2020JLWP23, CUP: E15F21005420006). M. S. is a member and acknowledges the support of {\it Gruppo Nazionale di Fisica Matematica} (GNFM) of {\it Istituto Nazionale di Alta Matematica} (INdAM). \\

\noindent \textbf{Data Availability Statement} Data sharing not applicable to this article as no
datasets were generated or analyzed during the current study.\\

\noindent \textbf{Declarations}\\

\noindent \textbf{Conflict of interest} The authors have no conflict of interest to declare that is relevant to the content of this article.\\

\appendix

\section{Appendix: convexity of Lima\c{c}on-type curves}\label{app:convexity}
\noindent 

\begin{proposition}\label{prop:convexityprop}
   The Lima\c{c}on-type curve that is the image of the map $\gamma_{\alpha}: \mathbb{R}\rightarrow \mathbb{R}^2$ defined by 
   \begin{equation}\label{eq:limaconformulaappendix}
   \gamma_{\alpha}(x)=r_{\alpha}(x)(\cos (2\pi x), \sin (2 \pi x)) \ \mbox{in which}\ r_{\alpha}(x) = 1+ \alpha \cos(2\pi n x)\ \mbox{and}\ |\alpha|<1\, ,
 \end{equation}
is $\mathbb{D}_n$-symmetric. It bounds a strictly convex domain if and only if
 $$
| \alpha|\leq \alpha^*(n):=\dfrac{1}{1+n^2}\, .
$$
\end{proposition}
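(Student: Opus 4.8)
The plan is to use the standard criterion for convexity of a curve given in polar form. Writing $\gamma_\alpha(x) = r_\alpha(x)(\cos(2\pi x),\sin(2\pi x))$ with $r_\alpha(x) = 1+\alpha\cos(2\pi n x)$, and setting $\vartheta = 2\pi x$ as angular parameter so that $r(\vartheta) = 1 + \alpha\cos(n\vartheta)$, the signed curvature of a polar curve is
\begin{equation}\label{eq:polarcurv}
\kappa(\vartheta) = \frac{r^2 + 2(r')^2 - r\,r''}{\left(r^2 + (r')^2\right)^{3/2}}\, ,
\end{equation}
where $'$ denotes $d/d\vartheta$. Since $|\alpha| < 1$ ensures $r > 0$ everywhere (so $\gamma_\alpha$ is a genuine embedded closed curve, and the denominator of \eqref{eq:polarcurv} never vanishes), the curve bounds a strictly convex domain precisely when the numerator $N(\vartheta) := r^2 + 2(r')^2 - r\,r''$ is nonnegative for all $\vartheta$, and bounds a strictly convex domain with curvature positive on a dense set iff $N \geq 0$ with equality only on a nowhere-dense set. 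First I would compute $r' = -\alpha n \sin(n\vartheta)$ and $r'' = -\alpha n^2 \cos(n\vartheta)$, substitute into $N$, and simplify.

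The computation goes as follows. With $c := \cos(n\vartheta)$ and $s := \sin(n\vartheta)$ (so $c^2 + s^2 = 1$), we get $r^2 = 1 + 2\alpha c + \alpha^2 c^2$, $2(r')^2 = 2\alpha^2 n^2 s^2$, and $-r\,r'' = \alpha n^2 c(1+\alpha c) = \alpha n^2 c + \alpha^2 n^2 c^2$. Summing,
\begin{equation}\label{eq:Nexpansion}
N = 1 + 2\alpha c + \alpha^2 c^2 + 2\alpha^2 n^2 s^2 + \alpha n^2 c + \alpha^2 n^2 c^2\, .
\end{equation}
Replacing $s^2 = 1 - c^2$ and collecting terms in $c$ yields
\begin{equation}\label{eq:Nfinal}
N = 1 + 2\alpha^2 n^2 + \alpha(2 + n^2)\, c + \alpha^2(1 - n^2)\, c^2\, .
\end{equation}
So the question reduces to: for which $\alpha$ is the quadratic polynomial $P(c) := \alpha^2(1-n^2)c^2 + \alpha(2+n^2)c + (1 + 2\alpha^2 n^2)$ nonnegative for all $c \in [-1,1]$? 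I would then analyze $P$ on $[-1,1]$. Note $P(1) = \alpha^2(1-n^2) + \alpha(2+n^2) + 1 + 2\alpha^2 n^2 = 1 + \alpha(2+n^2) + \alpha^2(1+n^2) = (1+\alpha)(1 + \alpha(1+n^2)) = (1+\alpha)(1 + \alpha + \alpha n^2)$, and similarly $P(-1) = (1-\alpha)(1 - \alpha - \alpha n^2)$. Since the leading coefficient $\alpha^2(1-n^2) \leq 0$ for $n \geq 1$, the polynomial $P$ is concave (or linear when $n=1$), hence on the interval $[-1,1]$ its minimum is attained at an endpoint. Therefore $P \geq 0$ on $[-1,1]$ iff $P(1) \geq 0$ and $P(-1) \geq 0$. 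Given $|\alpha| < 1$, the factors $(1+\alpha)$ and $(1-\alpha)$ are positive, so the conditions become $1 + \alpha + \alpha n^2 \geq 0$ and $1 - \alpha - \alpha n^2 \geq 0$, i.e. $|\alpha|(1 + n^2) \leq 1$, which is exactly $|\alpha| \leq \alpha^*(n) = \frac{1}{1+n^2}$.

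To finish, I would also check the boundary/strictness bookkeeping: when $|\alpha| < \alpha^*(n)$ we get $P > 0$ on $[-1,1]$, hence $\kappa > 0$ everywhere, which is more than enough for strict convexity; when $|\alpha| = \alpha^*(n)$, $N$ vanishes only where $c = \pm 1$, i.e. at finitely many values of $\vartheta$ per period, so the curvature is positive on a dense set and the domain is still strictly convex in the sense used in the paper; and when $|\alpha| > \alpha^*(n)$, one endpoint value (say $P(1) < 0$ if $\alpha > 0$) forces $N < 0$ on an open set, so the curve has a segment of negative curvature and fails to be convex. The main (and essentially only) obstacle is bookkeeping rather than conceptual: correctly deriving \eqref{eq:polarcurv}, carefully tracking signs in the simplification to \eqref{eq:Nfinal}, and handling the degenerate case $n=1$ (where $P$ is linear) and the concavity argument for $n \geq 2$ uniformly. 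I would also remark that $\gamma_\alpha$ being $\mathbb{D}_n$-symmetric is immediate from $r_\alpha(-x) = r_\alpha(x)$ and $r_\alpha(x + \tfrac1n) = r_\alpha(x)$, which give the equivariance relations \eqref{equivarianceofgamma}.
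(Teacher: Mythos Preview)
Your proof is correct and follows essentially the same route as the paper: both reduce convexity to nonnegativity of the expression $1 + 2\alpha^2 n^2 + \alpha(2+n^2)c + \alpha^2(1-n^2)c^2$ (the paper writes it as $A_1(x)\alpha^2 + A_2(x)\alpha + 1$ and minimizes over $x$ after fixing the sign of $\alpha$, while you use concavity in $c$ to handle both signs simultaneously, which is marginally cleaner). One small slip in your final paragraph: for $\alpha>0$ it is $P(-1)=(1-\alpha)(1-\alpha(1+n^2))$ that becomes negative when $\alpha>\alpha^*(n)$, not $P(1)$.
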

\begin{proof}
It is clear that $\gamma_{\alpha}$ parametrizes a simple closed curve for $|\alpha|<1$. For $\alpha=0$ this curve is a circle; otherwise its symmetry group is $\mathbb{D}_n=\langle R, S\rangle$, because $\gamma_{\alpha}$ satisfies the $\mathbb{D}_n$-equivariance conditions \eqref{equivarianceofgamma}. 

A  simple closed $C^2$-smooth planar curve, which is  parametrized counterclockwise by an immersion $\gamma:\mathbb{R}\rightarrow \mathbb{R}^2$, bounds a convex domain if and only if 
$$\det(\gamma'(x), \gamma''(x))\geq 0\ \mbox{for all}\ x\in \mathbb{R}\, .$$ 
When $\gamma$ takes the form $\gamma(x)=r(x)(\cos (2\pi x), \sin (2 \pi x))$, then this determinant is equal to 
\begin{equation}\nonumber 
\det(\gamma'(x), \gamma''(x)) = 8\pi^3 r(x)^2+4\pi (r'(x))^2-2\pi r(x)r''(x)\, .
\end{equation}
For $r(x) = r_{\alpha}(x) = 1 + \alpha \cos(2\pi n x)$ this expression reduces to  
\begin{equation} \label{eq:ineq-det}
\det(\gamma'_{\alpha}(x), \gamma''_{\alpha}(x)) = 8\pi^3
 \left( A_1(x)\alpha^2+A_2(x)\alpha+1 \right) ,
\end{equation}
in which   
$$A_1(x)=\cos^2(2\pi nx)(1-n^2)+2n^2 \ \mbox{and}\
A_2(x)=\cos(2\pi nx)(2+n^2)\, .$$ 
For convenience, assume from now on that $\alpha \geq 0$ (the proof is similar when $\alpha<0$). Note that the functions $A_1$ and $A_2$ both attain their minimum when $\cos(2\pi nx)=-1$, that is, at $x=\frac{1}{2n}+\tfrac{A}{n}\  (A\in \mathbb{Z})$,  their minimum values being $A_1\left(\frac{1}{2n}\right) = 1+n^2$ and $A_2\left(\frac{1}{2n}\right) = -(2+n^2)$.  Thus, as $\alpha\geq 0$, the expression in \eqref{eq:ineq-det} is nonnegative for all $x\in \mathbb{R}$ precisely when 
\begin{equation}
    \label{eq:quadraticinequality}
(1+n^2)\alpha^2 - (2+ n^2)\alpha + 1 \geq 0\, .
\end{equation}
The solutions to this quadratic inequality are given by $\alpha \leq \alpha^*(n) := \frac{1}{1+n^2}$ and $\alpha \geq 1$ (but recall that we excluded the latter case.)
This (together with a similar argument for $\alpha<0$) proves that the curves $\gamma_{\alpha}$ defined in \eqref{eq:limaconformulaappendix} are convex precisely when $|\alpha| \leq \alpha^*(n)$. 

We remark that the left-hand side of \eqref{eq:quadraticinequality} is strictly positive  when $ |\alpha|<\alpha^*(n)$, so $\gamma_{\alpha}$ bounds a strictly convex domain for these values of $\alpha$. For $\alpha=\pm \alpha^*(n)$ the expression in \eqref{eq:ineq-det} only vanishes at finitely many points. Hence also for $\alpha=\pm \alpha^*(n)$, the curve defined by $\gamma_{\alpha}$ bounds a strictly convex domain. 
\end{proof}

\section{Appendix: structure of the numerical code}\label{app:code}
\noindent The GitHub repository \href{https://github.com/MattiaSensi/BilliardOrbitFinder}{BilliardOrbitFinder} contains three commented examples of the Matlab code that was used to produce the figures in this manuscript. More precisely, the repository contains one code with a $\mathbb{D}_4$-symmetric billiard of Lima\c con-type (see Example \ref{ex:limacon}) and two codes with elliptical billiards. Other parametrized billiards can easily be implemented. 
The main ingredients of the code are the following:
\begin{enumerate}
    \item Provide a parametrization of the boundary of the billiard table of the form $\gamma(x)=(\gamma_1(x),\gamma_2(x))$ (for $x\in \mathbb{R}$),  as well as a parametrization of its derivative $\gamma'(x)=(\gamma'_1(x),\gamma'_2(x))$.
    \item Provide a sequence $(\hat{x}_1, \ldots, \hat x_p) \in \mathbb{R}^{p}$, corresponding to positions $\hat z_i = \gamma(\hat x_i)$ on the boundary of the billiard table. This sequence serves as initial condition for the gradient flow. Any desired symmetry of the sequence is imposed in the form of an affine relation between the $\hat x_i$ (see Lemma \ref{liftlemma}).
    \item Starting from the initial conditions $x_i(0)=\hat x_i$, numerically integrate the system of ODEs
    \begin{equation}
\label{eq:sys_of_ODE}\dot{x}_i= F_i(x) = F^-(x_{i-1},x_i)+F^+(x_i,x_{i+1}), \quad i=1,\dots,p\, ,
    \end{equation}
     as given in \eqref{eq:gradientflow}, \eqref{eq:F-formula} and \eqref{eq:F+formula}, forward in time. We define $x_0:=x_p$ and $x_{p+1}:=x_1$  throughout the integration. The symmetries imposed on the initial conditions $\hat x_i$ are preserved by imposing that the corresponding affine relations continue to hold for the $x_i(t)$. Equations \eqref{eq:sys_of_ODE} are integrated forward in time until the solution appears to stabilize at a stationary point.
\end{enumerate}

\bibliographystyle{abbrvurl} 
\bibliography{Bibliography}

%\printbibliography

\end{document}